\crefname{thm}{Theorem}{Theorems}
\numberwithin{equation}{section}
\theoremstyle{plain}
\newtheorem{thm}{Theorem}[section]
\newtheorem{thm-no-section}{Theorem}
\newtheorem*{thm*}{Theorem}
\newtheorem{cor}[thm]{Corollary}
\newtheorem{lemma}[thm]{Lemma}
\newtheorem{prop}[thm]{Proposition}
\newtheorem{thmx}{Theorem}
\theoremstyle{definition}
\newtheorem{defi}[thm]{Definition}
\theoremstyle{remark}
\newtheorem{ex}[thm]{Example}
\newtheorem{rmk}[thm]{Remark}
\newcommand{\op}{\operatorname}
\newcommand{\ignore}[1]{}
\newcommand{\Spc}{\operatorname{Spc}}
\newcommand{\Nrv}{\operatorname{Nrv}}
\newcommand{\Pos}{\operatorname{Pos}}
\newcommand{\Flag}{\operatorname{Flag}}
\newcommand{\opp}{\operatorname{op}}
\newcommand{\Set}{\mathsf{Set}}
\newcommand{\Top}{\mathsf{Top}}
\newcommand{\ClConv}{\mathsf{ClConv}}
\newcommand{\CGSpc}{\mathsf{CGSpc}}
\newcommand{\sSet}{\mathsf{sSet}}
\newcommand{\Fun}{\mathsf{Fun}}
\newcommand{\Hom}{\mathrm{hom}}
\newcommand{\Cov}{\mathsf{Cov}}
\newcommand{\sRMod}{\mathsf{s}(R\text{-}\mathsf{Mod})}
\newcommand{\sC}{\mathsf{s}(\mathcal{C})}
\newcommand{\RMod}{R\text{-}\mathsf{Mod}}
\newcommand{\freeR}{\mathsf{R}}
\newcommand{\iso}{\cong}
\newcommand{\SimpComplex}{\mathsf{Simp}}
\newcommand{\Po}{\mathsf{Po}}
\newcommand{\bbR}{\mathbb{R}}
\newcommand{\Sing}{\operatorname{Sing}}
\newcommand{\Sd}{\operatorname{Sd}}
\newcommand{\np}[1]{P_{#1}}
\newcommand{\BarCon}{\operatorname{Bar}}
\newcommand{\SmallBarCon}{\operatorname{Blowup}}
\newcommand{\Tri}{\operatorname{T}}
\newcommand{\PoBar}{\operatorname{PoBar}}
\newcommand{\AbBarCon}{\operatorname{Bar}}
\newcommand{\sk}[2]{\operatorname{sk}_{#1}#2}
\newcommand{\vertx}[1]{\operatorname{Vert}#1}
\newcommand{\CB}[2]{D_{#1}(#2)}
\newcommand{\B}[2]{B_{#1}(#2)}
\let\@fnsymbol\@alph
\title{A Unified View on the Functorial Nerve Theorem and its Variations}
\author{%
Ulrich Bauer\thanks{ Technical University of Munich (TUM), Boltzmannstraße 3, Garching bei München, Germany}%
\and
Michael Kerber\thanks{ Graz University of Technology, Kopernikusgasse 24, Graz, Austria}%
\and
Fabian Roll\footnotemark[1]\ \thanks{Corresponding author. \textit{Email address:} \texttt{fabian.roll@tum.de} (Fabian Roll)}%
\and
Alexander Rolle\footnotemark[1]%
}
\date{}
\begin{document}
\maketitle
\begin{abstract}
The nerve theorem is a basic result of algebraic topology
that plays a central role in computational and applied aspects of the subject.
In topological data analysis, one often needs a nerve theorem that is functorial
in an appropriate sense,
and furthermore one often needs a nerve theorem for closed covers
as well as for open covers.
While the techniques for proving such functorial nerve theorems
have long been available,
there is unfortunately no general-purpose, explicit treatment of this topic in the literature.
We address this by proving a variety of functorial nerve theorems.
First, we show how one can use elementary techniques to prove nerve theorems
for covers by closed convex sets in Euclidean space,
and for covers of a simplicial complex by subcomplexes.
Then, we establish a more general, ``unified'' nerve theorem that subsumes many of the variants,
using standard techniques from abstract homotopy theory.
\end{abstract}

\emph{Keywords:} Nerve theorem, Applied topology, Delaunay complex, \v{C}ech complex, Model categories, Discrete Morse theory

\newpage

\tableofcontents

\newpage

\section{Introduction}
\paragraph{Background}

If $\mathscr{U} = ( U_i )_{i \in I}$ is a cover of a topological space $X$,
then the \emph{nerve} of $\mathscr{U}$, which dates back to Alexandroff \cite{MR1512423}, is the simplicial complex $\Nrv (\mathscr{U})$
whose simplices are the finite subsets $J \subseteq I$ such that
the intersection $\cap_{i \in J} U_i$ is non-empty.
The nerve of a cover played an important role in the development of homology and cohomology theory.
In particular, \v{C}ech (co)homology is given by
the (co)limit of the (co)homology groups of the nerves of a directed system of open covers ordered by refinement.
A historical exposition
can be found in \cite[Chapter 2]{cech-history}.

The \emph{nerve theorem}, whose early versions are due to Leray \cite{MR15786}, Borsuk \cite{Borsuk1948}, and Weil \cite{weil},
is a basic result in algebraic and combinatorial topology.
Roughly speaking, it says that
if every non-empty finite intersection of cover elements is contractible,
then, subject to some further tameness conditions on $X$ and $\mathscr{U}$,
the space~$X$ is homotopy equivalent to the nerve of $\mathscr{U}$.

The literature on the nerve theorem is extensive but unfortunately hard to navigate.
In part, this is because there are many different variants of the nerve theorem.
There are many choices one can make for the ``further tameness conditions''
on the space and cover that yield a nerve theorem,
and there are further choices one can make for the kind of equivalence one works with.
For example, one could ask that the non-empty finite intersections are weakly homotopy equivalent to the one-point space, or to be acyclic.
Many of the original results use concepts that are now obscure,
and the many possible choices for hypotheses and proof techniques
make it difficult to compare all the available nerve theorems.

Nowadays, the nerve theorem and the aspect of functoriality play a crucial role in topological data analysis.
Nerves are the main way to replace a topological space, determined by the data points using geometric constructions, with a combinatorial model
that is suitable for computations.
Two prominent examples are the \emph{\v{C}ech complex} and the \emph{Delaunay complex},
which arise as nerves of a collection of closed balls and closed Voronoi balls, respectively.
Another important example is the \emph{Vietoris--Rips complex},
which is not usually defined as the nerve of a cover,
though it is isomorphic to a nerve \cite{ghrist,DBLP:journals/cgf/ChazalCGMO09}.
Note that, while one can choose whether to use open or closed sets
when defining the \v{C}ech and Vietoris--Rips complexes,
the only standard way to define the Delaunay complex uses closed sets.

\begin{center}
 \begin{minipage}[c]{0.45\textwidth}
\centering
 \includegraphics[width=.9\textwidth]{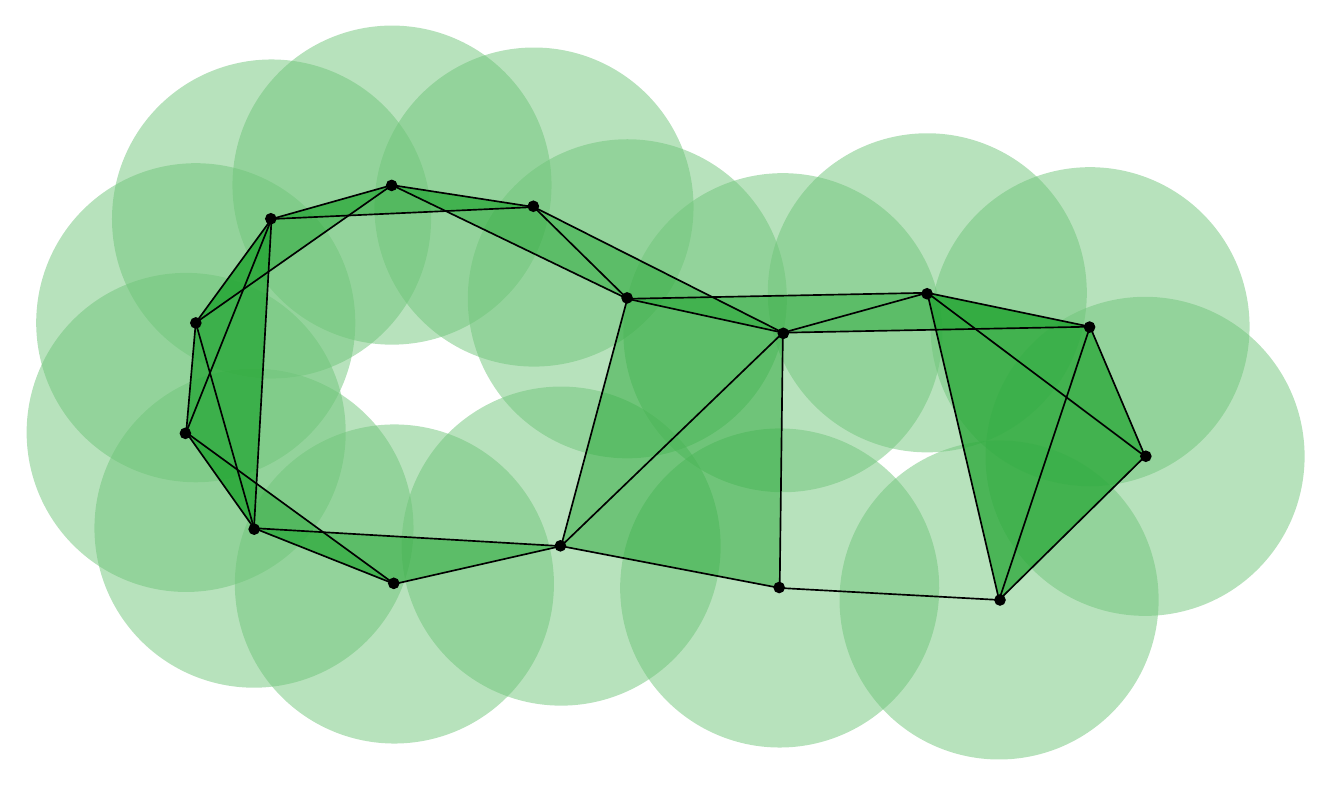}
\end{minipage}
\begin{minipage}[c]{0.45\textwidth}
\centering
  \includegraphics[width=.9\textwidth]{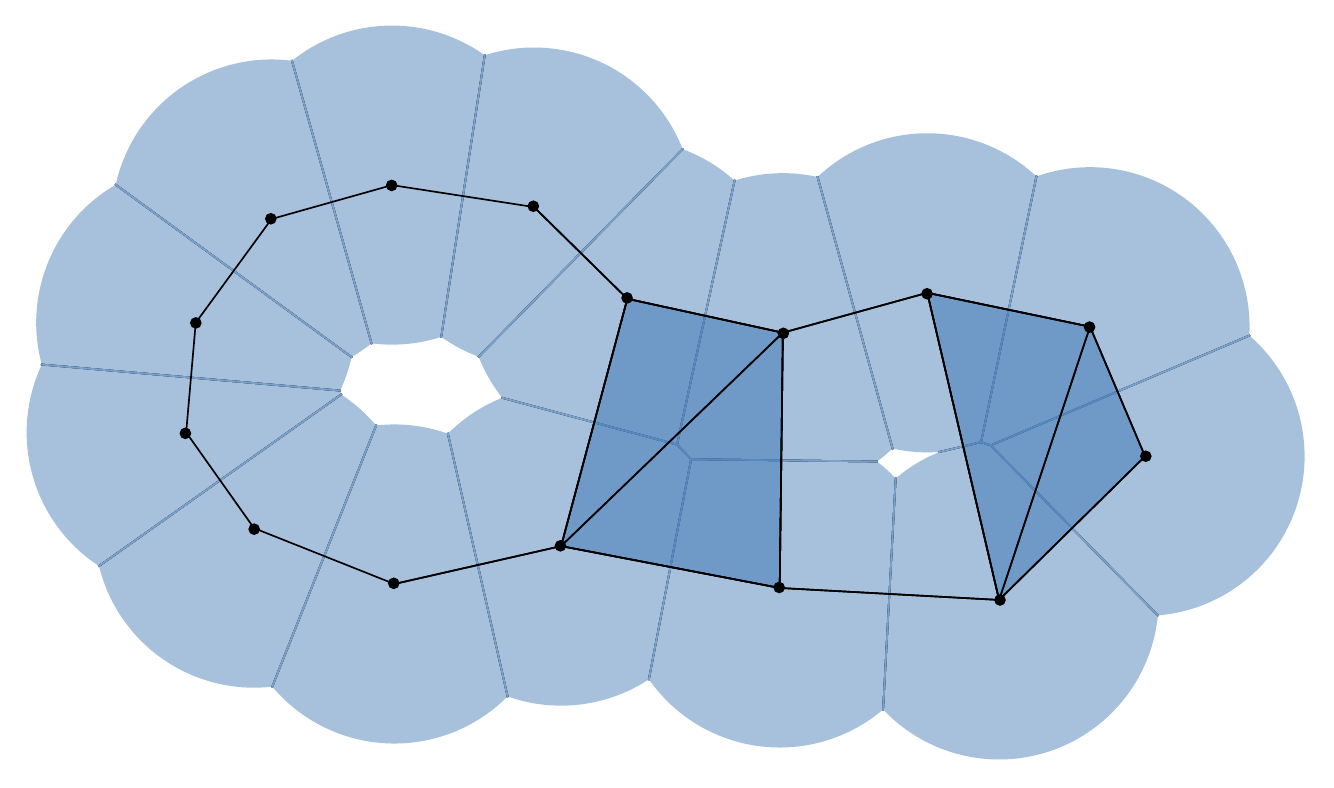}
\end{minipage}
\begingroup
\captionof{figure}{A cover by closed balls (left) and closed Voronoi balls (right) together with the corresponding \v{C}ech and Delaunay complex.}
\endgroup
\end{center}

These examples are typical, in that the topological spaces determined by data points
usually depend on one or more parameters, leading to filtrations of topological spaces and covers. 
Now functoriality ensures that the corresponding nerves form a filtration as well.
For example, if $X \subset \bbR^d$ is a finite set of points,
the {offset filtration} $O$ is the filtration of $\bbR^d$ with
$O_r = \cup_{x \in X} \CB{r}{x}$ for $r > 0$,
where $\CB{r}{x}$ is the closed ball about~$x$ of radius $r$.
The nerve of the cover $\mathscr{U}_r = ( \CB{r}{x} )_{x \in X}$ 
is the {\v{C}ech complex}, and as $r$ varies, 
these complexes form a filtration as well. 
In this case, the nerve theorem says that $O_r$
is homotopy equivalent to the nerve of $\mathscr{U}_r$. 

Going further, one wants a nerve theorem to provide homotopy equivalences that are somehow compatible with the inclusion maps in these two filtrations. 
This is necessary in particular if one is interested in 
persistent homology~\cite{eh-computational}, 
which is an algebraic invariant of filtrations that encodes the homology of each step of the filtration, as well as the maps in homology induced by each inclusion.
There are several ways in which the homotopy equivalences provided by a nerve theorem might be compatible with the inclusions in these two filtrations, 
as we will now explain. 

In order to prove that the persistent homology of the offset filtration is isomorphic to the persistent homology of the associated \v{C}ech complex filtration, 
it suffices to have isomorphisms
$H_n (O_r) \iso H_n (\Nrv(\mathscr{U}_r))$ such that all the squares of the following form commute:
\begin{equation}
\begin{tikzcd}
	H_n(O_r) \arrow[d] \arrow[r, "\iso"]
	& H_n( \Nrv (\mathscr{U}_r) ) \arrow[d] \\
	H_n(O_{r'}) \arrow[r, "\iso"']
	& H_n( \Nrv (\mathscr{U}_{r'}) )
\end{tikzcd}
\end{equation}
By \cref{intro-thm-one-map} below, such isomorphisms can be constructed from the induced maps of homotopy equivalences $\vert \Nrv(\mathscr{U}_r) \vert \to O_r$ between the nerves and the offsets such that all squares of the following form commute:
\begin{equation}
\label{offest-filtration-onearrow-functorial-nerve-intro}
\begin{tikzcd}
	O_r \arrow[d, hookrightarrow]
	& \vert \Nrv (\mathscr{U}_r) \vert \arrow[d, hookrightarrow] \arrow[l, "\simeq"'] \\
	O_{r'}
	& \vert \Nrv (\mathscr{U}_{r'}) \vert \arrow[l, "\simeq"]
\end{tikzcd}
\end{equation}
The construction of these compatible homotopy equivalences
relies on the fact that the cover elements of the offset filtration are convex and that the inclusions $O_r \hookrightarrow O_{r'}$ are affine linear.

For a more general filtration $(X_r,\mathscr{A}_r)$, with $X_r=\bigcup\mathscr{A}_r$, a similar strategy does not necessarily produce commuting diagrams as in \ref{offest-filtration-onearrow-functorial-nerve-intro}. However, if one is only interested in the filtration after applying homology or some other homotopy-invariant functor,
then it suffices to have homotopy equivalences $X_r \to \vert \Nrv(\mathscr{A}_r) \vert$
such that all squares of the following form commute \emph{up to homotopy:}
\begin{equation} \label{functorial-up-to-homotopy}
\begin{tikzcd}
X_{r} \arrow[r, "\simeq"] \arrow[d, hookrightarrow]
& \vert \Nrv (\mathscr{A}_r ) \vert \arrow[d, hookrightarrow] \\
X_{r'} \arrow[r, "\simeq"'] \arrow[ur, Rightarrow, "H"]
& \vert \Nrv (\mathscr{A}_{r'} ) \vert
\end{tikzcd}
\end{equation}
In the diagram, $H$ is a homotopy from the bottom route around the square to the top route.
Nerve theorems with this structure are often used in the study of persistent homology
(for references, see the end of this introduction).

However, in some homotopy-theoretic approaches to topological data analysis,
we need a nerve theorem that is compatible with the inclusions
$X_r \hookrightarrow X_{r'}$ on the nose, and not just up to homotopy.
In this paper, we will prove nerve theorems that provide strictly commuting diagrams,
at the cost of introducing an intermediary between the covered space and the nerve:
we obtain a filtration $Z_r$ and homotopy equivalences $Z_r \to X_r$ and $Z_r \to \vert\Nrv(\mathscr{A}_r)\vert$ such that
all the diagrams of the following form commute:
\begin{equation} \label{zig-zag}
\begin{tikzcd}
	X_r \arrow[d, hookrightarrow] & Z_r
	\arrow[r, "\simeq"] \arrow[d, hookrightarrow] \arrow[l, "\simeq"']
	& \vert \Nrv (\mathscr{A}_r) \vert \arrow[d, hookrightarrow] \\
	X_{r'} & Z_{r'}
	\arrow[r, "\simeq"'] \arrow[l, "\simeq"]
	& \vert \Nrv (\mathscr{A}_{r'}) \vert
\end{tikzcd}
\end{equation}

While one can avoid introducing intermediate objects in the special case of the offset filtration,
this is not possible in general, as we explain below.
Diagrams of the form \ref{zig-zag} appear classically in the study of homotopy categories \cite{MR0210125}. More recently, they appear in
Blumberg and Lesnick's work on the \emph{homotopy interleaving distance}
\cite{blumberg-lesnick},
a distance on diagrams of spaces that is universal among stable and homotopy-invariant distances.
The idea is to define an equivalence relation on filtered spaces such that
$F$ and $F'$ are related if they can be connected via an intermediate filtration, as above, with the horizontal arrows weak homotopy equivalences.
Then, filtered spaces $F_1$ and $F_2$ are homotopy interleaved if $F_1$ is related to some $F_1'$,
$F_2$ is related to some $F_2'$, and $F_1'$ and $F_2'$ are interleaved.
An important motivation for nerve theorems that provide diagrams of the form \ref{zig-zag}
is that they can be used in frameworks like the one of Blumberg--Lesnick.

In this paper, we prove a variety of nerve theorems 
that provide strictly commuting diagrams relating spaces and nerves. 
These nerve theorems are summarized in \cref{nerve-theorems-table}.
Before we introduce the contents of the paper in detail, 
we highlight some novelties in our treatment of the material. 
The blowup complex is often used as an intermediate object 
for proving nerve theorems (serving as the space $Z_r$ in Diagram \ref{zig-zag}). 
This space is closely related to the bar construction from abstract homotopy theory. 
We discuss this issue in detail, 
explaining why it is advantageous to state nerve theorems using the blowup complex 
rather than the bar construction, 
but that one can still use the bar construction for the proofs. 
We prove a nerve theorem for subsets of Euclidean space 
covered by closed, convex subsets by constructing a pair of maps between the nerve of the cover and the space that constitute a homotopy equivalence.
We extend this to a functorial nerve theorem using the blowup complex, and we also introduce a notion of pointed covers,
which allows us to prove a functorial nerve theorem 
for closed, convex covers that does not require an intermediate object at all.
We consider simplicial complexes covered by subcomplexes, 
and explain how one can use a bar construction in the category of posets 
to prove a functorial nerve theorem in this setting. 
Finally, after using standard model category arguments to prove 
more general nerve theorems, 
we give a series of examples that demonstrate 
that most of the assumptions in these theorems are necessary.

\begin{table*}[p]
\makebox[\textwidth][c]{
\begin{tabular}{|c|c|>{\centering}m{0.12\textwidth}|>{\centering}m{0.12\textwidth}|>{\centering}m{0.14\textwidth}|c|c|}
\hline
Reference & O/C & $X$ & $|\mathscr{A}|$ 
& $\mathscr{A}$ & Equivalence & Intermediate \\
\hhline{|=|=|=|=|=|=|=|}
Thm. \ref{thm:comp_conv_zigzag} & closed & $X \subset \mathbb{R}^d$ & finite 
& convex & hom. eq. & $\SmallBarCon(\mathscr{A})$ \\
\hline
Thm. \ref{one-arrow-functorial-nerve-thm} & closed & $X \subset \mathbb{R}^d$ & finite 
& pointed, convex & hom. eq. & none \\
\hhline{|=|=|=|=|=|=|=|}
Thm. \ref{functorial-simplicial-nerve} & closed & simplicial complex & none 
& good cover by subcomplexes & hom. eq. & $\SmallBarCon(\mathscr{A})$ \\
\hline
Thm. \ref{compact-semialgebraic} & closed & compact, semi-algebraic & finite 
& good, semi-algebraic & hom. eq. & $\SmallBarCon(\mathscr{A})$ \\
\hline
Thm. \ref{functorial-bjorner-nconnectivity} & closed & simplicial complex & loc. finite 
& $(k-t+1)$-good cover by subcomplexes & $k$-connected & $\SmallBarCon(\mathscr{A})$ \\
\hhline{|=|=|=|=|=|=|=|}
\multirow{6}{*}{Thm. \ref{abstract-nerve-thm}} 
& open & none & none & good, numerable & hom. eq. & $\SmallBarCon(\mathscr{A})$ \\\cline{2-7}
& open & none & none & weakly good & weak hom. eq. & $\SmallBarCon(\mathscr{A})$ \\\cline{2-7}
& open & none & none & CG, hom'gy good & hom'gy iso. & $\SmallBarCon(\mathscr{A})$ \\\cline{2-7}
& closed & CG & loc. finite, loc. finite-dim. & L-condition, good & hom. eq. & $\SmallBarCon(\mathscr{A})$ \\\cline{2-7}
& closed & CG & loc. finite, loc. finite-dim. & L-condition, weakly good & weak hom. eq. & $\SmallBarCon(\mathscr{A})$ \\\cline{2-7}
& closed & CG & loc. finite, loc. finite-dim. & L-condition, hom'gy good & hom'gy iso. & $\SmallBarCon(\mathscr{A})$ \\
\hline
\end{tabular}
}
\caption{A summary of the functorial nerve theorems in this paper,
for a cover $\mathscr{A}$ of a space $X$. Columns 2--5 summarize the assumptions: 
whether the cover elements are open or closed, assumptions on $X$, 
assumptions on the cardinality of $\mathscr{A}$, and additional assumptions on $\mathscr{A}$. 
Columns 6--7 summarize the conclusions: the type of equivalence established, 
and the intermediate object, if any. 
We use the abbreviations: 
homotopy equivalence (hom. eq.), 
homology isomorphism (hom'gy iso.), 
compactly generated (CG), 
homologically good (hom'gy good), 
Latching space condition (L-condition), 
locally finite (loc. finite), 
locally finite-dimensional (loc. finite-dim.).}
\label{nerve-theorems-table}
\end{table*}

\paragraph{Functorial Nerve Theorems}
In order to say precisely what we mean by a functorial nerve theorem,
we need to explain how the nerve can be viewed as a functor.
To this end, we will define the category of covered spaces.

To motivate our definition, we first briefly discuss a variant that is also common in the literature (see, e.g., \cite{barr}).
The objects in this category are of the form $(X,\mathscr{U})$, where~$X$ is a topological space and~$\mathscr{U}$ is an unindexed cover of $X$. A map between covered spaces $f\colon (X,\mathscr{U})\to (Y,\mathscr{V})$ is then given by a continuous map $f\colon X\to Y$ such that for any cover element $U\in \mathscr{U}$ there exists~$V\in\mathscr{V}$ with $f(U)\subseteq V$. Choosing such a cover element $V\in\mathscr{V}$ for every element $U\in \mathscr{U}$ determines a simplicial map $\Nrv(\mathscr{U})\to \Nrv(\mathscr{V})$ between the nerves. In general, different choices give different simplicial maps, but it will always be unique up to contiguity (see \cite[p. 67]{munkres} for a definition). In particular, it follows that any two choices determine, up to homotopy, the same map on the geometric~realization.

To avoid having to make choices, we work with indexed covers and record the choice of cover elements as above in a map between the indexing sets. This way, we circumvent the ambiguity of the induced map between the nerves up to homotopy.

\begin{defi}
Let $X$ and $Y$ be topological spaces,
$( U_i )_{i\in I}$ a cover of $X$, and $( V_{\ell} )_{\ell \in L}$ a cover of $Y$. A \emph{map of indexed covers} $\varphi\colon ( U_i )_{i\in I}\to ( V_{\ell} )_{\ell \in L}$ is specified formally by a map $\varphi \colon I \to L$ between the indexing sets, which we denote with the same symbol. We say that a continuous map $f \colon X \to Y$ is \emph{carried by} $\varphi$ if for all $i \in I$ we have $f(U_i) \subseteq V_{\varphi(i)}$.
\end{defi}

If $f$ is carried by $\varphi$ and $g$ is carried by $\psi$, then $g\circ f$ is carried by $\psi\circ \varphi$ if the compositions are defined. Hence, we get the following category.

\begin{defi} \label{category-of-covered-spaces}
The objects of the \emph{category of covered spaces} $\Cov$
are pairs of the form $(X, ( U_i )_{i \in I})$,
where $X$ is a topological space and $( U_i )_{i \in I}$ is a cover of $X$.
A \emph{morphism of covered spaces} $(f, \varphi) \colon (X, ( U_i )_{i \in I}) \to (Y, ( V_{\ell} )_{\ell \in L})$
consists of a continuous map $f \colon X \to Y$ and 
a map $\varphi\colon I \to L$ 
such that $f$ is carried by the corresponding map of indexed covers $\varphi\colon ( U_i )_{i\in I}\to ( V_{\ell} )_{\ell \in L}$.
\end{defi}

With this category in hand, we can define a functor $\Spc \colon \Cov \to \Top$ by forgetting the cover:
$\Spc$ takes a pair $(X, ( U_i )_{i \in I})$ to $X$.
By taking the geometric realization of the nerve of a cover, we obtain another such functor. We denote by $\SimpComplex$ the category of simplicial complexes.

\begin{defi}
Let $X$ be a topological space,
and let $\mathscr{U} = (U_i)_{i \in I}$ be a cover of $X$.
For any $J \subseteq I$, we write $U_J = \bigcap_{i \in J} \, U_i$.
The \emph{nerve} of $\mathscr{U}$ is the simplicial complex $\Nrv(\mathscr{U})$
with simplices
\[
	\{ J \subseteq I \mid |J| < \infty \text{ and } U_J \neq \emptyset \} \; .
\]
A morphism of covered spaces $(f, \varphi) \colon (X, \mathscr{U}) \to (Y, \mathscr{V})$ induces a simplicial map between the nerves of the covers $\varphi_*\colon \Nrv(\mathscr{U})\to \Nrv(\mathscr{V})$. Thus, the nerve can be seen to be a functor $\Nrv\colon\Cov\to\SimpComplex$. By postcomposing this with the geometric realization functor $|\cdot|\colon\SimpComplex\to\Top$, we get the functor $|\Nrv| \colon \Cov \to \Top$ that takes a pair $(X, \mathscr{U})$
to the geometric realization~$|\Nrv(\mathscr{U})|$.
\end{defi}
\begin{rmk}
There is a variant of this definition, where the vertex set is the set of cover elements, in contrast to our definition, where it is the indexing set. While these definitions yield different simplicial complexes in general, as the same subset can appear multiple times in the indexed cover, they are always homotopy equivalent. More precisely, if $\mathcal{U}=(U_i)_{i\in I}$ is an indexed cover and $U_j\subseteq U_l$, with $j\neq l$, are cover elements, then the inclusion $\Nrv(\mathcal{V})\hookrightarrow \Nrv(\mathcal{U})$ is a homotopy equivalence, where $\mathcal{V}=(U_i)_{i\in I\setminus \{j\}}$: The link $\op{lk}(j)=\{\sigma\in \Nrv(\mathcal{U})\mid j\notin\sigma,\ \sigma\cup\{j\}\in \Nrv(\mathcal{U}) \}$ of the vertex $j$ in $\Nrv(\mathcal{U})$ is a cone with apex $l$, i.e., for all $\sigma\in \op{lk}(j)$ we have $\sigma\cup\{l\}\in \Nrv(\mathcal{U})$. Therefore, there exists a collapse~$\Nrv(\mathcal{U})\searrow \Nrv(\mathcal{V})$ (see \cref{discrete_morse} for a definition).
\end{rmk}

Now that we can understand the covered space and the nerve as functors,
we can consider natural transformations that relate them.
In general, if $F_1$ and $F_2$ are functors from some category $\mathscr{C}$ to $\Top$,
and $\sigma \colon F_1 \Rightarrow F_2$ is a natural transformation,
one says that $\sigma$ is a pointwise homotopy equivalence if the component
$\sigma_C \colon F_1(C) \to F_2(C)$ is a homotopy equivalence for all objects $C$ of $\mathscr{C}$.
Similarly one can consider pointwise weak homotopy equivalences,
pointwise homology isomorphisms, et cetera.
This paper is about nerve theorems that relate the covered space and the nerve through
pointwise~equivalences.

Most of these nerve theorems make use of a standard construction that is called the \emph{blowup complex} by Zomorodian--Carlsson \cite{blowup}, but goes back at least to Segal \cite{segal}. It is a functor $\SmallBarCon \colon \Cov \to \Top$, along with natural transformations
$\rho_S \colon \SmallBarCon \Rightarrow \Spc$ and
$\rho_N \colon \SmallBarCon \Rightarrow |\Nrv|$. In particular, for any morphism of covered spaces $(f,\varphi)\colon (X, \mathscr{U}) \to (Y, \mathscr{V})$ there exists a commuting diagram of the following form:\vspace{-2.5pt}
\begin{equation}
\begin{tikzcd}
	X \arrow[d,swap,"f"] & \SmallBarCon(\mathscr{U})
	\arrow[r,"\rho_N"] \arrow[d] \arrow[l,swap,"\rho_S"]
	& \vert \Nrv (\mathscr{U}) \vert \arrow[d, "|\varphi_*|"] \\
	Y & \SmallBarCon(\mathscr{V})
	\arrow[r,swap,"\rho_N"] \arrow[l,"\rho_S"]
	& \vert \Nrv (\mathscr{V}) \vert
\end{tikzcd}\vspace{-2.5pt}
\end{equation}
We write $[n]$ for the set $\{0,\dots,n\}$. If $\mathscr{U} = (U_i)_{i\in[n]}$ is a finite cover of a space $X$,
then the blowup complex is\vspace{-2.5pt}
\[
	\SmallBarCon(\mathscr{U}) = \bigcup_{J \in \Nrv(\mathscr{U})} U_J \times \Delta^{J}
	\subseteq X\times \Delta^{n}  \; ,\vspace{-2.5pt}
\]
where $\Delta^{n}$ is the standard topological $n$-simplex
and $\Delta^{J}$ is a face of  $\Delta^{n}$ determined by the inclusion $J \subseteq [n]$.
The idea is that each piece of $X$ expands according to the number of cover elements that contain it.
See \cref{section:func_nerve} for the definition for arbitrary covers.

To begin, we give three functorial nerve theorems (Theorems \ref{intro-thm-convex}, \ref{intro-thm-one-map} and \ref{intro-thm-simplicial-complex}) whose proofs will be particularly attractive to students and newcomers to applied topology. This is because the arguments are relatively elementary and use techniques that are interesting in their own right. In \cref{section:closed-convex},
we prove the following functorial nerve theorem:

\begin{thmx}[\cref{thm:comp_conv_zigzag}] \label{intro-thm-convex}
    If $X \subset \bbR^d$, and $\mathscr{B} = ( C_i )_{i\in[n]}$ is a cover by closed convex subsets,
then the natural maps $\rho_S \colon \SmallBarCon(\mathscr{B}) \to X$ and
$\rho_N \colon \SmallBarCon(\mathscr{B}) \to |\Nrv(\mathscr{B})|$ are homotopy equivalences.
\end{thmx}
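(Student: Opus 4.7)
The plan is to establish both homotopy equivalences $\rho_S$ and $\rho_N$ simultaneously, by induction on $n$. The base case $n=0$ is immediate: $\SmallBarCon((C_0))$ is homeomorphic to $C_0=X$ with $\rho_S$ the identity, while $|\Nrv((C_0))|$ is a single point, and $\rho_N$ is a homotopy equivalence because $C_0$ is convex, hence contractible.

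For the inductive step, I would decompose each of the three spaces as a pushout along the last cover element. Writing $\mathscr{B}' = (C_i)_{i<n}$, $X' = \bigcup \mathscr{B}'$, and $\mathscr{C} = (C_i \cap C_n)_{i<n}$ for the induced closed convex cover of $X' \cap C_n$, one has
\begin{align*}
X &\cong X' \cup_{X' \cap C_n} C_n, \\
\SmallBarCon(\mathscr{B}) &\cong \SmallBarCon(\mathscr{B}') \cup_{\SmallBarCon(\mathscr{C})} T_n, \\
|\Nrv(\mathscr{B})| &\cong |\Nrv(\mathscr{B}')| \cup_{|\Nrv(\mathscr{C})|} \mathrm{Cone}_n|\Nrv(\mathscr{C})|,
\end{align*}
where $T_n = \bigcup_{n \in J} C_J \times \Delta^J = \rho_S^{-1}(C_n)$ collects the pieces of the blowup involving the new vertex, and $\mathrm{Cone}_n$ denotes the cone with apex the new vertex. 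Parametrizing a point of $\Delta^n$ as $t = (1-s)u + s\,e_n$ with $s \in [0,1]$ and $u \in \Delta^{n-1}$ identifies $T_n$ with the mapping cylinder of the projection $\SmallBarCon(\mathscr{C}) \to C_n$, $(x,u) \mapsto x$.

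Under these identifications, $\rho_S|_{T_n}$ is the standard mapping-cylinder retraction onto $C_n$, hence a homotopy equivalence, while $\rho_N|_{T_n}$ is a map between contractible spaces (both are cones) and is therefore also a homotopy equivalence. The inductive hypothesis applied to $\mathscr{B}'$ and to $\mathscr{C}$ provides homotopy equivalences at the remaining corners of the two pushout diagrams, and the gluing lemma for homotopy equivalences along closed cofibrations then yields the conclusion for the full cover.

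The main technical point is the verification of the cofibration hypotheses required by the gluing lemma. For the blowup and nerve pushouts the mapping-cylinder structure on $T_n$ and the cone structure on $\mathrm{Cone}_n|\Nrv(\mathscr{C})|$ provide the required closed Hurewicz cofibrations $\SmallBarCon(\mathscr{C}) \hookrightarrow T_n$ and $|\Nrv(\mathscr{C})| \hookrightarrow \mathrm{Cone}_n|\Nrv(\mathscr{C})|$. On the space side one must check that $X' \cap C_n \hookrightarrow C_n$ is a closed cofibration; here one invokes Borsuk's theorem on ANRs—closed convex subsets of $\bbR^d$ are ANRs, and a closed union of two ANRs whose intersection is an ANR is again an ANR—to conclude inductively that $X' \cap C_n$ is an ANR and that the pair $(C_n, X' \cap C_n)$ is an NDR pair, giving the required cofibration.
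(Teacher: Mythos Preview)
Your inductive argument is correct and constitutes a genuinely different proof from the one in the paper. The paper does not induct on the size of the cover at all. Instead, it first proves $\rho_N$ is a homotopy equivalence in one step by applying the homotopy invariance of the bar construction (\cref{prop:homotopy_lemma_top}) to the natural transformation $\mathscr{D}_{\mathscr{B}} \Rightarrow *$, using only that convex sets are contractible. For $\rho_S$, the paper constructs an explicit map $\Gamma \colon |\Sd \Nrv(\mathscr{B})| \to X$ by choosing a point in each nonempty intersection and extending affine linearly over the barycentric subdivision; it then builds a homotopy inverse $\Psi$ via a partition of unity subordinate to a slight open thickening of the cover, and finally observes that $\Gamma \circ \pi_{\Sd N}$ and $\pi_S$ are homotopic via a straight-line homotopy, so $\pi_S$ is a homotopy equivalence by 2-of-3.

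Your approach trades the explicit maps $\Gamma$ and $\Psi$ for the gluing lemma and an appeal to ANR theory to obtain the cofibration $X' \cap C_n \hookrightarrow C_n$. This is clean and conceptual, and in fact the paper eventually uses essentially the same cofibration input (closed convex pairs are NDR pairs, plus Lillig's union theorem) in \cref{subsection:applications-UNT} to rederive the result from the unified nerve theorem---so your ANR step is close in spirit to \cref{prop:cofib_comp_conv} and \cref{prop:lilli_union}. What the paper's direct argument buys is an explicit comparison map $\Gamma$ with geometric control over where simplices land, which is then reused to prove the one-arrow functorial version (\cref{one-arrow-functorial-nerve-thm}) for pointed covers; your pushout argument does not produce such a map. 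Conversely, your argument avoids partitions of unity and the auxiliary lemmas about closed barycentric stars, and it handles $\rho_S$ and $\rho_N$ symmetrically rather than deriving one from the other.
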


The proof uses partitions of unity,
and is similar to the strategy for open covers in Hatcher's textbook.
In \cref{section:closed-convex}, we also prove a functorial nerve theorem for closed convex covers
that does not require any intermediate object,
subject to an additional assumption on the morphisms of covered spaces.
Before we state this theorem,
we elaborate shortly on why such a functorial nerve theorem cannot exist in general.
The reason is simple: there are no natural transformations between $\Spc$ and $|\Nrv|$ in either direction.
Consider the covered spaces~$(*,(*))$, where~$*$ is the one-point space, and $(Y,(Y))$, where $Y\neq *$ is any space. For any point~$p\in Y$ the inclusion $\iota_p\colon *\hookrightarrow Y$ gives rise to a morphism of covered space~$(\iota_p, *\mapsto Y)$. If there existed a natural transformation $|\Nrv|\Rightarrow \Spc$, then this would already fix a single inclusion $\iota_q\colon *\hookrightarrow Y$ as part of such a morphism of covered spaces, implying that~$Y=\{q\}$ is a single point, yielding a contradiction. Similarly, consider any covered space $(Z,(U,V))$ with $p\in U\cap V$ any point. Consider the two morphisms of covered spaces $(\iota_p,*\mapsto U),(\iota_p,*\mapsto V)\colon (*,(*))\to (Z,(U,V))$. Then, these maps induce different simplicial maps on the nerves, implying that there exists no natural transformation~$\Spc\Rightarrow|\Nrv|$.

Thus, in order to obtain a functorial nerve theorem that does not need an intermediate object, the map of indexed covers needs to have strong combinatorial control on the continuous map. To this end, we introduce the following notions.

\begin{defi}
\label{defi:pointed_covered}
 A \emph{pointed cover} $\mathscr{U}_*=(\mathscr{U}=(U_i)_{i\in I}, (u_\sigma)_{\sigma\in \Nrv (\mathscr{U})})$ of a topological space~$X$ is a cover $\mathscr{U}$ of $X$ together with a point $u_\sigma\in U_\sigma$ for every $\sigma \in \Nrv (\mathscr{U})$.

 The \emph{category of pointed covered spaces} $\Cov_*$ has objects tuples of the form $(X,\mathscr{A}_*)$,
where $X$ is a topological space and $\mathscr{A}_*=(\mathscr{A},(a_\sigma)_{\sigma\in\Nrv{\mathscr{A}}})$ is a pointed cover of $X$.
A morphism $(f,\varphi)\colon (X, \mathscr{A}_*) \to (Y, \mathscr{B}_*)$ of pointed covered spaces is a morphism of covered spaces $(f,\varphi)\colon (X, \mathscr{A}) \to (Y, \mathscr{B})$ that respects the basepoints, i.e., such that for any $\sigma\in \Nrv (\mathscr{A})$ we have~$f(a_\sigma)=b_{\varphi_*(\sigma)}$.
\end{defi}
There is an obvious functor $\Cov_*\to\Cov$ that forgets the pointing, and hence we get, as for the category of covered space, the functors $\Spc \colon \Cov_* \to \Top$ and $|\Sd\Nrv| \colon \Cov_* \to \Top$,
where $\Sd\Nrv$ is the subdivision of the nerve.

Now, we will describe a functorial nerve theorem that does not require an intermediate object. The subcategory $\ClConv_*$ of $\Cov_*$ consists of subsets of $\mathbb{R}^d$ that are covered by finitely many closed convex sets. Further, we restrict to morphisms of pointed covered spaces whose underlying continuous maps are affine linear on each cover element.
Many covers of interest in topological data analysis are pointed.

\begin{ex}
 Let $\{x_0,\dots,x_n\}\subseteq \mathbb{R}^d$ be a finite set of points. Then, we can point the cover $\mathscr{U}_r = (\CB{r}{x_i})_{i\in[n]}$ of the union of closed balls $O_r=\bigcup_{i=0}^n\CB{r}{x_i}$ in the following way: For each non-empty subset $\sigma\subseteq [n]$ there exists a smallest real number $r_\sigma$ such that the intersection $(\mathscr{U}_{r_\sigma})_\sigma$ is non-empty. We define the point $p_\sigma$ to be the unique point in this intersection. This gives the pointed cover $(\mathscr{U}_r,(p_\sigma)_{\sigma\in \Nrv(\mathscr{U}_r)})$ of $O_r$ for each $r\in\mathbb{R}_{\geq 0}$. With this at hand, we see that the offset filtration is a functor $\mathbb{R}_{\geq 0}\to \ClConv_*$.
\end{ex}

\begin{thmx}[\cref{one-arrow-functorial-nerve-thm}]  \label{intro-thm-one-map}
 For every pointed covered space $(X,\mathscr{A}_*)\in \ClConv_*$ there exists a homotopy equivalence \[\Gamma\colon |\Sd\Nrv(\mathscr{A})|\to X\]
 that is natural with respect to the morphisms in $\ClConv_*$.
\end{thmx}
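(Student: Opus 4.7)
The plan is to define $\Gamma$ from the basepoint data, check naturality directly, and deduce the homotopy equivalence by comparing $\Gamma$ with the nerve-theorem map provided by \cref{intro-thm-convex} through an acyclic-carrier argument.

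On each vertex $\sigma$ of $\Sd\Nrv(\mathscr{A})$, I set $\Gamma(\sigma) = a_\sigma$ and extend affinely over each closed simplex $[\sigma_0 \subsetneq \cdots \subsetneq \sigma_k]$ by $\sum_i t_i\,[\sigma_i] \mapsto \sum_i t_i\, a_{\sigma_i}$. Since $a_{\sigma_i}\in A_{\sigma_i}\subseteq A_{\sigma_0}$ and $A_{\sigma_0}$ is convex, this convex combination lies in $A_{\sigma_0}\subseteq X$, so $\Gamma$ is continuous and well-defined. Naturality in $\ClConv_*$ is immediate: for a morphism $(f,\varphi)\colon(X,\mathscr{A}_*)\to(Y,\mathscr{B}_*)$, the basepoint condition gives $f(a_\sigma)=b_{\varphi_*(\sigma)}$ on vertices, and the hypothesis that $f$ is affine-linear on each cover element (in particular on $A_{\sigma_0}$, where $\Gamma_{\mathscr{A}}$ takes its values) implies that both $f\circ\Gamma_{\mathscr{A}}$ and $\Gamma_{\mathscr{B}}\circ|\Sd\varphi_*|$ are affine on each simplex of the subdivision; two affine maps agreeing on vertices are equal.

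For the homotopy equivalence I introduce the carrier $C$ on $|\Sd\Nrv(\mathscr{A})|$ given by $C([\sigma_0,\dots,\sigma_k]) = A_{\sigma_0}$. It is monotone under face inclusion (a face chain has minimum element at least $\sigma_0$) and takes closed-convex, hence contractible, values, and $\Gamma$ is carried by $C$ by construction. Invoking \cref{intro-thm-convex}, the natural maps $\rho_S\colon\SmallBarCon(\mathscr{A})\to X$ and $\rho_N\colon\SmallBarCon(\mathscr{A})\to|\Nrv(\mathscr{A})|$ are homotopy equivalences; composing a homotopy inverse of $\rho_N$ with $\rho_S$ and with the canonical homeomorphism $h\colon|\Sd\Nrv(\mathscr{A})|\xrightarrow{\cong}|\Nrv(\mathscr{A})|$ yields a homotopy equivalence $\Gamma'\colon|\Sd\Nrv(\mathscr{A})|\to X$. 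The key geometric observation is that for $t$ in the interior of $[\sigma_0,\dots,\sigma_k]$, the image $h(t)$ sits in the interior of the simplex spanned by the barycenters of $\Delta^{\sigma_0},\dots,\Delta^{\sigma_k}$, which lies inside $|\Delta^{\sigma_k}|$ with support exactly $\sigma_k$; so if the homotopy inverse restricts over this stratum to the fiber $A_{\sigma_k}$ of $\rho_N$, then $\Gamma'$ takes values in $A_{\sigma_k}\subseteq A_{\sigma_0}$, i.e., $\Gamma'$ is carried by $C$ as well.

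The hard part is producing such a stratified homotopy inverse of $\rho_N$: a generic homotopy inverse will not be carried by $C$. I would build it by induction over the skeleta of $|\Sd\Nrv(\mathscr{A})|$, using at each stage the convexity of the fibers $A_J$ of $\rho_N$ to extend the partial construction consistently while keeping the image of each new subsimplex inside the prescribed stratum. With $\Gamma$ and $\Gamma'$ both carried by the contractible, face-monotone carrier $C$, the acyclic-carrier theorem for continuous maps into topological spaces, proved by the classical inductive extension over simplices using contractibility of $A_{\sigma_0}$, furnishes a homotopy $\Gamma\simeq\Gamma'$; since $\Gamma'$ is a homotopy equivalence, so is $\Gamma$.
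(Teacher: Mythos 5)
Your definition of $\Gamma$ and the naturality argument are exactly right, and your observation that it suffices to exhibit \emph{some} homotopy equivalence $\Gamma'$ carried by the contractible carrier $C([\sigma_0,\dots,\sigma_k])=A_{\sigma_0}$ is sound (the inductive extension over skeleta you invoke is precisely \cref{prop:hequ_bstar}). The gap is in the step you yourself flag as hard: the ``stratified homotopy inverse'' of $\rho_N$ you need does not exist in general. If $g\colon|\Nrv(\mathscr{A})|\to\SmallBarCon(\mathscr{A})$ is continuous and sends each stratum lying over $\op{int}|\sigma|$ into $A_{\sigma}\times(\cdot)$, then at a vertex $v=|\{i\}|$ continuity forces the $X$-coordinate of $g(v)$ into $\bigcap_{j}A_{\{i,j\}}$, the intersection taken over all edges $\{i,j\}$ of the nerve, because each set $A_{\{i,j\}}\times|\{i,j\}|$ is closed in $\SmallBarCon(\mathscr{A})$ and $v$ lies in the closure of the adjacent strata. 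This intersection can be empty: take $X=[0,2]\subset\bbR$ with $A_0=[0,1]$, $A_1=[0.5,1.5]$, $A_2=[1.4,2]$, so that $A_{\{0,1\}}=[0.5,1]$ and $A_{\{1,2\}}=[1.4,1.5]$ are disjoint. No skeletal induction can repair this, since the obstruction already appears on the $0$-skeleton; so the proposed construction of $\Gamma'$ fails, and with it the comparison $\Gamma\simeq\Gamma'$.

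There are two ways to close the gap. The paper's route avoids inverting $\rho_N$ altogether: it builds an explicit candidate inverse $\Psi\colon X\to|\Sd\Nrv(\mathscr{A})|$ from a partition of unity subordinate to a slight open thickening of the cover (chosen so the nerve is unchanged), checks that $\Psi$ and $\Gamma$ are both carried by the identity index map between the cover $(A_i)$ and the barycentric-star cover, and then concludes $\Gamma\circ\Psi\simeq\op{id}_X$ by a straight-line homotopy inside the convex $A_i$ and $\Psi\circ\Gamma\simeq\op{id}$ by the carrier argument (\cref{thm:nerve_comp_conv}); the pointed version then only requires the naturality check you already gave. Alternatively, staying closer to your idea of leveraging \cref{intro-thm-convex}: rather than seeking a homotopy inverse of $\rho_N$ with special properties, compare the two maps \emph{out of} the intermediate object. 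Writing $\pi_S$ and $\pi_{\Sd N}$ for the maps from the bar construction, a point of $\BarCon(\mathscr{D}_{\mathscr{A}})$ over a chain $J_n\subset\cdots\subset J_0$ has $\pi_S$-image in $A_{J_0}\subseteq A_{J_n}$ and $\Gamma\circ\pi_{\Sd N}$-image in $A_{J_n}$, so $\Gamma\circ\pi_{\Sd N}\simeq\pi_S$ by a straight-line homotopy in the convex set $A_{J_n}$; since $\pi_S$ and $\pi_{\Sd N}$ are homotopy equivalences, the 2-of-3 property makes $\Gamma$ one. This is essentially the computation in the proof of \cref{thm:comp_conv_zigzag}, read in the opposite direction.
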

\goodbreak

One says that a cover is \emph{good} if all non-empty finite intersections of cover elements are contractible. As we have already mentioned, nerve theorems usually assume that the covers involved are good. 
In \cref{section:triangulated-covers},
we again use the blowup complex to prove a functorial nerve theorem for simplicial complexes:

\begin{thmx}[\cref{functorial-simplicial-nerve}] \label{intro-thm-simplicial-complex}
Let $K$ be a simplicial complex and let $\mathscr{A} = (K_i \subseteq K)_{i \in I}$
be a good cover of $K$ by subcomplexes.
The natural maps $\rho_S \colon \SmallBarCon(|\mathscr{A}|) \to |K|$
and $\rho_N \colon \SmallBarCon(|\mathscr{A}|) \to |\Nrv(\mathscr{A})|$ are homotopy equivalences.
\end{thmx}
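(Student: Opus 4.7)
The strategy is to translate the problem into the combinatorics of face posets and then apply Quillen's Theorem~A twice. Let $\Po(K)$ and $\Po(\Nrv(\mathscr{A}))$ denote the face posets of $K$ and of the nerve, whose order complexes are canonically the barycentric subdivisions of $|K|$ and $|\Nrv(\mathscr{A})|$. The key observation is that the blowup complex $\SmallBarCon(|\mathscr{A}|)$ is a regular CW complex whose face poset is exactly the \emph{poset bar construction}
\[
\PoBar(\mathscr{A}) = \{(\sigma, J) \in \Po(K) \times \Po(\Nrv(\mathscr{A})) : \sigma \in K_J\},
\]
with the componentwise order: the open cells of $\SmallBarCon(|\mathscr{A}|)$ are the products of open simplices $\mathring{\sigma} \times \mathring{\Delta}^J$ with $\sigma \in K_J$, and the face relation is componentwise inclusion. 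Hence the order complex of $\PoBar(\mathscr{A})$ is the barycentric subdivision of $\SmallBarCon(|\mathscr{A}|)$, and under this identification the natural maps $\rho_S$ and $\rho_N$ are realized by the two coordinate projections $p_S \colon \PoBar(\mathscr{A}) \to \Po(K)$ and $p_N \colon \PoBar(\mathscr{A}) \to \Po(\Nrv(\mathscr{A}))$. Since all spaces involved are CW, by Whitehead's theorem it suffices to show that $p_S$ and $p_N$ are weak equivalences on order complexes.

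For $p_S$, set $I(\sigma) = \{i \in I : \sigma \in K_i\}$ and write $\mathcal{P}(\sigma)$ for the poset of finite non-empty subsets of $I(\sigma)$, the face poset of a (possibly infinite) simplex and hence with contractible order complex. I would apply Quillen's Theorem~A with upper fibers:
\[
p_S^{-1}(\Po(K)_{\ge \sigma}) = \{(\sigma', J) : \sigma \subseteq \sigma', \sigma' \in K_J\}.
\]
Because $\sigma' \in K_J$ with $\sigma \subseteq \sigma'$ forces $J \subseteq I(\sigma)$, projection to the second coordinate defines a surjective poset map onto $\mathcal{P}(\sigma)$. Its upper fiber over $J$ has $(\sigma, J)$ as minimum element, hence is contractible, so a first application of Quillen's Theorem~A shows this projection is a weak equivalence, making $p_S^{-1}(\Po(K)_{\ge \sigma})$ contractible. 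A second application of Quillen's Theorem~A then gives that $p_S$ is a weak equivalence. Note that goodness is not used at this step.

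For $p_N$, fix $J \in \Nrv(\mathscr{A})$ and consider the upper fiber
\[
p_N^{-1}(\Po(\Nrv(\mathscr{A}))_{\ge J}) = \{(\sigma, J') : J \subseteq J', \sigma \in K_{J'}\}.
\]
Since $J' \supseteq J$ implies $K_{J'} \subseteq K_J$, the first coordinate defines a poset map to $\Po(K_J)$. Its upper fiber over $\sigma \in K_J$ again has $(\sigma, J)$ as a minimum and is therefore contractible, so Quillen's Theorem~A shows the projection is a weak equivalence. The order complex of $\Po(K_J)$ is the barycentric subdivision of $|K_J|$, which is contractible \emph{precisely by the goodness hypothesis}---this is the single place the assumption enters. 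A final application of Quillen's Theorem~A concludes that $p_N$ is a weak equivalence.

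The main obstacle is the careful setup of the first paragraph: identifying $\SmallBarCon(|\mathscr{A}|)$ with the realization of $\PoBar(\mathscr{A})$, and verifying that this identification together with the poset maps $p_S, p_N$ is natural in morphisms of covered spaces, so that the resulting homotopy equivalences assemble into natural transformations as required by the functorial statement. Once this bookkeeping is in place, the proof reduces to a clean repeated application of Quillen's Theorem~A, with a single appeal to goodness.
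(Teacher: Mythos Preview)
Your approach is correct and takes a genuinely different route from the paper. The key divergence is the choice of intermediate poset. The paper orders the set $\{(\sigma,J):\sigma\in K_J\}$ by the \emph{mixed} relation $(\sigma,J)\le(\sigma',J')\iff \sigma\subseteq\sigma'$ and $J\supseteq J'$, which is natural from the bar-construction viewpoint but is \emph{not} the face poset of the blowup. Consequently the paper must single out a proper subcomplex $\Tri(\mathscr{A})\subset\Flag(\PoBar(\mathscr{A}))$ that triangulates $\SmallBarCon(|\mathscr{A}|)$ and then run a discrete Morse theory argument to show that this inclusion is a homotopy equivalence. You sidestep all of this by taking the componentwise order on the same underlying set, which \emph{is} the face poset of the evident regular CW structure on the blowup, so its order complex is literally the barycentric subdivision and the projections $p_S,p_N$ are the induced cellular maps. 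Your nested applications of Quillen's Theorem~A are correct, and the argument for $p_S$ gracefully handles covers that are not locally finite. What the paper's longer route buys is an explicit identification of $\PoBar$ as a bar construction in the category of posets, which fits its broader narrative about homotopy colimits.

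Two minor remarks. First, Quillen's Theorem~A already delivers genuine homotopy equivalences on order complexes, so the appeal to Whitehead is unnecessary. Second, your closing worry about naturality is slightly misplaced: the theorem asserts only that the already-natural maps $\rho_S,\rho_N$ are componentwise homotopy equivalences, so the barycentric-subdivision identifications need only make the relevant square commute for each fixed $(K,\mathscr{A})$; they need not themselves be natural in morphisms of covered spaces. The paper makes exactly this point when passing between the bar construction and the blowup complex.
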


The proof is related to work of Bj{\"o}rner \cite{bjoerner,MR607041},
and uses elementary methods from combinatorial homotopy theory
for constructing homotopy equivalences between simplicial complexes,
together with discrete Morse theory.
Combining this result with a well-known
theorem on triangulations of semi-algebraic sets (\cref{triangulation}),
we obtain a nerve theorem for compact semi-algebraic sets
that are covered by finitely many closed, semi-algebraic subspaces.

Finally we use techniques from abstract homotopy theory to prove the following omnibus functorial nerve theorem.
In particular, this result implies
Theorems \ref{intro-thm-convex} and \ref{intro-thm-simplicial-complex}.
In parts 1(b) and 2(b) of the following \cref{intro-thm-unified},
we restrict attention to compactly-generated spaces.
This is a standard hypothesis in algebraic topology,
as these spaces form a ``convenient'' subcategory of topological spaces
that is suitable for developing the machinery of homotopy theory.
In part 1(b), the intersection ${A}_{T}$
and the \emph{latching space}
$L(T) = \bigcup_{T \subsetneq J} {A}_{J} \subseteq {A}_{T}$
are assumed to satisfy the homotopy extension property;
for example, CW-pairs satisfy the homotopy extension property (see \cref{rmk:cw_hep}).
These assumptions on the latching spaces together with the assumption that the cover is locally finite dimensional allow for inductive arguments
analogous to arguments that employ induction over the skeleton of a CW-complex.
At the beginning of \cref{section:unified-nerve-thm}
we introduce in detail all of the notions used in the statement of the following theorem.

\begin{thmx}[Unified Nerve \cref{abstract-nerve-thm}]
\label{intro-thm-unified}
Let $X$ be a topological space and let $\mathscr{A} = (A_i)_{i \in I}$ be a cover of $X$.
\begin{enumerate}
	\item Consider the natural map $\rho_{S} \colon \SmallBarCon(\mathscr{A}) \to X$.
	\begin{enumerate}[(a)]
		\item If $\mathscr{A}$ is an open cover,
		then $\rho_{S}$ is a weak homotopy equivalence.
		If furthermore $X$ is a paracompact Hausdorff space, or, more generally, if $\mathscr{A}$ is numerable, then $\rho_{S}$ is a homotopy equivalence.
		\item Assume that $X$ is compactly generated
		and that $\mathscr{A}$ is a closed cover
		that is locally finite and locally finite dimensional.
		If for any $T \in \Nrv(\mathscr{A})$ the latching space $L(T) \subseteq {A}_{T}$
		is a closed subset and the pair $({A}_{T},L(T))$ satisfies the homotopy extension property,
		then $\rho_{S}$ is a homotopy equivalence.
	\end{enumerate}
	\item Consider the natural map
	    $\rho_{N} \colon \SmallBarCon(\mathscr{A}) \to | \Nrv(\mathscr{A}) |$.
	\begin{enumerate}[(a)]
	    \item If $\mathscr{A}$ is (weakly) good, then $\rho_{N}$
		is a (weak) homotopy equivalence.
		\item If for all $J \in \Nrv({\mathscr{A}})$ the space ${A}_{J}$ is compactly generated
		and $\mathscr{A}$ is homologically good with respect to a coefficient ring $R$,
		then $\rho_{N}$ is an $R$-homology isomorphism.
	\end{enumerate}
\end{enumerate}
\end{thmx}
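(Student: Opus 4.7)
The plan is to recognise $\SmallBarCon(\mathscr{A})$ as a model for the homotopy colimit of the diagram $D \colon \Nrv(\mathscr{A})^{\opp} \to \Top$ sending $J \mapsto A_J$, with structure maps the natural inclusions $A_J \hookrightarrow A_{J'}$ for $J' \subseteq J$. Under this identification $X$ is the ordinary colimit of $D$, while $|\Nrv(\mathscr{A})|$ is the homotopy colimit of the constant diagram at a point, and the maps $\rho_S$, $\rho_N$ are the canonical comparisons. Every part of the theorem then reduces to two standard principles from homotopy theory: (i) a pointwise (weak) equivalence between Reedy-cofibrant diagrams induces a (weak) equivalence on homotopy colimits, and (ii) under suitable cofibrancy the canonical map from the homotopy colimit to the colimit is itself a (weak) equivalence.

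I would handle part 2 first, since it is the more formal. Goodness of $\mathscr{A}$ says precisely that the natural transformation from $D$ to the constant diagram at a point is a pointwise (weak) equivalence; since the bar-complex expression $\SmallBarCon(\mathscr{A}) = \bigcup_J A_J \times \Delta^J$ is Reedy cofibrant by construction, principle (i) yields part~2(a). For 2(b), one replaces principle (i) by the bar / homotopy-colimit spectral sequence associated to $\SmallBarCon(\mathscr{A})$: its $E^2$-page involves $H_*(A_J;R)$, and homological goodness makes it collapse onto $H_*(|\Nrv(\mathscr{A})|;R)$, yielding the claimed $R$-homology isomorphism.

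For part 1, principle (ii) is the relevant tool. For 1(a), for open covers one uses a partition of unity $(\psi_i)$ subordinate to $\mathscr{A}$ to build a section $s \colon X \to \SmallBarCon(\mathscr{A})$ of $\rho_S$, and a straight-line homotopy in the simplex coordinates shows $s \circ \rho_S \simeq \mathrm{id}$; when $X$ is paracompact the partition of unity exists globally and gives a genuine homotopy equivalence, while in general one falls back to weak contractibility of fibres on compact subsets to obtain a weak equivalence. For 1(b), where the cover is closed, I would proceed by induction along the filtration of $\SmallBarCon(\mathscr{A})$ by simplex dimension, which is pointwise finite by local finiteness and local finite-dimensionality. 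At the inductive step, the latching-space condition together with the homotopy extension property is exactly what turns the pushout attaching the $T$-cells into a closed cofibration, and compact generation ensures that the equivalence passes through the colimit.

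The most delicate step will be part 1(b): one must match the simplex-dimension filtration of $\SmallBarCon(\mathscr{A})$ with a compatible filtration of $X$ and verify at each stage that the latching-space data produces closed cofibrations on both sides, so that the equivalence lifts through the colimit. The compact generation, local finiteness, and local finite-dimensionality hypotheses are precisely what is needed to make this passage to the colimit behave correctly, and carefully bookkeeping these cofibrations in both diagrams is where the real work lies.
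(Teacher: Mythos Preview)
Your overall framework—identifying $\SmallBarCon(\mathscr{A})$ with the bar model of $\op{hocolim} D$, $X$ with $\op{colim} D$, and $|\Nrv(\mathscr{A})|$ with $\op{hocolim}(*)$—matches the paper exactly, and parts 2(a) and 1(b) are essentially the paper's arguments. For 1(b) the paper packages your ``induction along the simplex-dimension filtration'' as a Reedy model structure argument: the locally-finite-dimensional hypothesis makes $\np{\mathscr{A}}$ an upwards-directed Reedy category, the latching-space HEP conditions say $\mathscr{D}_{\mathscr{A}}$ is Reedy (hence projective) cofibrant in the Hurewicz model structure on $\CGSpc$, and then $\AbBarCon(\mathscr{D}_{\mathscr{A}})\to\op{colim}\mathscr{D}_{\mathscr{A}}\cong X$ is a homotopy equivalence because the bar construction computes the homotopy colimit. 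There is no need to build a matching filtration of $X$ itself; the Reedy machinery absorbs that bookkeeping.

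Two points deserve comment. First, your sketch of the weak-equivalence half of 1(a) is not right: for a non-paracompact $X$ there need be no partition of unity at all, and ``weak contractibility of fibres on compact subsets'' is too vague to carry the argument. The paper instead invokes a local-to-global criterion (if $f^{-1}(A_\sigma)\to A_\sigma$ is a weak equivalence for every $\sigma\in\Nrv(\mathscr{A})$, then $f$ is) and checks that each $\rho_S^{-1}(A_\sigma)$ deformation retracts onto a copy of $A_\sigma$; this uses only that the cover is open, with no partition of unity.

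Second, for 2(b) you propose the bar spectral sequence, which is a legitimate route (and the paper remarks that it works for CW covers by subcomplexes), but the paper itself takes a different path: it works in the simplicial model category $\sRMod$, transports the problem along the Quillen adjunctions $(|{-}|,\Sing)$ and $(\freeR,U)$, and then applies the homotopical property of the bar construction (your principle (i)) in $\sRMod$ together with several 2-of-3 arguments. Your spectral-sequence approach is arguably more direct; the paper's approach has the virtue of reusing the same ``bar is homotopical for pointwise-cofibrant diagrams'' lemma uniformly across all parts of the theorem.
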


We now summarize the ingredients that go into the proof.
First of all, the blowup complex is closely related to another standard construction,
called the bar construction.
While the blowup complex has a natural map to the nerve of a cover,
the bar construction has instead a natural map to the subdivision of the nerve,
which is why we use the blowup complex in the statements of the theorems.
However, the bar construction is in some ways easier to work with;
in \cref{section:func_nerve} we define the bar construction and explain 
how we can work with either the blowup complex or the bar construction, 
whichever is more~convenient.

The statement in 1(a) about weak homotopy equivalences follows from work of Dugger--Isaksen
\cite{dugger-isaksen}; we give a short proof in \cref{appendix-open-covers}.
The statement in 1(a) about homotopy equivalences is proved in Hatcher's textbook
\cite[Proposition 4G.2]{hatcher}.
Statement 1(b) follows from a standard argument using Reedy model structures,
which is similar to the proof of \cite[Corollary 14.17]{dugger-hocolim}, for example.
Both parts of 2(a) follow from the fact that the bar construction is homotopical, 
both for homotopy equivalences and weak homotopy equivalences.
In the case of weak homotopy equivalences, this again uses Dugger--Isaksen
\cite{dugger-isaksen}; see \cref{prop:weak_homotopy_lemma_top}.
We also give a short proof of this in \cref{appendix-open-covers}.
Finally, 2(b) is proved using the bar construction in the setting of simplicial $R$-modules.

In summary, the proof of \cref{intro-thm-unified} is straightforward,
given some powerful tools for studying diagrams of spaces from abstract homotopy theory.
In \cref{section:unified-nerve-thm} we provide an introduction to these tools.
We hope this will be useful to interested members of the applied topology community.

\paragraph{The Literature on Nerve Theorems}
We now summarize the literature on the nerve theorem, with a particular focus on results that address functoriality.

The original work of Alexandroff \cite{MR1512423} on nerves,
as well as the early nerve theorems of Leray \cite[Théorème~12]{MR15786} \cite{MR157376} and Borsuk \cite[Corollary 3]{Borsuk1948},
considered closed covers, motivated in part by covers of polytopes by simplices.
Open covers were considered by Weil \cite[Section 5]{weil},
McCord \cite{mccord}, and Segal \cite{segal}.
There is renewed interest in the case of closed covers in applied topology,
motivated by geometric constructions such as alpha shapes
\cite{edelsbrunner-kirkpatrick-seidel}.

A common way to relate the nerve of an open cover $\mathscr{U}$ with the covered space $X$
is by a partition of unity subordinate to the cover.
Such a partition of unity defines a map from $X$ to the nerve of $\mathscr{U}$
in a straightforward way, which is a homotopy equivalence if $\mathscr{U}$ is good.
This idea appears in the work of Weil and Segal,
and the textbook proofs of the nerve theorem by Hatcher \cite[Corollary 4G.3]{hatcher} and Kozlov \cite[Theorem 15.21]{kozlov}.
Moreover, up to homotopy this map is independent of the choice of partition of unity (see \cite[Ch. X.11]{eil_steen} for a discussion in a slightly different setting),
and this map commutes up to homotopy with the maps on spaces and nerves
induced by a morphism of covered spaces.
This was observed by Chazal--Oudot \cite{chazal-oudot}
(for certain inclusions of covered spaces)
and by Bauer--Edelsbrunner--Jab{\l}o{\'n}ski--Mrozek \cite{bauer-edelsbrunner-jablonski-mrozek}
(for general morphisms);
for similar results, see 
Lim--M{\'e}moli--Okutan \cite[Theorem 6]{lim-memoli-okutan} %
and Virk \cite[Lemma 5.1]{virk21}.%

In the case of open covers of paracompact Hausdorff spaces,
Botnan--Spreemann \cite{bs-approximating}, following the approach that goes back at least to Segal,
observe that the blowup complex provides a zigzag of natural transformations relating
covered spaces and nerves.

Ferry--Mischaikow--Nanda \cite{ferry-mischaikow-nanda}
consider covers by open and closed balls in Euclidean space;
they also use the blowup complex as an intermediate object relating spaces and nerves,
but use the Vietoris--Smale theorem on proper maps with contractible fibers
to obtain a homotopy equivalence from nerve to space with control over the image of each simplex.

Bendich--Cohen-Steiner--Edelsbrunner--Harer--Mo\-ro\-zov \cite{bcehm-inferring}
give a nerve theorem for certain closed convex covers in Euclidean space;
they define a map from the subdivision of the nerve of $\mathscr{U}$ to the space by
choosing a point in each non-empty intersection~$\mathscr{U}_J$,
mapping the vertex $J$ to this point, and extending by piecewise linear interpolation.
They show this map commutes up to homotopy with maps induced by certain morphisms of covered spaces.

The references in the previous four paragraphs also give examples of applications in which functoriality of the nerve theorem is important. For more, see, e.g., work on approximate nerve theorems \cite{appr_nerve,gen_pers_nerve} and a comparison of persistent singular and \v{C}ech homology \cite{comparison-singular-cech}.

Borel--Serre \cite[Theorem 8.2.1]{MR387495} prove a nerve theorem for locally finite and closed covers whose nerve is finite dimensional and such that all finite non-empty intersections of cover elements are absolute retracts for metric spaces. Using similar techniques, Nag\'{o}rko \cite{nagorko} proves a nerve theorem for locally finite, locally finite dimensional,
star-countable closed covers of normal spaces
such that all non-empty intersections of cover elements are absolute extensors for metric spaces.

Bj\"{o}rner \cite{bjoerner} gives a proof of an $n$-connectivity version of the nerve theorem,
which we discuss in \cref{bjoerners-nerve-theorem}. 
Given a good cover of a finite simplicial complex by subcomplexes,
Barmak \cite{barmak} proves a related result, 
showing that the simplicial complex and the nerve
have the same simple homotopy type.

\section{Functorial Nerve Theorems via Homotopy Colimits}
\label{section:func_nerve}

Theorems \ref{intro-thm-convex}, \ref{intro-thm-simplicial-complex}, and \ref{intro-thm-unified}
use the \emph{blowup complex} $\SmallBarCon(\mathscr{U})$ of a cover $\mathscr{U}$
as an intermediate object to relate the nerve of $\mathscr{U}$ with the covered space.
In this section, we define the blowup complex and its natural maps
to the covered space and the nerve.
The construction is not difficult, but there is an important point here:
the blowup complex is closely related to the \emph{bar construction},
and because of this, properties of the bar construction are used in many proofs of the nerve theorem,
including Theorems \ref{intro-thm-convex} and \ref{intro-thm-unified}.

The bar construction is a standard model for the homotopy colimit:
like the colimit, the homotopy colimit can be defined via a universal property,
but since this universal property is phrased in terms of derived categories,
it takes some work to define it precisely.
A full discussion of the homotopy colimit is beyond the scope of this paper
(see \cite{dugger-hocolim} for a nice introduction to the topic,
or \cite[Part I]{cat_hom} for a more abstract approach).
However, in order to explain the properties of the bar construction that we will use,
we will at least describe the problem that the homotopy colimit addresses.
So, in this section we will introduce a basic problem with the colimit of a diagram of topological spaces,
give an idea of how the homotopy colimit addresses this problem,
define the bar construction and the blowup complex,
and explain how the properties of the bar construction can be used to prove functorial nerve theorems
using the blowup complex.

First we establish some notation and recall some facts related to the barycentric subdivision of a simplicial complex.
Write $\Po$ for the category of posets, and $\SimpComplex$ for the category of simplicial complexes.
Let $\Pos \colon \SimpComplex \to \Po$ be the functor
that takes a simplicial complex to its poset of simplices (ordered by inclusion),
and let $\Flag \colon \Po \to \SimpComplex$ be the functor
that takes a poset $P$ to the simplicial complex
whose vertices are the elements of~$P$ and
whose $n$-simplices are the chains $x_0 < \dots < x_n$ of elements of~$P$.
Then the barycentric subdivision of a simplicial complex~$K$ is
$\Sd(K) = \Flag(\Pos(K))$.
There is an affine linear homeomorphism $\alpha_K \colon |\Sd(K)| \to |K|$ defined by the vertex map that sends a vertex $\sigma$ of $\Sd(K)$ to the barycenter of $|\sigma|$ in $|K|$.
Note that, while the homeomorphism $\alpha_K$ is natural with respect to inclusions of simplicial complexes,
it is not natural with respect to general simplicial maps.

\subsection{Homotopy Colimits and the Bar Construction}
\label{section-hocolim-bar}

While colimits are used everywhere in topology to construct new spaces,
the colimit operation fails to respect homotopy equivalences, in the following sense.
Take $\mathscr{A}$ to be the category that looks like this:
\[
	\bullet \leftarrow \bullet \rightarrow \bullet
\]
and consider the commutative diagram:
\[
\begin{tikzcd}
	D^n \arrow[d] \arrow[r, hookleftarrow]
	& S^{n-1} \arrow[r, hookrightarrow] \arrow[d, "\mathrm{id}"]
	& D^n \arrow[d] \\
	* & S^{n-1} \arrow[l] \arrow[r] &*
\end{tikzcd}
\]
Here, the top maps are the boundary inclusions.
We think of the rows as $\mathscr{A}$-shaped diagrams,
and the vertical maps define a natural transformation between these two $\mathscr{A}$-shaped diagrams.
Every component of this natural transformation is a homotopy equivalence,
but the colimit of the top row is the sphere $S^n$,
while the colimit of the bottom row is a one-point space $*$,
so the induced map between the colimits cannot be a homotopy equivalence.

More generally, let $\mathscr{C}$ be a small category,
and write $\Top^{\mathscr{C}}$ for the category of functors $\mathscr{C} \to \Top$.
One says that a functor $\Omega \colon \Top^{\mathscr{C}} \to \Top$
is \emph{homotopical} if, given a natural transformation
$\lambda \colon F \Rightarrow F'$ between $\mathscr{C}$-shaped diagrams $F$ and $F'$
that is a pointwise homotopy equivalence,
the induced map $\Omega(F) \to \Omega(F')$ is also a homotopy equivalence.
For any small category $\mathscr{C}$,
the colimit defines a functor $\op{colim} \colon \Top^{\mathscr{C}} \to \Top$,
and the previous example shows that this functor is not homotopical in general.

A homotopy colimit is a homotopical functor $\op{hocolim} \colon \Top^{\mathscr{C}} \to \Top$,
together with a natural transformation $\op{hocolim} \Rightarrow \op{colim}$
that makes $\op{hocolim}$, in some sense,
the best possible homotopical approximation of the colimit functor.
We now show how to construct a particular model for the homotopy colimit,
called the bar construction,
and we will see that it can be thought of as a ``thickened'' version of the colimit; see \cref{ex:circle_arcs} for an illustration.

We write $\Delta^{n}$ for the standard topological $n$-simplex,
and for $0 \leq i \leq n$,
we write $d^i \colon \Delta^{n-1} \hookrightarrow \Delta^{n}$
for the inclusion of the face opposite the $i^{th}$ vertex.

\begin{defi} \label{defi:hocolim_spaces}
Let $P$ be a poset, and let $F \colon P \to \Top$ be a diagram of topological spaces.
The \emph{bar construction} $\BarCon(F)$ of $F$ is the quotient space
\[
	\BarCon(F) = \left( \bigsqcup_{\sigma = (v_0 < \cdots < v_n)}
	 F(v_0)\times \Delta^n  \right) \; / \; \sim
\]
where the disjoint union is taken over all chains in $P$, and
the equivalence relation $\sim$ is generated as follows.
For a chain $\sigma = (v_0 < \cdots < v_n)$ and $0 \leq i \leq n$,
we write
\[\tau_i = (v_0 < \cdots < \hat{v_i} < \cdots < v_n) = (w_0 < \cdots < w_{n-1})\]
for the subchain with $v_i$ left out, noting that if $i>0$, then $w_0=v_0$, and if $i=0$, then $w_0=v_1$.
Now for any $x \in F(v_0)$ and $\alpha \in \Delta^{n-1}$,
	we identify $(x,d^i(\alpha))$ in the copy of $F(v_0)\times \Delta^n $
	indexed by $\sigma$ with $(F(v_0 \leq w_0)(x),\alpha)$
	in the copy of $F(w_0)\times \Delta^{n-1} $
	indexed by $\tau_i$.
\end{defi}

\begin{ex}
Let $P = \{ 0 < 1 \}$. Then a diagram $F \colon P \to \Top$ is just a map
$F(0) \to F(1)$, and the bar construction $\BarCon(F)$
is the mapping cylinder of this map.
\end{ex}

\begin{defi} \label{defi:nerve_diagram}
Let $X$ be a topological space and $\mathscr{U} = (U_i)_{i \in I}$ a cover of $X$.
Writing $P_{\mathscr{U}} = \Pos(\Nrv(\mathscr{U}))^{\opp}$, the \emph{nerve diagram} of the cover $\mathscr{U}$ is the functor $\mathscr{D}_{\mathscr{U}} \colon P_{\mathscr{U}} \to \Top$ with
$\mathscr{D}_{\mathscr{U}}(J) = U_J$.
\end{defi}

\begin{rmk}
\label{space-is-colimit}
In many cases, the colimit of the diagram $\mathscr{D}_{\mathscr{U}}$ simply gives us back $X$:
the inclusions $U_J \subseteq X$ induce a continuous map $\op{colim} \mathscr{D}_{\mathscr{U}} \to X$,
which is in fact a bijection.
If $\mathscr{U}$ is an open cover, or if it is a closed cover that is \emph{locally finite}
(i.e., every point of $X$ has an open neighborhood that intersects only finitely many cover elements),
then this bijection is a homeomorphism.
\end{rmk}

\begin{ex}
\label{ex:circle_arcs}
In this paper, we will mainly consider bar constructions of diagrams associated to a cover.
For example, consider the following cover $\mathscr{U}$ of the circle $S^1$:

\begin{center}
\begin{minipage}[c]{0.4\textwidth}
\centering
  \includegraphics[width=0.6\textwidth]{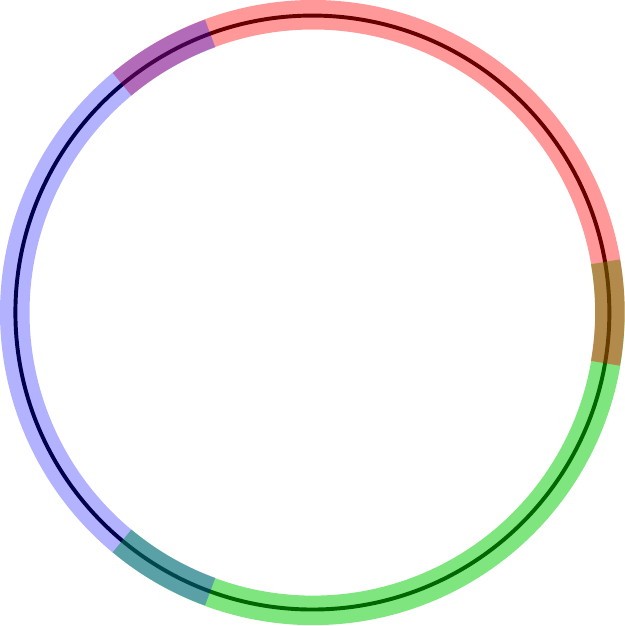}
\end{minipage}
\begin{minipage}[c]{0.4\textwidth}
\centering
  \includegraphics[width=0.6\textwidth]{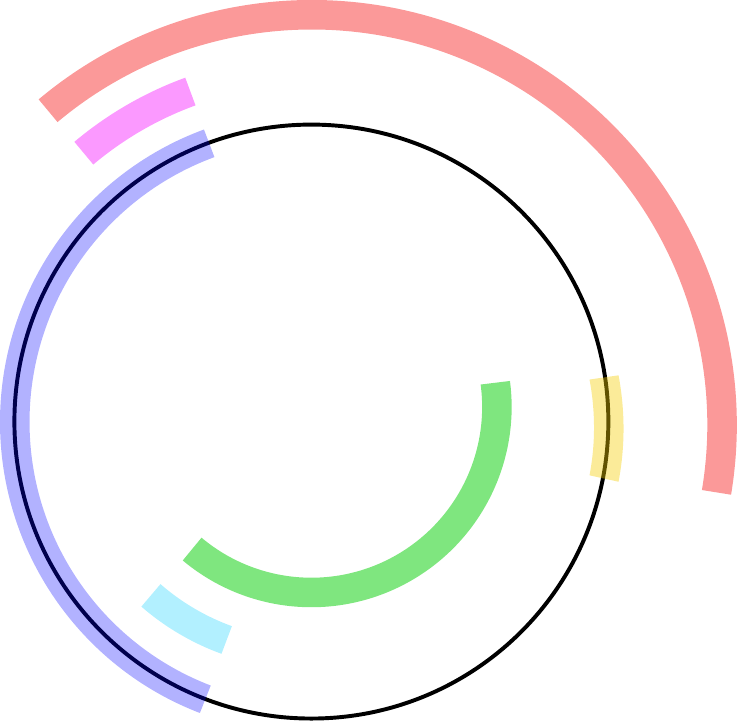}
\end{minipage}
\begingroup
\captionof{figure}{A cover by three arcs (left) and the intersections of those (right).}
\endgroup
\end{center}

If we label the three arcs $\{b,r,g\}$, the poset $P_{\mathscr{U}}$ associated to this cover
has the following~form:
\[
\begin{tikzcd}[column sep={5ex,between origins}]
	& & & \{r\} & &  \\
	& & \{b,r\} \arrow[ur] \arrow[dl] & & \{r,g\} \arrow[ul] \arrow[dr] & \\
	& \{b\} & & \{b,g\} \arrow[ll] \arrow[rr] & & \{g\}
\end{tikzcd}
\]

By definition, the bar construction $\BarCon(\mathscr{D}_{\mathscr{U}})$ of the nerve diagram $\mathscr{D}_{\mathscr{U}}\colon P_{\mathscr{U}} \to \Top$ associated to the cover $\mathscr{U}$ is built from pieces indexed by chains $v_0 < \cdots < v_n$ in $P_{\mathscr{U}}$ and are of the form $\mathscr{D}_{\mathscr{U}}(v_0)\times \Delta^n$. More concretely, the bar construction in our example is built from the following pieces:

\begin{center}
\begin{minipage}[c]{0.4\textwidth}
\centering
  \includegraphics[width=0.6\textwidth]{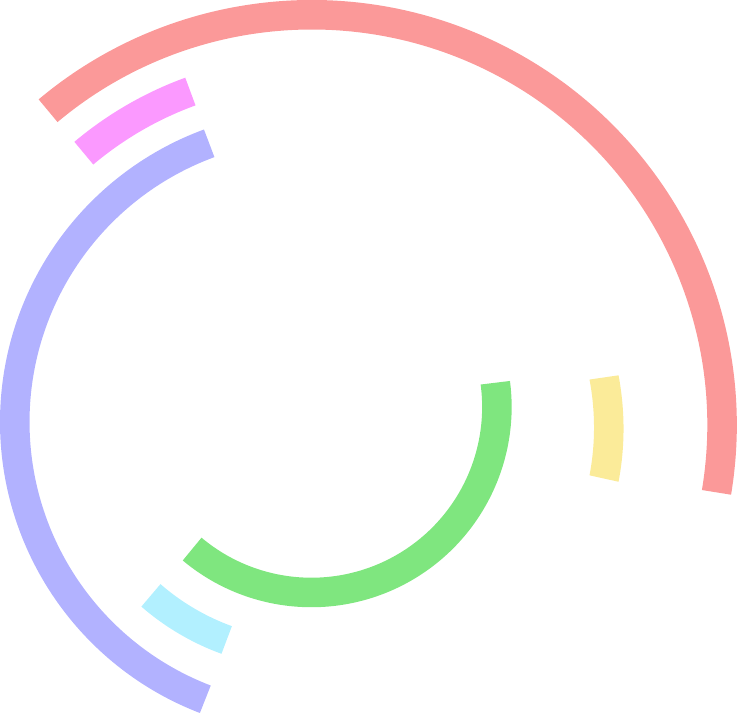}
\label{fig:zero_simp}
\end{minipage}
\begin{minipage}[c]{0.4\textwidth}
\centering
  \includegraphics[width=0.6\textwidth]{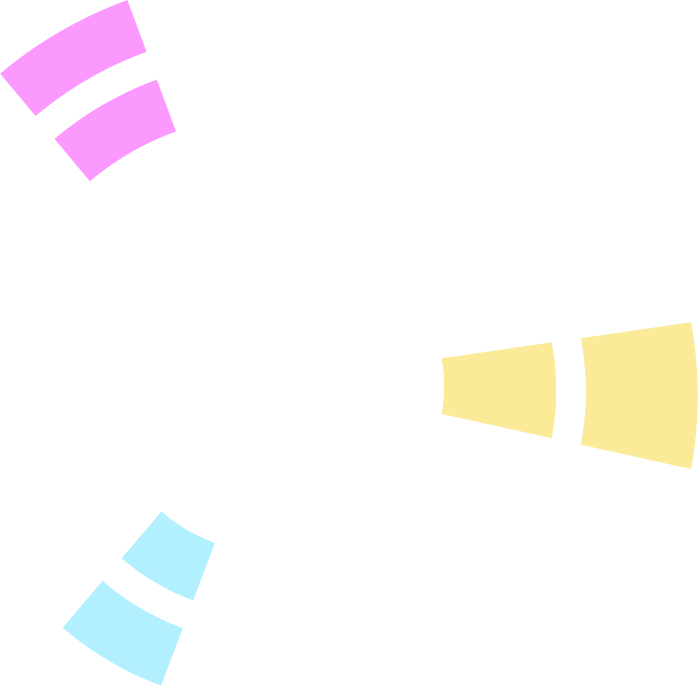}
\label{fig:one_simp}
\end{minipage}
\begingroup
\captionof{figure}{Pieces indexed by chains in $P_{\mathscr{U}}$ of length zero (left) and of length one (right).}
\endgroup
\end{center}

After making all identifications,
the bar construction $\BarCon(\mathscr{D}_{\mathscr{U}})$
is the following ``thickened'' version
of $\op{colim} \mathscr{D}_{\mathscr{U}} \iso S^1$:

\begin{center}
\begin{minipage}[c]{0.4\textwidth}
\centering
  \includegraphics[width=0.6\textwidth]{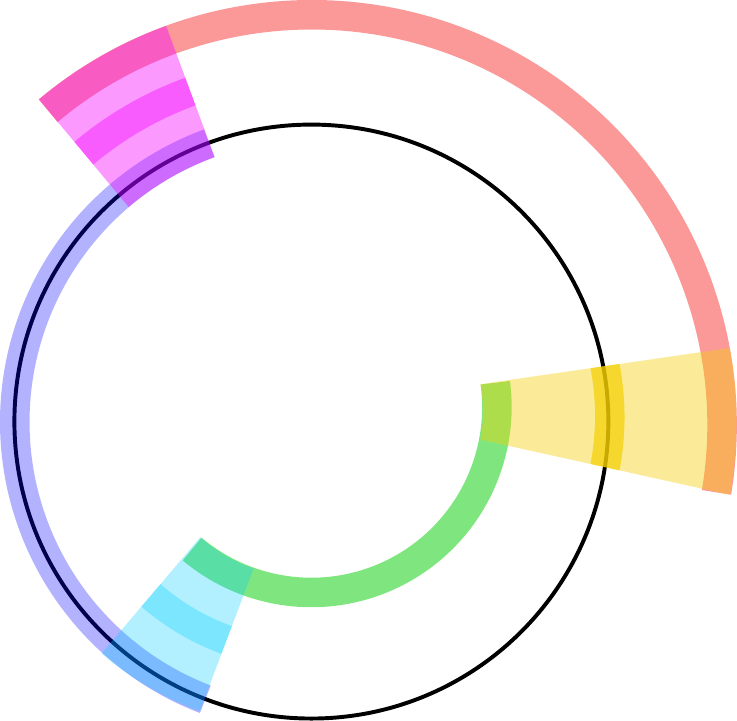}
\end{minipage}
\begingroup
\captionof{figure}{The bar construction of the nerve diagram.}
\endgroup
\end{center}
\end{ex}

By $\op{Diag}_{\Po}(\Top)$ we denote the category of diagrams over a poset: the objects are tuples $(P,F)$, where $P$ is a poset and $F\colon P\to \Top$ is a functor. A morphism $(g,\lambda)\colon (P,F)\to (R,G)$ consists of a poset map $g\colon P\to R$ and a natural transformation $\lambda\colon F\Rightarrow G\circ g$. Then the bar construction defines a functor
$\BarCon \colon \op{Diag}_{\Po}(\Top) \to \Top$:
a morphism $(g,\lambda)$ induces a continuous map $\BarCon(F) \to \BarCon(G)$
defined by the maps
\[
	\lambda(v_0)\times |\Flag(g)|  \colon F(v_0)\times \Delta^n  \to G(g(v_0))\times \Delta^m  \; ,
\]
where $|\Flag(g)|\colon \Delta^n\to \Delta^m$ is the affine map that sends the vertex $v_i$ to $g(v_i)$.
Moreover, the projection maps $F(v_0)\times \Delta^n  \to F(v_0)$ define a natural map
$\BarCon(F) \to \op{colim} F$.

There are analogues of this quotient space construction in other settings, which are also called bar constructions. We will encounter some of these in \cref{section:unified-nerve-thm}.
For a very general discussion of the bar construction,
including a proof that it is a model of the homotopy colimit, see
\cite[Chapters 4--5]{cat_hom}.
The bar construction for topological spaces is homotopical
(see \cite[Theorem~15.12]{kozlov} or \cite[Proposition~4G.1]{hatcher}):

\begin{prop} \label{prop:homotopy_lemma_top}
Let $P$ be a poset, $F, G \colon P \to \Top$ diagrams of topological spaces,
and let $\lambda \colon F \Rightarrow G$ be a natural transformation.
If the component $\lambda(v)$ is a homotopy equivalence for all $v \in P$,
then so is the induced map
$\BarCon(F) \to \BarCon(G)$.
\end{prop}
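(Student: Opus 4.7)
The plan is to filter $\BarCon(F)$ by the length of the indexing chains, apply the gluing lemma for homotopy equivalences at each stage, and then pass to the resulting sequential colimit. Let $\BarCon(F)_{\leq n}$ denote the image in $\BarCon(F)$ of the pieces $F(v_0) \times \Delta^k$ with $k \leq n$. The face identifications in \cref{defi:hocolim_spaces} always glue a length-$k$ piece to a length-$(k-1)$ piece, so $\BarCon(F)_{\leq n}$ is a well-defined closed subspace and $\BarCon(F) = \op{colim}_n \BarCon(F)_{\leq n}$. Naturality of the construction produces a ladder of compatible maps $\BarCon(F)_{\leq n} \to \BarCon(G)_{\leq n}$ induced by $\lambda$.

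The key structural observation is that $\BarCon(F)_{\leq n}$ sits in a natural pushout square
\[
\begin{tikzcd}
\bigsqcup_{\sigma} F(v_0) \times \partial \Delta^n \arrow[r] \arrow[d, hookrightarrow] & \BarCon(F)_{\leq n-1} \arrow[d, hookrightarrow] \\
\bigsqcup_{\sigma} F(v_0) \times \Delta^n \arrow[r] & \BarCon(F)_{\leq n}
\end{tikzcd}
\]
where the disjoint unions range over length-$n$ chains $\sigma = (v_0 < \cdots < v_n)$, the top horizontal map is prescribed by the face identifications, and the left vertical map is a closed cofibration (since $\partial\Delta^n \hookrightarrow \Delta^n$ is one and closed cofibrations are stable under taking products with any space). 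I would then prove by induction on $n$ that $\BarCon(F)_{\leq n} \to \BarCon(G)_{\leq n}$ is a homotopy equivalence. The base case $\BarCon(F)_{\leq 0} = \bigsqcup_{v \in P} F(v)$ is immediate from the hypothesis on $\lambda$. For the inductive step, the analogous pushout square for $G$ together with the inductive hypothesis provides homotopy equivalences on all three non-pushout corners of the diagram (the left column being disjoint unions of maps of the form $\lambda(v_0) \times \op{id}$), and the gluing lemma for homotopy equivalences along closed cofibrations then promotes this to a homotopy equivalence on the pushouts.

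Finally, each inclusion $\BarCon(F)_{\leq n-1} \hookrightarrow \BarCon(F)_{\leq n}$ is itself a closed cofibration as a pushout of one, and likewise for $G$, so a standard telescope argument promotes the stagewise homotopy equivalences to a homotopy equivalence of the sequential colimits $\BarCon(F) \to \BarCon(G)$. The main obstacle is setting up the pushout decomposition of the filtration cleanly; once that combinatorial bookkeeping is in place, the gluing lemma and the colimit step are formal, and the argument follows the lines of \cite[Proposition 4G.1]{hatcher} or \cite[Theorem 15.12]{kozlov}.
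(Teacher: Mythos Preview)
Your argument is correct and is precisely the one the paper relies on: the paper does not give its own proof of this proposition but simply cites \cite[Theorem~15.12]{kozlov} and \cite[Proposition~4G.1]{hatcher}, both of which proceed via the skeletal filtration, the gluing lemma along the closed cofibrations $\partial\Delta^n\hookrightarrow\Delta^n$, and a sequential-colimit step. The pushout decomposition you set up also reappears later in the paper (see \cref{rmk:barInCGSpc}, diagram~\eqref{bar-as-pushout}), so your bookkeeping is consistent with the rest of the text.
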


\subsection{Functorial Nerve Theorems via the Bar Construction}
\label{functorial_nerve_via_bar}

We can now explain how the bar construction can be used to prove functorial nerve~theorems.
For any poset $P$, if we write $*^P$ for the diagram $P \to \Top$ with constant value the one-point space $*$,
there is a canonical identification $\BarCon(*^P) \cong |\Flag(P)|$.
In particular, if~$P_{\mathscr{U}}$ is the poset associated to a cover $\mathscr{U}$, we have $\BarCon(*^{P_{\mathscr{U}}}) \cong |\Flag(P_{\mathscr{U}})| = |\Sd \Nrv(\mathscr{U})|$.

A morphism of covered spaces $(f, \varphi) \colon (X, \mathscr{U}) \to (Y, \mathscr{V})$ induces a poset map $g\colon {P_\mathscr{U}\to P_\mathscr{V}}$ and a natural transformation $\lambda \colon\mathscr{D}_\mathscr{U}\Rightarrow \mathscr{D}_\mathscr{V} \circ g $. Thus, by what we have seen before, the operation $(X, \mathscr{U}) \mapsto \BarCon(\mathscr{D}_{\mathscr{U}})$
defines a functor $\Cov \to \op{Diag}_{\Po}(\Top)\to  \Top$. Moreover, the unique natural transformation $\mathscr{D}_{\mathscr{U}} \Rightarrow *^{P_{\mathscr{U}}}$
induces a natural map
\[
	\pi_{\Sd N} \colon \BarCon(\mathscr{D}_{\mathscr{U}}) \to \BarCon(*^{P_{\mathscr{U}}})
	\cong |\Sd \Nrv(\mathscr{U})| \; .
\]
If every non-empty finite intersection of cover elements happens to be contractible,
then this map is a homotopy equivalence by \cref{prop:homotopy_lemma_top}.
Using also the natural map from the bar construction to the colimit as mentioned at the end of \cref{section-hocolim-bar},
we get a diagram that is natural in morphisms of covered spaces:
\[
	X \leftarrow \op{colim} \mathscr{D}_{\mathscr{U}}
	\leftarrow \BarCon(\mathscr{D}_{\mathscr{U}}) \to |\Sd \Nrv(\mathscr{U})| \; .
\]

In \cref{section:closed-convex} and \cref{section:unified-nerve-thm}
we will use this diagram to prove functorial nerve theorems
by finding various sets of assumptions that make these natural maps equivalences of various kinds.
This strategy -- also employed in the well-known proof of the nerve theorem for open covers
in Hatcher's textbook \cite[Section 4.G]{hatcher} -- relies on the well-known good properties of the bar construction.
We exploit this established theory repeatedly in \cref{section:unified-nerve-thm},
where we use the fact that the bar construction is homotopical in several contexts,
including homological algebra.
In \cref{section:triangulated-covers},
we will prove a functorial nerve theorem using a bar construction constructed in the category of posets,
rather than the topological construction.

However, for purposes of computational topology,
we want a nerve theorem to relate the space $X$ directly with the nerve of $\mathscr{U}$,
not the much larger subdivision of the nerve.
In order to obtain a functorial nerve theorem that works for morphisms of covered spaces as we have defined them,
in which the map of indexed covers need not be an inclusion,
we cannot simply apply the usual homeomorphism $\alpha_K \colon |\Sd(K)| \to |K|$
defined for any simplicial complex $K$,
as this map is natural only in inclusions of simplicial complexes.
In the case of diagrams $\mathscr{D}_{\mathscr{U}}$ associated to a cover,
the \emph{blowup complex} is an efficient way to build a space homeomorphic to the bar construction,
which comes with a natural map to $|\Nrv(\mathscr{U})|$ rather than $|\Sd \Nrv(\mathscr{U})|$.
In the definition, for a non-empty finite set $J$,
we write~$|J|$ for the geometric realization of the full simplicial complex generated by~$J$,
which is homeomorphic to the standard topological simplex $\Delta^{|J|-1}$.

\begin{defi}
\label{defi:blowup}
Let $\mathscr{U} = ( U_i )_{i \in I}$ be a cover of a topological space $X$.
The \emph{blowup complex} $\SmallBarCon(\mathscr{U})$ is the quotient space
\[
	\SmallBarCon(\mathscr{U}) = \left( \bigsqcup_{J \in \Nrv(\mathscr{U})}
	 U_J \times |J|  \right) \; / \; \sim ,
\]
where the disjoint union is taken over all simplices $J \in \Nrv(\mathscr{U})$,
and the equivalence relation~$\sim$ identifies, for all $J \subseteq J'$,
the spaces
$U_J \times |J|$
and
$U_{J'} \times |J'|$
along their common subspace
$U_{J'} \times |J|$.

\end{defi}

\begin{rmk}
For a finite cover $\mathscr{U} = (U_i)_{i\in[n]}$ the blowup complex can be defined as a subspace of the product $X\times \Delta^{n}$,
as mentioned in the introduction.
This is the approach used by Zomorodian and Carlsson \cite[Definition 3]{blowup}.
\end{rmk}

As before, the operation
$(X, \mathscr{U}) \mapsto \SmallBarCon(\mathscr{U})$
defines a functor $\Cov \to \Top$,
and the projection maps $U_J\times |J| \to U_J$ define a natural map
$\SmallBarCon(\mathscr{U}) \to \op{colim} \mathscr{D}_{\mathscr{U}}$,
which gives us a natural map $\rho_S \colon \SmallBarCon(\mathscr{U}) \to X$.
But now the projection maps $U_J\times |J| \to |J|$ assemble to define a natural map
$\rho_N \colon \SmallBarCon(\mathscr{U}) \to |\Nrv(\mathscr{U})|$.

\begin{ex}
As in \cref{ex:circle_arcs}, we consider the cover of the circle by three arcs.

\begin{center}
 \begin{minipage}[c]{0.4\textwidth}
\centering
 \includegraphics[width=0.6\textwidth]{circle_cover_decomposed_glued.pdf}
\end{minipage}
\begin{minipage}[c]{0.4\textwidth}
\centering
  \includegraphics[width=0.6\textwidth]{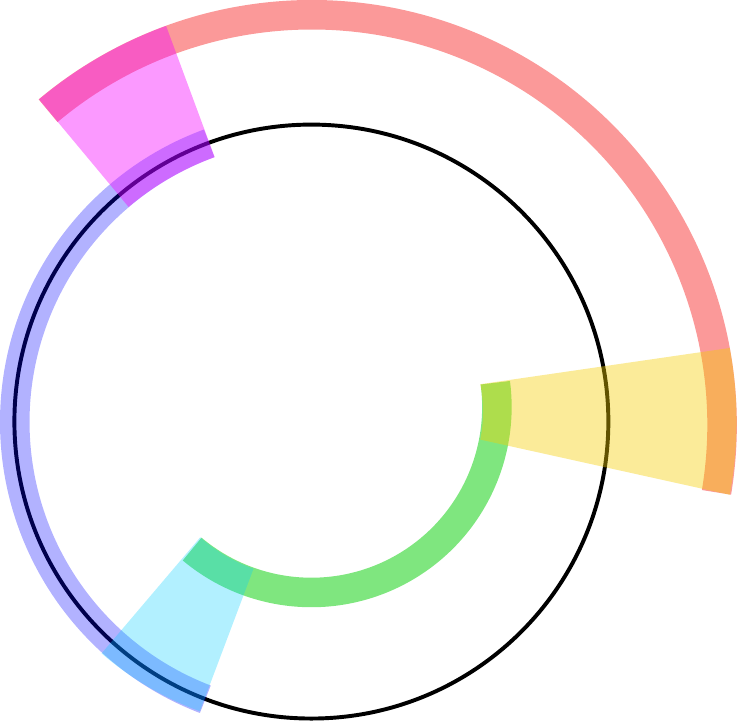}
\end{minipage}
\begingroup
\captionof{figure}{The bar construction (left) and the blowup complex (right).}
\endgroup
\end{center}
Note that the blowup complex is combinatorially simpler.
\end{ex}

We can use the homeomorphism $\alpha_K \colon |\Sd(K)| \to |K|$ defined for any simplicial complex $K$
to construct a homeomorphism
$\BarCon(\mathscr{D}_{\mathscr{U}}) \to \SmallBarCon(\mathscr{U})$.
For any simplex $J \in \Nrv(\mathscr{U})$, any flag $J \supset J_1 \supset \cdots \supset J_n$
in $P_{\mathscr{U}}$ indexes a piece $U_J\times \Delta^n $ in
$\BarCon(\mathscr{D}_{\mathscr{U}})$.
The flags of this form glue together to give a copy of
$ U_J\times | \Sd J | $ inside $\BarCon(\mathscr{D}_{\mathscr{U}})$,
where $\Sd J$ is the subdivision of the simplicial complex generated by $J$.
Now for all $J \in \Nrv(\mathscr{U})$ we have a map
\[
	U_J\times | \Sd J |  \xrightarrow{\alpha_J \times \mathrm{id}}
	U_J \times |J|  \subset \SmallBarCon(\mathscr{U}) \; ,
\]
and assembling these maps gives the homeomorphism
$\BarCon(\mathscr{D}_{\mathscr{U}}) \to \SmallBarCon(\mathscr{U})$.

This homeomorphism is not natural in arbitrary morphisms of covered spaces,
but it does fit into the following commutative diagram,
where the solid arrows are natural in morphisms of covered spaces:
\begin{equation} \label{diagram:two-roofs}
\begin{tikzcd}
	& X \arrow[d, equal] & \BarCon(\mathscr{D}_{\mathscr{U}})
	\arrow[l, "\pi_{S}"'] \arrow[r, "\pi_{\Sd N}"] \arrow[d, dashed, "\cong"]
	& \vert \Sd \Nrv(\mathscr{U}) \vert \arrow[d, dashed, "\cong"] \\
	& X & \SmallBarCon(\mathscr{U}) \arrow[l, "\rho_{S}"] \arrow[r, "\rho_{N}"']
	& \vert \Nrv(\mathscr{U}) \vert
\end{tikzcd}
\end{equation}
The somewhat subtle point here is that, even though the homeomorphism
$\BarCon(\mathscr{D}_{\mathscr{U}}) \to \SmallBarCon(\mathscr{U})$
is not natural in arbitrary morphisms of covered spaces,
we can use this homeomorphism and the good properties of the bar construction
to prove functorial nerve theorems for the blowup complex:
if some set of assumptions on the covered space~$(X, \mathscr{U})$
imply that the top maps from $\BarCon(\mathscr{D}_{\mathscr{U}})$
are equivalences of some kind,
then the commutativity of the diagram \ref{diagram:two-roofs}
implies that the bottom maps from $\SmallBarCon(\mathscr{U})$ are equivalences of the same kind.

\section{Nerve Theorems for Closed Convex Covers}
\label{section:closed-convex}

In this section, we consider nerves of finite closed and convex covers of subsets of $\mathbb{R}^d$. The motivating instances are alpha complexes and \v{C}ech complexes of finite point sets. Our approach to prove a nerve theorem in this context is elementary and does not make explicit use of abstract homotopy-theoretic machinery. Nevertheless, it foreshadows the concepts that will take on a central role in~\cref{section:unified-nerve-thm}.

We write $[n]$ for the set $\{0,\dots,n\}$. Let $\mathscr{C}=(C_i)_{i\in [n]}$ be a collection of closed convex subsets of $\bbR^d$, and let $X$ be their union. We construct a continuous map
$\Gamma\colon |\Sd \Nrv(\mathscr{C})| \to X$
and show that this map is a homotopy equivalence, establishing a nerve theorem for this setting. We then extend this result to prove the two functorial versions discussed in the introduction, \cref{intro-thm-convex} and \cref{intro-thm-one-map}.

Each vertex $J\in \Sd  \Nrv(\mathscr{C})$ represents a simplex in the nerve $\Nrv(\mathscr{C})$, and hence we can choose a point $p_J$ from the non-empty intersection $C_J =\bigcap_{j\in J} C_j$.
	By convexity of the cover elements in $\mathscr{C}$, this choice extends uniquely to a map $\Gamma\colon |\Sd \Nrv(\mathscr{C})| \to X$
	that is affine linear on each simplex of the barycentric subdivision; see \cref{fig:gamma} for an illustration. Similar constructions can be found in the literature \cite[Theorem 13.4]{bott_tu} \cite[p. 179]{MR1368659} \cite[p. 544]{bcehm-inferring}.
\begin{figure}[h!]
    \centering
    \includegraphics[width=0.6\textwidth]{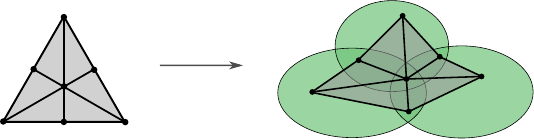}
    \caption{Illustration of the map $\Gamma$.}
    \label{fig:gamma}
\end{figure}
\begin{thm}
	\label{thm:nerve_comp_conv}
	The map $\Gamma$ is a homotopy equivalence. In particular, $|\Nrv(\mathscr{C})|$ is homotopy equivalent to $X$.
	\end{thm}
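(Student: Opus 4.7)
The plan is to construct an explicit homotopy inverse $F \colon X \to |\Sd\Nrv(\mathscr{C})|$ to $\Gamma$ and verify that both compositions are homotopic to the identity via straight-line homotopies enabled by the convexity of the cover elements. This adapts the classical partition-of-unity proof of the nerve theorem for good open covers (as in Hatcher's textbook) to the closed convex setting by exploiting the ambient Euclidean metric.

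For the construction of $F$, pick $\epsilon > 0$ and set $\chi_i(x) := \max(0, \epsilon - d(x, C_i))$ for each $i \in [n]$. Each $\chi_i$ is continuous, and since every $x \in X$ lies in some $C_j$ (so $\chi_j(x) = \epsilon$), the sum $\sum_j \chi_j$ is strictly positive on $X$; normalizing yields a continuous partition of unity $\varphi_i := \chi_i / \sum_j \chi_j$. Provided $\epsilon$ is chosen small enough that $J_\epsilon(x) := \{i : \varphi_i(x) > 0\}$ is always a simplex of $\Nrv(\mathscr{C})$, the formula $f(x) := \sum_i \varphi_i(x)\, v_i$ defines a continuous map $f \colon X \to |\Nrv(\mathscr{C})|$, and I set $F := \alpha^{-1} \circ f$ via the canonical homeomorphism $\alpha \colon |\Sd\Nrv(\mathscr{C})| \to |\Nrv(\mathscr{C})|$. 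For the composition $\Gamma \circ F \simeq \mathrm{id}_X$, use the straight-line homotopy $H(x, t) := (1-t) x + t\, \Gamma(F(x))$ in $\mathbb{R}^d$: the key point is that the indices in $J(x) := \{i : x \in C_i\}$ all achieve the maximum weight $\chi_i(x) = \epsilon$, so $F(x)$ lies in a sub-simplex of $\Sd\Nrv(\mathscr{C})$ whose minimum chain element is $J(x)$. Hence $\Gamma(F(x))$ is an affine combination of points $p_{J'}$ with $J' \supseteq J(x)$, all of which lie in $C_{J(x)}$; combined with $x \in C_{J(x)}$, convexity of $C_{J(x)}$ keeps the entire segment in $C_{J(x)} \subseteq X$. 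For $F \circ \Gamma \simeq \mathrm{id}$, on each simplex $(J_0 \subsetneq \cdots \subsetneq J_k)$ of $\Sd\Nrv(\mathscr{C})$ both $F \circ \Gamma$ and the identity land in the PL-convex subcomplex $|\Sd J_k|$ (identified with the simplex $|J_k|$ via $\alpha$), and a simplex-wise linear interpolation inside this subcomplex, glued together via a standard simplicial contiguity argument, produces the required homotopy.

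The main obstacle is arranging the combinatorial compatibility of the partition of unity with the nerve: $\epsilon$ must be chosen small enough that whenever $J \subseteq [n]$ satisfies $\bigcap_{i \in J} C_i = \emptyset$, one also has $\bigcap_{i \in J}\{y \in X : d(y, C_i) < \epsilon\} = \emptyset$, ensuring $J_\epsilon(x) \in \Nrv(\mathscr{C})$ for every $x \in X$. Since $\Nrv(\mathscr{C})$ is finite, this reduces to finitely many separation conditions, and hyperplane separation of disjoint closed convex sets in $\mathbb{R}^d$ yields such an $\epsilon$, with a localization argument over compact pieces of $X$ handling unbounded cover elements. Once this combinatorial control is secured, convexity of the $C_J$'s makes both straight-line homotopies succeed without further difficulty.
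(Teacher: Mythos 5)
Your overall plan coincides with the paper's: the same $\Gamma$, a partition-of-unity map $X\to|\Nrv(\mathscr{C})|$ transported to the subdivision, and a straight-line homotopy in $\bbR^d$ for $\Gamma\circ F\simeq\operatorname{id}_X$ (that part of your argument is essentially correct, and matches the paper's use of the closed barycentric stars). However, there are two genuine gaps. The first is the choice of $\epsilon$: disjoint closed convex sets in $\bbR^d$ can be at distance zero (e.g.\ the closed region above the graph of $1/x$ for $x>0$ and the closed lower half-plane), so hyperplane separation does \emph{not} produce an $\epsilon>0$ for which the $\epsilon$-neighborhoods of the $C_i$ have the same nerve; the finitely many separation conditions cannot in general be met by any single $\epsilon$ when the cover elements are unbounded, and the ``localization over compact pieces'' you invoke would force $\epsilon$ to depend on the point, changing the construction. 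The paper avoids this by using normality of $\bbR^d$ to choose non-metric open neighborhoods $U_i\supseteq C_i$ with unchanged nerve and then Urysohn functions equal to $1$ on $C_i$ and $0$ off $U_i$; your construction needs this (or an equivalent) to be well-defined.

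The second and more serious gap is the homotopy $F\circ\Gamma\simeq\operatorname{id}$. On the simplex of $\Sd\Nrv(\mathscr{C})$ corresponding to a chain $J_0\subsetneq\cdots\subsetneq J_k$, the map $\Gamma$ lands only in $C_{J_0}$, and a point of $C_{J_0}$ may lie within $\epsilon$ of cover elements $C_j$ with $j\notin J_k$; hence the support of $F(\Gamma(z))$ need not be contained in $J_k$, so $F\circ\Gamma$ does \emph{not} land in $|\Sd J_k|$, and $z$ and $F(\Gamma(z))$ need not lie in any common simplex of $|\Nrv(\mathscr{C})|$. The proposed simplex-wise linear interpolation is therefore not even well-defined, and contiguity is not available since $F\circ\Gamma$ is not simplicial. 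All one actually knows is that both $F\circ\Gamma$ and the identity are carried by the identity index map on the cover by closed barycentric stars $(\operatorname{bst}v_i)_i$, whose finite intersections are contractible (star-shaped about a barycenter) but not convex, so no single linear formula gives the homotopy. This is exactly why the paper proves a separate proposition constructing the homotopy by induction over the skeleta of the source complex, extending from $|\partial\sigma|\times[0,1]$ to $|\sigma|\times[0,1]$ at each step using contractibility of the relevant intersection; some argument of this kind is needed to close your proof.
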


We prove the theorem by constructing a homotopy inverse $\Psi$ to $\Gamma$.
For this construction, we work with an open cover and a subordinate partition of unity, as in the familiar proof of the nerve theorem for open covers \cite[Proposition 4G.2]{hatcher}. To this end, we thicken the subsets $C_i$ slightly so that the nerve remains unchanged.
If the $C_i$ are compact, it is possible to choose an $\varepsilon$ such that the open $\varepsilon$-neighborhoods of the $C_i$ have this desired property.
More generally, we can choose such neighborhoods according to the following lemma.

\begin{lemma}
\label{lemma:thickening_choice}
Let $\mathscr{C}=(C_i)_{i\in [n]}$ be a finite collection of closed subsets of a metric space $Y$. Then there exists a collection of open subsets $\mathcal{G}=(U_i \supseteq C_i)_{i\in[n]}$ with $\Nrv(\mathscr{C}) = \Nrv(\mathcal{G})$.
\end{lemma}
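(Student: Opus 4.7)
The plan is to construct the thickening pointwise using a continuous, strictly positive function $\delta\colon \mathbb{R}^d\to\mathbb{R}_{>0}$ and set
\[
	U_i \;=\; \{\, x \in \mathbb{R}^d \mid d(x, C_i) < \delta(x)\,\},
\]
where $d(x,C_i) = \inf_{c\in C_i}\|x-c\|$. One direction of the desired equality $\Nrv(\mathscr{C}) = \Nrv(\mathcal{G})$ is automatic: since $C_i \subseteq U_i$ for every $i$, any $J \in \Nrv(\mathscr{C})$ satisfies $\bigcap_{i\in J} U_i \supseteq \bigcap_{i\in J} C_i \neq \emptyset$, so $J \in \Nrv(\mathcal{G})$. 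The content of the lemma is therefore the converse: if $J\subseteq[n]$ satisfies $C_J = \emptyset$, then $\bigcap_{i\in J} U_i = \emptyset$ as well.

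To design $\delta$, I would consider, for each ``bad'' subset $J\subseteq [n]$ (meaning $C_J = \emptyset$), the continuous function
\[
	f_J(x) \;=\; \max_{i \in J} d(x, C_i).
\]
Because no point lies in every $C_i$ for $i\in J$, $f_J$ is strictly positive on all of $\mathbb{R}^d$. Since $[n]$ is finite there are only finitely many bad subsets $J$, so
\[
	\delta(x) \;=\; \min\bigl\{\, f_J(x) \mid J\subseteq[n],\ C_J = \emptyset\,\bigr\}
\]
is again continuous and strictly positive (and if no bad $J$ exists, I simply set $\delta \equiv 1$). Consequently each $U_i$ is open, and $C_i \subseteq U_i$ since $d(\cdot,C_i) = 0 < \delta$ on $C_i$.

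Finally I would verify the key nerve-preserving property. Fix a bad $J$ and any $x \in \mathbb{R}^d$, and choose $i^* \in J$ attaining the maximum $f_J(x) = d(x, C_{i^*})$. Then by the definition of $\delta$,
\[
	d(x, C_{i^*}) \;=\; f_J(x) \;\geq\; \delta(x),
\]
so $x \notin U_{i^*}$, and hence $x \notin \bigcap_{i\in J} U_i$. This shows $\bigcap_{i\in J} U_i = \emptyset$ for every bad $J$, completing the argument. The only mild subtlety, rather than a genuine obstacle, is that a uniform $\varepsilon$-thickening is not sufficient for closed convex sets in general (two asymptotically close unbounded convex sets with empty intersection would be merged); this is exactly why $\delta$ is taken to depend on the point $x$, allowing the neighborhoods to ``pinch'' wherever the cover elements approach each other at infinity.
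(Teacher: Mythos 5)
Your proof is correct, but it takes a genuinely different route from the paper's. The paper argues via normality of $\bbR^d$: for each \emph{disjoint pair} $C_i\cap C_j=\emptyset$ it chooses disjoint open neighborhoods $V_{i,j}\supseteq C_i$ and $V_{j,i}\supseteq C_j$, and sets $U_i=\bigcap_{j:\,C_i\cap C_j=\emptyset}V_{i,j}$. Your construction instead uses a pointwise-varying thickening radius: a continuous $\delta>0$ built as the minimum, over all ``bad'' subsets $J$ with $C_J=\emptyset$, of $\max_{i\in J}d(\cdot,C_i)$, and $U_i=\{x\mid d(x,C_i)<\delta(x)\}$. Two things are worth noting. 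First, your argument is actually more robust, because it explicitly handles \emph{every} empty intersection $C_J=\emptyset$, not only the empty pairwise ones; the paper's pairwise separation does not by itself rule out a spurious higher simplex appearing in $\Nrv(\mathcal{G})$ (three convex sets can intersect pairwise while having empty triple intersection, in which case the paper's construction gives $U_i=\bbR^d$ for all $i$ and the nerves differ), so your version closes a gap that the claimed equality of nerves genuinely requires. Second, your proof never uses convexity, so it establishes the lemma for arbitrary finite collections of closed subsets of a metric space. Your closing remark about why a uniform $\varepsilon$-thickening fails for unbounded convex sets is also accurate and is exactly the point the paper flags before the lemma. The only cosmetic caveat is the degenerate case $C_i=\emptyset$ for some $i$, where $d(\cdot,C_i)\equiv+\infty$; this causes no harm (the corresponding $U_i$ is empty), but a one-line remark excluding or treating it would make the write-up airtight.
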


\begin{proof}%
Let $X\subseteq Y$ be the union of the finitely many cover elements $C_i$. Let $p\in X$ be any point.
For every simplex $J\in \Nrv(\mathscr{C})$ such that $p$ is not contained in $C_J$ consider the positive real number 
\begin{equation}
	\label{open_thickening_radius}
	r_{p,J}=\frac{d(p,C_J)}{2}>0,
\end{equation}
where $d(p,C_J)=\inf_{q\in C_J}d(p,q)$ is the distance of $p$ to $C_J$.
If no such $J$ exists, define $r_{p}=1$.
Otherwise, define
$r_p=\min_{p\notin C_J} r_{p,J}>0$, where the minimum exists and is positive, because the nerve $\Nrv(\mathscr{C})$ is finite.
We show that the open cover elements
\begin{equation}
	\label{open_thickening_closed_cover_element}
	U_i=\bigcup\limits_{p\in C_i} \B{r_p}{p},
\end{equation}
where $\B{r_p}{p}=\{q\in Y\ |\ d(p,q)< r_p\}$ is the open ball of radius $r_p$ centered at $p$,
have the desired property $\Nrv(\mathscr{C}) = \Nrv(\mathcal{G})$.
Note first that $\Nrv(\mathscr{C}) \subseteq \Nrv(\mathcal{G})$, since $C_i \subseteq U_i$ by construction.
To show the reverse inclusion $\Nrv(\mathscr{C}) \supseteq \Nrv(\mathcal{G})$, it suffices to argue that for every $k$-simplex $J\in \Nrv(\mathcal{G})$ we have 
\begin{equation}
	\label{open_thickenting_intersections}
	U_J=\bigcup\limits_{p\in C_J} \B{r_p}{p},
\end{equation}
meaning that the intersection of the open thickenings equals the thickening of the intersection of the corresponding original cover elements. 
For $k=0$, meaning that $J=\{i\}$ for some $i\in [n]$, this is simply \cref{open_thickening_closed_cover_element}. 
Let $k>0$ and assume that \cref{open_thickenting_intersections} is true for all simplices of dimension smaller than $k$. Let $J\in \Nrv(\mathcal{G})$ be any $k$-simplex and take any partition $J_1 \sqcup J_2=J$ so that we can write $C_J = C_{J_1}\cap C_{J_2}$ and $U_J = U_{J_1}\cap U_{J_2}$.
To show \cref{open_thickenting_intersections}, note
first that by the induction assumptions $\bigcup_{p\in C_{J_1}} \B{r_p}{p} = U_{J_1}$ and $\bigcup_{p\in C_{J_2}} \B{r_p}{p} = U_{J_2}$ we have 
\[\bigcup_{p\in C_J} \B{r_p}{p}= \bigcup_{p\in C_{J_1}\cap C_{J_2}} \B{r_p}{p}\subseteq U_{J_1}\cap U_{J_2}=U_J.\]
To show the reverse inclusion, let $x\in U_J = U_{J_1}\cap U_{J_2}$ be any point.
Again by the induction assumptions, there exist $p\in C_{J_1}$ and $q\in C_{J_2}$ such that $x\in \B{r_p}{p}\cap \B{r_q}{q}$. 
Assume that neither $p$ nor $q$ are contained in $C_J = C_{J_1}\cap C_{J_2}$. Together with \cref{open_thickening_radius} and the subsequent definitions, we then get the estimate
\[
	d(p,q)\leq d(p,x)+d(x,q)<r_p+r_q\leq \frac{d(p,C_{J_2})}{2}+\frac{d(q,C_{J_1})}{2}\leq d(p,q),
\]
a contradiction. Thus, either $p$ or $q$ must be contained in $C_J = C_{J_1}\cap C_{J_2}$, and therefore 
\[
x\in \B{r_p}{p}\cap \B{r_q}{q}\subseteq \bigcup_{p\in C_J} \B{r_p}{p},
\]
proving the reverse inclusion $U_J \subseteq \bigcup_{p\in C_J} \B{r_p}{p}$.
\end{proof}

We choose $\mathcal{G}$ according to \cref{lemma:thickening_choice}.
For each $i\in [n]$, there exists a Urysohn function $\varphi_i\colon \bbR^d\to [0,1]$ that takes on the value $0$ outside of $U_i$ and the value $1$ on $C_i$.
For example, we may take \[x\mapsto\frac{d(x,\bbR^d\setminus U_i)}{d(x,C_i)+d(x,\bbR^d\setminus U_i)}.\]
	Normalizing these functions $\varphi_i$ yields a partition of unity on $X$ subordinate to the cover $(U_i \cap X)_{i\in[n]}$ of $X$: $\psi_i={\varphi_i}/{\sum_{j=0}^n \varphi_j}\colon X\to [0,1].$
	We define the map $\Phi\colon X\to |\Nrv(\mathscr{C})|$ in barycentric coordinates for $|\Nrv(\mathscr{C})|$ as
	\begin{equation}
	\label{eq:closed_convex_phi}
	\Phi \colon x \mapsto \sum\limits_{i=0}^n \psi_i(x) \cdot |v_i|,
	\end{equation}
	where $v_i=\{i\}$ is the vertex in $\Nrv(\mathscr{C})$ corresponding to $i$ and $|v_i|$ is the corresponding point in the geometric realization. The map $\Psi\colon X\to |\Sd\Nrv(\mathscr{C})|$ is then given as the composite $\alpha_{\Nrv(\mathscr{C})}\circ\Phi$, where $\alpha_{\Nrv(\mathscr{C})} \colon |\Nrv(\mathscr{C})| \to |\Sd\Nrv(\mathscr{C})|$ is the standard homeomorphism from the nerve to its barycentric subdivision.

	In order to show that $\Psi$ is a homotopy inverse to $\Gamma$, we analyze more closely how these maps are related combinatorially.
	To this end, we use the following construction.
	\begin{defi}
	For every vertex $v$ of a simplicial complex $K$, define the \textit{closed barycentric star}
	as the subspace \[\operatorname{bst} v = |\operatorname{Cl}\operatorname{St}_{\Sd K}v|\subseteq |\Sd K|,\] where
	$\operatorname{Cl}\operatorname{St}_{\Sd K}v=\{\sigma\in \Sd K \mid \sigma\cup\{v\}\in \Sd K \}$
	is the closure of the star of $v$ in the barycentric subdivision of $K$.
\end{defi}

We now state two lemmas about the closed barycentric stars, deferring the proofs to \cref{appendix-auxiliary-lemmas}.

\begin{lemma}
	\label{lemma:int_contractible}
	Let $K$ be a simplicial complex and let $\sigma\in K$ be a simplex. Then the intersection $\bigcap_{v \in \sigma}\operatorname{bst}v$ is contractible. In particular, the collection of closed barycentric stars forms a good cover of $|K|$.
\end{lemma}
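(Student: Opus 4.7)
The plan is to realize $\bigcap_{v \in \sigma}\op{bst} v$ as the geometric realization of a simplicial cone, which yields contractibility for free. First I would unpack the definition of the closed star in the barycentric subdivision. A vertex $v$ of $K$ corresponds, viewed inside $\Sd K$, to the $0$-simplex $\{v\} \in K$, and an $n$-simplex of $\Sd K$ is a strictly ascending chain $\tau_0 \subsetneq \cdots \subsetneq \tau_n$ of simplices in $K$. Adjoining $\{v\}$ to such a chain yields another chain iff $\{v\}$ is comparable (by inclusion) with every $\tau_i$, which---since all $\tau_i$ are non-empty---reduces to $v \in \tau_0$. Hence $\op{Cl}\op{St}_{\Sd K} v$ consists exactly of those chains whose minimum simplex contains $v$.

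Intersecting over $v \in \sigma$, the simplices of $\Sd K$ in $\bigcap_{v \in \sigma}\op{Cl}\op{St}_{\Sd K} v$ are precisely the chains $\tau_0 \subsetneq \cdots \subsetneq \tau_n$ with $\sigma \subseteq \tau_0$. Writing $P_\sigma = \{\tau \in K \mid \sigma \subseteq \tau\}$ for the subposet of simplices containing $\sigma$, this subcomplex is exactly $\Flag(P_\sigma)$. The key observation is that $P_\sigma$ has $\sigma$ itself as a minimum element, so $\Flag(P_\sigma)$ is a simplicial cone with apex the vertex $\sigma \in \Sd K$. The geometric realization of any simplicial cone is contractible via the straight-line contraction to the apex, which establishes the first claim.

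For the ``in particular'' clause, I would check two things. First, the closed barycentric stars cover $|K|$: any simplex $\tau_0 \subsetneq \cdots \subsetneq \tau_n$ of $\Sd K$ lies in $\op{Cl}\op{St}_{\Sd K} v$ for any vertex $v$ of $\tau_0$, and such a $v$ exists because $\tau_0 \neq \emptyset$. Second, for an arbitrary finite set $S$ of vertices of $K$, the argument above identifies $\bigcap_{v \in S}\op{bst} v$ with the realization of $\Flag(P_S)$, where $P_S = \{\tau \in K \mid S \subseteq \tau\}$; this is non-empty precisely when $S \in K$, in which case $P_S$ has $S$ as a minimum and the intersection is again a contractible cone.

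The whole proof hinges on the chain characterization of $\op{Cl}\op{St}_{\Sd K} v$; once that is stated correctly, every remaining step is essentially bookkeeping. So I expect the main pitfall to be notational---carefully distinguishing vertices of $K$ from vertices of $\Sd K$ (which are themselves simplices of $K$), and chains of simplices of $K$ from simplices of $\Sd K$.
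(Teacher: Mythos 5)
Your proof is correct and follows essentially the same route as the paper: the paper first proves (as a separate auxiliary lemma) that $\bigcap_{v\in\sigma}\operatorname{bst}v$ is the realization of the subcomplex of chains $\tau_0\subset\cdots\subset\tau_m$ with $\sigma\subseteq\tau_0$, and then concludes contractibility by observing that this subspace is star-shaped with respect to the barycenter $z(|\sigma|)$ --- which is exactly your ``simplicial cone with apex $\sigma$'' argument in geometric language. Your explicit treatment of the ``in particular'' clause (non-emptiness of $\bigcap_{v\in S}\operatorname{bst}v$ iff $S\in K$) is a welcome extra detail that the paper leaves implicit.
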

It is not hard to see that the nerve of this cover is isomorphic to $K$.
The following statement describes the closed barycentric stars in terms of barycentric coordinates.
\begin{lemma}
	\label{lemma:bst_maximal}
	Let $K$ be a simplicial complex and let $v$ be a vertex of $K$.
	The closed barycentric star $\operatorname{bst} v$ consists of all points $x \in |K|$ that satisfy
	\begin{equation}
	\label{equation:bst_maximal}
	b_v(x)\geq b_w(x) \quad \text{for all }w\in \vertx{K} ,
	\end{equation}
	where $b_v$ denotes the barycentric coordinate with respect to the vertex $v$.
\end{lemma}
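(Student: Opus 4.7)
The plan is to decompose any $x \in |K| = |\Sd K|$ according to the unique simplex of $\Sd K$ whose interior contains it. Such a simplex corresponds to a chain $\sigma_0 \subsetneq \sigma_1 \subsetneq \cdots \subsetneq \sigma_k$ of simplices of $K$, and $x$ admits a unique expression $x = \sum_{i=0}^{k} t_i \hat{\sigma}_i$ as a strictly positive convex combination of the barycenters $\hat{\sigma}_i$ of the $\sigma_i$. My strategy is to express both the condition $x \in \operatorname{bst} v$ and the family of inequalities $b_v(x) \geq b_w(x)$ in terms of this chain and to observe that both amount to the same combinatorial statement, namely $v \in \sigma_0$.

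For the first translation, $x \in \operatorname{bst} v$ means that the simplex of $\Sd K$ whose interior contains $x$ lies in the closed star of the vertex $\{v\}$, i.e.\ the chain above can be enlarged by inserting the singleton $\{v\}$. Since $\{v\}$ cannot be placed strictly between two consecutive terms of the chain, this is equivalent to $v \in \sigma_0$.

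For the second translation, I would expand each barycenter as $\hat{\sigma}_i = \tfrac{1}{|\sigma_i|} \sum_{w \in \sigma_i} w$ to read off the barycentric coordinates of $x$ with respect to $K$. Setting $j(w) = \min\{ i : w \in \sigma_i \}$ and using the nesting of the $\sigma_i$, one obtains
\[
b_w(x) \;=\; \sum_{i = j(w)}^{k} \frac{t_i}{|\sigma_i|},
\]
with the convention $b_w(x) = 0$ if $w \notin \sigma_k$. Since the $t_i$ are strictly positive, this quantity is strictly decreasing in $j(w)$, so $b_v(x) \geq b_w(x)$ for every $w \in \vertx{K}$ is equivalent to $j(v) \leq j(w)$ for every such $w$; as $\sigma_0$ is nonempty, the right side forces $j(v) = 0$, i.e.\ $v \in \sigma_0$.

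I do not foresee any real obstacle: the proof reduces to careful bookkeeping of the two barycentric coordinate systems in play, one for $K$ and one for $\Sd K$. The only subtlety is handling the degenerate case $v \notin \sigma_k$, where $b_v(x) = 0$ while $b_w(x) > 0$ for every $w \in \sigma_0$, so the inequality fails immediately and matches $v \notin \sigma_0$ on the other side.
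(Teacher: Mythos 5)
Your proof is correct and takes essentially the same route as the paper: the heart of both arguments is the explicit conversion between the barycentric coordinates of $x$ in $K$ and in $\Sd K$ — your formula $b_w(x)=\sum_{i\ge j(w)} t_i/|\sigma_i|$ is exactly \cref{lem:subdivision_simplex_converse} (there specialized to a full flag, where $|\sigma_i|=i+1$), and your handling of $v\notin\sigma_k$ matches the paper's treatment of vertices with vanishing coordinate. Your packaging — reducing both $x\in\operatorname{bst}v$ and the maximality of $b_v(x)$ to the single condition $v\in\sigma_0$ on the carrier of $x$ — is a slightly tidier organization of what the paper does in two separate directions via \cref{lemma:subdivision_simplex} and \cref{lem:subdivision_simplex_converse}, but it is not a different argument.
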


\begin{prop}
\label{prop:psi_carried}
The pair of maps
$(\Psi, \operatorname{id}_{[n]})$
constitutes a morphism of covered spaces
\[ (X, \mathscr{C}=(C_i)_{i\in[n]})
\to
 (|\Sd \Nrv(\mathscr{C})|, (\operatorname{bst}v_i)_{i\in[n]} ) .\]
\end{prop}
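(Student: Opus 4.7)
The plan is to unpack Definition \ref{category-of-covered-spaces} and verify the three requirements that make $(\Psi, \operatorname{id}_{[n]})$ a morphism of covered spaces: continuity of $\Psi$, that $(\operatorname{bst}v_i)_{i\in[n]}$ actually covers $|\Sd\Nrv(\mathscr{C})|$, and the carrying condition $\Psi(C_i) \subseteq \operatorname{bst}v_i$ for each $i \in [n]$.

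First I would dispose of the easy bookkeeping. Continuity of $\Psi$ is immediate: each $\varphi_i$ is continuous, the denominator $\sum_j \varphi_j$ is strictly positive on $X$ (since $\mathscr{C}$ covers $X$ and each $\varphi_j$ equals $1$ on $C_j$), so the $\psi_i$ and hence $\Phi$ are continuous, and $\alpha_{\Nrv(\mathscr{C})}$ is a homeomorphism. For the cover property, any point of $|\Sd\Nrv(\mathscr{C})|$ lies in the realization of a flag $J_0 \subsetneq \cdots \subsetneq J_k$ of simplices of $\Nrv(\mathscr{C})$; picking any $i \in J_0$, the singleton $\{i\}$ either coincides with or can be prepended to this flag, so the point lies in $\operatorname{bst} v_i$.

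The core of the argument is the carrying condition. Let $x \in C_i$. The Urysohn property gives $\varphi_i(x) = 1$, while every other $\varphi_j$ takes values in $[0,1]$, so $\varphi_i(x) \geq \varphi_j(x)$ for all $j$. Dividing by the positive sum $\sum_k \varphi_k(x)$ preserves this inequality, yielding $\psi_i(x) \geq \psi_j(x)$ for every $j \in [n]$. By the definition of $\Phi$ in Equation \ref{eq:closed_convex_phi}, the values $\psi_j(x)$ are precisely the barycentric coordinates of $\Phi(x) \in |\Nrv(\mathscr{C})|$, so this inequality says the coordinate of $\Phi(x)$ at the vertex $v_i$ is maximal among vertices of $\Nrv(\mathscr{C})$. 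Lemma \ref{lemma:bst_maximal} applied to $K = \Nrv(\mathscr{C})$ identifies this maximality condition, via the standard homeomorphism, with membership in $\operatorname{bst} v_i \subseteq |\Sd\Nrv(\mathscr{C})|$. Hence $\Psi(x) = \alpha_{\Nrv(\mathscr{C})}(\Phi(x)) \in \operatorname{bst} v_i$, as required.

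The only subtlety I anticipate is keeping track of the two presentations of $\operatorname{bst} v_i$: by definition it lives in $|\Sd\Nrv(\mathscr{C})|$, while Lemma \ref{lemma:bst_maximal} characterizes it via barycentric coordinates on $|\Nrv(\mathscr{C})|$ through the standard homeomorphism. Once one applies $\alpha_{\Nrv(\mathscr{C})}$ in the correct direction, the verification collapses to the elementary inequality $\psi_i(x) \geq \psi_j(x)$ established above, and no further geometric input is needed.
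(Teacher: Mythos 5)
Your proposal is correct and follows essentially the same route as the paper: the key step in both is that $x \in C_i$ forces $\varphi_i(x) = 1$, hence $\psi_i(x)$ is maximal among the $\psi_j(x)$, and \cref{lemma:bst_maximal} then places $\Psi(x)$ in $\operatorname{bst} v_i$. Your additional explicit checks of continuity and of the fact that the closed barycentric stars cover $|\Sd\Nrv(\mathscr{C})|$ are correct but are routine points the paper leaves implicit.
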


\begin{proof}%
	Recall that $\Psi = \alpha_{\Nrv(\mathscr{C})}\circ\Phi$, where
	$\alpha_{\Nrv(\mathscr{C})}$ is the isomorphism $|\Nrv(\mathscr{C})|\cong |\Sd \Nrv(\mathscr{C})|$
	and
	$\Phi \colon X \to |\Nrv(\mathscr{C})|, \, x \mapsto \sum_{i=0}^n \psi_i(x) \cdot |v_i|$.
	Note that if $x\in C_i$, then $\phi_i(x) = 1$ and thus $\psi_i(x)$ is maximal
	among the $\psi_j(x)$. Hence, by \cref{lemma:bst_maximal}
	we know that $\Psi(x)\in \operatorname{bst}(v_i)$ and the claim follows.
\end{proof}

\begin{prop}
\label{prop:gamma_carried}
The pair of maps $(\Gamma, \operatorname{id}_{[n]})$ constitutes a morphism of covered spaces
\[
 (|\Sd \Nrv(\mathscr{C})|, (\operatorname{bst}v_i)_{i\in[n]})\to (X, \mathscr{C}=(C_i)_{i\in[n]}) .
\]
\end{prop}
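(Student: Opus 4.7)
The plan is to verify the single non-trivial requirement for $(\Gamma, \operatorname{id}_{[n]})$ to be a morphism of covered spaces, namely that $\Gamma$ is carried by $\operatorname{id}_{[n]}$: for each $i \in [n]$, I need to show $\Gamma(\operatorname{bst} v_i) \subseteq C_i$. Continuity of $\Gamma$ was already established in the construction preceding \cref{thm:nerve_comp_conv}, so only this containment is left.

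First, I would unfold the combinatorial meaning of membership in $\operatorname{bst} v_i$. A simplex of $\Sd \Nrv(\mathscr{C})$ is a chain $\tau = (J_0 \subsetneq J_1 \subsetneq \cdots \subsetneq J_k)$ in the poset of simplices of $\Nrv(\mathscr{C})$, and $\tau$ lies in $\operatorname{Cl}\operatorname{St}_{\Sd \Nrv(\mathscr{C})}(v_i)$ iff $\tau \cup \{v_i\}$ is again such a chain. Since $v_i$ corresponds to the singleton $\{i\}$ and singletons are minimal among non-empty simplices, comparability of $\{i\}$ with each $J_l$ forces either $J_l = \{i\}$ or $\{i\} \subsetneq J_l$, in either case $i \in J_l$. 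As $J_0 \subseteq J_l$ for all $l$, the condition collapses to $i \in J_0$.

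Next I would combine this with the definition of $\Gamma$ as the affine-linear extension of the vertex assignment $J \mapsto p_J \in C_J$. Whenever $i \in J_0$, the chain above satisfies $i \in J_l$ for every $l$, so $p_{J_l} \in C_{J_l} \subseteq C_i$ for every $l$. Because $C_i$ is convex, every affine combination of the $p_{J_l}$ stays in $C_i$, hence $\Gamma(|\tau|) \subseteq C_i$. Taking the union over all simplices $\tau$ in $\operatorname{Cl}\operatorname{St}(v_i)$ yields $\Gamma(\operatorname{bst} v_i) \subseteq C_i$, which is exactly the carrying condition for $(\Gamma, \operatorname{id}_{[n]})$.

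There is no real obstacle here: once the combinatorial description of $\operatorname{bst} v_i$ is identified with the condition that $i$ lies in the minimum element of the chain, the statement falls out immediately from convexity of the cover elements. The only subtlety worth noting is that one must use the inclusion-based description of $\operatorname{bst} v_i$ from the definition rather than the maximal-barycentric-coordinate characterization of \cref{lemma:bst_maximal}, since the former interacts directly with the chain structure through which $\Gamma$ was defined.
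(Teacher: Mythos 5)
Your proof is correct and follows essentially the same route as the paper's: identify that every vertex $J_l$ of a simplex in $\operatorname{bst} v_i$ satisfies $i \in J_l$, hence $p_{J_l} \in C_{J_l} \subseteq C_i$, and conclude by convexity of $C_i$ together with affine linearity of $\Gamma$ on each simplex. The paper states this more tersely, while you spell out the combinatorial reason why membership in the closed star forces $i$ to lie in every element of the chain; the substance is identical.
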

\begin{proof}
	By definition, the map $\Gamma$ sends the vertices of a geometric simplex $\sigma$ in $\operatorname{bst}v_i$ to~$C_i$.
	As the cover element $C_i$ is convex and $\Gamma$ is affine linear on $\sigma$, it follows that $\Gamma(\sigma)$ is also contained in $C_i$. This shows $\Gamma(\operatorname{bst}v_i)\subseteq C_i$, proving the claim.
\end{proof}

We will now show that $\Psi$ is a homotopy inverse to $\Gamma$, which implies $|\Nrv(\mathscr{C})|\cong|\Sd\Nrv(\mathscr{C})|\simeq X$.

\begin{proof}[Proof of \cref{thm:nerve_comp_conv}]
 It follows from \cref{prop:gamma_carried} and \cref{prop:psi_carried} that the pair of maps $(\Gamma\circ\Psi,\operatorname{id}_{[n]})$ constitutes a morphism of covered spaces. Hence, $\Gamma\circ\Psi$ is carried by the identity on $\mathscr{C}$ and thus it is homotopic to the identity $\operatorname{id}_X$ by a straight line homotopy:
for every $x \in C_i$, we have $\Gamma\circ\Psi(x) \in C_i$, and since the $C_i$ are convex, the line segment joining $x$ and $\Gamma\circ\Psi(x)$ lies in $C_i$.
Similarly, the pair of maps $(\Psi\circ\Gamma, \operatorname{id}_{[n]})$ constitutes a morphism of covered spaces. That the composition $\Psi\circ\Gamma$ is homotopic to $\operatorname{id}_{|\Sd\Nrv(\mathscr{C})|}$ now follows from \cref{lemma:int_contractible} and the following \cref{prop:hequ_bstar}.
\end{proof}

Recall that any two maps into a contractible space are homotopic (to a constant map). The following statement generalizes this fact to good covers, where contractibility is only guaranteed locally.
\begin{prop}
	\label{prop:hequ_bstar}
	Let $K$ be a finite simplicial complex and let $Y$ be a topological space. Assume we have two morphisms of covered spaces
	\[(f,\varphi),(g,\varphi)\colon (|K|,(|L_i|)_{i\in[n]}) \to (Y,(V_j)_{j\in J}),\]
	with the same map of index sets $\varphi \colon [n] \to J$, where $(|L_i|)_{i\in[n]}$ is a cover by subcomplexes and $(V_j)_{j\in J}$ is a good cover. Then $f$ is homotopic to $g$.
\end{prop}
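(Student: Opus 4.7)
The plan is to build a homotopy $H \colon |K| \times I \to Y$ from $f$ to $g$ by induction on the skeleton of $K$, at each stage extending across one new simplex by invoking contractibility of a good-cover intersection.

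Setup: for each simplex $\sigma \in K$, I set $I(\sigma) = \{i \in [n] \mid \sigma \in L_i\}$. Because each $L_i$ is a subcomplex, any interior point of $\sigma$ lying in $|L_i|$ forces $\sigma \in L_i$, so the covering of $|K|$ by the $|L_i|$ yields $I(\sigma) \neq \emptyset$. Define $V(\sigma) = \bigcap_{i \in I(\sigma)} V_{\varphi(i)}$. Since $f$ and $g$ are both carried by $\varphi$, we have $f(|\sigma|) \cup g(|\sigma|) \subseteq V(\sigma)$, so $V(\sigma)$ is a non-empty finite intersection of elements of the good cover $(V_j)_{j \in J}$, and hence contractible. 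The key combinatorial observation is a monotonicity statement: for any face $\tau \subseteq \sigma$ one has $I(\tau) \supseteq I(\sigma)$ and therefore $V(\tau) \subseteq V(\sigma)$. This is what will make the partial homotopies constructed on faces compatible with the target $V(\sigma)$.

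I then construct $H$ inductively on dimension so that $H(\cdot, 0) = f$, $H(\cdot, 1) = g$, and $H(|\sigma| \times I) \subseteq V(\sigma)$ for every simplex $\sigma$. On vertices, path-connectedness of $V(v)$ supplies a path from $f(v)$ to $g(v)$ inside $V(v)$. For the inductive step, suppose $H$ is defined on $|K^{(k-1)}| \times I$ with the stated property. For each $k$-simplex $\sigma$, the map $H$ is already specified on the prism boundary $A_\sigma = (|\sigma| \times \{0,1\}) \cup (\partial|\sigma| \times I)$: the two base faces by $f$ and $g$, the lateral faces by the inductive hypothesis applied to proper faces $\tau \subsetneq \sigma$. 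The inclusion $V(\tau) \subseteq V(\sigma)$ guarantees that the entire partial homotopy $H|_{A_\sigma}$ lands in $V(\sigma)$. Since $(|\sigma| \times I, A_\sigma)$ is a CW pair homeomorphic to $(D^{k+1}, S^k)$ and $V(\sigma)$ is contractible (so in particular $\pi_k(V(\sigma)) = 0$), the obstruction to extending vanishes, and an extension $|\sigma| \times I \to V(\sigma)$ exists. Iterating up to $\dim K$, which is finite because $K$ is finite, produces the desired $H$ on all of $|K| \times I$.

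The only point that demands care is verifying that $V(\sigma)$ is non-empty for every simplex, which is exactly where the two hypotheses cooperate: the subcomplex structure of $(|L_i|)$ makes $I(\sigma)$ non-empty, while $f$ and $g$ being carried by $\varphi$ put actual points into $V(\sigma)$, so that the good-cover hypothesis delivers contractibility rather than a vacuous statement. Once this non-emptiness is established along with the monotonicity $V(\tau) \subseteq V(\sigma)$, the remainder is a standard cell-by-cell obstruction argument, and finiteness of $K$ avoids any issues of convergence of the inductive procedure.
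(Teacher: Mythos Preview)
Your proof is correct and follows essentially the same approach as the paper: both build the homotopy by induction on the skeleton of $K$, using at each simplex $\sigma$ the contractibility of the intersection $\bigcap_{i:\sigma\in L_i} V_{\varphi(i)}$ to extend the partial homotopy from $\partial|\sigma|\times I \cup |\sigma|\times\{0,1\}$ across $|\sigma|\times I$. Your presentation makes the monotonicity $V(\tau)\subseteq V(\sigma)$ explicit, whereas the paper phrases the inductive invariant as the partial homotopy being carried by $\varphi$, but these are equivalent formulations of the same argument.
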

\begin{proof}
	Let $I=[0,1]$ denote the unit interval. We inductively construct homotopies $H^m\colon |\sk{m}{K}|\times I \to Y$
between $f |_{|\sk{m}{K}|}$ and $g |_{|\sk{m}{K}|}$
such that $H^m$ is carried by the map of indexed covers
$\varphi \colon (|\sk{m}{L_i}| \times I)_{i\in[n]}  \to (V_{j})_{j\in J}$ induced by the given map of index sets $\varphi \colon [n] \to J$. If $m=\dim K$ is the dimension of the simplicial complex, the map $H=H^m$ is the desired homotopy between~$f$ and $g$.

	To establish the base case $m=0$, let $p$ be any vertex of $K$ and let $i_0,\dots,i_k\in [n]$ be those indices $i$ with $|p|\in |L_i|$.
	By the assumption that $f$ and $g$ are carried by $\varphi$, we know that both $f(|p|)$ and $g(|p|)$ are contained in
	$S:=\bigcap_{l=0}^k V_{\varphi(i_l)}$, which is contractible by assumption, and thus we can choose a path in $S$ that connects these two points. This defines the desired homotopy $H^0$.
	To see that the map $H^0$ is carried by $P^0$, let $(|p|,t)\in |p|\times I$ be a point. If $(|p|,t)\in |L_i|\times I$, then $i=i_l$ is one of the indices above. Thus, by construction, $H^0((|p|,t))\in S\subseteq V_{\varphi(i_l)}=V_{\varphi(i)}$, and the claim is proven.

	For the induction step from $(m-1)$ to $m$, let $H^{m-1}$ satisfy the induction hypothesis. Let $\sigma$ be an $m$-simplex in $\sk{m}{K}$. Furthermore, let $i_0,\dots,i_k\in [n]$ be those indices $i$ with $\sigma
	\in L_i$. By the induction hypothesis, we have
	\[H^{m-1}(|\partial\sigma|\times I)\subseteq W:= \bigcap_{l=0}^k V_{\varphi(i_l)}.\]
	By the assumption that $(V_j)$ is good, the space $W$ is contractible,
	and so we can extend the homotopy
	$H^{m-1}|_{|\partial\sigma|\times I}$
	to a homotopy $H^{m}|_{|\sigma|\times I}$
	from $f |_{|\sigma|}$ to $g |_{|\sigma|}$:

	\[
	\begin{tikzcd}[column sep=12ex,/tikz/column 1/.append style={anchor=base east}]
	(|\partial \sigma| \times I) \cup (|\sigma|\times\{0,1\})\cong
	\makebox[0pt][l]{$\subnode{Sp}{S^{m}}$}\phantom{{B}^{m+1}}
	& \subnode{W}{W}\subseteq Y\\
	\ |\sigma|\times I\cong \subnode{B}{B^{m+1}}\arrow[swap,dotted]{ur}{H^{m}|_{|\sigma|\times I}}&
	\end{tikzcd}
\begin{tikzpicture}[overlay, remember picture]
   \draw [{Hooks[width=+0pt 10.8,length=+0pt 3.6,harpoon,line cap=round,right]}->] (Sp) edge (B.north -| Sp.south);
   \draw [->] (Sp)
   edge node[auto, font=\everymath\expandafter{\the\everymath\scriptstyle}, inner sep=+0.5ex]
   {$(H^{m-1},(f,g))$}
   (W);
\end{tikzpicture}
	\]
	Because the $m$-simplex $\sigma$ was arbitrary, we can extend $H^{m-1}\colon |\sk{m-1}{K}|\times I \to Y$ to $H^{m}\colon |\sk{m}{K}|\times I \to Y$.

	By construction, this map is carried by $\varphi \colon (|\sk{m}{L_i}| \times I)_{i\in[n]}  \to (V_{j})_{j\in J}$.
	To see this, we verify that for any $i$, every point
	$(x,t)\in |\sk{m}{L_i}|\times I$
	is mapped to $H^m(x,t)\in V_{\varphi(i)}$. 
	By induction, this is true whenever $x \in |\sk{m-1}{L_i}|$, so it remains to show the claim for~$x$ in the interior of some $m$-simplex $\sigma \in L_i$. 
	Now $i=i_l$ is one of the indices above, and by construction of $H^{m}$, we have
	\[H^m(x,t)\in H^{m}(|\sigma|\times I)\subseteq W \subseteq V_{\varphi(i_l)}=V_{\varphi(i)},\] proving the claim.
\end{proof}
\subsection{A Functorial Nerve Theorem for Closed Convex Covers}
Now, we will discuss two ways of turning the result in \cref{thm:nerve_comp_conv} into a functorial nerve theorem. We start by giving a version that follows the strategy explained in \cref{section:func_nerve}. After this, we will give a version that is functorial on the nose but needs to use the concept of pointed covers.
\begin{thm}
    \label{thm:comp_conv_zigzag}
    If $X \subset \bbR^d$, and $\mathscr{C} = ( C_i )_{i\in[n]}$ is a cover of $X$ by closed convex subsets,
then the natural maps $\rho_S \colon \SmallBarCon(\mathscr{C}) \to X$ and
$\rho_N \colon \SmallBarCon(\mathscr{C}) \to |\Nrv(\mathscr{C})|$, introduced in \cref{section:func_nerve}, are homotopy equivalences.
\end{thm}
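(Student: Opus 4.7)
The plan is to handle $\rho_N$ and $\rho_S$ separately, each exploiting convexity in a different way, and in both cases to build on machinery that has already been introduced.

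For $\rho_N$, I would use the bar-construction strategy of \cref{functorial_nerve_via_bar}. Any non-empty intersection $C_J$ is an intersection of convex sets, hence convex and in particular contractible. Therefore the canonical natural transformation $\mathscr{D}_{\mathscr{C}} \Rightarrow *^{P_{\mathscr{C}}}$ is a pointwise homotopy equivalence, so \cref{prop:homotopy_lemma_top} gives that the induced map $\pi_{\Sd N} \colon \BarCon(\mathscr{D}_{\mathscr{C}}) \to |\Sd \Nrv(\mathscr{C})|$ is a homotopy equivalence. The commutativity of Diagram \ref{diagram:two-roofs}, together with the homeomorphisms $\BarCon(\mathscr{D}_{\mathscr{C}}) \cong \SmallBarCon(\mathscr{C})$ and $\alpha_{\Nrv(\mathscr{C})} \colon |\Sd \Nrv(\mathscr{C})| \to |\Nrv(\mathscr{C})|$, transports this conclusion to $\rho_N$.

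For $\rho_S$, I would leverage \cref{thm:nerve_comp_conv}, which provides a homotopy equivalence $\Gamma \colon |\Sd \Nrv(\mathscr{C})| \to X$ constructed by choosing a point $p_{J} \in C_J$ for every $J \in \Nrv(\mathscr{C})$ and extending affine linearly on each simplex of the subdivision. Setting $\tilde{\Gamma} = \Gamma \circ \alpha_{\Nrv(\mathscr{C})}^{-1} \colon |\Nrv(\mathscr{C})| \to X$, which is still a homotopy equivalence, I would construct a straight-line homotopy $H \colon \SmallBarCon(\mathscr{C}) \times I \to X$ from $\rho_S$ to $\tilde{\Gamma} \circ \rho_N$ by the formula
\[
H\bigl((x,t),s\bigr) = (1-s)\, x + s\, \tilde{\Gamma}(t) \qquad \text{for } (x,t) \in C_J \times |J|.
\]
Given such a homotopy, $\rho_S$ is homotopic to the composition $\tilde{\Gamma} \circ \rho_N$ of homotopy equivalences, and is therefore itself a homotopy equivalence.

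The main obstacle is checking that $H$ is well-defined and takes values in $X$. For a point $(x,t) \in C_J \times |J|$, the image $\alpha_{\Nrv(\mathscr{C})}^{-1}(t)$ lies in some simplex of $\Sd J$ corresponding to a chain $J_0 \subsetneq \cdots \subsetneq J_k \subseteq J$ of non-empty subsets, so that $\tilde{\Gamma}(t)$ lies in the affine simplex spanned by $p_{J_0}, \dots, p_{J_k}$. Because $C_{J_i} \subseteq C_{J_0}$ for each $i$ and $C_{J_0}$ is convex, this affine simplex is contained in $C_{J_0}$; likewise $x \in C_J \subseteq C_{J_0}$, so the line segment from $x$ to $\tilde{\Gamma}(t)$ lies in $C_{J_0} \subseteq X$. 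Compatibility with the identifications in \cref{defi:blowup} is immediate because the formula depends only on $x$ and on $t$ viewed in $|\Nrv(\mathscr{C})|$, and not on the particular simplex $J$ used to represent the class; continuity follows from the quotient description of $\SmallBarCon(\mathscr{C})$ together with continuity of $\tilde{\Gamma}$ and of the straight-line formula in $\mathbb{R}^d$.
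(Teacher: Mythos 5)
Your proposal is correct and follows essentially the same route as the paper: the nerve-side map is handled via pointwise contractibility of the convex intersections together with \cref{prop:homotopy_lemma_top} and Diagram \ref{diagram:two-roofs}, and the space-side map is shown to be a homotopy equivalence by exhibiting a straight-line homotopy (justified by convexity of the $C_{J_0}$) between the projection to the space and the composite of $\Gamma$ from \cref{thm:nerve_comp_conv} with the projection to the nerve. The only cosmetic difference is that you carry out this homotopy directly on $\SmallBarCon(\mathscr{C})$ rather than on $\BarCon(\mathscr{D}_{\mathscr{C}})$, which changes nothing of substance.
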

\begin{proof}
    As explained at the end of \cref{functorial_nerve_via_bar}, it suffices to consider the (not necessarily commutative) diagram
\[
\begin{tikzcd}%
    &\subnode{u}{\BarCon(\mathscr{D}_\mathscr{C})}%
    &\\
    \subnode{d1}{X}&& \subnode{d2}{|\Sd}\Nrv(\mathscr{C})\arrow{ll}{\Gamma}|,
\end{tikzcd}
\begin{tikzpicture}[overlay, remember picture]
  \draw [->] (u)
  edge node[auto, font=\everymath\expandafter{\the\everymath\scriptstyle}, inner sep=+0.5ex]
  {$\pi_{\Sd N}$}
  (d2);
  \draw [->] (u)
  edge node[auto, font=\everymath\expandafter{\the\everymath\scriptstyle}, inner sep=+0.5ex, swap]
  {$\pi_S$}
  (d1);
\end{tikzpicture}
\]
where $\Gamma$ is as in \cref{thm:nerve_comp_conv}, and show that $\pi_S$ and $\pi_{\Sd N}$ are homotopy equivalences.

    By \cref{prop:homotopy_lemma_top} and the fact that convex sets are contractible, we know that $\pi_{\Sd N}$ is a homotopy equivalence.
    Every point $p\in \BarCon(\mathscr{D}_\mathscr{C})$ can be described as a pair $p=(x,\alpha)$, where $\alpha$ is a point in $|\sigma|$, for some $\sigma=(J_n\subset \cdots\subset J_0)\in \Sd\Nrv(\mathscr{C})$, and $x\in C_{J_0}$. By construction, we have $\Gamma(\pi_{\Sd N}(p))=\Gamma(\alpha)\in C_{J_n}$ and $\pi_{S}(p)= x\in C_{J_0}\subseteq C_{J_n}.$ Therefore, a straight line homotopy shows that the maps $\Gamma\circ \pi_{\Sd N}\simeq \pi_{S}$ are homotopic. As $\Gamma$ and $\pi_{\Sd N}$ are homotopy equivalences the same is true for $\pi_{S}$.
\end{proof}

\subsection{A One-Arrow Functorial Nerve Theorem for Pointed Covers}

We will now describe the second way of obtaining a functorial nerve theorem. Recall from \cref{defi:pointed_covered} and the paragraphs afterwards the definition of $\Cov_*$ and its subcategory $\ClConv_*$. Before stating the functorial nerve theorem let us consider one more important example of a pointed covered space.
\begin{ex}
\label{ex:pointed_covered_simp_comp}
 Let $K$ be a simplicial complex. The cover $(\operatorname{bst}(v))_{v\in \vertx{K}}$ of $|K|$ by the closed barycentric stars is pointed by the vertices $(|w|)_{w\in \vertx{\Sd K}}$.
\end{ex}

Let $(X,\mathscr{A}_*)\in \ClConv_*$ be a pointed covered space. Recall that the construction of $\Gamma$ at the beginning of \cref{section:closed-convex} requires many choices. Those choices can be made such that $\Gamma$ is a morphism of pointed covered spaces, where the nerve is a pointed covered space as described in \cref{ex:pointed_covered_simp_comp}, and such that it is affine linear on each simplex of the barycentric subdivision of the nerve.

\begin{thm} \label{one-arrow-functorial-nerve-thm}
 The homotopy equivalence $\Gamma\colon|\Sd\Nrv(\mathscr{A})|\to X$
 is natural with respect to the morphisms in $\ClConv_*$.
\end{thm}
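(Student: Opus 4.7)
The plan is to prove naturality by verifying that the relevant square commutes on each simplex of $|\Sd\Nrv(\mathscr{A})|$, reducing this to an equality on vertices by an affine-linearity argument. Concretely, given a morphism $(f,\varphi)\colon (X,\mathscr{A}_*)\to (Y,\mathscr{B}_*)$ in $\ClConv_*$ with basepoints $(a_\sigma)$ and $(b_\tau)$, I need to show that the square
\[
\begin{tikzcd}
|\Sd\Nrv(\mathscr{A})| \arrow[r,"\Gamma_{\mathscr{A}}"] \arrow[d,"|\Sd\varphi_*|"'] & X \arrow[d,"f"] \\
|\Sd\Nrv(\mathscr{B})| \arrow[r,"\Gamma_{\mathscr{B}}"'] & Y
\end{tikzcd}
\]
commutes, where $\Gamma_{\mathscr{A}}$ and $\Gamma_{\mathscr{B}}$ are the maps constructed in \cref{section:closed-convex} but now with the basepoint choices $a_\sigma\in A_\sigma$ and $b_\tau\in B_\tau$ specified by the pointings.

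First I would check commutativity on vertices. A vertex of $\Sd\Nrv(\mathscr{A})$ is a simplex $\sigma\in\Nrv(\mathscr{A})$, and by construction $\Gamma_{\mathscr{A}}(\sigma)=a_\sigma$. Applying $f$ gives $f(a_\sigma)$, which equals $b_{\varphi_*(\sigma)}$ by the very definition of a morphism of pointed covered spaces in \cref{defi:pointed_covered}. Going the other way around the square, $|\Sd\varphi_*|$ sends the vertex $\sigma$ to the vertex $\varphi_*(\sigma)$, and then $\Gamma_{\mathscr{B}}$ sends it to $b_{\varphi_*(\sigma)}$. Hence the two composites agree on vertices.

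Next I would upgrade this to commutativity on arbitrary simplices of $|\Sd\Nrv(\mathscr{A})|$. A typical such simplex is a flag $\sigma_0\subsetneq\sigma_1\subsetneq\cdots\subsetneq\sigma_n$ of simplices of $\Nrv(\mathscr{A})$. The key observation is that the basepoints $a_{\sigma_0},\dots,a_{\sigma_n}$ all lie in the convex set $A_{\sigma_0}$ (since $A_{\sigma_i}\subseteq A_{\sigma_0}$ for each $i$), so by affine linearity of $\Gamma_{\mathscr{A}}$ on the simplex, $\Gamma_{\mathscr{A}}$ maps the flag entirely into $A_{\sigma_0}$. Since $f$ is assumed affine linear on each cover element, $f\circ\Gamma_{\mathscr{A}}$ is then affine linear on the simplex. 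On the other side, $|\Sd\varphi_*|$ is a simplicial map and hence affine linear on the simplex, while $\Gamma_{\mathscr{B}}$ is affine linear on each simplex of $|\Sd\Nrv(\mathscr{B})|$, so the composite $\Gamma_{\mathscr{B}}\circ|\Sd\varphi_*|$ is also affine linear on the simplex. Two affine linear maps into $\mathbb{R}^d$ that agree on vertices agree everywhere, so the diagram commutes on each simplex, and therefore globally.

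The main potential obstacle is ensuring that the composites really are affine linear on each simplex of the subdivision. This requires both the hypothesis that morphisms in $\ClConv_*$ are affine linear on each cover element (needed to keep $f\circ\Gamma_{\mathscr{A}}$ affine linear, since $\Gamma_{\mathscr{A}}$ sends the simplex into a single cover element $A_{\sigma_0}$) and the containment $\{a_{\sigma_0},\dots,a_{\sigma_n}\}\subseteq A_{\sigma_0}$, which uses the order-reversing behavior of $A_{(\cdot)}$ on simplices. Once these two ingredients are in place, the argument is essentially the vertex check plus a standard extension-by-affine-linearity remark.
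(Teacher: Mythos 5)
Your proposal is correct and follows essentially the same route as the paper's proof: check the square on vertices using the basepoint condition $f(a_\sigma)=b_{\varphi_*(\sigma)}$, then conclude by observing that both composites are affine linear on each simplex of the barycentric subdivision. Your additional justification of why $f\circ\Gamma_{\mathscr{A}}$ is affine linear (via $\Gamma_{\mathscr{A}}$ landing in the single convex set $A_{\sigma_0}$ and $f$ being affine linear on cover elements) fills in a detail the paper leaves implicit, but the argument is the same.
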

\begin{proof}
To show naturality, let $(f,\varphi)\colon (X,\mathscr{A}_*)\to (Y,\mathscr{C}_*)$ be a morphism in $\ClConv_*$. Then we need to prove that the diagram \[\begin{tikzcd}
                                                                                                                                                                                                                                                                                                                                                                                               X\arrow{r}{f} & Y\\
\ |\Sd \Nrv (\mathscr{A})|\arrow{u}\arrow{r}{|\Sd \varphi_*|} & \arrow{u} |\Sd\Nrv (\mathscr{C})|                                                                                                                                                                                                                                                                                                                                                                                                                                                                                                                                                                \end{tikzcd}
 \]
 commutes. Both compositions are maps $|\Sd\Nrv(\mathscr{A})|\to Y$ that are affine linear on each simplex of the barycentric subdivision. Hence, they are completely determined by their values on the vertices, where both compositions coincide by construction.
\end{proof}

\section{Nerve Theorems for Simplicial and Semi-Algebraic Covers}
\label{section:triangulated-covers}

One can prove a nerve theorem for simplicial complexes
as a corollary of Quillen's Theorem A for posets.
In this section, we use combinatorial arguments to prove a functorial version of this result.
Using a well-known triangulation theorem for semi-algebraic sets,
this functorial nerve theorem for simplicial complexes implies such a theorem
for finite, closed, semi-algebraic covers of compact semi-algebraic sets.
Finally, we use the same combinatorial methods to prove a functorial version of a nerve theorem of Bj\"orner.

\paragraph{Posets and Homotopy Theory}
We begin by reviewing some basic facts about posets,
following Quillen \cite{quillen-posets}.

Recall that the flag complex $\Flag(P)$ of a poset $P$ is the simplicial complex
whose vertices are the elements of $P$ and
whose $n$-simplices are the chains $x_0 < \dots < x_n$ of elements of $P$.
We will sometimes say that a poset has a certain topological property if its flag complex has that property.
For example, we say that a poset $P$ is contractible if $|\Flag(P)|$ is contractible,
and we say a map $f \colon P \to Q$ of posets is a homotopy equivalence if
the induced map $|\Flag(P)| \to |\Flag(Q)|$ is a homotopy equivalence.
If $P$ and $Q$ are posets, then there is a canonical homeomorphism
\begin{equation} \label{flag-and-products}
	|\Flag(P \times Q)| \xrightarrow{\iso} |\Flag(P)| \times |\Flag(Q)|
\end{equation}
induced by the projection maps.
As explained in \cite{quillen-posets},
the product must be taken in the category of compactly generated spaces,
\cref{compactly-generated}.
However, if one of $P$ or $Q$ is finite, then this agrees with the usual product.
It follows that if $f,g \colon P \to Q$ are maps of posets such that $f(x) \leq g(x)$ for all $x \in P$,
then $|\Flag(f)|, |\Flag(g)| \colon |\Flag(P)| \to |\Flag(Q)|$ are homotopic.
To see this, observe that the relation $f \leq g$ determines a map of posets $H: P \times \{0 < 1\} \to Q$,
and $|\Flag(\{0 < 1\})|$ is an interval.

The main result about posets that we need is Quillen's Theorem A \cite{quillen-K-theory}.
Given a map $f \colon P \to Q$ of posets and $y \in Q$, define the subposet of $P$:
$f/y = \{ x \in P \mid f(x) \leq y \}$.

\begin{thm}[Quillen's Theorem A]
If $f \colon P \to Q$ is a map of posets, and $f/y$ is contractible for all $y \in Q$,
then $f$ is a homotopy equivalence.
\end{thm}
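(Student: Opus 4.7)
The plan is to factor $f$ through an auxiliary ``mapping cylinder'' poset, reducing the theorem to two homotopy equivalences --- one that holds unconditionally and one that uses the hypothesis. Define
\[
    N = \{(x, y) \in P \times Q : f(x) \leq y\}
\]
with the order inherited from $P \times Q$, together with the two projections $\pi_P \colon N \to P$ and $\pi_Q \colon N \to Q$. By construction $f(\pi_P(x, y)) = f(x) \leq y = \pi_Q(x, y)$ for every $(x, y) \in N$, so $f \circ \pi_P \leq \pi_Q$ pointwise; the observation about pointwise inequalities of poset maps made just before the theorem then gives $|\Flag(f \circ \pi_P)| \simeq |\Flag(\pi_Q)|$. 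Once both $\pi_P$ and $\pi_Q$ are shown to be homotopy equivalences, $|\Flag(f)|$ follows by two-out-of-three applied to this homotopy-commutative triangle.

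First I would handle $\pi_P$, which requires no hypothesis on $f$. Exhibit a section $\sigma \colon P \to N$ by $\sigma(x) = (x, f(x))$; this is order-preserving, satisfies $\pi_P \circ \sigma = \mathrm{id}_P$, and satisfies $\sigma \circ \pi_P \leq \mathrm{id}_N$ because $(x, f(x)) \leq (x, y)$ whenever $(x, y) \in N$. Applying the order-preserving homotopy principle once more makes $|\Flag(\sigma)|$ a homotopy inverse to $|\Flag(\pi_P)|$.

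Second, I would handle $\pi_Q$ using the bar construction. Consider the diagram $\mathscr{F} \colon Q \to \Top$ defined by $\mathscr{F}(y) = |\Flag(f/y)|$, with transition maps induced by the inclusions $f/y \hookrightarrow f/y'$ for $y \leq y'$. Under the contractibility hypothesis, $\mathscr{F}$ is pointwise homotopy equivalent to the constant diagram $*^Q$, so \cref{prop:homotopy_lemma_top} yields $\BarCon(\mathscr{F}) \simeq \BarCon(*^Q) \cong |\Flag(Q)|$. What is needed to conclude is a natural identification $\BarCon(\mathscr{F}) \simeq |\Flag(N)|$ that intertwines the canonical projection $\BarCon(\mathscr{F}) \to \BarCon(*^Q)$ with $|\Flag(\pi_Q)|$; granted this, $|\Flag(\pi_Q)|$ is a homotopy equivalence.

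The hard part is precisely this Grothendieck-construction identification. I expect the cleanest route is to run the bar construction argument entirely inside the category of posets: define a poset-level bar construction for diagrams $Q \to \Po$, check that, applied to $y \mapsto f/y$, it reproduces the poset $N$ on the nose, and only then transfer back to $\Top$ by applying $|\Flag(-)|$ together with a poset analogue of \cref{prop:homotopy_lemma_top}. This fits the paper's stated plan of using ``a bar construction constructed in the category of posets'' in this section, and avoids the subtle task of matching topological simplicial structures coming from two different subdivision conventions.
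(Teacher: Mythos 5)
First, note that the paper does not prove Quillen's Theorem A at all; it cites Walker and Barmak for proofs at this level of generality, so there is no internal proof to compare against. Your reduction steps are correct and standard: the comma poset $N$, the section $\sigma(x)=(x,f(x))$ showing $\pi_P$ is a homotopy equivalence via the pointwise-inequality principle, and the homotopy $f\circ\pi_P\simeq\pi_Q$ together with two-out-of-three correctly reduce everything to showing that $\pi_Q$ is a homotopy equivalence. (This is exactly the argument the paper uses for $\lambda_S$ in \cref{proj-are-equiv}.)

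The genuine gap is in your step for $\pi_Q$, and it is precisely where all the content of Theorem A lives. The identification you need, $\BarCon(\mathscr{F})\simeq|\Flag(N)|$ over $|\Flag(Q)|$, is an instance of Thomason's homotopy colimit theorem, and your proposed route does not deliver it: a poset-level bar construction of $y\mapsto f/y$ does reproduce $N$ (this is the Grothendieck construction, cf.\ \cref{pobar-is-a-bar}), but applying $|\Flag(-)|$ to it yields $|\Flag(N)|$, \emph{not} the topological bar construction $\BarCon(|\Flag|\circ(f/-))$ — a simplex of $\Flag(N)$ is a chain $(x_0,y_0)<\dots<(x_m,y_m)$ with only $f(x_i)\le y_i$, whereas the cells of the bar construction correspond to chains with $f(x_m)\le y_0$. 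Bridging these two objects is exactly the nontrivial discrete Morse collapse of \cref{triangulation-of-blowup,collapse-to-T}, which you would have to redo for an arbitrary diagram of subposets $f/y\subseteq P$; it is not automatic. Worse, the ``poset analogue of \cref{prop:homotopy_lemma_top}'' you invoke (pointwise homotopy equivalences of diagrams of posets induce a homotopy equivalence of Grothendieck constructions) does not exist in the paper and is essentially equivalent in strength to Theorem A itself, so citing it would be circular. To close the gap you would need either to carry out the collapse argument in this generality, or to switch to Quillen's original bisimplicial set $S(f)_{p,q}=\{(x_0\le\dots\le x_p,\,y_0\le\dots\le y_q): f(x_p)\le y_0\}$ and use the realization lemma for levelwise weak equivalences of bisimplicial sets in both directions, upgrading to a genuine homotopy equivalence at the end via Whitehead's theorem.
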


It should be said that Quillen's theorem is more general than this result,
but this is what we will use. For a nice proof at this level of generality, see \cite{walker} or \cite{barmak}, where it is shown that for finite posets the map $f$ is even a simple homotopy equivalence.
We now use Quillen's Theorem A to give a simple proof of the nerve theorem
for covers of a simplicial complex by subcomplexes; 
this is similar to \cite[Lemma 1.1]{MR607041}, 
\cite[Theorem 6]{bjoerner}, 
and \cite[Theorem 4.3]{barmak}.

\begin{prop} \label{nerve-tri-cov}
Let $K$ be a simplicial complex and let $\mathscr{A} = (K_i \subseteq K)_{i \in I}$
be a locally finite good cover of $K$ by subcomplexes.
Then $K$ is homotopy equivalent to $\Nrv(\mathscr{A})$.
\end{prop}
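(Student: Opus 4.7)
The plan is to prove this by applying Quillen's Theorem A (recalled just before the statement) to a natural poset map between the simplices of $K$ and those of $\Nrv(\mathscr{A})$. Concretely, I would define
\[
	f \colon \Pos(K) \to \Pos(\Nrv(\mathscr{A}))^{\opp}, \qquad
	f(\sigma) = \{\, i \in I \mid \sigma \in K_i \,\}.
\]
First I would check this is well-defined: the set $f(\sigma)$ is non-empty because $\mathscr{A}$ covers $K$, and it is finite because $\mathscr{A}$ is locally finite (any point in the relative interior of $\sigma$ has a neighborhood meeting only finitely many cover elements, and every $K_i$ containing $\sigma$ contributes such an element). Moreover $\sigma \in \bigcap_{i \in f(\sigma)} K_i$, so $f(\sigma)$ is a simplex of $\Nrv(\mathscr{A})$. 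Order preservation is immediate: subcomplexes are closed under faces, so $\sigma \subseteq \tau$ implies $f(\tau) \subseteq f(\sigma)$, which is exactly the order relation $f(\sigma) \leq f(\tau)$ in $\Pos(\Nrv(\mathscr{A}))^{\opp}$.

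Next I would compute the fiber $f/J$ for $J \in \Pos(\Nrv(\mathscr{A}))^{\opp}$. Unwinding the definition, $f(\sigma) \leq J$ in the opposite order means $J \subseteq f(\sigma)$, i.e.\ $\sigma \in K_i$ for all $i \in J$, which is the same as $\sigma \in K_J = \bigcap_{i \in J} K_i$. Thus $f/J = \Pos(K_J)$. Since $J \in \Nrv(\mathscr{A})$ the intersection $K_J$ is non-empty, and because $\mathscr{A}$ is a good cover of $K$ the geometric realization $|K_J|$ is contractible. Using the canonical homeomorphism $|\Flag(\Pos(K_J))| \cong |\Sd K_J| \cong |K_J|$, the poset $f/J$ is contractible in the sense defined earlier.

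Quillen's Theorem A then yields that $f$ is a homotopy equivalence of posets, meaning that the induced map $|\Flag(\Pos(K))| \to |\Flag(\Pos(\Nrv(\mathscr{A}))^{\opp})|$ is a homotopy equivalence. Since $|\Flag(P^{\opp})| = |\Flag(P)|$ as simplicial complexes, and since for any simplicial complex $L$ one has the natural homeomorphism $\alpha_L \colon |\Sd L| = |\Flag(\Pos(L))| \to |L|$, we deduce $|K| \simeq |\Sd K| \simeq |\Sd \Nrv(\mathscr{A})| \cong |\Nrv(\mathscr{A})|$, which is the desired homotopy equivalence.

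The only mildly delicate step is checking that $f(\sigma)$ is really a simplex of the nerve (in particular, that it is finite), which is where the local finiteness hypothesis enters; everything else is a routine unwinding of definitions plus Quillen's Theorem A. In the functorial version that comes next, the main obstacle will be upgrading this poset map argument so that it is natural in morphisms of covered spaces, which presumably is why the author highlights the intermediate blowup/bar construction — but for the present, non-functorial statement, the construction of $f$ and the fiber computation above should suffice.
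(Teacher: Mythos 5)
Your proposal is correct and follows essentially the same route as the paper: the same poset map $f(\sigma) = \{i \in I \mid \sigma \in K_i\}$, the same identification $f/J = \Pos(K_J)$, contractibility from goodness, Quillen's Theorem A, and the concluding chain of homeomorphisms through the barycentric subdivisions. The extra well-definedness checks you include (finiteness of $f(\sigma)$ via local finiteness, order preservation) are correct and only make explicit what the paper leaves implicit.
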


\begin{proof}
Define a map of posets $f \colon \Pos(K) \to \Pos(\Nrv(\mathscr{A}))^{\opp}$ by
\[
	f(\sigma) = \{ i \in I  \mid  \sigma \in K_i \} \; .
\]
As $\mathscr{A}$ is locally finite, $f(\sigma)$ is finite for all $\sigma \in K$. 
We will show that $f$ is a homotopy equivalence.
As usual, for $J \subseteq I$, we write $K_J = \cap_{i \in J} K_i$.
By Quillen's Theorem A, it suffices to show that,
for all elements $J$ of $\Pos(\Nrv(\mathscr{A}))^{\opp}$,
the poset $f/J$ is contractible.
Unwinding the definition,
$f/J$ is the subposet of $\Pos(K)$ with elements $\sigma \in \Pos(K)$ such that $J \subseteq f(\sigma)$.
By definition, $J \subseteq f(\sigma)$ if and only if $\sigma \in K_J$.
So, $f/J = \Pos(K_J)$.
As the intersection $K_J$ is nonempty, it is contractible by assumption,
and so $|\Flag(\Pos(K_J))| \iso |K_J|$ is contractible.
Thus, $f$ is a homotopy equivalence.
The homotopy equivalence of the proposition is the composition:
\[
	|K| \iso |\Sd(K)| \xrightarrow{f_*}
	|\Sd(\Nrv(\mathscr{A}))| \iso |\Nrv(\mathscr{A})| \; . \qedhere
\]
\end{proof}

\subsection{A Functorial Nerve Theorem for Simplicial Covers}

In order to prove a functorial version of \cref{nerve-tri-cov},
we now introduce a poset $\PoBar$ that is intermediate between
the posets $\Pos(K)$ and $\Pos(\Nrv(\mathscr{A}))^{\opp}$
that appeared in the proof.
We use the notation $\PoBar$ because this construction can be seen as a bar construction
taken in the category of posets, as we explain in \cref{pobar-is-a-bar}. 
An additional benefit of using this intermediate object is that 
it allows one to remove the assumption that the cover is locally finite. 
This is similar to the strategy of Bj\"{o}rner \cite[Lemma 1.1]{MR607041}, 
which he attributes to Quillen.

\begin{defi}
If $K$ is a simplicial complex and $\mathscr{A} = (K_i \subseteq K)_{i \in I}$
is a cover of $K$ by subcomplexes,
let $\PoBar(\mathscr{A})$ be the poset with the underlying set
\[
	\PoBar(\mathscr{A}) =
	\left\lbrace (\sigma, J)  \mid  J \subseteq I \text{ finite, } \sigma \in K_J\right\rbrace
\]
where
$(\sigma, J) \leq (\sigma', J')$ if and only if $\sigma \subseteq \sigma'$ and $J \supseteq J'$.
\end{defi}

Since $\PoBar(\mathscr{A})$ is a subposet of the product
$\Pos(K)\times \Pos(\Nrv(\mathscr{A}))^{\opp} $, it comes with projection maps
$\lambda_S \colon \PoBar(\mathscr{A}) \to \Pos(K)$ and
$\lambda_N \colon \PoBar(\mathscr{A}) \to \Pos(\Nrv(\mathscr{A}))^{\opp}$.
In the next lemma, $f$ denotes the poset map defined in the proof of \cref{nerve-tri-cov}.

\begin{prop} \label{proj-are-equiv}
Let $K$ be a simplicial complex and let $\mathscr{A} = (K_i \subseteq K)_{i \in I}$
be a cover of $K$ by subcomplexes.
Then, the map $\lambda_S$ is a homotopy equivalence and if $\mathscr{A}$ is good, then $\lambda_N$ is a homotopy equivalence, as well.
Moreover, if $\mathscr{A}$ is locally finite,
the map $f$ is defined and the following diagram of posets 
commutes up to homotopy after taking flag complexes:
\[
\begin{tikzcd}
& & \PoBar(\mathscr{A}) \arrow[dl, "\lambda_{S}"'] \arrow[dr, "\lambda_{N}"] & \\
& \Pos(K) \arrow[rr, "f"'] & & \Pos(\Nrv(\mathscr{A}))^{\opp}
\end{tikzcd}
\]
\end{prop}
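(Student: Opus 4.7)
The plan is to treat the three claims separately, using Quillen's Theorem A for the two homotopy equivalences and the ``comparable poset maps are homotopic'' principle for the triangle.

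\textbf{Contractibility of $\lambda_S / \sigma$.} For $\sigma \in \Pos(K)$, unwinding definitions gives
\[
\lambda_S / \sigma = \{(\tau, J) \in \PoBar(\mathscr{A}) \mid \tau \subseteq \sigma \} .
\]
Using the convention $K_\emptyset = K$, the pair $(\sigma, \emptyset)$ lies in this subposet. For any $(\tau, J) \in \lambda_S/\sigma$ we have $\tau \subseteq \sigma$ and $\emptyset \subseteq J$, so $(\tau, J) \leq (\sigma, \emptyset)$ in $\PoBar(\mathscr{A})$. Hence $\lambda_S/\sigma$ has a maximum, its flag complex is a cone, and Quillen's Theorem A yields that $\lambda_S$ is a homotopy equivalence.

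\textbf{Contractibility of $\lambda_N / J$.} Since the target is $\Pos(\Nrv(\mathscr{A}))^{\opp}$, the condition $\lambda_N(\sigma, J') \leq J$ translates to $J' \supseteq J$ in $\Pos(\Nrv(\mathscr{A}))$, so
\[
\lambda_N / J = \{(\sigma, J') \in \PoBar(\mathscr{A}) \mid J' \supseteq J\} .
\]
Consider the projection $g \colon \lambda_N/J \to \Pos(K_J)$ sending $(\sigma, J') \mapsto \sigma$; this is well defined since $J' \supseteq J$ implies $K_{J'} \subseteq K_J$. For each $\sigma \in \Pos(K_J)$ the fiber $g/\sigma$ contains $(\sigma, J)$ as a maximum element (by exactly the same argument as above), so $g/\sigma$ is contractible. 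Quillen's Theorem A then gives $\lambda_N/J \simeq \Pos(K_J) \iso \Sd(K_J)$. When $\mathscr{A}$ is good and $J \in \Nrv(\mathscr{A})$, $K_J$ is nonempty and contractible, and therefore $\lambda_N/J$ is contractible. A second application of Quillen's Theorem A proves that $\lambda_N$ is a homotopy equivalence.

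\textbf{Homotopy commutativity of the triangle.} Assume $\mathscr{A}$ is locally finite, so that $f(\sigma) = \{i \in I \mid \sigma \in K_i\}$ is finite and $f$ is defined. For $(\sigma, J) \in \PoBar(\mathscr{A})$, the relation $\sigma \in K_J$ means precisely that $J \subseteq f(\sigma)$, i.e., $f(\sigma) \leq J$ in the opposite order $\Pos(\Nrv(\mathscr{A}))^{\opp}$. Hence the two poset maps
\[
f \circ \lambda_S,\ \lambda_N \colon \PoBar(\mathscr{A}) \to \Pos(\Nrv(\mathscr{A}))^{\opp}
\]
satisfy $(f \circ \lambda_S)(\sigma, J) \leq \lambda_N(\sigma, J)$ pointwise. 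By the standard fact recalled above \cref{flag-and-products} that pointwise comparable poset maps induce homotopic maps on flag complexes, $|\Flag(f \circ \lambda_S)|$ and $|\Flag(\lambda_N)|$ are homotopic, and the diagram commutes up to homotopy after taking flag complexes.

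The only subtle point is bookkeeping the reversed order on $\Pos(\Nrv(\mathscr{A}))^{\opp}$ when computing $\lambda_N/J$; once that is done, everything reduces to exhibiting a maximum in each fiber and comparing the two maps pointwise, which should be routine.
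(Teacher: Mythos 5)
Your treatment of $\lambda_N$ and of the homotopy-commuting triangle is correct and essentially matches the paper's proof (the triangle argument is identical; for $\lambda_N$ the paper also reduces $\lambda_N/J$ to $\Pos(K_J)$, via an explicit comparable retraction onto the fiber $\lambda_N^{-1}(J)$ rather than a second application of Quillen's Theorem A, but the computation is the same). The problem is the first step. The pair $(\sigma,\emptyset)$ is not an element of $\PoBar(\mathscr{A})$: the second coordinate of an element of $\PoBar(\mathscr{A})$ must be a simplex of $\Nrv(\mathscr{A})$, hence a \emph{non-empty} finite subset of $I$ --- this is forced by the requirement that $\PoBar(\mathscr{A})$ be a subposet of $\Pos(K)\times\Pos(\Nrv(\mathscr{A}))^{\opp}$ and that $\lambda_N$ land in $\Pos(\Nrv(\mathscr{A}))^{\opp}$. (If $\emptyset$ were admitted as a simplex of the nerve, it would be the maximum of $\Pos(\Nrv(\mathscr{A}))^{\opp}$, the subdivided nerve would be a cone, and the second assertion of the proposition would be false.) Consequently $\lambda_S/\sigma=\{(\tau,J)\mid\tau\subseteq\sigma\}$ has no maximum element in general, and your argument for the contractibility of these slices does not go through as written.

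The gap is easy to repair while keeping your Quillen's Theorem A strategy. Choose $i_0\in I$ with $\sigma\in K_{i_0}$, which is possible since $\mathscr{A}$ covers $K$. For $(\tau,J)\in\lambda_S/\sigma$ one has $\tau\subseteq\sigma\in K_{i_0}$, hence $\tau\in K_{i_0}$ because $K_{i_0}$ is a subcomplex; therefore $h(\tau,J):=(\tau,J\cup\{i_0\})$ is a well-defined poset endomorphism of $\lambda_S/\sigma$ satisfying $h(\tau,J)\leq(\tau,J)$ and $h(\tau,J)\leq(\sigma,\{i_0\})$, the latter being an element of $\lambda_S/\sigma$. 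The zigzag of pointwise-comparable maps $\operatorname{id}\geq h\leq\operatorname{const}_{(\sigma,\{i_0\})}$ then shows that $|\Flag(\lambda_S/\sigma)|$ is contractible, and Quillen's Theorem A applies. For comparison, the paper instead exhibits a global homotopy inverse $\mu(\sigma)=(\sigma,f(\sigma))$ to $\lambda_S$; your slice-wise argument, once repaired, has the small advantage of not needing the sets $f(\sigma)$ to be finite, i.e., of working without any local finiteness hypothesis, which is what the stated generality of the first claim requires.
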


\begin{proof}
We begin by showing that $\lambda_{S}$ is a homotopy equivalence.
Since a map of posets $P\to Q$ is a homotopy equivalence if and only if the induced map on opposite posets $P^{\opp}\to Q^{\opp}$ is a homotopy equivalence, it suffices, by Quillen's Theorem A, to show that for any $\sigma\in \Pos(K)$ the subposet $\sigma\textbackslash \lambda_{S}=\{(\tau,J) \mid \sigma \subseteq \lambda_{S}(\tau,J)= \tau\}\subseteq \PoBar(\mathscr{A})$ is contractible:
Consider the fiber $\lambda_{S}^{-1}(\sigma) = \{ (\sigma, J) \mid \sigma \in K_J\}\subseteq \PoBar(\mathscr{A})$,
and define the poset map $\mu \colon \sigma\textbackslash \lambda_{S} \to \lambda_{S}^{-1}(\sigma)$
by $\mu(\tau, J) = (\sigma, J)$.
The map $\mu$ is a homotopy inverse to the inclusion of
$\lambda_{S}^{-1}(\sigma)$ into $\sigma\textbackslash \lambda_{S}$,
as, for any $(\tau, J) \in \sigma\textbackslash \lambda_{S}$ we have the relation
$\mu(\tau, J) \leq (\tau, J)$ in $\sigma\textbackslash\lambda_{S}$.
The fiber $\lambda_{S}^{-1}(\sigma)$ is contractible,
as it is isomorphic to the opposite face poset of a full simplicial complex.
We conclude that $\sigma \textbackslash \lambda_{S}$ is contractible and thus $\lambda_{S}$ is a homotopy equivalence.

Now, assume that $\mathscr{A}$ is good. We show that $\lambda_{N}$ is a homotopy equivalence,
using Quillen's Theorem A. So, we take $J \in \Pos(\Nrv(\mathscr{A}))^{\opp}$, 
and we must check that $\lambda_{N} / J$ is contractible. 
Consider the fiber $\lambda_{N}^{-1}(J) = \{ (\sigma, J) \mid \sigma \in K_J\}\subseteq \PoBar(\mathscr{A})$,
and define the poset map $\nu \colon \lambda_{N} / J \to \lambda_{N}^{-1}(J)$ 
by $\nu(\sigma, \tilde{J}) = (\sigma, J)$. 
The map $\nu$ is a homotopy inverse to the inclusion of 
$\lambda_{N}^{-1}(J)$ into $\lambda_{N} / J$, 
as, for any $(\sigma, \tilde{J}) \in \lambda_{N} / J$ we have the relation 
$\nu(\sigma, \tilde{J}) \geq (\sigma, \tilde{J})$ in $\lambda_{N} / J$. 
The fiber $\lambda_{N}^{-1}(J)$ is contractible, 
as it is isomorphic to $\Pos(K_J)$, 
which is contractible as $\mathscr{A}$ is good. 
We conclude that $\lambda_{N} / J$ is contractible and thus Quillen's Theorem A implies that
$\lambda_{N}$ is a homotopy equivalence.

Finally, assume $\mathscr{A}$ is locally finite, so that $f$ is defined. 
If $(\sigma,J)$ is in $\PoBar(\mathscr{A})$,
then $\lambda_N(\sigma,J) = J \subseteq f(\sigma) = (f \circ \lambda_{S})(\sigma,J)$.
So, we have $(f \circ \lambda_{S})(\sigma,J) \leq \lambda_N(\sigma,J)$,
which implies that
$|\Flag(f)| \circ |\Flag(\lambda_{S})|$ and $|\Flag(\lambda_{N})|$ are homotopic.
\end{proof}

The strategy now is to use what we have proved about the $\PoBar$ construction
to show that the natural maps from the blowup complex to $|K|$ and $|\Nrv(\mathscr{A})|$
(defined in \cref{section:func_nerve}) are homotopy equivalences.

To do this, we will identify a subcomplex
$\Tri(\mathscr{A}) \subseteq \Flag(\PoBar(\mathscr{A}))$
that is homeomorphic to the blowup complex after realization;
we then show, using discrete Morse theory,
that the inclusion $\vert\Tri(\mathscr{A})\vert\hookrightarrow\vert\Flag(\PoBar(\mathscr{A}))\vert$ is a homotopy equivalence.
In particular, it follows that the blowup complex is homotopy equivalent to
$\vert\Flag(\PoBar(\mathscr{A}))\vert$.

\begin{defi}
Let $K$ be a simplicial complex,
and let $\mathscr{A} = (K_i \subseteq K)_{i \in I}$ be a cover of $K$ by subcomplexes.
Let $\Tri(\mathscr{A})$ be the subcomplex of $\Flag(\PoBar(\mathscr{A}))$
consisting of the simplices
$(\sigma_0,J_0) < \dots < (\sigma_m,J_m)$ such that $\sigma_m \in K_{J_0}$.
\end{defi}

The letter $\Tri$ stands for ``triangulation'',
since $\Tri(\mathscr{A})$ turns out to be a triangulation of the blowup complex.
\goodbreak

\begin{lemma} \label{triangulation-of-blowup}
Let $K$ be a simplicial complex,
and let $\mathscr{A} = (K_i \subseteq K)_{i \in I}$ be a cover of $K$ by subcomplexes;
write $|\mathscr{A}|=(|K_i| \subseteq |K|)_{i \in I}$.
There is a homeomorphism $\varphi \colon \SmallBarCon(|\mathscr{A}|) \to |\Tri(\mathscr{A})|$
such that the following diagram commutes:
\begin{equation} \label{pobar-and-blowup}
\begin{tikzcd}
	& \vert \Flag(\PoBar(\mathscr{A})) \vert
	\arrow[rd, "\lambda_{N}"] \arrow[ld, "\lambda_{S}"']\\
	\vert \Sd(K) \vert & \vert \Tri(\mathscr{A}) \vert
	\arrow[r] \arrow[l] \arrow[u, hook]
	& \vert \Sd(\Nrv (\mathscr{A})) \vert \\
	\vert K \vert \arrow[u, "\iso"] & \SmallBarCon(|\mathscr{A}|)
	\arrow[r, "\rho_{N}"'] \arrow[u, "\varphi"] \arrow[l, "\rho_{S}"]
	& \vert \Nrv (\mathscr{A}) \vert \arrow[u, "\iso"'] 
\end{tikzcd}
\end{equation}
Here, the vertical maps on the left and right are the standard homeomorphisms.
\end{lemma}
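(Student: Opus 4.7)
The plan is to realize $|\Tri(\mathscr{A})|$ as a piecewise triangulation of the blowup complex: for each $J \in \Nrv(\mathscr{A})$ I will identify a subcomplex $T_J \subseteq \Tri(\mathscr{A})$ canonically homeomorphic to the piece $|K_J| \times |J|$, and then check that these subcomplexes are glued together in exactly the same way as the pieces of $\SmallBarCon(|\mathscr{A}|)$. Concretely, define $T_J$ to be the subcomplex of $\Tri(\mathscr{A})$ consisting of chains $(\sigma_0, L_0) < \cdots < (\sigma_m, L_m)$ with $L_0 \subseteq J$ and $\sigma_m \in K_J$. The latter condition ensures $\sigma_i \in K_J \subseteq K_{L_i}$ for all $i$ and also forces $\sigma_m \in K_{L_0}$, so such chains genuinely lie in $\Tri(\mathscr{A})$. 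Unwinding definitions, $T_J$ is the flag complex $\Flag(\Pos(K_J) \times \Pos(J)^{\opp})$, and equation~\ref{flag-and-products} together with $\Flag \circ \Pos = \Sd$ supplies canonical homeomorphisms $|T_J| \cong |\Sd K_J| \times |\Sd J| \cong |K_J| \times |J|$, which I package as $\varphi_J \colon |K_J|\times|J| \xrightarrow{\cong} |T_J|$; its inverse sends a vertex $(\sigma,L)$ to the point $(\mathrm{bary}(\sigma),\mathrm{bary}(L))$.

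Next, I verify the gluing compatibility. Any chain in $\Tri(\mathscr{A})$ lies in $T_{L_0}$, so $\Tri(\mathscr{A}) = \bigcup_{J} T_J$. For $J \subseteq J'$ in $\Nrv(\mathscr{A})$, the intersection $T_J \cap T_{J'}$ reduces to chains with $L_0 \subseteq J$ and $\sigma_m \in K_{J'}$, which is precisely the subcomplex of $\Tri(\mathscr{A})$ triangulating $|K_{J'}| \times |J|$ -- the subspace along which $\SmallBarCon(|\mathscr{A}|)$ glues $|K_J|\times|J|$ to $|K_{J'}|\times|J'|$. Consequently, the maps $\varphi_J$ descend via the universal property of the quotient to a continuous bijection $\varphi \colon \SmallBarCon(|\mathscr{A}|) \to |\Tri(\mathscr{A})|$. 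A topological check -- using that each product $|K_J|\times|J|$ carries a natural CW structure and that both sides inherit the coherent topology determined by the pieces -- upgrades this to a homeomorphism.

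Finally, the commutativity of diagram~(\ref{pobar-and-blowup}) reduces to checking agreement on the vertices of $\Tri(\mathscr{A})$, since every route is affine linear on each simplex after the identifications $|\Sd K| \cong |K|$ and $|\Sd \Nrv(\mathscr{A})| \cong |\Nrv(\mathscr{A})|$. For a vertex $(\sigma,L)$, the composition through $\lambda_S$ sends it to $\mathrm{bary}(\sigma) \in |K|$, which agrees with $\rho_S \circ \varphi^{-1}$ since the latter projects $(\mathrm{bary}(\sigma),\mathrm{bary}(L)) \in |K_L|\times|L|$ onto $\mathrm{bary}(\sigma)$; the right-hand square is verified symmetrically using $\lambda_N$ and $\rho_N$. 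The main obstacle is the bookkeeping in the second step -- carefully matching the combinatorial gluing data of the $T_J$ with that of the blowup complex, and justifying that the continuous bijection $\varphi$ is genuinely a homeomorphism given that $\mathscr{A}$ is not assumed locally finite.
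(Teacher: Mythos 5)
Your proof is correct and follows essentially the same route as the paper: you decompose $\Tri(\mathscr{A})$ into the subcomplexes $T_J = \Flag(\Pos(K_J)\times\Pos(J)^{\opp})$, identify each with $|K_J|\times|J|$ via the product formula \ref{flag-and-products}, and match the gluing data with that of the blowup complex. The only cosmetic difference is that the paper exhibits $\varphi^{-1}$ explicitly by reversing the piecewise homeomorphisms, whereas you argue via coherence of both topologies with respect to the pieces; both are valid.
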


\begin{proof}
The blowup complex $\SmallBarCon(|\mathscr{A}|)$ is defined by glueing together pieces
of the form $|K_J|\times |J| $ for $J \in \Nrv(\mathscr{A})$.
We abuse notation and write $J$ also for the full simplicial complex on $J$.
For any such piece, define $\varphi$ by the composition
\begin{align*}
	 |K_J| \times |J|  &\iso  |\Sd K_J|\times |\Sd J|\\
		&=  |\Flag(\Pos(K_J))|\times |\Flag(\Pos(J))|  \\
		&\iso |\Flag(\Pos(K_J))|\times |\Flag(\Pos(J)^{\opp})|  \\
		&\iso |\Flag(\Pos(K_J)\times \Pos(J)^{\opp} )| \subseteq |\Tri(\mathscr{A})|
\end{align*}
where the last homeomorphism is an instance of \ref{flag-and-products}.
As these maps respect the equivalence relation from the definition of the blowup complex,
together they define a continuous map
$\varphi \colon \SmallBarCon(|\mathscr{A}|) \to |\Tri(\mathscr{A})|$.
By construction, the diagram \ref{pobar-and-blowup} commutes.

To see that $\varphi$ is a homeomorphism, we can construct its inverse.
As $J$ varies, the subcomplexes $\Flag(\Pos(K_J)\times \Pos(J)^{\opp} )$
cover $\Tri(\mathscr{A})$.
For each $J$, we can reverse the homeomorphisms in the definition of $\varphi$
to define $\varphi^{-1}$ on $|\Flag(\Pos(K_J)\times \Pos(J)^{\opp} )|$.
Since these maps agree on intersections, they glue together to define the inverse
$\varphi^{-1} \colon |\Tri(\mathscr{A})| \to \SmallBarCon(|\mathscr{A}|)$.
\end{proof}

\begin{lemma} \label{collapse-to-T}
Let $K$ be a simplicial complex,
and let $\mathscr{A} = (K_i \subseteq K)_{i \in I}$ be a cover of~$K$ by subcomplexes.
Then the inclusion $\vert \Tri(\mathscr{A})\vert\hookrightarrow\vert\Flag(\PoBar(\mathscr{A}))\vert$ is a homotopy equivalence.
\end{lemma}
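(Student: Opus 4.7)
The plan is to apply discrete Morse theory to the simplicial complex $\Flag(\PoBar(\mathscr{A}))$, producing an acyclic matching whose unpaired (critical) simplices are exactly those lying in the subcomplex $\Tri(\mathscr{A})$. By Forman's theorem this gives a collapse $\Flag(\PoBar(\mathscr{A})) \searrow \Tri(\mathscr{A})$, which is stronger than the claimed homotopy equivalence.

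To build the matching I would proceed as follows. Given a simplex $c = ((\sigma_0, J_0) < \dots < (\sigma_m, J_m))$ of $\Flag(\PoBar(\mathscr{A}))$ that is not in $\Tri(\mathscr{A})$ (so $\sigma_m \notin K_{J_0}$), the index $q = \max\{i : \sigma_i \in K_{J_0}\}$ satisfies $0 \le q < m$, since $K_{J_0}$ is a subcomplex containing $\sigma_0$ but not $\sigma_m$. Set $p = (\sigma_q, J_{q+1})$; this is a legitimate element of $\PoBar(\mathscr{A})$ because $\sigma_q \subseteq \sigma_{q+1} \in K_{J_{q+1}}$ forces $\sigma_q \in K_{J_{q+1}}$. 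A quick computation shows $p$ is strictly comparable to every vertex of $c$, lying strictly above the elements at positions $0,\dots,q$ and strictly below those at positions $q+1,\dots,m$, and that $p \in c$ precisely when $J_q = J_{q+1}$. I would pair $c$ with $c \cup \{p\}$ when $p \notin c$ and with $c \setminus \{p\}$ otherwise. Both partners have unchanged bottom $(\sigma_0, J_0)$ and top $(\sigma_m, J_m)$, so neither lies in $\Tri(\mathscr{A})$; moreover applying the rule to $c \cup \{p\}$ returns $c$, since in the larger chain the new critical index is $q+1$ but the pivot element is still $p$. This gives a well-defined involution on simplices outside $\Tri(\mathscr{A})$.

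The delicate step, which I expect to be the main obstacle, is acyclicity: I must rule out closed V-paths $\alpha_0 \subsetneq \beta_0 \supsetneq \alpha_1 \subsetneq \beta_1 \supsetneq \cdots$ with $\alpha_n = \alpha_0$, each $\alpha_i$ matched upward to $\beta_i = \alpha_i \cup \{p_i\}$, and $\alpha_{i+1} = \beta_i \setminus \{y_i\}$ for some $y_i \ne p_i$. My strategy is a monotonicity argument. A case analysis on the position of $y_i$ in $\beta_i$ would show that the index $q_i$ is weakly non-decreasing along a V-path, and that the bottom $J_0^i$ can only shrink strictly when $y_i$ is the bottom of $\beta_i$. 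On a closed V-path both invariants must therefore be constant; this pins down $\sigma_q^i$ and $J_{q+1}^i$ throughout, forcing $p_i$ to be a single fixed element $p$. But then $p \in \alpha_{i+1}$ by construction, so the pivot of $\alpha_{i+1}$ lies in $\alpha_{i+1}$, meaning $\alpha_{i+1}$ is matched downward rather than upward, contradicting the V-path hypothesis. Once acyclicity is established, Forman's theorem furnishes the collapse and hence the required homotopy equivalence.
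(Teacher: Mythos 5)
Your overall strategy---a discrete Morse matching on $\Flag(\PoBar(\mathscr{A}))$ whose critical cells are exactly the simplices of $\Tri(\mathscr{A})$, built by toggling the pivot $p=(\sigma_q,J_{q+1})$ and verified acyclic via monotone invariants along $V$-paths---is the same as the paper's (note only that, since $\Flag(\PoBar(\mathscr{A}))$ may be infinite, one cannot simply cite Forman's theorem for finite complexes; the paper has to develop discrete Morse theory for infinite complexes and additionally check finiteness of $V$-path heights, a point you would also need to address). However, there is a genuine gap at the step you call a ``quick computation'': the toggle rule is \emph{not} an involution, because you only verify the direction $c\mapsto c\cup\{p\}\mapsto c$. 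In the other direction, when $p\in c$ (i.e.\ $J_q=J_{q+1}$, so $p=c_q$), the chain $c\setminus\{p\}$ has last good index $q-1$ and hence pivot $(\sigma_{q-1},J_{q+1})$, which equals $p=(\sigma_q,J_{q+1})$ only when $\sigma_{q-1}=\sigma_q$. This fails in general, so $c\setminus\{p\}$ is matched \emph{upward} to a third simplex rather than back to $c$, and the proposed pairs do not partition $\Flag(\PoBar(\mathscr{A}))\setminus\Tri(\mathscr{A})$.

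Concretely, let $K$ be the full $2$-simplex on $\{a,b,c\}$, let $K_1$ be the edge $\{a,b\}$ with its faces, let $K_2=K$, and consider the chain
$c=\bigl((\{a\},\{1,2\})<(\{a,b\},\{2\})<(\{a,b,c\},\{2\})\bigr)$,
which lies outside $\Tri(\mathscr{A})$ because $\{a,b,c\}\notin K_{\{1,2\}}=K_1$. Here $q=1$ and $p=(\{a,b\},\{2\})=c_1\in c$, so your rule pairs $c$ with $c'=\bigl((\{a\},\{1,2\})<(\{a,b,c\},\{2\})\bigr)$; but $c'$ has $q'=0$ and pivot $(\{a\},\{2\})\notin c'$, so your rule pairs $c'$ with $c'\cup\{(\{a\},\{2\})\}\neq c$. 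Thus $c'$ would lie in two distinct pairs, so no discrete vector field with critical set $\Tri(\mathscr{A})$ has been produced, and the subsequent acyclicity analysis has nothing to act on. (The same example shows that the terse assertion in the paper's own proof---that its pairs are disjoint and exhaust $\Flag(\PoBar(\mathscr{A}))\setminus\Tri(\mathscr{A})$---also fails for this pairing rule: the chain $c$ above is neither a $\tau$ with $J_{i-1}\neq J_i$ nor of the form $\mu$. So the case $J_q=J_{q+1}$ with $\sigma_{q-1}\neq\sigma_q$ genuinely requires a modified pairing, and the well-definedness of the matching is precisely the point that must be argued carefully rather than dispatched in passing.)
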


\begin{proof}

We construct a discrete gradient vector field $V$ on $\Flag(\PoBar(\mathscr{A}))$ such that the set of critical simplices is $\Tri(\mathscr{A})$. Then it follows from \cref{infinite_discrete_morse} that the inclusion $|\Tri(\mathscr{A})| \hookrightarrow |\Flag(\PoBar(\mathscr{A}))|$ is a homotopy equivalence.

To this end, let $L=\Flag(\PoBar(\mathscr{A}))\setminus \Tri(\mathscr{A})$ and consider the function $f\colon L\to\mathbb{N}\cup\{\infty\}$
that assigns to a simplex $\tau=((\sigma_0,J_0)< \cdots < (\sigma_m,J_m))$ the value
\[f(\tau)=\min\{i\in\{0,\dots,m-1\}\mid \sigma_i<\sigma_{i+1},\ J_i<J_{i+1} \}\]
with $f(\tau)=\infty$ if no such $i$ exists. Moreover, we consider the function $g\colon L\to\mathbb{N}\cup\{\infty\}$
that assigns to a simplex $\tau=((\sigma_0,J_0)< \cdots < (\sigma_m,J_m))$ the value
\[g(\tau)=\min\{i\in\{0,\dots,m-2\}\mid \sigma_i=\sigma_{i+1}<\sigma_{i+2},\ J_{i}<J_{i+1}=J_{i+2} \}\]
with $g(\tau)=\infty$ if no such $i$ exists. As $\tau\in L$ we have $\sigma_m\notin K_{J_0}$, by definition of $\Tri(\mathscr{A})$, and hence we get that if $f(\tau)=\infty$ then $g(\tau)<\infty$, as otherwise $\sigma_m\in K_{J_0}$. This and the definitions of the function $f$ and $g$ imply that either $f(\tau)<g(\tau)$ or $f(\tau)>g(\tau)+1$.

We now define the discrete vector field $V$ on $\Flag(\PoBar(\mathscr{A}))$ that partitions $L$ into pairs of simplices and where we let every simplex in $\Tri(\mathscr{A})$ be critical. Take any simplex~$\tau$ in $L=\Flag(\PoBar(\mathscr{A}))\setminus \Tri(\mathscr{A})$, i.e., a chain $(\sigma_0,J_0)< \cdots < (\sigma_m,J_m)$ in $\PoBar(\mathscr{A})$ such that $\sigma_m\notin K_{J_0}$. If $i=f(\tau)<g(\tau)$, consider the chain
\[
(\sigma_0,J_0)< \cdots< (\sigma_{i},J_{i})< (\sigma_{i},J_{i+1})< (\sigma_{i+1},J_{i+1})< \cdots < (\sigma_m,J_m)
\]
 and pair the corresponding simplex $\mu$ in $L$ with $\tau$; note that $f(\mu)>f(\tau)+1=g(\mu)+1$.
 We verify that $V$ is a discrete vector field that partitions $L$: For any simplex $\mu=((\tilde{\sigma}_0,\tilde{J}_0)< \cdots < (\tilde{\sigma}_m,\tilde{J}_m))\in L$ with $f(\mu)>g(\mu)+1=j$ consider the facet $\tau$ of $\mu$ that skips the element $(\tilde{\sigma}_{j},\tilde{J}_{j})$; note that $f(\tau)=g(\mu)<g(\tau)$. It is straightforward to see that the sets $\{\tau\in L\mid f(\tau)<g(\tau)\}$ and $\{\tau\in L\mid f(\tau)>g(\tau)+1\}$ partition~$L$ and that the above constructions yield mutually inverse bijections between those sets, implying that $V$ is a discrete vector field that partitions~$L$ into pairs of simplices.

We prove that $V$ is a discrete gradient vector field by showing that there are no non-trivial closed $V$-paths: Consider any $V$-path $\tau_0\to\mu_0\leftarrow\cdots\to\mu_r\leftarrow\tau_{r+1}$                                                                                                                                                                                                                                                                                with $\{\tau_i,\mu_i\}\in V$ and $\tau_i\neq \tau_{i+1}$ for all $i$. To show that it is not closed, i.e., $\tau_{r+1}\neq\tau_0$, consider first any chain $((\sigma,J)<(\tilde{\sigma},\tilde{J}))\in L$ of length $2$ and the set
\[S=\{\tau\in L\mid \min \tau = (\sigma,J),\ \max\tau= (\tilde{\sigma},\tilde{J})\}.\]
Note that $V$ restricts to a partition of $S$. If $R \neq S$ is another such set of chains with $\tau_0\in S$ and $\tau_i\in R$ for some $i$, then $\tau_j\notin S$ for all $j\geq i$ as at least one of the inequalities $\min\tau_0\leq \min\tau_j$ and $\max\tau_j\leq\max\tau_0$ is strict.
Therefore the $V$-path cannot be closed. Moreover, as $S$ is finite, there are only finitely many possible other such~$R$. Therefore, it is enough to show that for any such $S$ there is no non-trivial closed $V$-path with $\tau_i\in S$ for all $i$.

To this end, we construct a lexicographic partial order on the set of chains $S$. First, we consider the product inclusion order on pairs of simplices of $K$ and subsets of $I$,
given by $(\sigma,J)\subseteq(\tilde{\sigma},\tilde{J})$ if and only if $\sigma \subseteq \tilde{\sigma}$ and $J \subseteq \tilde{J}$. Now, we extend this partial order to a partial order on $S$: For any two chains $\tau=((\sigma_0,J_0)< \cdots < (\sigma_m,J_m))$ and $\mu=((\tilde{\sigma}_0,\tilde{J}_0)< \cdots < (\tilde{\sigma}_m,\tilde{J}_m))$ in $S$ of equal length, we let $\tau\leq_{\op{lex}} \mu$ if $\tau=\mu$ or if for the smallest index $j$ with $(\sigma_j,J_j)\neq (\tilde{\sigma}_j,\tilde{J}_j)$ we have~$(\sigma_j,J_j)\subseteq (\tilde{\sigma}_j,\tilde{J}_j)$.
We show that for any two gradient pairs $(\tau,\mu),(\tilde{\tau},\tilde{\mu})\in V$ with $\tilde{\tau}$ a facet of $\mu$, we have $\mu>_{\op{lex}}\tilde{\mu}$, proving that the $V$-path above cannot be closed. We have
\[\mu = (\sigma_0,J_0)< \cdots< (\sigma_{i},J_{i})< (\sigma_{i},J_{i+1})< (\sigma_{i+1},J_{i+1})< \cdots < (\sigma_m,J_m)\]
with $i=f(\tau)$, and $\tilde{\tau}$ is a facet of $\mu$ that skips some element $(\sigma_j,J_j)$ with~$0<j<m$. Note that $j$ cannot be greater than $i+1$, as otherwise $g(\tilde{\tau})=i<f(\tilde{\tau})$, contradicting the assumption that $\tilde{\tau}$ is a gradient facet of $\tilde{\mu}$ and therefore $f(\tilde{\tau})<g(\tilde{\tau})$.
Moreover, if $j\leq i$ then $f(\tilde{\tau})=j-1$, and if $j=i+1$ then $f(\tilde{\tau})=j$. 
In any case, $\tilde{\mu}$~is obtained by adding the element $(\sigma_{j-1},J_{j+1})$ to~$\tilde{\tau}$.
Now $\sigma_{j-1} \subseteq \sigma_j$ and $J_j \supseteq J_{j+1}$, and thus
$(\sigma_{j-1},J_{j+1})\subseteq (\sigma_j,J_j)$ and $\tilde{\mu}\leq_{\op{lex}}\mu$. As $\tilde{\tau}\neq\tau$, we must have $\tilde{\mu}\neq \mu$ and thus $\tilde{\mu}<_{\op{lex}}\mu$.

The reasoning above also implies that for every simplex in $\Flag(\PoBar(\mathscr{A}))$ its $V$-path height is finite and hence it follows from \cref{lemma:equivalent_height_def} and \cref{infinite_discrete_morse} that the inclusion $|\Tri(\mathscr{A})| \hookrightarrow |\Flag(\PoBar(\mathscr{A}))|$ is a homotopy equivalence.
\end{proof}

\begin{thm} \label{functorial-simplicial-nerve}
Let $K$ be a simplicial complex and let $\mathscr{A} = (K_i \subseteq K)_{i \in I}$
be a good cover of $K$ by subcomplexes.
Then, the natural maps $\rho_S \colon \SmallBarCon(|\mathscr{A}|) \to |K|$
and $\rho_N \colon \SmallBarCon(|\mathscr{A}|) \to |\Nrv(\mathscr{A})|$ are homotopy equivalences.
\end{thm}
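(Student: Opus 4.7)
The plan is to read off the conclusion from the commutative diagram \ref{pobar-and-blowup} established in \cref{triangulation-of-blowup}, using the homotopy equivalences that are already available from \cref{proj-are-equiv} and \cref{collapse-to-T}. The two natural maps $\rho_S$ and $\rho_N$ will be exhibited as compositions of maps that are all known to be homotopy equivalences (or homeomorphisms), so no further construction should be required; the entire proof amounts to a diagram chase.

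More concretely, first I would recall that by \cref{triangulation-of-blowup} the map $\varphi \colon \SmallBarCon(|\mathscr{A}|) \to |\Tri(\mathscr{A})|$ is a homeomorphism fitting into the commutative diagram \ref{pobar-and-blowup}. Next, by \cref{collapse-to-T} the inclusion $\iota \colon |\Tri(\mathscr{A})| \hookrightarrow |\Flag(\PoBar(\mathscr{A}))|$ is a homotopy equivalence. Finally, \cref{proj-are-equiv} tells us that $|\Flag(\lambda_S)|$ is always a homotopy equivalence and, because $\mathscr{A}$ is assumed to be good, $|\Flag(\lambda_N)|$ is too. Combining these with the standard homeomorphisms $|\Sd(K)| \cong |K|$ and $|\Sd(\Nrv(\mathscr{A}))| \cong |\Nrv(\mathscr{A})|$ yields
\[
	\rho_S = \bigl(|\Sd(K)| \xrightarrow{\cong} |K|\bigr) \circ |\Flag(\lambda_S)| \circ \iota \circ \varphi
\]
and an analogous factorization for $\rho_N$, so both are compositions of homotopy equivalences and hence themselves homotopy equivalences.

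There is really no hard step here, since the substantive work has already been done in \cref{proj-are-equiv} (Quillen's Theorem A and goodness of the cover) and in \cref{collapse-to-T} (the discrete Morse argument). The only point that requires a bit of care is to make sure that the outer squares of diagram \ref{pobar-and-blowup} are the ones that actually relate $\rho_S$ (resp.\ $\rho_N$) to the composition $|\Flag(\lambda_S)| \circ \iota \circ \varphi$ (resp.\ $|\Flag(\lambda_N)| \circ \iota \circ \varphi$); this is already recorded as commutativity of \ref{pobar-and-blowup}, so nothing new needs to be verified. Hence the write-up should be just a couple of sentences invoking the three preceding lemmas and the two-out-of-three property for homotopy equivalences.
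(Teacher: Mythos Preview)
Your proposal is correct and follows exactly the same approach as the paper's own proof: invoke \cref{triangulation-of-blowup} for the homeomorphism $\varphi$, \cref{collapse-to-T} for the inclusion $|\Tri(\mathscr{A})| \hookrightarrow |\Flag(\PoBar(\mathscr{A}))|$, and \cref{proj-are-equiv} for $\lambda_S$ and $\lambda_N$, then read off the result from the commutativity of diagram~\ref{pobar-and-blowup}. The only minor remark is that what you call ``two-out-of-three'' is really just closure of homotopy equivalences under composition; no cancellation property is needed here.
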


\begin{proof}
Consider Diagram \ref{pobar-and-blowup}.
By \cref{proj-are-equiv} the maps $\lambda_S$ and $\lambda_N$ are homotopy equivalences.
By \cref{collapse-to-T}, the inclusion
$|\Tri(\mathscr{A})| \hookrightarrow |\Flag(\PoBar(\mathscr{A}))|$
is a homotopy equivalence, and by \cref{triangulation-of-blowup},
$\varphi$ is a homeomorphism.
It follows that $\rho_S$ and $\rho_N$ are homotopy equivalences.
\end{proof}

\subsection{A Functorial Nerve Theorem for Semi-Algebraic Covers}

As a corollary of \cref{functorial-simplicial-nerve},
we get a functorial nerve theorem for finite, closed, semi-algebraic covers of compact semi-algebraic sets.
For this, we need a well known theorem on the existence of triangulations of semi-algebraic sets
\cite[Theorem 9.2.1]{BCR}, which we now state.
For $K$ a simplicial complex and $\sigma$ a simplex of $K$,
we write $\op{int}| \sigma | = |\sigma| \setminus |\partial \sigma| \subset |K|$ for the open simplex.

\begin{lemma} \label{triangulation}
Let $S \subset \bbR^n$ be a compact semi-algebraic set,
and let $(S_i)_{i=0}^q$ be a finite family of semi-algebraic subsets of $S$.
There is a finite simplicial complex $K = \{\sigma_j\}_{j=0}^p$
and a homeomorphism $h \colon |K| \to S$,
such that every $S_i$ is the union of some images of simplices $h(\op{int} | \sigma_j |)$.
\end{lemma}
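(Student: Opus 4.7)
The plan is to prove this classical triangulation theorem by induction on the ambient dimension $n$, using cylindrical algebraic decomposition (CAD) as the principal tool. The base case $n = 0$ is trivial: $\bbR^0$ is a point, so $S$ is either empty or a singleton, and a zero-dimensional simplicial complex does the job.

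For the inductive step, I would first apply a generic linear change of coordinates so that the family $(S, S_0, \dots, S_q)$ is in ``good position'' with respect to the last-coordinate projection $\pi \colon \bbR^n \to \bbR^{n-1}$, meaning roughly that the defining polynomials have nonzero leading coefficient in the last variable, which makes projections well-behaved. Then I would invoke the existence of a CAD of $\bbR^n$ adapted to $\{S, S_0, \dots, S_q\}$: a finite semi-algebraic partition of $\bbR^n$ into cylindrical cells, built from finitely many continuous semi-algebraic functions $\xi_1 < \cdots < \xi_k$ defined on the cells of an induced CAD of $\bbR^{n-1}$ (the graphs of the $\xi_j$ and the open bands between them), such that each $S_i$ and $S$ itself is a union of such cells. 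Applying the induction hypothesis to the finite family in $\bbR^{n-1}$ consisting of $\pi(S)$ together with the projections of all the relevant base cells, I obtain a finite simplicial complex $K'$ and a homeomorphism $h' \colon |K'| \to \pi(S)$ such that the image of each open simplex $h'(\op{int}|\tau|)$ lies inside a single base CAD-cell.

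Finally, I would lift the triangulation $h'$ fiberwise. For each simplex $\tau \in K'$ sitting inside a single base cell $C$, the preimage $\pi^{-1}(h'(\op{int}|\tau|)) \cap S$ is a stack of graphs $\{(\xi_j \circ h')\}$ and open bands between them. Over each vertex of $\tau$ I would introduce new vertices at the appropriate heights $\xi_j \circ h'$, and over $\tau$ itself triangulate each prism between two consecutive graphs by the standard coning/prism-subdivision of $\tau \times [0,1]$. Doing this coherently for every $\tau \in K'$ and assembling produces a finite simplicial complex $K$ and the required homeomorphism $h \colon |K| \to S$, with each $S_i$ exactly the union of images of those open simplices lying in cells contained in $S_i$; compactness of $S$ guarantees $K$ is finite.

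The principal obstacle is precisely the coherence of the fiberwise lifts across shared faces of $K'$, since two adjacent base simplices may lie in \emph{different} base CAD-cells with their own families of cylindrical functions. This is the reason ``good position'' is imposed at the outset: it guarantees that the relevant graph functions extend continuously to the closures of base cells and match up on shared boundaries, so that the lifted vertices over a common face of two simplices in $K'$ coincide. With this in place, the prism triangulations glue along common faces and the final complex $K$ is well-defined; all other steps reduce to bookkeeping via the CAD structure.
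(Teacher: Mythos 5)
The paper does not prove this statement at all: it is imported verbatim as a known result, namely the triangulation theorem for semi-algebraic sets, with a citation to Bochnak--Coste--Roy (Theorem 9.2.1), and is then used as a black box in the proof of \cref{compact-semialgebraic}. Your outline is, in substance, the standard proof of that cited theorem: induction on the ambient dimension, a generic linear change of coordinates to put the family in good position, a cylindrical algebraic decomposition adapted to $\{S, S_0,\dots,S_q\}$, an application of the inductive hypothesis to the projected cells in $\bbR^{n-1}$, and a fiberwise lift of the base triangulation through the stack of section functions $\xi_1\le\cdots\le\xi_k$. So you are reconstructing the reference rather than diverging from the paper, and the overall architecture is sound. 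One point that your sketch underplays, and which is where the real work in the cited proof lives, is the degeneracy of the prisms: over the closure of a base cell, consecutive graph functions $\xi_j$ and $\xi_{j+1}$ may coincide on part of $\partial|\tau|$, so the slab between them is not homeomorphic to $|\tau|\times[0,1]$ and the ``standard prism subdivision'' does not apply verbatim; the classical fix is to lift a distinguished interior point (the barycenter) of each base simplex into each graph and each band and to triangulate the slabs by coning from these lifted points, which is also what makes the lifts over shared faces agree. You correctly identify good position as the hypothesis that makes the $\xi_j$ extend continuously to cell closures and match across boundaries, but the coning construction (rather than a product triangulation) is the missing ingredient needed to turn your outline into a complete argument.
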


\begin{thm}
\label{compact-semialgebraic}
Let $S \subset \bbR^n$ be a compact semi-algebraic set,
and let $\mathscr{A} = (S_i)_{i=0}^{q}$ be a finite good cover of $S$
such that each $S_i$ is semi-algebraic and closed in $S$.
Then, the natural maps $\rho_S \colon \SmallBarCon(\mathscr{A}) \to S$
and $\rho_N \colon \SmallBarCon(\mathscr{A}) \to |\Nrv(\mathscr{A})|$ are homotopy equivalences.
\end{thm}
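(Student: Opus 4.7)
The plan is to reduce this to the simplicial case already established in \cref{functorial-simplicial-nerve} by choosing a triangulation that is compatible with the cover. First, I would apply \cref{triangulation} to the family $(S_i)_{i=0}^{q}$ to obtain a finite simplicial complex $K$ and a homeomorphism $h \colon |K| \to S$ such that each $S_i$ is the union of the images $h(\op{int}|\sigma|)$ for $\sigma$ ranging over some subset of the simplices of $K$. In particular, $h^{-1}(S_i) \subseteq |K|$ is a union of open simplices.

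The key observation is then that each $h^{-1}(S_i)$ is the geometric realization of a subcomplex $K_i \subseteq K$. Indeed, since $S_i$ is closed in $S$ and $h$ is a homeomorphism, $h^{-1}(S_i)$ is closed in $|K|$; but a union of open simplices that is closed in $|K|$ must contain all faces of each of its open simplices (otherwise the missing face would lie in the closure but not in the set), hence it is the realization of a subcomplex. Write $\mathscr{A}' = (K_i)_{i=0}^{q}$ for the resulting cover of $K$ by subcomplexes. Because $h$ is a homeomorphism, for every $J \subseteq [q]$ the intersection $\bigcap_{i \in J} |K_i|$ is mapped homeomorphically by $h$ onto $\bigcap_{i \in J} S_i$; in particular, $\Nrv(\mathscr{A}') = \Nrv(\mathscr{A})$ as abstract simplicial complexes, and goodness of $\mathscr{A}$ transfers to goodness of $\mathscr{A}'$.

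At this point \cref{functorial-simplicial-nerve} applies to $(|K|, |\mathscr{A}'|)$, giving that the natural maps
\[
    \rho_S \colon \SmallBarCon(|\mathscr{A}'|) \to |K|,
    \qquad
    \rho_N \colon \SmallBarCon(|\mathscr{A}'|) \to |\Nrv(\mathscr{A}')|
\]
are homotopy equivalences. Finally, viewing $h$ together with the identity map on $[q]$ as an isomorphism of covered spaces $(|K|, |\mathscr{A}'|) \to (S, \mathscr{A})$ in $\Cov$, functoriality of $\SmallBarCon$, $\Spc$, and $|\Nrv|$ (as described in \cref{section:func_nerve}) gives a commutative diagram in which $h$ induces a homeomorphism on blowup complexes and the identity on nerves, so the two statements for $\mathscr{A}$ follow by transport along these homeomorphisms.

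The argument is essentially a bookkeeping exercise, so there is no substantial obstacle; the only point that requires any care is verifying that the preimages $h^{-1}(S_i)$ are genuinely subcomplexes rather than arbitrary unions of open simplices, which is where the closedness hypothesis on each $S_i$ in $S$ is used.
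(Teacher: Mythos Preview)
Your proposal is correct and follows essentially the same argument as the paper: triangulate compatibly with the cover, pull back to a good cover by subcomplexes, apply \cref{functorial-simplicial-nerve}, and transport along the homeomorphism of covered spaces. The paper simply asserts the existence of the subcomplexes $K_i$ with $h(|K_i|) = S_i$, whereas you explicitly justify this using closedness of $S_i$; this is the only difference, and it is a welcome clarification rather than a departure in strategy.
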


\begin{proof}
By \cref{triangulation}, there is a simplicial complex $K$,
a homeomorphism $h \colon |K| \to S$, and a cover of $K$ by subcomplexes
$\mathscr{B} = (K_i \subseteq K)_{i=0}^{q}$ such that
$h|_{K_i}$ is a homeomorphism between $K_i$ and $S_i$.
Then, $h$ induces a homeomorphism $\SmallBarCon(|\mathscr{B}|) \to \SmallBarCon(\mathscr{A})$
such that the following diagram commutes:
\[
\begin{tikzcd}
	\vert K \vert \arrow[d, "h"'] & \SmallBarCon(|\mathscr{B}|)
	\arrow[r, "\rho_{N}"] \arrow[d] \arrow[l, "\rho_{S}"']
	& \vert \Nrv (\mathscr{B}) \vert \arrow[d, equal] \\
	S & \SmallBarCon(\mathscr{A})
	\arrow[r, "\rho_{N}"'] \arrow[l, "\rho_{S}"]
	& \vert \Nrv (\mathscr{A}) \vert
\end{tikzcd}
\]
By \cref{functorial-simplicial-nerve} the top horizontal maps are homotopy equivalences,
and the corollary follows.
\end{proof}

\subsection{A Functorial Version of Bj\"{o}rner's Nerve Theorem}
\label{bjoerners-nerve-theorem}

If $K$ is a simplicial complex, and $\mathscr{A}$ is a locally finite cover of $K$ by subcomplexes,
then we have a comparison map $|K| \to |\Nrv(\mathscr{A})|$ induced by the
map of posets $f \colon \Pos(K) \to \Pos(\Nrv(\mathscr{A}))^{\opp}$ defined in the proof of \cref{nerve-tri-cov}.
In \cite{bjoerner}, Bj\"orner gives a detailed analysis of how the connectivity of this map
is affected by the connectivity of the finite intersections of cover elements.
For the final result of this section, we will use the $\PoBar$ construction and the blowup complex
to prove a functorial version of Bj\"orner's theorem.

\begin{defi}
Let $k \geq 0$. A topological space $X$ is $k$-\emph{connected}
if, for every $0 \leq r \leq k$, every map of the $r$-sphere into $X$ is homotopic to a constant map.
\end{defi}

\begin{prop}[Bj\"orner {\cite[Theorem 6]{bjoerner}}] \label{connectivity-nerve}
Let $K$ be a simplicial complex,
let $\mathscr{A}$ be a locally finite cover of $K$ by subcomplexes,
and let $k \geq 0$.
Assume that every non-empty intersection $K_{i_1} \cap \dots \cap K_{i_t}$
is $(k-t+1)$-connected, for all $t \geq 1$.
Then $f$ induces a bijection
\[\pi_0(K) \iso \pi_0(\Nrv(\mathscr{A})),\]
and for all $1 \leq j \leq k$, and for all $x \in |K|$,
$f$ induces an isomorphism
\[\pi_j (|K|, x) \iso \pi_j(|\Nrv(\mathscr{A})|, f_* (x)).\]
\end{prop}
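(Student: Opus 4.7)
The plan is to adapt the $\PoBar$-based strategy used for \cref{functorial-simplicial-nerve}. By \cref{proj-are-equiv}, $\lambda_S \colon \PoBar(\mathscr{A}) \to \Pos(K)$ is an unconditional homotopy equivalence (no goodness hypothesis needed), and the triangle there commutes up to homotopy after passing to geometric realizations of flag complexes. Hence the comparison map $|f|$ agrees up to homotopy with $|\lambda_N| \circ |\lambda_S|^{-1}$, and it suffices to show that $|\lambda_N|$ induces a bijection on $\pi_0$ and isomorphisms on $\pi_j$ for $1 \le j \le k$.

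For each $J \in \Pos(\Nrv(\mathscr{A}))^{\opp}$, exactly the argument used in the proof of \cref{proj-are-equiv} shows that the downfiber $\lambda_N / J$ deformation retracts onto the fiber $\lambda_N^{-1}(J) \cong \Pos(K_J)$, whose flag complex realizes homeomorphically to $|\Sd K_J| \cong |K_J|$. By hypothesis, $|K_J|$ is $(k-|J|+1)$-connected. The analytic core of the proof is then a connectivity-refined Quillen Theorem A: if $g \colon P \to Q$ is a poset map such that for each $y \in Q$ the downfiber $g/y$ is $(k - \op{rk}_Q(y))$-connected, where $\op{rk}_Q(y)$ denotes the length of the longest strictly increasing chain in $Q$ beginning at $y$, then $|g|$ induces isomorphisms on $\pi_j$ for $0 \le j \le k$. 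This is essentially Bj\"orner's Theorem 6 from \cite{bjoerner}. In our setting, an increasing chain in $\Pos(\Nrv(\mathscr{A}))^{\opp}$ starting at $J$ corresponds to a strictly descending sequence of subsets of $J$, so $\op{rk}_{\Pos(\Nrv(\mathscr{A}))^{\opp}}(J) = |J| - 1$, and the two connectivity bounds match exactly: $k - \op{rk}_Q(J) = k - |J| + 1$.

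The main obstacle will be the rigorous invocation or re-derivation of this connectivity-refined Quillen Theorem A, since the connectivity of the downfibers is not uniform but varies with $|J|$. Its proof proceeds by induction on the skeleta of $\Flag(Q)$: one pulls back the skeleton filtration to a compatible filtration of $P$ and uses van Kampen and Hurewicz at each stage, where a cell corresponding to a chain of length $m$ ending at $y$ is attached along a subspace of connectivity $(k-m)$, so that the attachment does not alter $\pi_j$ for $j \le k$. In our concrete setting, the same bookkeeping can be carried out directly on the CW filtration of $\PoBar(\mathscr{A})$ (equivalently on the blowup complex $\SmallBarCon(|\mathscr{A}|)$ via \cref{triangulation-of-blowup}) obtained by pulling back the skeleton filtration of the nerve; at the $m$-th stage one glues pieces of the form $|K_J| \times |J|$ along $|K_J| \times \partial |J|$, and the $(k-|J|+1)$-connectivity of $|K_J|$ is precisely what is needed for the induction.
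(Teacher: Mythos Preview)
The paper does not prove this proposition: it is quoted from Bj\"orner \cite{bjoerner}, with only the remark that Bj\"orner assumes $K$ connected and that the $\pi_0$ bijection follows separately from an easy lemma. So on the paper's side there is nothing beyond a citation to compare against.

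Your reduction is correct and in fact matches what the paper does in the proof of \cref{functorial-bjorner-nconnectivity}: by \cref{proj-are-equiv} the statement about $f$ is equivalent to the corresponding statement about $\lambda_N$, and the downfibers $\lambda_N/J$ have exactly the connectivities you compute. However, the step you label the ``analytic core'' --- a connectivity-graded Quillen Theorem~A --- is not a standard black box independent of Bj\"orner; it is precisely the content of his paper, merely phrased for a general poset map rather than for the specific comparison map $f$. You acknowledge this yourself. So your proposal does not bypass \cite{bjoerner}; it repackages the same work through $\lambda_N$ instead of $f$.

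Your skeletal-induction sketch has the right shape, but the decisive step is glossed over. When you attach $|K_J|\times|J|$ along $|K_J|\times|\partial J|$ and compare with attaching $|J|$ along $|\partial J|$, you need a gluing lemma controlling how the $(k-|J|+1)$-connectivity of $|K_J|$ propagates through the pushout to give the desired isomorphisms on $\pi_j$ of the comparison map for $j\le k$. ``Van Kampen and Hurewicz'' will not carry this for $j\ge 2$; one needs either a relative-connectivity argument (as Bj\"orner does), a Blakers--Massey style excision, or a spectral-sequence comparison. If you want a self-contained proof, that gluing lemma is where the real work lies, and you should either state and prove it explicitly or cite Bj\"orner for it --- in which case you are back to what the paper does.
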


In fact, the theorem of Bj\"orner deals with regular CW complexes, rather than simplicial complexes.
The assumption that $\mathscr{A}$ is locally finite is omitted from the original statement,
but it is used in the proof.
For convenience, Bj\"orner assumes that $K$ is connected: \cref{connectivity-nerve}
follows from Bj\"orner's theorem and the following lemma, which is easily proved.

\begin{lemma}
Let $K$ be a simplicial complex,
and let $\mathscr{A} = (K_i)_{i \in I}$ be a locally finite cover of $K$ by subcomplexes
such that $K_i$ is non-empty and connected for all $i \in I$.
Then~$f$ induces a bijection $\pi_0(K) \iso \pi_0(\Nrv(\mathscr{A}))$.\qed
\end{lemma}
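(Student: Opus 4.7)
The plan is to prove surjectivity and injectivity of the induced map $f_* \colon \pi_0(|K|) \to \pi_0(|\Nrv(\mathscr{A})|)$ separately, in both cases reducing to statements about vertices. For the reduction, recall that every point in the realization of a simplicial complex lies in an open simplex, hence in the same path component as any of that simplex's vertices; so it is enough to understand where $f_*$ sends the vertices of $K$ and how it relates their path components to those of the vertices of $\Nrv(\mathscr{A})$. Also, for a vertex $v$ of $K$, the value $f(v) = \{i \in I \mid v \in K_i\}$ is a finite, non-empty simplex of $\Nrv(\mathscr{A})$ (finite by local finiteness, non-empty because $\mathscr{A}$ is a cover), and $f_*(v)$ is the barycenter of this simplex in $|\Sd \Nrv(\mathscr{A})| \cong |\Nrv(\mathscr{A})|$.

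For surjectivity, I would argue as follows. Every path component of $|\Nrv(\mathscr{A})|$ contains a vertex $i \in I$, and $K_i$ is non-empty by hypothesis, so it contains a vertex $v$. Since $i \in f(v)$, we have $\{i\} \leq f(v)$ in $\Pos(\Nrv(\mathscr{A}))^{\opp}$, so these two elements span an edge in $\Flag(\Pos(\Nrv(\mathscr{A}))^{\opp}) = \Sd \Nrv(\mathscr{A})$ (or coincide). Hence the vertex $i$ lies in the same path component as $f_*(v)$, proving surjectivity.

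For injectivity, suppose $v, w$ are vertices of $K$ such that $f_*(v)$ and $f_*(w)$ lie in the same path component of $|\Nrv(\mathscr{A})|$. Choose $i \in f(v)$ and $j \in f(w)$; by the same argument as above, $i$ and $j$ lie in the path components of $f_*(v)$ and $f_*(w)$ respectively, so $i$ and $j$ are connected by an edge path in $\Nrv(\mathscr{A})$. This gives a sequence $i = i_0, i_1, \ldots, i_n = j$ of vertices of $\Nrv(\mathscr{A})$ with $K_{i_k} \cap K_{i_{k+1}} \neq \emptyset$ for each $k$. Pick a vertex $x_k \in K_{i_k} \cap K_{i_{k+1}}$. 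Since each $K_{i_k}$ is connected (hence path-connected, as it is the realization of a simplicial complex), we obtain a chain of path-connections in $|K|$: $v \in K_{i_0} \ni x_0 \in K_{i_1} \ni x_1 \in \cdots \ni x_{n-1} \in K_{i_n} \ni w$, so $v$ and $w$ are in the same path component of $|K|$.

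No step here is a serious obstacle; the proof is essentially a bookkeeping exercise combining the definition of $f$, the definitions of $\pi_0$ for the relevant simplicial complexes, and the hypotheses that each $K_i$ is non-empty and connected. The only subtle point is to be careful that $f$ is well-defined as a poset map, which requires the local finiteness hypothesis so that $f(\sigma)$ is a finite subset of $I$ and hence a simplex of $\Nrv(\mathscr{A})$; this has already been noted in the proof of \cref{nerve-tri-cov}.
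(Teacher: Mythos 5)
Your proof is correct. The paper states this lemma without proof (it is explicitly flagged as ``easily proved''), and your argument --- reducing to vertices, using local finiteness so that $f(v)$ is a finite non-empty simplex of $\Nrv(\mathscr{A})$, non-emptiness of the $K_i$ for surjectivity, and connectedness of the $K_i$ to convert an edge path in the nerve into a chain of path-connections in $|K|$ for injectivity --- is exactly the standard argument the authors intend.
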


Using the $\PoBar$ construction and the blowup complex as before,
we obtain the following functorial version of Bj\"orner's theorem.

\begin{thm}
\label{functorial-bjorner-nconnectivity}
Let $k \geq 0$, let $K$ be a simplicial complex, and let $\mathscr{A} = ( K_i )_{i \in I}$
be a locally finite cover of $K$ by subcomplexes.
Assume that every non-empty intersection $K_{i_1} \cap \dots \cap K_{i_t}$
is $(k-t+1)$-connected, for all $t \geq 1$.
The natural map $\rho_S \colon \SmallBarCon(|\mathscr{A}|) \to |K|$ is a homotopy equivalence,
and the natural map $\rho_N \colon \SmallBarCon(|\mathscr{A}|) \to |\Nrv(\mathscr{A})|$
induces a bijection in path components,
and for all $1 \leq j \leq k$, and for all $x \in \SmallBarCon(|\mathscr{A}|)$,
$\rho_N$ induces an isomorphism
$\pi_j (\SmallBarCon(|\mathscr{A}|), x) \iso \pi_j(|\Nrv(\mathscr{A})|, \rho_N (x))$.
\end{thm}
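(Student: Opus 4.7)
The plan is to reduce the statement to Bj{\"o}rner's theorem (\cref{connectivity-nerve}) using the machinery of the $\PoBar$ construction that has already been developed, in the same style as the proof of \cref{functorial-simplicial-nerve}. The key observation is that the conclusion for $\rho_S$ does not use any connectivity hypothesis at all, while the conclusion for $\rho_N$ follows by transferring the connectivity statement for the map $f$ along a zig-zag of homotopy equivalences.

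First I would treat $\rho_S$. Inspecting \cref{proj-are-equiv}, the argument that $\lambda_S$ is a homotopy equivalence uses only that $\PoBar(\mathscr{A})$ sits inside the product $\Pos(K) \times \Pos(\Nrv(\mathscr{A}))^{\opp}$, and makes no use of any goodness or connectivity of $\mathscr{A}$. Combined with \cref{collapse-to-T}, which says the inclusion $|\Tri(\mathscr{A})|\hookrightarrow |\Flag(\PoBar(\mathscr{A}))|$ is a homotopy equivalence, and \cref{triangulation-of-blowup}, which provides the homeomorphism $\varphi\colon \SmallBarCon(|\mathscr{A}|)\to |\Tri(\mathscr{A})|$ and the commutative diagram \ref{pobar-and-blowup}, a two-out-of-three argument along the left column of that diagram shows that $\rho_S$ is a homotopy equivalence.

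For $\rho_N$, I would run the same zig-zag but replace the \emph{homotopy equivalence} conclusion by the weaker \emph{$(k+1)$-equivalence} that Bj{\"o}rner's theorem provides. Concretely, \cref{connectivity-nerve} gives that the comparison map $|f| \colon |\Sd(K)|\to |\Sd(\Nrv(\mathscr{A}))|$ induced by $f\colon\Pos(K)\to\Pos(\Nrv(\mathscr{A}))^{\opp}$ is a bijection on $\pi_0$ and an isomorphism on $\pi_j$ for $1\leq j\leq k$, at every basepoint. Since $\lambda_S$ is a homotopy equivalence and the triangle in \cref{proj-are-equiv} commutes up to homotopy, $|\lambda_N|=|f|\circ|\lambda_S|$ up to homotopy, and hence $\lambda_N$ inherits the same $\pi_j$-properties. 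Pre-composing with the homotopy equivalence $|\Tri(\mathscr{A})|\hookrightarrow |\Flag(\PoBar(\mathscr{A}))|$ and with the homeomorphism $\varphi$, and using commutativity of the right half of Diagram \ref{pobar-and-blowup} together with the standard homeomorphism $|\Sd(\Nrv(\mathscr{A}))|\cong |\Nrv(\mathscr{A})|$, we deduce the same conclusion for $\rho_N$.

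The only genuine work is bookkeeping: checking that the property ``bijection on $\pi_0$ and isomorphism on $\pi_j$ for $1\leq j\leq k$, at all basepoints'' is preserved under pre- and post-composition with homotopy equivalences, and under homotopies of the given map. This is standard (it is the statement that being an $n$-equivalence is a homotopy-invariant property of maps), and once noted the argument reduces to chasing the diagram \ref{pobar-and-blowup}. The main conceptual obstacle, if any, is to notice that no additional assumption on $\mathscr{A}$ is needed for $\rho_S$; the connectivity hypothesis enters only through Bj{\"o}rner's theorem and thus only constrains the nerve-side map $\rho_N$.
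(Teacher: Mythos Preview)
Your proposal is correct and follows essentially the same approach as the paper: observe that the proof of \cref{proj-are-equiv} shows $\lambda_S$ is a homotopy equivalence and the triangle commutes up to homotopy without any goodness assumption, invoke Bj\"orner's \cref{connectivity-nerve} to obtain the connectivity conclusion for $\lambda_N$, and then chase Diagram~\ref{pobar-and-blowup} to transfer both conclusions to $\rho_S$ and $\rho_N$. Your additional remark about the homotopy invariance of the $(k+1)$-equivalence property is the only detail the paper leaves implicit.
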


\begin{proof}
Note that the proof of \cref{proj-are-equiv}
shows that the poset map $\lambda_S$ is a homotopy equivalence,
and that the triangle commutes up to homotopy, without the assumption that $\mathscr{A}$ is good.
So, by Bj\"orner's \cref{connectivity-nerve},
$\lambda_N \colon |\Flag(\PoBar(\mathscr{A}))| \to | \Sd(\Nrv(\mathscr{A})) |$
induces a bijection in path components,
and for all $1 \leq j \leq k$, and for all $x \in |\Flag(\PoBar(\mathscr{A}))|$,
$\lambda_N$ induces an isomorphism
$\pi_j (|\Flag(\PoBar(\mathscr{A}))|, x) \iso \pi_j(|\Nrv(\mathscr{A})|, \lambda_N (x))$.
The result follows from commutativity of Diagram \ref{pobar-and-blowup}.
\end{proof}

\section{A Unified Nerve Theorem} \label{section:unified-nerve-thm}

We now prove the unified nerve theorem
(\cref{intro-thm-unified} in the introduction),
which subsumes \cref{thm:comp_conv_zigzag,compact-semialgebraic} as special cases, and which implies \cref{functorial-simplicial-nerve} with the additional assumption that the cover by subcomplexes is locally finite and locally finite dimensional.
Like in \cref{thm:comp_conv_zigzag},
we use the connection between the blowup complex and the bar construction
(explained in \cref{section:func_nerve})
to deduce statements about the blowup complex from the corresponding statements
about the bar construction.
Since the bar construction is a standard tool in homotopy theory,
we can use well-known results to prove the requisite properties.
We begin by introducing the various notions from topology we need
to state the unified nerve theorem.

In order to avoid pathological behavior in the category $\Top$ of all topological spaces,
results in algebraic topology are often restricted to certain full subcategories
that include all the spaces of primary interest (such as CW complexes)
and that have better categorical properties.
For example, it is often convenient to work in a category of topological spaces
that is \emph{cartesian closed}:
roughly speaking, this means that for any spaces $X$ and $Y$ in the category,
we have a ``mapping space'' $Y^X$ in the category such that for a fixed space $Z$, the set of maps
$Z \to Y^X$ is in bijection with the set of maps $X \times Z \to Y$,
and this bijection is natural in~$Y$ and $Z$.
Letting $Z = *$, we see that the points of $Y^X$ are in bijection with
continuous maps $X \to Y$.
Such mapping spaces play an important role in algebraic topology,
because they encode homotopy-theoretic information.
For example, a path $\gamma \colon [0,1] \to Y^X$ in the mapping space
corresponds to a homotopy $H \colon X \times [0,1] \to Y$.
There is more than one standard choice for a cartesian closed subcategory.
We will consider the following one.

\begin{defi} \label{compactly-generated}
A topological space $X$ is \emph{weak Hausdorff} if $g(K)$ is closed in $X$
for every continuous map $g \colon K \to X$ with $K$ compact Hausdorff.
A subspace $A$ of $X$ is \emph{compactly closed} if $g^{-1}(A)$ is closed in $K$
for every continuous map $g \colon K \to X$ with $K$ compact Hausdorff.
A space $X$ is a $k$-\emph{space} if every compactly closed subspace of $X$ is closed.
A space $X$ is \emph{compactly generated} if it is a weak Hausdorff $k$-space.
The full subcategory of $\Top$ of compactly generated spaces is denoted by $\CGSpc$.
\end{defi}

A note of warning: there is conflicting terminology in the literature surrounding compactly generated spaces.
See \cite[Chapter 5]{may-concise} or \cite{strickland} for basic facts about these spaces. For example, there exist inclusions and adjoint functors
\[\begin{tikzcd}
   \CGSpc\arrow[hookrightarrow,bend right,swap]{r}\arrow[phantom, "\dashv" rotate=-90]{r} & \op{k-spaces} \arrow[bend right]{l}  \arrow[phantom, near start, "\dashv" rotate=90]{r} \arrow[hookrightarrow, bend right, swap]{r} & \Top \arrow[bend right]{l},
  \end{tikzcd} \]
where $\op{k-spaces}$ is the full subcategory of $\Top$ consisting of $k$-spaces.

\begin{ex}
\label{ex:compactly_generated}
Many spaces are compactly generated:
 \begin{itemize}
   \item Every closed subspace of a compactly generated space is compactly generated.
   \item Every CW-complex is compactly generated.
  \item Every locally compact Hausdorff space is compactly generated \cite[Proposition 1.7]{strickland}. In particular, $\mathbb{R}^d$ is compactly generated.
 \end{itemize}
\end{ex}

A simplicial complex $K$ is sometimes said to be \emph{locally finite dimensional}
if every vertex $v$ of $K$ has a finite dimensional star, i.e.,
$\sup \{ \mathrm{dim} \, \sigma \mid v \in \sigma \} < \infty$.
Following this usage, we say that a cover of a topological space is locally finite dimensional
if the nerve of the cover is so. More explicitly, we have the following:

\begin{defi}
If $X$ is a topological space, and $\mathscr{A} = (A_i)_{i\in I}$ is a cover,
then $\mathscr{A}$ is \emph{locally finite dimensional} if for each cover element $A_i$
there exists $k_i \in \mathbb{N}$ such that for any
$J \subseteq I$ with ${A}_J \neq \emptyset$ and $i \in J$,
we have $|J| \leq k_i$.
\end{defi}

\begin{defi}
Let $R$ be a commutative ring.
We say that a continuous map $f$ between topological spaces is an \emph{$R$-homology isomorphism} if $H_n(f, R)$ is an isomorphisms for all $n \geq 0$.
We say that a cover $\mathscr{A} = (A_i)_{i \in I}$ is \textit{homologically good}
with respect to $R$ if, for all non-empty $J \subseteq I$ such that ${A}_{J} \neq \emptyset$,
the map to the one point space ${A}_{J} \to *$ is an $R$-homology isomorphism.
\end{defi}

\begin{defi}
We say that a cover $\mathscr{A} = (A_i)_{i \in I}$ is \textit{weakly good} if, for all non-empty $J \subseteq I$ such that ${A}_{J} \neq \emptyset$,
the map ${A}_{J} \to *$ is a weak homotopy equivalence, where $*$ is the one point space.
\end{defi}

\begin{defi}
Let $X$ be a topological space, and let $\mathscr{A} = (A_i)_{i \in I}$ be a cover.
For $T \in \Nrv(\mathscr{A})$, the \textit{latching space} is the subset
\[
	L(T) := \bigcup_{T \subsetneq J\subseteq I} {A}_{J} \subseteq {A}_{T} \; .
\]
\end{defi}

Finally, let us recall the {homotopy extension property}.
\begin{defi} \label{homotopy-extension-property}
 Let $X$ be topological spaces and let $A$ be a subset. We say that the pair $(X,A)$ satisfies the \emph{homotopy extension property} if for every commutative diagram of the following shape the dotted arrow exists
 \[
 \begin{tikzcd}
   A \arrow[hookrightarrow]{r}\arrow{d}[swap]{\op{id}_A\times \{0\}} & X \arrow{d}[swap]{\op{id}_X\times\{0\}}\arrow[bend left]{ddr}{f} &\\
   A\times [0,1] \arrow[hookrightarrow]{r}\arrow[bend right]{drr}{H} & X\times [0,1]\arrow[dotted]{dr}{\tilde{H}}&\\
   && Y.
 \end{tikzcd}
 \]
 In words, the pair $(X,A)$ has the homotopy extension property if for any map $f$, every homotopy $H$ of $f$ on $A$ can be extended to a homotopy $\tilde{H}$ of $f$ defined on all of $X$.
\end{defi}
\begin{rmk}
\label{rmk:cw_hep}
A large class of pairs has the homotopy extension property. For example, if $X$ is a CW-complex and $A$ a subcomplex, then $(X,A)$ satisfies the homotopy extension property (\cite[Proposition 0.16]{hatcher} or \cite[Proposition 7.10]{kozlov}).
We say more about the homotopy extension property in \cref{subsection:applications-UNT}.
\end{rmk}

We can now state the unified version of the Nerve Theorem.

\begin{thm}[Unified Nerve Theorem]
\label{abstract-nerve-thm}
Let $X$ be a topological space and let $\mathscr{A} = (A_i)_{i \in I}$ be a cover of $X$.
\begin{enumerate}
	\item Consider the natural map $\rho_{S} \colon \SmallBarCon(\mathscr{A}) \to X$.
	\begin{enumerate}
		\item If $\mathscr{A}$ is an open cover,
		then $\rho_{S}$ is a weak homotopy equivalence.
		If furthermore $X$ is a paracompact Hausdorff space, or, more generally, if $\mathscr{A}$ is numerable, then $\rho_{S}$ is a homotopy equivalence.
		\item Assume that $X$ is compactly generated
		and that $\mathscr{A}$ is a closed cover
		that is locally finite and locally finite dimensional.
		If for any $T \in \Nrv(\mathscr{A})$ the latching space $L(T) \subseteq {A}_{T}$
		is a closed subset and the pair $({A}_{T},L(T))$ satisfies the homotopy extension property,
		then $\rho_{S}$ is a homotopy equivalence.
	\end{enumerate}
	\item Consider the natural map
	    $\rho_{N} \colon \SmallBarCon(\mathscr{A}) \to | \Nrv(\mathscr{A}) |$.
	\begin{enumerate}
	    \item If $\mathscr{A}$ is (weakly) good, then $\rho_{N}$
		is a (weak) homotopy equivalence.
		\item If for all $J \in \Nrv({\mathscr{A}})$ the space ${A}_{J}$ is compactly generated
		and $\mathscr{A}$ is homologically good with respect to a coefficient ring $R$,
		then $\rho_{N}$ is an $R$-homology isomorphism.
	\end{enumerate}
\end{enumerate}
\end{thm}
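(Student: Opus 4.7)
The plan is to reduce every statement to a corresponding statement about the bar construction of the nerve diagram $\mathscr{D}_{\mathscr{A}}$, exploiting the homeomorphism $\BarCon(\mathscr{D}_{\mathscr{A}}) \iso \SmallBarCon(\mathscr{A})$ and the commutative diagram \ref{diagram:two-roofs}. Because that diagram has vertical homeomorphisms, each claim about $\rho_{S}$ or $\rho_{N}$ is equivalent to the corresponding claim about $\pi_{S} \colon \BarCon(\mathscr{D}_{\mathscr{A}}) \to X$ or $\pi_{\Sd N} \colon \BarCon(\mathscr{D}_{\mathscr{A}}) \to |\Sd\Nrv(\mathscr{A})|$. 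I will therefore treat parts 1(a), 1(b), 2(a), 2(b) by proving the appropriate property of one of these two natural projections.

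For part 2, the observation is that $\pi_{\Sd N}$ is induced by the canonical natural transformation $\mathscr{D}_{\mathscr{A}} \Rightarrow *^{P_{\mathscr{A}}}$, together with the identification $\BarCon(*^{P_{\mathscr{A}}}) \iso |\Sd\Nrv(\mathscr{A})|$. For 2(a), the (weak) goodness hypothesis says exactly that each component of this natural transformation is a (weak) homotopy equivalence, so \cref{prop:homotopy_lemma_top} in the homotopy-equivalence case, and its weak-equivalence variant (proved via Dugger--Isaksen in \cref{appendix-open-covers}), give the conclusion directly. For 2(b), the strategy is to lift the problem into the category $\sRMod$ of simplicial $R$-modules by applying the singular chain functor $C_{*}(-;R)$: one checks that $C_{*}$ intertwines the topological bar construction with the bar construction in $\sRMod$ up to natural weak equivalence, so that $C_{*}(\pi_{\Sd N})$ is induced by a natural transformation in $\sRMod$ whose components are quasi-isomorphisms by the homological goodness hypothesis; homotopy invariance of the bar construction in $\sRMod$ (Dold--Kan together with standard homological algebra) then yields a quasi-isomorphism, which gives the $R$-homology isomorphism.

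For part 1(a), I would invoke \cref{space-is-colimit} to identify $\op{colim}\mathscr{D}_{\mathscr{A}} \iso X$ when $\mathscr{A}$ is open, so that $\pi_{S}$ is precisely the canonical comparison from the bar construction to the ordinary colimit. The statement that this map is a weak homotopy equivalence for arbitrary open covers is the Dugger--Isaksen theorem proved in \cref{appendix-open-covers}, and under the additional paracompactness hypothesis it is a genuine homotopy equivalence by Hatcher's \cite[Proposition 4G.2]{hatcher}, whose proof uses a subordinate partition of unity to build a map $X \to \BarCon(\mathscr{D}_{\mathscr{A}})$ and verify it is a homotopy inverse to $\pi_{S}$.

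The main obstacle is part 1(b), where the cover is closed rather than open and the straightforward open-cover proof does not apply; the remedy is to work within $\CGSpc$ and use the Reedy model structure on the diagram category $\CGSpc^{P_{\mathscr{A}}}$, with $P_{\mathscr{A}}$ viewed as a Reedy category in which all morphisms decrease degree. The plan is to show that the hypotheses (locally finite, locally finite dimensional, closed latching inclusions $L(T) \hookrightarrow {A}_{T}$ satisfying the homotopy extension property) are exactly what is needed to make $\mathscr{D}_{\mathscr{A}}$ Reedy cofibrant, proceeding by induction on the dimension of simplices in $\Nrv(\mathscr{A})$; local finite-dimensionality is what makes this induction terminate on each cover element. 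For a Reedy cofibrant diagram the bar construction computes the homotopy colimit and the canonical map $\op{hocolim}\mathscr{D}_{\mathscr{A}} \to \op{colim}\mathscr{D}_{\mathscr{A}}$ is a weak equivalence; combining this with \cref{space-is-colimit} (valid for closed locally finite covers, and giving a homeomorphism after passing to $\CGSpc$) and with the fact that a weak equivalence between CW-like spaces is a homotopy equivalence, yields the asserted homotopy equivalence $\pi_{S} \colon \BarCon(\mathscr{D}_{\mathscr{A}}) \to X$. This last step closely mirrors \cite[Corollary 14.17]{dugger-hocolim}, which serves as the template for the argument.
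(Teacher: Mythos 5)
Your overall strategy --- reducing every claim to the corresponding claim about $\pi_S$ and $\pi_{\Sd N}$ on $\BarCon(\mathscr{D}_{\mathscr{A}})$ via Diagram \ref{diagram:two-roofs}, handling 2(a) by homotopy invariance of the bar construction, 2(b) by passing to simplicial $R$-modules, 1(a) by Dugger--Isaksen plus Hatcher's partition-of-unity argument, and 1(b) by a Reedy cofibrancy argument with the latching spaces --- is exactly the route the paper takes, and parts 1(a), 2(a), and 2(b) are fine as you outline them.

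The one step that would fail is the endgame of 1(b). After establishing that $\mathscr{D}_{\mathscr{A}}$ is Reedy cofibrant and that the comparison map from the homotopy colimit to $\op{colim}\mathscr{D}_{\mathscr{A}} \cong X$ is a weak equivalence, you propose to upgrade this to a homotopy equivalence using ``the fact that a weak equivalence between CW-like spaces is a homotopy equivalence.'' But neither $X$ nor the cover elements $A_T$ are assumed to have the homotopy type of CW complexes --- $X$ is merely compactly generated --- so a Whitehead-type upgrade is not available, and the theorem genuinely asserts a homotopy equivalence, not just a weak one. The resolution (which is the paper's, in \cref{hocolim-colim-comparison}) is to run the entire Reedy argument in the \emph{Hurewicz} model structure on $\CGSpc$ (\cref{model-structures-on-Top}), whose cofibrations are precisely the maps satisfying the homotopy extension property and whose weak equivalences are genuine homotopy equivalences. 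Since $P_{\mathscr{A}}$ is upwards-directed, the Reedy structure coincides with the projective one; the hypotheses on the pairs $(A_T, L(T))$ hand you exactly the Hurewicz cofibrations needed for Reedy cofibrancy in that structure, and the comparison map is then automatically a homotopy equivalence with no further upgrading step. So the fix is simply to commit to the Hurewicz rather than the Quillen model structure throughout 1(b); as written, the final step of your argument does not go through.
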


We prove \cref{abstract-nerve-thm} in \cref{section:proof}.

\begin{rmk}
The compactly generated assumption in 2(b) is satisfied for example if $X$ is a locally compact Hausdorff space and $\mathscr{A}$ is an open cover. The assumption also holds if $X$ is compactly generated and $\mathscr{A}$ is a closed cover; this also includes the case of a cover of a CW-complex by subcomplexes. See \cref{ex:compactly_generated}.
\end{rmk}

\begin{rmk}
 If $X$ is a regular CW-complex and $\mathscr{A}$ is a cover of subcomplexes, then 2(b) can also be proven using spectral sequence techniques \cite[Chapter VII, Section~4]{brown}. Note that in this reference, the total complex of the double complex associated to the cover is isomorphic to the cellular chain complex of $\SmallBarCon(\mathscr{A})$. Moreover, these techniques can also be used to prove an analogous statement to \cref{connectivity-nerve} for homology groups \cite[Theorem 2.1]{meshulam}.
\end{rmk}

\begin{rmk}
\label{counterexamples-nervetheorem}
To illustrate the role of the technical assumptions in the unified nerve theorem, we now discuss some counterexamples when these assumptions are violated.
\begin{itemize}
\item The classical nerve theorem for a good open cover of a paracompact Hausdorff space is proven using 1(a) and 2(a). We will now give an example that shows that this paracompactness Hausdorff assumption, which ensures that the open cover is numerable, cannot be omitted in order to establish a homotopy equivalence between space and nerve. Consider the \emph{long ray} ${L}$, which is constructed as follows: Take the first uncountable ordinal $\omega_1$, which is a well-ordered set and its elements are all countable ordinals, and insert a unit interval $(0,1)$ between each countable ordinal $\alpha$ and its successor $\alpha+1$, yielding a totally ordered set. The topology on $L$ is given by the order topology. The long ray is a standard example for a non-paracompact Hausdorff space that is also not contractible \cite{counterexamples-topology,MR709260}. However, $L$ is weakly contractible and for any point $p\in L$ the open set $L_{<p}=\{t\in L\mid t<p\}\subset L$ is homeomorphic to the interval $[0,1)$. Thus, the open cover~$\mathscr{A}=(L_{<p})_{p\in \omega_1}$ is a good cover of $L$ and it follows from 1(a) and 2(a) that the nerve $|\Nrv{\mathscr{A}}|$ is weakly contractible and hence contractible by Whitehead's theorem \cite[Theorem 4.5]{hatcher}. This implies that the space $L$ and the nerve $|\Nrv{\mathscr{A}}|$ are not homotopy equivalent.

Note that the Hausdorff assumption cannot be dropped either; there exist non-Hausdorff paracompact spaces with good open covers that are not homotopy equivalent to the nerve of the cover. Specifically, any finite simplicial complex $K$ is weakly homotopy equivalent to a finite topological space $X$ whose points correspond to the simplices and whose open sets are upsets in the face poset \cite{MR196744}. The open sets corresponding to vertex stars form a good open cover of $X$ whose nerve is isomorphic to~$K$. However, it is straightforward to verify that every map $X \to |K|$ is locally constant, and therefore $|K|$ and $X$ are not homotopy equivalent in general.

\item The finiteness conditions in 1(b) control the size of the cover. If $\mathscr{A}$ is the cover of the circle $S^1$ by its points, then all conditions in 1(b) and 2(a) are satisfied except the locally finiteness assumption. As the nerve $|\Nrv{\mathscr{A}}|$ is a disjoint union of points, it is not homotopy equivalent to $S^1$.

\item Even if we are only interested in finite good and closed covers, the covered space does not need to be homotopy equivalent to the nerve of the cover. Consider the \emph{double comb space} $C$ and denote the two combs by $A_1$ and $A_2$ (see \cref{double_comb_warsaw}).
 Then, the nerve $|\Nrv{\mathscr{A}}|$ of the finite good and closed cover $\mathscr{A}=\{A_1,A_2\}$ of $C$ is contractible. Hence, it can not be homotopy equivalent to $C$, because the latter is not contractible. In this example, the pairs $(A_1,A_1\cap A_2)$ and $(A_2,A_1\cap A_2)$ do not satisfy the homotopy extension property. This shows that the conditions on the latching spaces are crucial, as all other assumptions in 1(b) and 2(a) are satisfied.

\item If $\mathscr{A}$ is any homologically good open cover of a locally compact Hausdorff space~$X$, then it follows from 1(a) and 2(b) that the space $X$ and the nerve $\Nrv{\mathscr{A}}$ have isomorphic homology groups. This conclusion does not hold if one replaces the open cover by a closed cover. Consider the \emph{Warsaw circle} $W\subseteq S^2$ that separates the sphere into two connected components $U_1$ and~$U_2$ (see \cref{double_comb_warsaw}).
The closed sets $A_1=U_1\cup W$ and $A_2=U_2\cup W$ cover the sphere and are contractible. Moreover, the intersection $A_1\cap A_2=W$ is acyclic and hence $\mathscr{A}=\{A_1,A_2\}$ is a homologically good closed cover of $S^2$. Nevertheless, the space $S^2$ and the nerve~$\Nrv{\mathscr{A}}$ do not have isomorphic homology groups, as $H_2(S^2)\cong\mathbb{Z}$ and $H_2(\Nrv{\mathscr{A}})\cong 0$. Hence, the conditions on the latching spaces are crucial, as all other assumptions in 1(b) and 2(b) are satisfied. This counterexample also shows that the nerve of a weakly good closed cover is not necessarily weakly equivalent to the space it covers.
\end{itemize}

\bigskip

\begin{center}
 \begin{minipage}[c]{.85\textwidth}
\centering
 \includegraphics[height=.3\textwidth]{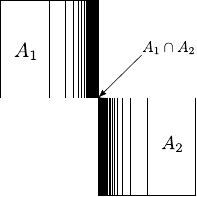}
 \hspace{.2\textwidth}
 \includegraphics[height=.3\textwidth]{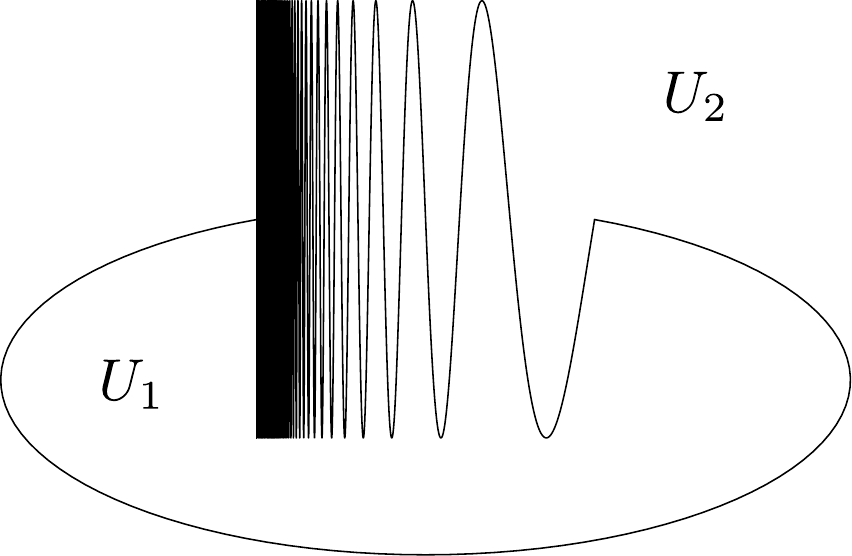}
\end{minipage}
\begingroup
\captionof{figure}{The double comb space $C$ (left) and the Warsaw circle $W$ (right).}
\label{double_comb_warsaw}
\endgroup
\end{center}
\end{rmk}

\subsection{Applications of the Unified Nerve Theorem} \label{subsection:applications-UNT}
The assumptions on the latching spaces in 1(b) of \cref{abstract-nerve-thm} might not be easy to check in all situations. We now give a reformulation, and a union theorem for pairs that satisfy the homotopy extension property, which help to verify these assumptions.
We also show in this subsection that \cref{abstract-nerve-thm}
implies the functorial nerve theorems
\cref{thm:comp_conv_zigzag} and \cref{functorial-simplicial-nerve}.

To study pairs that satisfy the homotopy extension property it suffices to consider neighborhood deformation retracts.
\begin{defi}
\label{defi:ndr_pair}
 A pair of topological spaces $(X,A)$ is called an \emph{NDR-pair} if there exist continuous maps $u\colon X\to [0,1]$ and $h\colon X\times[0,1]\to X$ such that
 \begin{multicols}{2}
    \begin{enumerate}[(i)]
    \item $A=u^{-1}(0)$
    \item $h(-,0)=\op{id}_X$
    \item $h(a,-)=a$ for all $a\in A$
    \item $h(x,1)\in A$ for all $x\in X$ with $u(x)<1$.
    \end{enumerate}
\end{multicols}
\end{defi}

 \begin{prop}[{\cite[Proposition 7.7]{kozlov}}]
 \label{prop:ndr_cofib}
  Let $A$ be a closed subspace of $X$. Then $(X,A)$ is an NDR-pair if and only if $(X,A)$ satisfies the homotopy extension property.
 \end{prop}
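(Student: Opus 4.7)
The plan is to establish both directions through the classical equivalent characterization that $(X,A)$ satisfies the homotopy extension property if and only if $M:=(X\times\{0\})\cup(A\times[0,1])$ is a retract of $X\times[0,1]$. One direction of this equivalence is immediate: given a retraction $r\colon X\times[0,1]\to M$, any pair $(f,H)$ of the data in \cref{homotopy-extension-property} assembles into a map $M\to Y$, and precomposing with $r$ produces the required $\tilde H$. The converse follows by applying HEP to $f=\op{id}_X$ and the identity homotopy of $A\times[0,1]$, both landing in $X\times[0,1]$; the resulting extension lands in $M$ because its restrictions to $X\times\{0\}$ and to $A\times[0,1]$ do. Thus the claim reduces, under the closedness hypothesis on $A$, to the equivalence between existence of a retraction onto $M$ and existence of NDR-data $(u,h)$.

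For the direction NDR $\Rightarrow$ retraction, I would build $r$ by a piecewise formula that uses $h$ to push points into $A$ where possible, and projects onto $X\times\{0\}$ elsewhere. The intuition is that $u$ measures ``distance to $A$'', and when the homotopy parameter $t$ exceeds $u(x)$, axiom (iv) guarantees $h(x,1)\in A$, so one can send $(x,t)$ to $(h(x,1),t-u(x))$ (after a suitable rescaling into $[0,1]$), landing in $A\times[0,1]$; when $t\le u(x)$, one sends $(x,t)$ to $(x,0)\in X\times\{0\}$. The two pieces are glued continuously across the interface $t=u(x)$ using axiom (ii), $h(-,0)=\op{id}_X$. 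This is a standard construction in the cofibration literature; I would take the precise formula from there rather than re-derive it.

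For the direction retraction $\Rightarrow$ NDR, given a retraction $r=(r_1,r_2)\colon X\times[0,1]\to M$, define
\[u(x):=\sup_{s\in[0,1]}\lvert s-r_2(x,s)\rvert,\qquad h(x,s):=r_1(x,s).\]
Continuity of $u$ follows from continuity of $r_2$ together with compactness of $[0,1]$. The four axioms of \cref{defi:ndr_pair} are then verified directly: the retraction property forces $r(x,0)=(x,0)$ and $r(a,s)=(a,s)$ for $a\in A$, which yields axioms (ii) and (iii); if $u(x)<1$ then $r_2(x,1)>0$, so $r(x,1)\in A\times[0,1]$ and in particular $h(x,1)\in A$, giving axiom (iv); finally, for axiom (i), if $u(x)=0$ then $r_2(x,s)=s$ for all $s$, so $r(x,s)\in A\times[0,1]$ for every $s>0$, and continuity of $r_1$ together with closedness of $A$ forces $x=r_1(x,0)=\lim_{s\to 0^+}r_1(x,s)\in A$.

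The main obstacle is the explicit construction of the retraction in the forward direction; getting a continuous piecewise formula on the interface $t=u(x)$ takes some care. The reverse direction is largely mechanical, with the only subtle point being that closedness of $A$ in $X$ is what makes the implication $u(x)=0\Rightarrow x\in A$ go through, highlighting why the hypothesis that $A$ be closed is essential to the statement.
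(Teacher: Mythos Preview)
The paper does not prove this proposition; it is stated with a citation to Kozlov's textbook \cite[Proposition~7.7]{kozlov} and no argument is given. Your outline is the standard one and is essentially correct: the reduction to the retract characterization of the homotopy extension property is right, and your verification of the NDR axioms from a retraction $r=(r_1,r_2)$ is careful and complete, including the essential use of closedness of $A$ in showing $u^{-1}(0)\subseteq A$.

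One small correction to your sketch of the forward direction: as written, your two pieces do not agree on the interface $t=u(x)$. The first formula gives $(h(x,1),0)$ there, while the second gives $(x,0)$, and these need not coincide. The usual fix is to let the homotopy $h$ run in \emph{both} regimes, so that the map into the $X$-coordinate is continuous across the seam; a common formula (due to Str{\o}m) is of the shape
\[
r(x,t)=\bigl(h(x,\min(1,t/u(x))),\,\max(0,t-u(x))\bigr),
\]
with the convention that for $u(x)=0$ one reads $\min(1,t/0)=1$, which is consistent since then $x\in A$ and $h(x,1)=x$. You already flagged this as the delicate step and deferred to the literature, which is the right call.
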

The following union theorem is due to Lillig \cite{lillig}.
\begin{prop}
         \label{prop:lilli_union}
          Let $A_0,\dots, A_n\subseteq X$ be closed subsets and assume that for all~$J\subseteq [n]$ the pair $(X, A_J)$ satisfies the homotopy extension property. Then the pair $(X,\bigcup_{i=0}^nA_i)$ also satisfies the homotopy extension property.
\end{prop}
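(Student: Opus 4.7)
The plan is to use the NDR-pair characterization of \cref{prop:ndr_cofib} and induct on $n$, reducing the inductive step to the two-subspace version of Lillig's union theorem. The extra hypothesis that every intersection $(X, A_J)$ is already an NDR-pair is what makes the gluing in the two-subspace step tractable.

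By \cref{prop:ndr_cofib} it suffices to produce NDR data for the pair $\bigl(X, \bigcup_{i=0}^n A_i\bigr)$, noting that a finite union of closed sets is closed. I would induct on $n$; the base $n = 0$ is immediate from the hypothesis applied to $J = \{0\}$. For the inductive step, set $A = A_n$ and $B = \bigcup_{i<n} A_i$. The inductive hypothesis applied to $A_0, \ldots, A_{n-1}$ shows $(X, B)$ is an NDR-pair, since the relevant intersections $A_J$ with $J \subseteq [n-1]$ are among those provided by the original hypothesis. Applied instead to the family $A_n \cap A_0, \ldots, A_n \cap A_{n-1}$, whose $J$-indexed intersection is $A_{J \cup \{n\}}$ (again an NDR-pair with $X$ by hypothesis), the inductive hypothesis yields that $(X, A \cap B) = \bigl(X, \bigcup_{i < n}(A_n \cap A_i)\bigr)$ is an NDR-pair. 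So the induction reduces to the two-subspace claim: \emph{if $A, B \subseteq X$ are closed and $(X, A)$, $(X, B)$, $(X, A \cap B)$ are NDR-pairs, then $(X, A \cup B)$ is an NDR-pair.}

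For this two-subspace statement, fix NDR representations $(u_A, h_A)$, $(u_B, h_B)$, $(w, k)$ of the three given pairs. Set $u = \min(u_A, u_B) \colon X \to [0,1]$; this is continuous with $u^{-1}(0) = A \cup B$, giving condition (i) of \cref{defi:ndr_pair}. The deformation $h \colon X \times [0,1] \to X$ is built by pasting $h_A$ on the closed subset $\{u_A \leq u_B\}$ with $h_B$ on $\{u_B \leq u_A\}$, after a preliminary application of $k$ on the overlap $\{u_A = u_B\}$. The role of $k$ is to push the overlap locus into $A \cap B$, on which both $h_A$ and $h_B$ restrict to the identity; consequently the two halves of the paste agree on the overlap and glue to a continuous homotopy on $X \times [0,1]$. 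Conditions (ii)--(iv) of \cref{defi:ndr_pair} then follow: (ii) from each ingredient starting at the identity; (iii) from $h_A$, $h_B$, $k$ each fixing their respective closed subsets (so in particular $A \cup B$, once the preliminary stage is modulated to be trivial off the overlap); and (iv) from the observation that $u(x) < 1$ forces $u_A(x) < 1$ or $u_B(x) < 1$, pushing $h(x,1)$ into $A$ or $B$.

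The main obstacle is engineering the composite along $\{u_A = u_B\}$: naively interpolating $h_A$ and $h_B$ across this overlap fails because they need not agree there even after $k$. The trick is to use a bump function $\beta \colon X \to [0,1]$, built from $|u_A - u_B|$, which equals $1$ on $\{u_A = u_B\}$ and vanishes where $u_A$ and $u_B$ are sufficiently separated, so that the preliminary $k$-stage is fully active on the overlap but trivial at points of $A \cup B$ lying away from it. Verifying that this modulated composite satisfies all four NDR conditions is a routine but delicate check; once it is in place, the two-subspace case is established and the induction on $n$ closes the proof.
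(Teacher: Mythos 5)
First, a point of comparison: the paper does not prove this proposition at all --- it is quoted from Lillig's paper \cite{lillig} --- so there is no in-paper argument to measure your proof against; what matters is whether your argument stands on its own. Your outer induction is correct and well organized: the base case is the hypothesis for $J=\{n\}$, and in the inductive step you rightly note that the intersections needed to apply the inductive hypothesis to the family $(A_n\cap A_i)_{i<n}$ are exactly the sets $A_{J\cup\{n\}}$ supplied by the original hypothesis. Everything therefore reduces, as you say, to the two-subspace lemma. That lemma is true, but it is the entire content of Lillig's theorem, and your construction of the NDR data for $(X,A\cup B)$ does not close.

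Two concrete failures. (1) Condition (iii) of \cref{defi:ndr_pair} requires $h(a,t)=a$ for \emph{every} $a\in A\cup B$, but your preliminary $k$-stage, modulated by a bump $\beta$ built from $|u_A-u_B|$, is active at points $a\in A\setminus B$ lying close to $B$: there $|u_A(a)-u_B(a)|=u_B(a)$ is small, so $\beta(a)$ is close to $1$, and $k$ fixes only $A\cap B$, not $A$, so such points are moved. If you instead force the modulation to vanish on $(A\cup B)\setminus(A\cap B)$, continuity makes it small at nearby points of the gluing locus $\{u_A=u_B\}$, so the $k$-stage is no longer ``fully active on the overlap'' there, defeating its purpose. (2) Even where the $k$-stage is fully applied, $k(x,1)\in A\cap B$ only when $w(x)<1$, and a point with $u_A(x)=u_B(x)<1$ need not satisfy $w(x)<1$: the deformation neighborhood $\{w<1\}$ of $A\cap B$ can be strictly smaller than $\{u_A<1\}\cap\{u_B<1\}$ and cannot in general be enlarged (take $X=S^1$ covered by two closed arcs meeting in two points; no neighborhood of the two-point intersection covering all of $S^1$ deformation retracts onto it). On that part of $\{u_A=u_B\}$ the two halves $h_A\circ(\text{$k$-stage})$ and $h_B\circ(\text{$k$-stage})$ still disagree, so the pasting is not continuous. (A further small gap: for condition (iv) you need $u_A$ or $u_B$ to remain $<1$ after the preliminary stage, which $k$ does not guarantee.) Resolving exactly these tensions --- by a more careful normalization and interleaving of the three sets of NDR data --- is what Lillig's proof does; as written, your ``routine but delicate check'' is where the theorem actually lives, and it fails.
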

This proposition, together with 1(b) in \cref{abstract-nerve-thm}, implies the following corollary, which does not involve the latching spaces.
\begin{cor}
\label{rho-s-no-latching-space}
 Let $X$ be a compactly generated topological space and $\mathscr{A}=(A_i)_{i\in[n]}$ a finite closed cover. Assume that for all $ I\subseteq J\subseteq [n]$ the pair $(A_I,A_J)$ satisfies the homotopy extension property. Then $\rho_{S} \colon \SmallBarCon(\mathscr{A}) \to X$ is a homotopy equivalence.
\end{cor}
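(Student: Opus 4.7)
The plan is to deduce the corollary directly from part 1(b) of \cref{abstract-nerve-thm} by verifying each of its hypotheses under the present assumptions. Since $\mathscr{A} = (A_i)_{i\in[n]}$ is finite, it is automatically locally finite and locally finite dimensional; the ambient space $X$ is compactly generated by assumption and the cover is closed. What remains is to check, for each $T \in \Nrv(\mathscr{A})$, that the latching space $L(T) \subseteq A_T$ is closed and that the pair $(A_T, L(T))$ satisfies the homotopy extension property.

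Closedness is immediate: by definition
\[
    L(T) = \bigcup_{T \subsetneq J \subseteq [n]} A_J,
\]
which is a finite union of closed subsets of $A_T$ (each $A_J$ being a finite intersection of closed sets contained in $A_T$), and hence closed.

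The main step, and the only place the hypothesis of the corollary is genuinely used, is establishing the HEP for $(A_T, L(T))$. Here I would invoke \cref{prop:lilli_union} with ambient space $A_T$ and the finite family of closed subsets $\{A_J\}_{T \subsetneq J \subseteq [n]}$, whose union is precisely $L(T)$. Lillig's theorem requires that for every subfamily indexed by $\{J_1, \dots, J_r\}$, the pair $\bigl(A_T,\, \bigcap_{k=1}^r A_{J_k}\bigr)$ satisfy HEP. The key observation is
\[
    \bigcap_{k=1}^r A_{J_k} \;=\; A_{J_1 \cup \dots \cup J_r},
\]
and since each $J_k$ contains $T$, so does their union. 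The hypothesis of the corollary, applied with $I = T$ and $J = J_1 \cup \dots \cup J_r$, then supplies exactly the required HEP for this pair. Consequently \cref{prop:lilli_union} yields the HEP for $(A_T, L(T))$, and part 1(b) of \cref{abstract-nerve-thm} concludes that $\rho_S$ is a homotopy equivalence.

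The only mild obstacle is the notational bookkeeping needed to translate the per-pair hypothesis of the corollary into the latching-space HEP demanded by the Unified Nerve Theorem; this translation is precisely what Lillig's union theorem performs, once one notices that intersections of the sets $A_{J}$ with $J \supsetneq T$ remain of the form $A_{J'}$ with $J' \supseteq T$, keeping us within the scope of the hypothesis.
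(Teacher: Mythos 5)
Your proposal is correct and is precisely the argument the paper intends: the corollary is stated as an immediate consequence of Lillig's union theorem (\cref{prop:lilli_union}) together with part 1(b) of \cref{abstract-nerve-thm}, and your verification — finiteness gives local finiteness and local finite-dimensionality, closedness of $L(T)$ is a finite union of closed sets, and the HEP for $(A_T, L(T))$ follows from Lillig applied to the family $\{A_J\}_{T\subsetneq J}$ using $\bigcap_k A_{J_k}=A_{J_1\cup\dots\cup J_r}$ — fills in exactly the bookkeeping the paper leaves implicit.
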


We will now illustrate how these statements can be used to deduce the functorial nerve theorem for closed convex sets in $\bbR^d$ (\cref{thm:comp_conv_zigzag}) from the unified nerve theorem (\cref{abstract-nerve-thm}).
The proof of the following lemma is elementary and left to the reader.
\begin{lemma}
\label{interior-convex-set-is-convex}
        Let $K\subseteq \bbR^d$ be compact and convex. Assume that $\op{aff}K=\bbR^d$, where $\op{aff}K$ is the affine hull of $K$. Then $\op{int}K$ is convex and $\overline{\op{int}K}=K$.
\end{lemma}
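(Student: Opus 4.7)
The plan is to establish three successive claims: that $\op{int} K$ is non-empty, that it is convex, and that its closure recovers $K$. Each step is standard in convex geometry, and the hypothesis $\op{aff} K = \mathbb{R}^d$ is used essentially only for the first claim.

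First I would show that $\op{int} K \neq \emptyset$. Since $\op{aff} K = \mathbb{R}^d$, one can pick $d+1$ affinely independent points $x_0, \dots, x_d \in K$. Their convex hull is a $d$-dimensional simplex $\Delta \subseteq K$ (using convexity of $K$), and $\Delta$ has non-empty Euclidean interior; hence so does $K$.

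Next I would verify convexity of $\op{int} K$. Given $x, y \in \op{int} K$, pick $r > 0$ with $B(x,r), B(y,r) \subseteq K$, and let $z = tx + (1-t)y$ for some $t \in [0,1]$. For any $w \in B(z,r)$, writing $w = t(x + (w-z)) + (1-t)(y + (w-z))$ exhibits $w$ as a convex combination of points in $B(x,r)$ and $B(y,r)$, hence in $K$; thus $B(z,r) \subseteq K$ and $z \in \op{int} K$.

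Finally, to show $\overline{\op{int} K} = K$, the inclusion $\overline{\op{int} K} \subseteq K$ is immediate from closedness of $K$. For the reverse, fix any $y \in \op{int} K$ with $B(y,r) \subseteq K$ and let $x \in K$ be arbitrary. For $t \in (0,1]$ consider $y_t := (1-t) x + t y$. A small calculation shows $B(y_t, tr) \subseteq K$: any $w$ in this ball can be written as $(1-t)x + tz$ with $z := y + (w - y_t)/t \in B(y,r) \subseteq K$, so $w \in K$ by convexity of $K$. Therefore $y_t \in \op{int} K$ for all $t \in (0,1]$, and letting $t \to 0^+$ gives $x \in \overline{\op{int} K}$.

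The only mildly delicate point is the ball-inflation estimate in the last paragraph (i.e., identifying the right radius $tr$ so that the segment-based homotopy stays inside $K$); everything else is bookkeeping with convex combinations. No homotopy-theoretic machinery from the rest of the paper is needed here.
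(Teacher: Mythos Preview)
Your proof is correct and is precisely the standard convex-geometry argument. The paper does not actually give a proof of this lemma: it states that ``The proof of the following lemma is elementary and left to the reader,'' so your write-up simply fills in what the authors omitted. The only cosmetic point is that in step two you should explicitly take a common radius $r = \min(r_x, r_y)$ for the two balls around $x$ and $y$, but this is implicit in what you wrote.
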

\begin{prop}
       \label{prop:cofib_comp_conv}
       Let $K$ and $K'$ be compact and convex sets in $\bbR^d$ with $K\subseteq K'$.
       Then the pair $(K',K)$ satisfies the homotopy extension property.
\end{prop}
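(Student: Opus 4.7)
The plan is to verify directly that $(K', K)$ is an NDR-pair (\cref{defi:ndr_pair}) and then invoke \cref{prop:ndr_cofib} (using that $K$, being compact, is closed in $K'$). Concretely, I would write down explicit maps $u$ and $h$ built from the Euclidean nearest-point projection onto $K$, and check the four conditions.

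The first key step is to define $p \colon K' \to K$ to be the nearest-point projection onto $K$. Because $K$ is a nonempty closed convex subset of $\mathbb{R}^d$ and the Euclidean norm is strictly convex, $p(x)$ is well-defined for every $x \in K'$; it is well known that $p$ is continuous (indeed $1$-Lipschitz). Next, I would define
\[
    h \colon K' \times [0,1] \to K', \qquad h(x,t) = (1-t)\,x + t\,p(x),
\]
which is continuous, and which lands in $K'$ because $p(x) \in K \subseteq K'$ and $K'$ is convex, so the segment from $x$ to $p(x)$ stays inside $K'$. Finally, set
\[
    u \colon K' \to [0,1], \qquad u(x) = \min\bigl(1,\, \|x - p(x)\|\bigr),
\]
which is continuous and satisfies $u^{-1}(0) = K$ since $K$ is closed.

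The four NDR conditions are then immediate: (i) holds by definition of $u$; (ii) reads $h(x,0)=x$; (iii) follows because $p(a)=a$ for $a \in K$, hence $h(a,t)=a$; and (iv) is in fact strengthened to $h(x,1) = p(x) \in K$ for every $x \in K'$, not merely for those with $u(x)<1$. Applying \cref{prop:ndr_cofib} then yields the homotopy extension property.

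I do not anticipate a real obstacle here: once the nearest-point projection is available, the proof is purely a verification. The only subtlety is that this argument does not explicitly invoke \cref{interior-convex-set-is-convex}; that lemma would be needed for an alternative approach in which one first constructs a deformation retract of a neighborhood of $K$ onto $K$ using the interior structure, which is an option if one wishes to avoid quoting the continuity of the Euclidean projection.
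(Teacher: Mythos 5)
Your proof is correct: the nearest-point projection $p$ onto the nonempty closed convex set $K$ is well defined and $1$-Lipschitz, your $h$ stays inside $K'$ by convexity, and the four conditions of \cref{defi:ndr_pair} hold exactly as you verify them (with (iv) in the stronger form $h(x,1)\in K$ for all $x$); since $K$ is closed in $K'$, \cref{prop:ndr_cofib} then gives the homotopy extension property. (The only degenerate case is $K=\emptyset$, where the projection is undefined but the HEP is trivial.) This is, however, a genuinely different route from the paper's. The paper reduces to two special cases via homeomorphisms of $\bbR^d$ — first $K = \op{aff}K \cap K'$, mapped to a CW-pair of the form $(\overline{\mathbb{B}}_1(0), \overline{\mathbb{B}}_1(0)\cap \bbR^l\times\{0\}^{d-l})$ so that \cref{rmk:cw_hep} applies, then the full-dimensional case handled by a star-shapedness argument — and glues the cases together using transitivity of the homotopy extension property. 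Your argument avoids the case analysis and the radial homeomorphisms entirely, at the cost of quoting the continuity of the metric projection. Notably, your proof never uses compactness: it applies verbatim to any nonempty \emph{closed} convex $K$ inside a convex $K'$, so it also yields the subsequent proposition of the paper directly, making the truncation-and-colimit argument given there unnecessary.
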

\begin{proof}
        Without loss of generality, we can assume that $\op{aff}K'=\bbR^d$ holds.

        First of all, let us assume that $K$ is the intersection of $K'$ with an affine subspace.
        Now, choose a point $x$ in $K$. By \cref{interior-convex-set-is-convex} and the proof of \cite[Lemma 1.1]{munkres}, we see that there exists a homeomorphism $\varphi\colon\bbR^d\to\bbR^d$ such that $\varphi(K')=\overline{\mathbb{B}}_1(0)$, $\varphi(x)=0$ and $\varphi(K)=\overline{\mathbb{B}}_1(0)\cap \bbR^{l}\times\{0\}^{d-l}$, with $l=\op{dim}\op{aff}K$. The pair \[(\varphi(K'),\varphi(K))=(\overline{\mathbb{B}}_1(0),\overline{\mathbb{B}}_1(0)\cap\bbR^{l}\times\{0\}^{d-l}) \] is a CW-pair and hence satisfies the homotopy extension property (\cref{rmk:cw_hep}).

        Now let $K\subseteq K'$ be such that $\op{aff}K=\op{aff}K'=\bbR^d$. As before, choose a point $x\in K$ and let $\varphi\colon\bbR^d\to\bbR^d$ be a homeomorphism with $\varphi(K)=\overline{\mathbb{B}}_1(0)$, $\varphi(x)=0$ and $\varphi(K')$ star-shaped with respect to $0$.
It is easy to see that $(\varphi(K'),\varphi(K))$ is an NDR-pair.
Hence, it follows from \cref{prop:ndr_cofib} that the pair $(K',K)$ satisfies the homotopy extension~property.

Finally, for arbitrary compact convex sets $K\subseteq K'$ we factor the inclusion as $K\hookrightarrow \op{aff}K\cap K'\hookrightarrow K'$. The claim now follows from the previous two cases together with transitivity of the homotopy extension property: if two pairs $(X,Y)$ and $(Y,Z)$ satisfy the homotopy extension property, then so does the pair $(X,Z)$.
\end{proof}

Using a truncation argument, we extend this result to any pair of closed convex~sets.
\begin{prop}
      Let $K$ and $K'$ be closed and convex sets in $\bbR^d$ with $K\subseteq K'$.
       Then the pair $(K',K)$ satisfies the homotopy extension property.
\end{prop}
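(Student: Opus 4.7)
The plan is to exhibit an NDR structure on the pair $(K',K)$; since $K$ is closed in $K'$, \cref{prop:ndr_cofib} then yields the homotopy extension property. The essential tool is the nearest-point projection onto $K$: because $K$ is a nonempty closed convex subset of $\bbR^d$, for every $x \in \bbR^d$ there is a unique point $r(x) \in K$ realizing the distance $d(x,K)$, and the resulting map $r \colon \bbR^d \to K$ is $1$-Lipschitz, hence continuous. Restricting gives a continuous retraction $r \colon K' \to K$, which is well-defined since $K \subseteq K'$.

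With $r$ in hand, I would set $u \colon K' \to [0,1]$, $u(x) = \min\{d(x,K),1\}$, and $h \colon K' \times [0,1] \to K'$, $h(x,t) = (1-t)x + t\, r(x)$. The map $h$ takes values in $K'$ because $K'$ is convex and both endpoints $x$ and $r(x)$ lie in $K'$. The four axioms of \cref{defi:ndr_pair} then follow immediately: $u^{-1}(0) = K$ since $K$ is closed, $h(-, 0) = \op{id}_{K'}$ by construction, $h(a, t) = a$ for $a \in K$ since $r(a) = a$, and $h(x, 1) = r(x) \in K$ for every $x \in K'$, which is even stronger than the condition required in part (iv) of \cref{defi:ndr_pair}. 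The degenerate case $K = \emptyset$ is trivial, since any pair $(K',\emptyset)$ vacuously satisfies HEP.

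The paper's hint at a ``truncation argument'' suggests a different route that stays closer to the compact case of \cref{prop:cofib_comp_conv}: write $K' = \bigcup_n (K' \cap \overline{\mathbb{B}}_n(0))$, apply the compact-convex result to each pair $(K' \cap \overline{\mathbb{B}}_n(0),\, K \cap \overline{\mathbb{B}}_n(0))$, and patch the resulting NDR data together. The main obstacle in that approach is gluing the local NDR pairs $(u_n, h_n)$ into a globally continuous pair $(u,h)$ defined on all of $K'$, which is fiddly precisely because the retractions $r_n$ for the individual truncations need not agree on overlaps. The nearest-point projection approach avoids this entirely by producing the retraction in one global step; the only nontrivial input is the continuity of nearest-point projection onto a closed convex set in Euclidean space, which is a standard consequence of the strict convexity of the Euclidean norm.
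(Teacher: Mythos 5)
Your proof is correct, but it takes a genuinely different route from the paper. The paper proves the compact case first (\cref{prop:cofib_comp_conv}) and then handles general closed convex sets by a truncation argument: it intersects $K$ and $K'$ with the balls $\CB{n}{0}$, uses Lillig's union theorem (\cref{prop:lilli_union}) to get the homotopy extension property for the pairs $(K'_{n+1}, K'_n \cup K_{n+1})$, and extends the homotopy inductively, passing to the colimit. You instead exhibit an explicit NDR structure on $(K',K)$ in one step via the nearest-point projection $r \colon \bbR^d \to K$, which is well defined and $1$-Lipschitz for a nonempty closed convex $K$, and then invoke \cref{prop:ndr_cofib}; the verification of the four NDR axioms is routine (note that $u^{-1}(0)=K$ uses closedness of $K$, and $h$ lands in $K'$ by convexity of $K'$), and the empty case is indeed vacuous. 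Your argument is more elementary and avoids both Lillig's theorem and the inductive gluing that you correctly identify as the delicate point of the truncation route; in fact it yields the stronger conclusion that $K$ is a strong deformation retract of $K'$, and it would render \cref{prop:cofib_comp_conv} unnecessary for this purpose. What the paper's approach buys in exchange is independence from the metric projection: it only needs the compact case as input together with general closure properties of the homotopy extension property, a pattern that transfers to situations where no canonical nonexpansive retraction is available.
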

\begin{proof}
We verify \cref{homotopy-extension-property}: Let $f\colon K'\to Y$ be any continuous map and $H\colon K\times [0,1]\to Y$ be any homotopy with $H(-,0)=f|_{K}$. We inductively construct an extension $\tilde{H}\colon K'\times [0,1]\to Y$ of $H$ with~$\tilde{H}(-,0)=f$. To this end, consider for every $n\in \mathbb{N}$ the compact convex sets $K_n=\CB{n}{0}\cap K\subseteq K$ and $K'_n=\CB{n}{0}\cap K'\subseteq K'$, where $\CB{n}{0}$ is the closed ball of radius $n$ centered at the origin. Denote by $f_n\colon K'_n\to Y$ and $H_n\colon K_n\times [0,1]\to Y$ the restrictions of $f$ and $H$, respectively.

By \cref{prop:cofib_comp_conv}, the pair $(K'_1,K_1)$ satisfies the homotopy extension property.
Hence, there exists an extension $\tilde{H}_1\colon K'_1\times [0,1]\to Y$ of $H_1$ that satisfies $\tilde{H}_1(-,0)=f_1$.

Let $n\in \mathbb{N}$ be arbitrary and consider the following diagram of inclusions:
\[\begin{tikzcd}[column sep={6em,between origins},row sep={2em,between origins}]
   K'_n\arrow{rr}\arrow{dr} & & K'_{n+1}\\
   & K'_n\cup K_{n+1}\arrow{ur}  &\\
   K_n\arrow{uu}\arrow{ur}\arrow{rr}  &&  K_{n+1}\arrow{uu}\arrow{ul}
  \end{tikzcd}
\]
By \cref{prop:lilli_union} and \cref{prop:cofib_comp_conv}, the pair $(K'_{n+1},K'_n\cup K_{n+1})$ satisfies the homotopy extension property.
Hence, we can extend the homotopy on $K'_n\cup K_{n+1}$ induced by $(\tilde{H}_n,H_{n+1})$ to a homotopy $\tilde{H}_{n+1}\colon K'_{n+1}\times [0,1]\to Y$ that satisfies $\tilde{H}_{n+1}(-,0)=f_{n+1}$. Taking to the colimit over all $n$ yields the desired extension $\tilde{H}$.
\end{proof}

Let $X \subset \bbR^d$ be a subset and let $\mathscr{B} = ( C_i )_{i \in[n]}$ be a finite cover of $X$ by closed convex sets. The previous corollary shows, together with the fact that any closed subset of $\bbR^d$ is compactly generated (\cref{ex:compactly_generated}), that the assumptions in \cref{rho-s-no-latching-space} are satisfied and hence, the map $\rho_S \colon \SmallBarCon(\mathscr{B}) \to X$ is a homotopy equivalence. As any cover by convex sets is good, it follows from 2(a) in \cref{abstract-nerve-thm} that the map $\rho_N \colon \SmallBarCon(\mathscr{B}) \to |\Nrv(\mathscr{B})|$ is a homotopy equivalence as well. This proves \cref{thm:comp_conv_zigzag}.

If in the functorial nerve theorem for covers by subcomplexes (\cref{functorial-simplicial-nerve}) we additionally assume that the cover is locally finite dimensional, then this theorem also follows readily from the unified nerve theorem (\cref{abstract-nerve-thm}): the realization of a simplicial complex is compactly generated (\cref{ex:compactly_generated}); moreover, the latching spaces are subcomplexes and thus satisfy the homotopy extension properties (\cref{rmk:cw_hep}).

\subsection{Simplicial Model Categories}
\label{subsection:bar_simp_model}

In order to prove \cref{abstract-nerve-thm},
we will need a generalization of the bar construction to other settings than
the category of topological spaces.
To make sense of the homotopy invariance property in other settings,
we will need a general framework for studying analogues of homotopy equivalences in other contexts.
There are many choices for such frameworks:
we will work with \emph{model categories}, which have been a standard tool of abstract homotopy theory
since they were introduced by Quillen in the 1960s.
A thorough introduction to model categories is beyond the scope of this paper
(see, e.g., \cite{dwyer-spalinski} for a friendly introduction),
but we will briefly introduce the aspects of model categories that are most relevant to this paper.

\paragraph{Model categories}
A model category is a category together with three distinguished classes of morphisms,
the \emph{weak equivalences}, \emph{fibrations}, and \emph{cofibrations},
which are required to satisfy certain axioms.
An admissible choice of these classes is called a \emph{model structure} on the underlying category.
The distinguished classes of morphisms also determine two distinguished classes of objects:
an object $X$ is \emph{fibrant} if the unique map from $X$ to the terminal object is a fibration,
and it is \emph{cofibrant} if the unique map from the initial object is a cofibration.
Before we give the axioms in \cref{defi:model}, it is useful to have in mind some basic examples.
\begin{ex} \label{model-structures-on-Top}
There are several important model categories whose objects are topological spaces.
As discussed earlier in this section, in order to avoid pathological behavior, one often considers some subcategory of $\Top$;
we choose the subcategory of compactly generated spaces.
There is a model structure on the category of compactly generated spaces
for which the weak equivalences are the homotopy equivalences
and the cofibrations are the \emph{Hurewicz cofibrations},
which are the maps $i \colon A \to X$ that satisfy the homotopy extension property
(see \cref{homotopy-extension-property},
and replace the inclusion $A \subset X$ with~$i$).
This is called the \emph{Hurewicz model structure}.
It was originally shown to be a model structure (on the category of all topological spaces)
by Str{\o}m \cite{strom}; see \cite[Theorem 17.1.1]{may-ponto}
for an account in the setting of compactly generated spaces.
Every space is both fibrant and cofibrant in the Hurewicz model structure,
which is quite rare.

There is another model structure on the category of compactly generated spaces,
called the \emph{Quillen model structure},
for which the weak equivalences are the weak homotopy equivalences,
i.e., the maps that induce a bijection on path components
and an isomorphism on homotopy groups for all choices of base point.
This was first studied by Quillen in his original work on model categories
\cite{quillen-homotopical-algebra}; see \cite[Theorem 17.2.2]{may-ponto}
for an account in our setting.
Every space is fibrant in the Quillen model structure,
and every CW complex is cofibrant.
\end{ex}

\begin{ex} \label{chain-complexes}
Model categories can be used to study homological algebra.
Let $R$ be a commutative ring. There is a model structure on the category of non-negatively graded
chain complexes of $R$-modules, for which the weak equivalences are the quasi-isomorphisms,
and the cofibrations are those monomorphisms that have a degreewise-projective cokernel.
In particular, the cofibrant objects are the degreewise-projective chain complexes.
This is another of the original examples from \cite{quillen-homotopical-algebra}.
\end{ex}

\begin{defi}
\label{defi:model}
A \emph{model category} $\mathscr{M}$ is a category which is equipped with three subcategories of morphisms
called \emph{weak equivalences}, \emph{fibrations} and \emph{cofibrations} such that the following axioms hold:
\begin{enumerate}
  	\item The category $\mathscr{M}$ has all small limits and colimits.
  	\item (2-of-3) If $f$ and $g$ are maps of $\mathscr{M}$ such that $g \circ f$ is defined
  	and two of the maps $f, \, g, \, g \circ f$ are weak equivalences, then so is the third.
    \item If $f$ is a retract of $g$ and $g$ is a weak equivalence, fibration, or cofibration,
  	then so is $f$.
    \item Given a commutative square
    \[
    \begin{tikzcd}
    A \arrow[d, "i"'] \arrow[r, "f"] & X \arrow[d, "p"] \\
    B\arrow[r, "g"'] & Y,
    \end{tikzcd}
    \]
    where $i$ is a cofibration and $p$ is a fibration,
    then there is a map $h \colon B \to X$ such that $f = h \circ i$ and $g = p \circ h$
    if either of $i$ or $p$ is a weak equivalence.
    \item Any map $f$ can be factored as (i) $f = p \circ i$,
    where $i$ is a cofibration and $p$ is a fibration and a weak equivalence,
    and (ii) $f = p' \circ i'$
    where $i'$ is a cofibration and a weak equivalence and $p'$ is a fibration.
\end{enumerate}
\end{defi}

\begin{rmk}
The definition of model category has evolved since it was first introduced.
For example, we require all small limits and colimits,
while Quillen originally required only all finite limits and colimits.
For a discussion, see \cite[Chapter 1]{hovey}
\end{rmk}

\begin{rmk}
In a model category the weak equivalences together with the fibrations or the cofibrations determine the third subcategory; see, e.g., \cite[Lemma 1.1.10]{hovey}.
\end{rmk}

Many algebraic topologists prefer to work with certain kinds of combinatorial models of spaces,
rather than with topological spaces themselves.
These combinatorial models are called \emph{simplicial sets},
and they are somewhat similar to simplicial complexes.
While they may appear more complicated than simplicial complexes
-- for example, every simplicial set has simplices in every dimension,
even the simplicial set that models the one-point space --
they have better categorical properties. For example, there is a geometric realization functor $|-|$
from the category of simplicial sets
to the category of compactly generated topological spaces,
and this functor preserves all small colimits
and all finite limits (we define this construction in \cref{section:proof}).
So, one can take limits and colimits in the category of simplicial sets,
and these will model the corresponding limits and colimits of topological spaces.
See \cite{sset-intro} for a friendly introduction to this topic.

\begin{defi}
The \emph{simplex category}, denoted by $\Delta$, has as objects the finite ordinals
$\{[n]=\{0,1,\dots,n\} \mid n \geq 0 \}$, with the morphisms being the order preserving~maps.
\end{defi}

 \begin{defi}
A \emph{simplicial set} is a functor $X \colon \Delta^{\opp} \to \Set$,
and a \emph{morphism of simplicial sets} is a natural transformation.
The set $X_n = X([n])$ is the set of $n$-\emph{simplices} of $X$.
The category of simplicial sets is denoted by $\sSet$.
More generally, if $\mathscr{C}$ is any category, a \emph{simplicial object in} $\mathscr{C}$
is a functor $\Delta^{\opp} \to \mathscr{C}$, and the category of simplicial objects in $\mathscr{C}$ is denoted by $\sC$.
\end{defi}

\begin{ex}
\label{ex:n_simp}
The \emph{Yoneda embedding} $Y\colon \Delta\hookrightarrow \op{sSet},\ [n]\mapsto \op{Hom}_\Delta(-,[n])$ gives rise to a simplicial set for each $n \in \mathbb N$. We denote $Y([n])$ by $\Delta^n$ and call it the \textit{standard $n$-simplex}.
\end{ex}

\begin{ex}
Let $X$ be a topological space. The \textit{singular simplicial set of $X$} is the simplicial set $\op{Sing}(X)$ with \[\op{Sing}(X)([n])=\Hom(|\Delta^n|,X), \] where $|\Delta^n|$ is the standard topological $n$-simplex,
and $\Hom(-,-)$ denotes the set of continuous maps.
\end{ex}

A fundamental fact about the relationship between simplicial sets and topological spaces
is that the functor $\op{Sing}\colon \CGSpc \to \sSet$ is right adjoint to the geometric
realization $| - |\colon \sSet \to \CGSpc$ mentioned above.
This adjunction is what allows us to use simplicial sets as a model for spaces;
we say more about this below.

\begin{ex}
Let $\mathcal{C}$ be a category. The \emph{(categorical) nerve of $\mathcal{C}$} is the simplicial set $N(\mathcal{C})$ such that \[N(\mathcal{C})([n])=\{v_0\to v_1\to\cdots\to v_n\mid \text{string of composable morphisms in $\mathcal{C}$} \}. \]

If $\mathscr{A}=(A_i)_{i\in I}$ is a cover of a topological space, then the finite non-empty intersections of cover elements form a category with morphisms given by inclusion $A_J\hookrightarrow A_{J'}$ if $J'\subseteq J$. 
The nerve of this category and the nerve of the cover have homeomorphic geometric realizations, explaining the common name for the two constructions.
\end{ex}

We can now introduce two more examples of model categories,
both of which play a role in the proof of \cref{abstract-nerve-thm}.

\begin{ex}
The category of simplicial sets can be equipped with a model structure, also called the \emph{Quillen model structure}, where the the cofibrations are the monomorphisms and the weak equivalences are those maps that are mapped to weak homotopy equivalences when applying the geometric realization functor; see, for example, \cite[Chapter I]{goerss-jardine}.
From the definition of cofibration, it follows that every simplicial set is cofibrant.
\end{ex}

\begin{ex} \label{sRMod}
An alternative to the chain complexes of \cref{chain-complexes} is the category of simplicial $R$-modules.
Here, $R$ is a commutative ring as before,
and a simplicial $R$-module is a simplicial object in the category $\RMod$ of $R$-modules, i.e., a functor $\Delta^{\opp} \to \RMod$.
Denoting the category of simplicial $R$-modules by $\sRMod$,
we let $\freeR\colon \sSet \to \sRMod$ denote the functor that is induced by the free $R$-module functor $\freeR\colon \mathsf{Set}\to \RMod$;
the forgetful functor $U\colon \sRMod \to \sSet$ is right adjoint to $\freeR$.
Then there is a model structure on $\sRMod$ such that the weak equivalences and fibrations are exactly those morphisms whose underlying map of simplicial sets is a weak equivalence and fibration, respectively \cite[Proposition 4.2]{goerss-schemmerhorn}.
Moreover, a continuous map $X\to Y$ is an $R$-homology isomorphism if and only if the induced map $\freeR(\Sing (X))\to \freeR(\Sing (Y))$ is a weak equivalence of simplicial $R$-modules \cite[Dold-Kan Theorem 8.4.1]{weibel}.
\end{ex}

We have now encountered two important adjunctions connecting model categories:
the adjunction $(| - |, \op{Sing})$ relating spaces with simplicial sets, and
$(\freeR, U)$ relating simplicial sets with simplicial $R$-modules.
In general, a \emph{Quillen adjunction} between model categories is an adjunction
such that the left adjoint preserves cofibrations and trivial cofibrations,
or equivalently,
the right adjoint preserves fibrations and trivial fibrations.
These adjunctions are the main way to relate model categories;
both of the adjunctions just mentioned are Quillen adjunctions.

A Quillen adjunction $(F, G)$,
where the left adjoint $F$ is a functor $\mathscr{M} \to \mathscr{N}$,
is a \emph{Quillen equivalence} if, for all cofibrant $X$ in $\mathscr{M}$
and all fibrant $Y$ in $\mathscr{D}$, a map $FX \to Y$ is a weak equivalence in $\mathscr{N}$
if and only if the corresponding map $X \to GY$ is a weak equivalence in $\mathscr{M}$.
The adjunction $(| - |, \op{Sing})$ is a Quillen equivalence
when $\CGSpc$ is given the Quillen model structure;
see for example \cite[Theorem 2.4.25,Theorem 3.6.7]{hovey}.
This simple definition has powerful consequences, and
we now describe one that plays a role in the proof of \cref{abstract-nerve-thm}.
Since $(F, G)$ is an adjunction, there is a natural map
$\eta\colon X \to GFX$ called the \emph{unit},
and another $\varepsilon\colon FGY \to Y$ called the \emph{counit}.
If $(F, G)$ is a Quillen equivalence,
then these maps are weak equivalences, subject to additional fibrancy and cofibrancy assumptions.
See \cite[Proposition 1.3.13]{hovey} for details.
Once we know that $(| - |, \op{Sing})$ is a Quillen equivalence,
then it follows immediately that the unit
$K \to \op{Sing}(|K|)$ is a weak equivalence for every simplicial set $K$,
and the counit $|\op{Sing}(Y)| \to Y$ is a weak equivalence for every compactly-generated space $Y$. The additional fibrancy and cofibrancy assumptions are vacuous in this case,
as every simplicial set is cofibrant and every space is fibrant.

\paragraph{Simplicial model categories}
At the beginning of this section,
we discussed the importance of mapping spaces.
A simplicial model category $\mathscr{M}$
is a model category equipped with additional structure
that generalizes this feature of algebraic topology;
see \cite[Definition 11.4.4]{cat_hom} for a precise definition.
For any two objects $X$ and $Y$ of a simplicial model category $\mathscr{M}$,
we have a simplicial set $\textbf{Hom}_{\mathscr{M}}(X,Y)$
that encodes homotopy-theoretic information about $X$ and $Y$.
Formally, one requires that the model category $\mathscr{M}$
be enriched in simplicial sets, and tensored and cotensored.
One then imposes an additional axiom that relates this structure to the model structure. 
We will omit the formal definitions, since we will not use most of the structure explicitly. 
Rather, for the proof of the unified nerve theorem, 
we will need a few facts about simplicial model categories, 
principally \cref{prop:homotopy_lemma}. 
However, we will use the \emph{tensoring} explicitly in order to define the bar construction in a simplicial model category, and so we introduce it now.
If $\mathscr{M}$ is a simplicial model category,
then for any object $X$ of $\mathscr{M}$ and any simplicial set $K$
there is an object $X\otimes K$ of $\mathscr{M}$,
and this construction gives a functor
$\mathscr{M}\times \sSet  \to \mathscr{M}$.
Furthermore, for any $X$,
the functor $\textbf{Hom}_{\mathscr{M}}(X,-)\colon \mathscr{M} \to \sSet$
has a left adjoint given by $X\otimes- \colon \sSet \to \mathscr{M}$. 
This adjunction in particular yields an isomorphism
\[
	\Hom_{\mathscr{M}}(X \otimes K, Y) \iso \Hom_{\sSet}\left(K, \textbf{Hom}_{\mathscr{M}}(X,Y)\right)
\]
for any $X,Y$ in $\mathscr{M}$ and any simplicial set $K$. 
The motivation for the terminology ``tensoring'', and the notation $\otimes$, 
comes from this adjunction, which is analogous to the tensor-hom adjunction from linear algebra.

\begin{ex}
The category $\CGSpc$ of compactly-generated spaces is enriched in simplicial sets, and tensored and cotensored.
This makes $\CGSpc$ a simplicial model category with either the Hurewicz or Quillen model structures.
If $X$ and $Y$ are compactly generated spaces, $\textbf{Hom}(X,Y)$ is the simplicial set with
\[
	\textbf{Hom}(X,Y)_n = \Hom(X \times |\Delta^n|, Y).
\]
So, the zero-simplices of $\textbf{Hom}(X,Y)$ are maps from $X$ to $Y$,
the one-simplices are homotopies, the two-simplices are ``homotopies between homotopies'',
and so on.
The operation $\otimes$ is characterized by $X\otimes \Delta^n= X\times |\Delta^n|$.
\end{ex}

\begin{ex}
The Quillen model structure on $\sSet$ gives a simplicial model category,
and the operation $\otimes$ is the cartesian product.
\end{ex}

\begin{ex}
The category $\sRMod$ is a simplicial model category,
with the model structure described in \cref{sRMod}.
If $K$ is a simplicial set and $M$ is a simplicial $R$-module,
then $M\otimes K$ is the simplicial $R$-module with
$(M\otimes K)_n =M_n \otimes_{R}\freeR(K_n) $.
\end{ex}

\subsection{Proof of the Unified Nerve Theorem} \label{section:proof}

We can now define the bar construction in the setting of a simplicial model category,
generalizing the construction of \cref{section:func_nerve}.

\begin{defi} \label{simplicial-bar-construction}
Let $P$ be a poset and let $\mathscr{M}$ be a simplicial model category.
The \textit{simplicial bar construction} of a functor $F \colon P \to \mathscr{M}$
is the simplicial object
\[
	\AbBarCon_{\bullet}(F)\colon \Delta^{\opp} \to \mathscr{M}
\]
whose $n$-simplices $\AbBarCon_n(F)$ are defined by the coproduct
\[
	\AbBarCon_{n}(F) = \coprod\limits_{v_0 \leq v_1 \leq \cdots \leq v_n} F(v_0).
\]
Equivalently, the coproduct is indexed by functors of the form $\gamma \colon [n] \to P$.
For any map $\theta \colon [m] \to [n]$ in $\Delta$,
$\theta^* \colon \AbBarCon_{n}(F) \to \AbBarCon_{m}(F)$ takes the summand indexed by $\gamma$
to the summand indexed by $\gamma \circ \theta$,
via the map $F(\gamma(0)) \to F(\gamma(\theta(0)))$.
\end{defi}

The identifications that were used to define the bar construction for topological spaces
are achieved in this setting by the categorical notion of a \emph{coend}:

\begin{defi}
Let $\mathscr{C}$ be a small category, $\mathscr{E}$ any category,
and $H \colon \mathscr{C}^{\opp} \times \mathscr{C} \to \mathscr{E}$ a functor.
The \emph{coend} $\int^{\mathscr{C}} H$,
sometimes denoted $\int^{c \in \mathscr{C}} H(c, c)$,
is an object of $\mathscr{E}$ equipped with arrows $H(c, c) \to \int^{\mathscr{C}} H$
for each $c \in \mathscr{C}$ that are collectively universal with the property that the diagram
\[
\begin{tikzcd}
	H(c', c) \arrow[r, "f_*"] \arrow[d, "f^*"'] & H(c', c') \arrow[d] \\
	H(c, c) \arrow[r] & \int^{\mathscr{C}} H
\end{tikzcd}
\]
commutes for each $f \colon c \to c'$ in $\mathscr{C}$.
\end{defi}

We can use this notion to define the geometric realization functor
$| - |\colon \sSet \to \CGSpc$ mentioned in \cref{subsection:bar_simp_model}.
Writing $\Delta$ for the simplex category as before, there is a functor
$\Delta \to \CGSpc$ that takes $[n]$ to the standard $n$-simplex.
If $X\colon \Delta^{\opp} \to \Set$ is a simplicial set,
$|X|$ is the coend of the functor $\Delta^{\opp} \times \Delta \to \CGSpc$
that takes $([n], [m])$ to $X_n\times |\Delta^m|$,
where $X_n$ has the discrete topology. This is written
\[
	|X| = \int^{[n] \in \Delta} X_n\times |\Delta^n|  \, .
\]
We now apply the same idea to the simplicial bar construction:

\begin{defi} \label{defi:bar_const_sm}
Let $P$ be a poset and let $\mathscr{M}$ be a simplicial model category.
The \emph{bar construction} of a functor $F \colon P \to \mathscr{M}$ is the coend
\[
	\AbBarCon(F) = \int^{[n] \in \Delta} \AbBarCon_n(F)\otimes \Delta^n .
\]
\end{defi}

Note that there are canonical maps
$\AbBarCon_n(F)\otimes \Delta^n  \to \AbBarCon_n(F) \to \op{colim} F$ for all $n \geq 0$,
which induce a map $\AbBarCon(F) \to \op{colim} F$
by the universal property of the coend.

\begin{rmk}
\label{rmk:barInCGSpc}
Let $F \colon P \to \CGSpc$ be a functor valued in 
the category $\CGSpc$ of compactly generated spaces. 
Since $\CGSpc$ is a simplicial model category 
(with either the Hurewicz or Quillen model structures), 
we can consider the bar construction  $\AbBarCon(F)$. 
However, if one thinks of $F$ as a functor valued in $\Top$, 
forgetting that $F(p)$ is compactly generated for each $p \in P$, 
then one could instead consider the bar construction 
of \cref{defi:hocolim_spaces}; 
one could also mimic the construction of \cref{defi:bar_const_sm}, 
but taking the various limits and colimits in $\Top$ 
rather than in $\CGSpc$. 
Fortunately, all of these constructions coincide, 
as we will now explain.

Let $F \colon P \to \Top$ be a functor valued in the category of all topological spaces. 
Just in this remark, 
let $\AbBarCon^*(F)$ be the coend
\[
	\AbBarCon^*(F) = \int^{[n] \in \Delta} \AbBarCon_n(F)\times \vert \Delta^n \vert
\]
in $\Top$, where $\AbBarCon_{\bullet}(F) \colon \Delta^{\opp} \to \Top$ 
is defined as in \cref{simplicial-bar-construction}.

Following Dugger--Isaksen \cite[Appendix A]{dugger-isaksen},
we can compute $\AbBarCon^*(F)$
as a sequential colimit of pushouts in $\Top$, as follows.
For $k=0$, we define $\AbBarCon(F)(0) = \coprod_{v\in P} F(p)$,
and for $k>0$ we inductively define $\AbBarCon(F)(k)$ as the pushout
\begin{equation} \label{bar-as-pushout}
\begin{tikzcd}
\coprod\limits_{v_0<\dots<v_{k}} F(v_0) \times \vert \partial \Delta^k \vert
\arrow[dr,"\ulcorner",phantom, near start] \arrow{r} \arrow[hookrightarrow]{d}
& \AbBarCon(F)(k-1) \arrow[dotted]{d}{f_{k-1}} \\
\coprod\limits_{v_0<\dots<v_{k}} F(v_0) \times \vert \Delta^k \vert \arrow[dotted]{r} & \AbBarCon(F)(k)                                                                                                                                                                                                                                                \end{tikzcd}
\end{equation}
where the top horizontal map is defined using the face maps
$\AbBarCon_{k}(F) \to \AbBarCon_{k-1}(F)$.
Then we have an isomorphism $\AbBarCon^*(P) \cong \op{colim}_{k}\AbBarCon(F)(k)$.

Now, say $F$ is valued in compactly generated spaces. 
We will use this characterization of $\AbBarCon^*(F)$ 
to show that it coincides with the bar construction computed in $\CGSpc$. 
The key fact we need is that, 
for any diagram in $\CGSpc$, 
if its (co)limit computed in $\Top$ happens to be compactly generated, 
then this is also the (co)limit in $\CGSpc$. 
This follows from the existence of the pair of adjunctions relating $\CGSpc$ and $\Top$, 
mentioned right below \cref{compactly-generated}: 
because of these adjunctions, $\CGSpc$ is a reflective subcategory of $\op{k-spaces}$, 
and $\op{k-spaces}$ is a coreflective subcategory of $\Top$ 
\cite[Definition 4.5.12]{riehl-context}. 
As the disjoint union of compactly generated spaces is compactly generated, 
and because $| \partial \Delta^n  |$ as well as $| \Delta^n  |$ are locally compact Hausdorff, 
it follows from \cite[Proposition 2.6. and Corollary 2.16]{strickland} 
that the spaces on the left hand side of diagram \ref{bar-as-pushout} and $\AbBarCon(F)(0)$ are compactly generated.
Moreover, because the pushout of compactly generated spaces along a closed inclusion 
is again compactly generated \cite[p.40]{may-concise},
it follows that the spaces $\AbBarCon(F)(k)$ are compactly generated.
By \cite[Proposition 2.35]{strickland}, the maps $f_k$ are closed inclusions.
Finally, it follows again from \cite[p.40]{may-concise} that 
$\AbBarCon^*(F) \iso \op{colim}_{k}\AbBarCon(F)(k)$ is compactly generated.
Thus, $\AbBarCon^*(F)$ agrees with the bar construction 
$\AbBarCon(F)$ computed in $\CGSpc$.

Furthermore, one can use this method for building the bar construction
as a sequential colimit of pushouts
to check that, given $F \colon P \to \Top$, the bar construction we just discussed 
is naturally homeomorphic to the bar construction of \cref{defi:hocolim_spaces},
which justifies using the same notation in both places.
\end{rmk}

\begin{rmk}
\label{geom-realization-simplicial-bar}
The coend construction used to define $\AbBarCon(F)$ in \cref{defi:bar_const_sm}
is an example of the \emph{geometric realization} of a simplicial space.
We note that the blowup complex of \cref{defi:blowup} can also be seen as the geometric realization of a simplicial space,
the \emph{ordered \v{C}ech complex} of \cite[Section 2.5]{dugger-isaksen}.

The map
$\pi_{\Sd N} \colon \AbBarCon(\mathscr{D}_{\mathscr{U}}) \to
|\Sd \Nrv(\mathscr{U})|$
is the geometric realization of a map of simplicial spaces
$\AbBarCon_{\bullet}(\mathscr{D}_{\mathscr{U}}) \to
\AbBarCon_{\bullet}(\mathop{*^{\np{\mathscr{U}}}})$, and the map
$\pi_{S} \colon \AbBarCon(\mathscr{D}_{\mathscr{U}}) \to X$
is also the geometric realization of a map of simplicial spaces
$\AbBarCon_{\bullet}(\mathscr{D}_{\mathscr{U}}) \to X_\bullet$,
where $X_n = X$ for all $n$.
Analogously, the maps
$\rho_N \colon \SmallBarCon(\mathscr{U}) \to |\Nrv(\mathscr{U})|$
and $\rho_S \colon \SmallBarCon(\mathscr{U}) \to X$
can be seen as geometric realizations of maps of simplicial spaces.
\end{rmk}

\begin{ex} \label{pobar-is-a-bar}
If we leave aside the requirement that we work in a simplicial model category,
the $\PoBar$ construction from \cref{section:triangulated-covers} is a bar construction.
In more detail, let $K$ be a simplicial complex, and let $\mathscr{A} = (K_i)_{i \in I}$
be a cover by subcomplexes.
Let $\mathscr{D}_{\mathscr{A}} \colon \np{\mathscr{A}} \to \Po$
be the functor with $\mathscr{D}_{\mathscr{A}}(J) = \Pos(\cap_{i \in J} K_i)$.
There is a simplicial object $\AbBarCon_{\bullet}(\mathscr{D}_{\mathscr{A}})$ in $\Po$
defined as in \cref{simplicial-bar-construction},
and the inclusion $\Delta \subset \Po$ defines a cosimplicial object,
i.e., a functor $\Delta \to \Po$. Then
\[
	\PoBar(\mathscr{A}) = \int^{[n]}\AbBarCon_n(\mathscr{D}_{\mathscr{A}})\times  [n] .
\]
\end{ex}

Given a sufficiently well-behaved diagram $F \colon P \to \mathscr{M}$
in a simplicial model category~$\mathscr{M}$,
the bar construction of $F$ computes the homotopy colimit of $F$.
This appears as \cite[Corollary 5.1.3]{cat_hom}.
In the proof of \cref{abstract-nerve-thm},
we will use two statements closely related to this result.
The first one says that the bar construction is homotopical for pointwise cofibrant diagrams (see \cite[Corollary 5.2.5]{cat_hom}):

\begin{prop}
\label{prop:homotopy_lemma}
Let $\mathscr{M}$ be a simplicial model category, let $P$ be a poset,
and let $F, G \colon P \to \mathscr{M}$ be pointwise cofibrant diagrams.
For a natural transformation $F \Rightarrow G$ 
that is a pointwise weak equivalence,
the induced map $\AbBarCon(F) \to \AbBarCon(G)$ is a weak equivalence.
\end{prop}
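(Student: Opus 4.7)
The plan is to exhibit $\AbBarCon(F)$ as the geometric realization of the simplicial object $\AbBarCon_{\bullet}(F) \colon \Delta^{\opp} \to \mathscr{M}$ in the sense of \cref{defi:bar_const_sm}, and then invoke standard facts about Reedy model structures on $\mathsf{s}\mathscr{M}$. Concretely, I would reduce the claim to two ingredients: (i) $\AbBarCon_{\bullet}(F) \to \AbBarCon_{\bullet}(G)$ is a Reedy weak equivalence between Reedy cofibrant objects; and (ii) the geometric realization $|-| \colon \mathsf{s}\mathscr{M} \to \mathscr{M}$, where $\mathsf{s}\mathscr{M}$ carries the Reedy model structure, is left Quillen. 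Together these imply (by Ken Brown's lemma) that $|-|$ preserves weak equivalences between Reedy cofibrant objects, yielding the conclusion.

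First I would check that $\AbBarCon_{\bullet}(F)$ is Reedy cofibrant whenever $F$ is pointwise cofibrant. By definition, $\AbBarCon_n(F)$ is a coproduct indexed by chains $\gamma \colon v_0 \leq \cdots \leq v_n$, with summand $F(v_0)$. The latching object $L_n \AbBarCon_{\bullet}(F)$ is the subcoproduct indexed by those $\gamma$ that factor through some degeneracy $[n] \twoheadrightarrow [m]$ with $m < n$, i.e.\ chains in which at least one $\leq$ is an equality. The latching map is the inclusion of this subcoproduct, which is canonically isomorphic to the coproduct of the identities on $F(v_0)$ for the degenerate summands with the initial maps $\emptyset \to F(v_0)$ for the non-degenerate summands (those $\gamma$ with $v_0 < \cdots < v_n$ strictly increasing). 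Since each $F(v_0)$ is cofibrant and the coproduct of cofibrations is a cofibration, this latching map is a cofibration, proving Reedy cofibrancy. The same reasoning applies to $G$.

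Next, the natural transformation $F \Rightarrow G$ induces a levelwise map $\AbBarCon_n(F) \to \AbBarCon_n(G)$ given on the summand indexed by $\gamma$ by the component $F(v_0) \to G(v_0)$, which is a weak equivalence between cofibrant objects. A coproduct of such weak equivalences is again a weak equivalence (a standard consequence of Ken Brown's lemma applied to the left Quillen coproduct functor), so the map of simplicial objects is a levelwise, and hence a Reedy, weak equivalence. Finally, the geometric realization $|-| = \int^{[n]}(-)_n \otimes \Delta^n$ is left adjoint to the mapping-space functor $\textbf{Hom}_{\mathscr{M}}(-,-)$ built from $\Delta^{\bullet}$, and the $\mathsf{SM7}$ axiom of a simplicial model category, together with the fact that the standard cosimplicial simplicial set $[n] \mapsto \Delta^n$ is Reedy cofibrant in $\mathsf{s}\sSet$, implies that $|-|$ is left Quillen with respect to the Reedy structure on $\mathsf{s}\mathscr{M}$ (see \cite[Ch.~14]{cat_hom}). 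Applying Ken Brown's lemma to $|-|$ on the map from the previous step produces the weak equivalence $\AbBarCon(F) \to \AbBarCon(G)$.

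The main obstacle is not the Reedy cofibrancy analysis of $\AbBarCon_{\bullet}(F)$, which is essentially combinatorial once one unpacks the latching object as a subcoproduct; rather, it is packaging the tensoring axiom of a simplicial model category into the statement that $|-|$ is left Quillen on the Reedy structure. I would not reprove this from scratch but cite the standard treatment and only indicate why the cosimplicial simplex is Reedy cofibrant, since the rest is automatic from the $\mathsf{SM7}$ axiom.
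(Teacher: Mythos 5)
Your proof is correct, but note that the paper does not prove this proposition itself: it defers to \cite[Corollary 5.2.5]{cat_hom}, whose proof is precisely the Reedy-model-structure argument you outline (Reedy cofibrancy of $\AbBarCon_{\bullet}(F)$ via the identification of the latching object with the subcoproduct of degenerate chains, levelwise weak equivalence between Reedy cofibrant objects, and geometric realization being left Quillen for the Reedy structure on $\mathsf{s}\mathscr{M}$). So your argument is essentially the same as the one the paper relies on, just written out rather than cited.
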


Recall that in \cref{prop:homotopy_lemma_top} we already saw a similar statement for topological spaces, saying that the bar construction
respects pointwise homotopy equivalences,
without any pointwise cofibrancy or compactly-generated assumptions.
There is an analogous result for weak homotopy equivalences,
which follows from work of Dugger--Isaksen \cite{dugger-isaksen}:

\begin{prop}
\label{prop:weak_homotopy_lemma_top}
Let $P$ be a poset, and let $F,G\colon P\to\Top$ be diagrams of topological spaces.
For a natural transformation $F \Rightarrow G$ 
that is a pointwise weak homotopy equivalence,
the induced map $\AbBarCon(F) \to \AbBarCon(G)$ is a weak homotopy equivalence.
\end{prop}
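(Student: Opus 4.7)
The plan is to reduce to a skeletal induction using the sequential-colimit-of-pushouts description of the bar construction recalled in \cref{rmk:barInCGSpc}. Writing $\AbBarCon(F) \iso \op{colim}_k B_k$ and $\AbBarCon(G) \iso \op{colim}_k B'_k$ for those filtrations, with $B_0 = \coprod_{v \in P} F(v)$ and $B_k$ obtained from $B_{k-1}$ by a pushout along the closed inclusion $\coprod_{v_0 < \cdots < v_k} F(v_0) \times |\partial \Delta^k| \hookrightarrow \coprod_{v_0 < \cdots < v_k} F(v_0) \times |\Delta^k|$, the natural transformation $\lambda \colon F \Rightarrow G$ induces a compatible system of maps $\lambda_k \colon B_k \to B'_k$, whose colimit is the map $\AbBarCon(F) \to \AbBarCon(G)$ under consideration.

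Next, I would show by induction on $k$ that each $\lambda_k$ is a weak homotopy equivalence. The base case $k=0$ is immediate since disjoint unions of weak equivalences are weak equivalences. For the inductive step I have a morphism of pushout squares; the upper-right corner map $B_{k-1} \to B'_{k-1}$ is a weak equivalence by the inductive hypothesis, while the two left-hand corner maps, of the form $\coprod F(v_0) \times |\partial \Delta^k| \to \coprod G(v_0) \times |\partial \Delta^k|$ and similarly with $|\Delta^k|$ in place of $|\partial \Delta^k|$, are weak equivalences because coproducts preserve weak equivalences and because taking cartesian product with a finite CW complex preserves weak equivalences. Since the horizontal maps in both pushout squares are closed Hurewicz cofibrations (being products and coproducts of the standard cofibrations $|\partial \Delta^k| \hookrightarrow |\Delta^k|$), the standard gluing lemma for weak homotopy equivalences yields that $\lambda_k$ is a weak equivalence. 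Finally, since each transition map $B_k \hookrightarrow B_{k+1}$ is a closed inclusion, the resulting sequential colimit preserves weak equivalences, giving the desired conclusion.

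The main obstacle is verifying that the gluing lemma applies in ordinary $\Top$: in general, pushouts do not preserve weak equivalences, and one needs at least one leg of each span to be a cofibration in a suitable sense so that the square is a homotopy pushout. Here this is automatic, because the relevant horizontal maps are products and coproducts of boundary inclusions of standard simplices and hence are closed Hurewicz cofibrations. This is precisely what makes the filtration $\{B_k\}$ a well-behaved skeletal filtration and underlies the results of Dugger and Isaksen that we cite; an alternative phrasing would be to identify $\AbBarCon_\bullet(F) \to \AbBarCon_\bullet(G)$ as a pointwise weak equivalence of Reedy-cofibrant simplicial spaces and then invoke their general theorem that geometric realization of such a map is a weak equivalence.
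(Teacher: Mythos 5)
Your proof is correct and is essentially the argument the paper uses: the paper checks that $\AbBarCon_{\bullet}(F)\to\AbBarCon_{\bullet}(G)$ is a levelwise weak homotopy equivalence of simplicial spaces with free degeneracies and then cites Dugger--Isaksen's Corollary A.6, and your skeletal induction via the pushout filtration of \cref{rmk:barInCGSpc}, the gluing lemma along closed Hurewicz cofibrations, and passage to the sequential colimit is precisely the proof of that corollary unrolled. The ``alternative phrasing'' you mention in your last sentence is, in fact, the paper's actual proof.
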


\begin{proof}
The natural transformation $F \Rightarrow G$ induces a map
$\AbBarCon_{\bullet}(F) \to \AbBarCon_{\bullet}(G)$
such that $\AbBarCon_{n}(F) \to \AbBarCon_{n}(G)$ is a weak homotopy equivalence for all $n \geq 0$.
It is straightforward to check that the simplicial bar construction has free degeneracies in the sense of \cite[Definition A.4]{dugger-isaksen},
and therefore the induced map $\AbBarCon(F) \to \AbBarCon(G)$ is a weak homotopy equivalence
by \cref{geom-realization-simplicial-bar} together with \cite[Corollary A.6]{dugger-isaksen}.
\end{proof}

We will soon use this last result to prove the ``weak'' version of
\cref{abstract-nerve-thm} 2(a).
If we did not know this result, and applied only \cref{prop:homotopy_lemma},
we would need the additional assumption that
the intersection $A_J$ is cofibrant in the Quillen model structure on $\CGSpc$
for all $J \in \Nrv(\mathscr{A})$.

For the proof of \cref{abstract-nerve-thm},
we will also need a second result related to the general fact that the bar construction
computes the homotopy colimit.
This is similar to \cite[Lemma 4.5]{wzz-hocolims}, for example. Recall that $\mathscr{D}_{\mathscr{A}} \colon \np{\mathscr{A}} \to \Top$ denotes the nerve diagram of the cover $\mathscr{A}$ of a topological space $X$.

\begin{prop} \label{hocolim-colim-comparison}
Let $X$ be a compactly generated space,
and let $\mathscr{A}$ be a closed cover that is locally finite and locally finite dimensional.
If for all $T \in \Nrv(\mathscr{A})$ the latching space $L(T) \subseteq {A}_T$
is closed and the pair $({A}_T, L(T))$ satisfies the homotopy extension property,
then the natural map
$\AbBarCon(\mathscr{D}_{\mathscr{A}}) \to
\op{colim} \mathscr{D}_{\mathscr{A}}\cong X$
is a homotopy equivalence.
\end{prop}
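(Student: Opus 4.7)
The plan is to reduce the claim to a Reedy-type argument in the Hurewicz model structure on $\CGSpc$. First, by \cref{space-is-colimit}, the closed and locally finite hypotheses on $\mathscr{A}$ give a homeomorphism $\op{colim}\mathscr{D}_{\mathscr{A}} \cong X$, so it suffices to show that the canonical map $\AbBarCon(\mathscr{D}_{\mathscr{A}}) \to \op{colim}\mathscr{D}_{\mathscr{A}}$ is a homotopy equivalence. The HEP hypothesis on each pair $(A_T, L(T))$ is precisely the Reedy cofibrancy condition for $\mathscr{D}_{\mathscr{A}}$ in the Hurewicz model structure, under which the bar construction---a model for the homotopy colimit---should agree with the strict colimit up to homotopy equivalence.

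Concretely, I would run the sequential filtration from \cref{rmk:barInCGSpc}: write $\AbBarCon(\mathscr{D}_{\mathscr{A}}) = \op{colim}_n B_n$, with $B_n$ obtained from $B_{n-1}$ by a pushout attaching cells of the form $A_{v_0} \times |\Delta^n|$ along $A_{v_0} \times |\partial \Delta^n|$, indexed by strictly increasing chains $v_0 < \cdots < v_n$ in $P_{\mathscr{A}}$. By induction on $n$, I would show that each composite $B_n \to X$ is a homotopy equivalence. The inductive step rests on the observation that the attaching data at level $n$, after gathering contributions from all chains terminating at a common $T \in \Nrv(\mathscr{A})$ and identifying via the coend relations defining the bar construction, reduces to an inclusion of the form $A_T \times Z \cup L(T) \times |\Delta^n| \hookrightarrow A_T \times |\Delta^n|$ for a suitable sub-complex $Z \subseteq \partial|\Delta^n|$. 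The HEP hypothesis on $(A_T, L(T))$, combined with \cref{prop:lilli_union} (applicable thanks to the assumption that $L(T) \subseteq A_T$ is closed), shows each such inclusion is a Hurewicz cofibration. The composite $B_n \to X$ collapses the newly attached simplex factors onto $A_T$, and since each attaching map is a Hurewicz cofibration, a standard mapping-cylinder style argument preserves the homotopy equivalence at each stage.

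To pass from the stage-wise equivalences to the global one, local finite dimensionality of $\mathscr{A}$ ensures the filtration is locally finite---each point of $\AbBarCon(\mathscr{D}_{\mathscr{A}})$ lies in some finite stage $B_n$---so the sequential colimit of the maps $B_n \to X$ yields a continuous map agreeing with $\AbBarCon(\mathscr{D}_{\mathscr{A}}) \to X$. The compactly generated hypothesis on $X$ is then used both to identify the colimit topology correctly and to guarantee that sequential colimits of homotopy equivalences along Hurewicz cofibrations remain homotopy equivalences in $\CGSpc$. I expect the main obstacle to be the careful bookkeeping in the inductive step: precisely identifying, after the coend identifications of the bar construction, which subspace of $A_T \times |\Delta^n|$ appears as the attaching image, and then iteratively verifying via \cref{prop:lilli_union} that this subspace is always the domain of a Hurewicz cofibration. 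This verification is essentially the Reedy cofibrancy check promised in the overview, and is the point where the closedness of $L(T) \subseteq A_T$ is genuinely needed.
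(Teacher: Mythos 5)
Your high-level framing is the right one, and it matches the paper's: the HEP hypothesis on each pair $(A_T, L(T))$ is exactly the Reedy cofibrancy condition for $\mathscr{D}_{\mathscr{A}}$ in the Hurewicz model structure, and the identification $\op{colim}\mathscr{D}_{\mathscr{A}} \cong X$ comes from closedness and local finiteness via \cref{space-is-colimit} (together with the fact that the colimit in $\CGSpc$ agrees with the one in $\Top$). The paper then finishes abstractly: local finite dimensionality makes $P_{\mathscr{A}}$ an upwards-directed Reedy category (it is what makes the degree function $\deg(J) = \sup\{|J'|-|J|\}$ finite), for such a category the Reedy and projective model structures on $\Fun(P_{\mathscr{A}},\CGSpc)$ coincide, so $\mathscr{D}_{\mathscr{A}}$ is projectively cofibrant; since the bar construction computes the homotopy colimit, the comparison map to the underived colimit is a weak equivalence, i.e., a homotopy equivalence in the Hurewicz model structure.

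Your concrete execution, however, has a genuine gap. The inductive claim that each composite $B_n \to X$ is a homotopy equivalence fails already at the base case: $B_0 = \coprod_{J \in \Nrv(\mathscr{A})} A_J$, and the canonical map $\coprod_J A_J \to X$ is essentially never a homotopy equivalence (for the cover of $S^1$ by three arcs it maps six contractible pieces onto the circle). The filtration of $\AbBarCon(\mathscr{D}_{\mathscr{A}})$ by simplicial degree does not approximate $X$ stage by stage; only the full bar construction is equivalent to $X$, so there is no induction of the form you propose. A hands-on argument in this spirit would instead have to filter \emph{both} sides compatibly by Reedy degree — building $X$ by attaching each $A_T$ along its latching map $L(T) \hookrightarrow A_T$, and building the bar construction by the corresponding pushout-product cells — and compare the filtrations levelwise; this amounts to re-proving that $\op{colim}$ is left Quillen for the projective structure and that the bar construction is a cofibrant replacement, which is what the cited abstract machinery packages. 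Note also that local finite dimensionality is not needed to see that every point of $\AbBarCon(\mathscr{D}_{\mathscr{A}})$ lies in some finite stage $B_n$ (that holds for any diagram); its actual role is to make $P_{\mathscr{A}}$ a Reedy category at all.
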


Only in this proof, we will make use of model structures on functor categories, in particular, the projective and Reedy model structures, which we do not introduce in this paper.
For the interested reader, we refer to \cite[Section~13]{dugger-hocolim} and \cite[Chapter~15]{hirschhorn}.

\begin{proof}
A closed subspace of a compactly generated space is also compactly generated,
so $\mathscr{D}_{\mathscr{A}} \colon \np{\mathscr{A}} \to \Top$
takes values in the subcategory $\CGSpc$ of compactly generated spaces. 
Hence, we can take the bar construction of $\mathscr{D}_{\mathscr{A}}$, 
as discussed in \cref{rmk:barInCGSpc}.

As $\mathscr{A}$ is locally finite dimensional,
the poset $\np{\mathscr{A}}$ is an upwards-directed Reedy category,
with $\op{deg}(J) = \sup \{(|J'| - |J|) \mid J' \in \Nrv(\mathscr{A}) \text{ with } J \subseteq J' \}$.
Working with the Hurewicz model structure on $\CGSpc$,
the Reedy model structure on the functor category
$\Fun(\np{\mathscr{A}}, \CGSpc)$
coincides with the projective model structure as
$\np{\mathscr{A}}$ is upwards-directed.
This is immediate from the definition of the Reedy model structure;
see \cite[Proposition 13.12]{dugger-hocolim} for a clear discussion of the relationship with the projective model structure.

The condition on the latching spaces implies that all inclusions
$L(T) \subseteq {A}_T$ are Hurewicz cofibrations,
so that $\mathscr{D}_{\mathscr{A}}$ is Reedy cofibrant and thus projective cofibrant.
As the bar construction $\AbBarCon(\mathscr{D}_{\mathscr{A}})$
computes the homotopy colimit of $\mathscr{D}_{\mathscr{A}}$
\cite[Corollary 5.1.3]{cat_hom}, the natural map
$\AbBarCon(\mathscr{D}_{\mathscr{A}}) \to
\op{colim} \mathscr{D}_{\mathscr{A}}$
is a homotopy equivalence. As the cover is locally finite and $X$ is compactly generated, it follows from \cite[Corollary 2.23]{strickland} that the colimit calculated in $\CGSpc$ coincides with the one in $\Top$, and by \cref{space-is-colimit} this is naturally homeomorphic to $X$.
\end{proof}

We are now ready to prove the unified nerve theorem.

\begin{proof}[Proof of \cref{abstract-nerve-thm}]
In \cref{functorial_nerve_via_bar}, we explained how one can
prove that the natural maps $\rho_S$ and $\rho_N$ from the blowup complex
are equivalences by proving that the natural maps
$\pi_S$ and $\pi_{\Sd N}$ from the bar construction are equivalences (see Diagram \ref{diagram:two-roofs}).
So, we work with the bar construction in this proof.

The first part of 1(a) follows from work of Dugger and Isaksen
\cite[Theorem 2.1 and Proposition 2.7]{dugger-isaksen};
in \cref{appendix-open-covers} we give a short proof using their ideas.
The second part of 1(a) is essentially the content of \cite[Proposition 4G.2]{hatcher}; note that the author uses the convention that paracompact spaces are assumed to be Hausdorff.
Statement 1(b)
is the content of \cref{hocolim-colim-comparison}.

We now prove 2(a).
By assumption, the unique natural transformation $\mathscr{D}_{\mathscr{A}} \Rightarrow *^{\np{\mathscr{A}}}$ from the nerve diagram of the cover $\mathscr{A}$ to the constant diagram on the one-point space is a pointwise (weak) homotopy equivalence.
The results now follow from \cref{prop:homotopy_lemma_top} and \cref{prop:weak_homotopy_lemma_top}, respectively.
Alternatively, see \cref{appendix-open-covers}
for a short proof of the fact that $\rho_{N}$ is a weak homotopy equivalence
whenever $\mathscr{A}$ is a weakly good cover.

We now prove 2(b). For every compactly generated space $Z$, there is a natural weak homotopy equivalence
$|\Sing(Z)| \to Z$,
given by the counit of the adjunction $(|-|, \Sing)$
as explained in \cref{subsection:bar_simp_model}.
So, there is a pointwise weak homotopy equivalence
\[|-| \circ \Sing \circ \mathop{\mathscr{D}_{\mathscr{A}}} \Rightarrow \mathscr{D}_{\mathscr{A}},\] that induces, by \cref{prop:weak_homotopy_lemma_top}, a weak homotopy equivalence
\begin{equation}
\label{eq:unif_nerve_eq}
 \AbBarCon(|-| \circ \Sing \circ \mathop{\mathscr{D}_{\mathscr{A}}})
\to \AbBarCon(\mathscr{D}_{\mathscr{A}}).
\end{equation}

We work with the model structure on the category $\sRMod$ of simplicial $R$-modules
described in \cref{sRMod};
recall that a map $X \to Y$ of compactly-generated spaces
is an $R$-homology isomorphism if and only if the induced map
$\freeR(\Sing (X))\to \freeR(\Sing (Y))$
is a weak equivalence of simplicial $R$-modules.
By our assumption that the natural transformation
$\mathscr{D}_{\mathscr{A}} \Rightarrow *^{\np{\mathscr{A}}}$ is a pointwise $R$-homology isomorphism,
the natural transformation
$\freeR \circ \Sing \circ \mathop{\mathscr{D}_{\mathscr{A}}} \Rightarrow \freeR \circ \Sing \circ \mathop{*^{\np{\mathscr{A}}}}$
is a pointwise weak equivalence of simplicial $R$-modules.
As $\freeR$ preserves cofibrant objects (since it is the left adjoint of a Quillen adjunction), and every simplicial set is cofibrant,
both diagrams are pointwise cofibrant.
So, by \cref{prop:homotopy_lemma}, the induced map
$\AbBarCon(\freeR \circ \Sing \circ \mathop{\mathscr{D}_{\mathscr{A}}})
\to \AbBarCon(\freeR \circ \Sing \circ \mathop{*^{\np{\mathscr{A}}}})$
is a weak equivalence.
Furthermore, for any poset $P$ and any functor $F\colon P \to \sSet$,
there is a natural isomorphism
$\AbBarCon(\freeR \circ F) \iso \freeR(\AbBarCon(F))$,
using the definition of the tensor structure on $\sRMod$
and the fact that $\freeR$ preserves colimits (as it is a left adjoint).
So we have a commutative diagram:
\[\begin{tikzcd}
   	\AbBarCon(\freeR \circ \Sing \circ \mathop{\mathscr{D}_{\mathscr{A}}})\arrow{r}{\iso}\arrow{d}[swap]{\simeq} &\freeR(\AbBarCon(\Sing \circ \mathop{\mathscr{D}_{\mathscr{A}}}))\arrow{d}\\
	\AbBarCon(\freeR \circ \Sing \circ \mathop{*^{\np{\mathscr{A}}}})\arrow{r}{\iso} &\freeR(\AbBarCon(\Sing\circ \mathop{*^{\np{\mathscr{A}}}} ))
  \end{tikzcd}
\]
and hence, the morphism on the right is a weak equivalence by 2-of-3.

For any simplicial set $K$, the unit map $K \to \Sing(|K|)$ is a natural weak equivalence,
as explained in \cref{subsection:bar_simp_model}.
The functor $\freeR$ preserves all weak equivalences,
as every simplicial set is cofibrant \cite[Lemma 1.1.12 (Ken Brown’s lemma)]{hovey}.
So we have a commutative square,
in which the indicated maps are weak equivalences:
\[
\begin{tikzcd}
& \freeR(\AbBarCon(\Sing \circ \mathop{\mathscr{D}_{\mathscr{A}}})) \arrow[d, "\simeq"'] \arrow[r, "\simeq"]
& \freeR(\Sing(\vert \AbBarCon(\Sing \circ \mathop{\mathscr{D}_{\mathscr{A}}})\vert)) \arrow[d] \\
& \freeR(\AbBarCon(\Sing\circ \mathop{*^{\np{\mathscr{A}}}})) \arrow[r, "\simeq"']
& \freeR(\Sing(\vert \AbBarCon(\Sing\circ \mathop{*^{\np{\mathscr{A}}}})\vert))
\end{tikzcd}
\]
It follows, by 2-of-3, that the fourth map in the square is a weak equivalence as well, and so
$\vert \AbBarCon(\Sing \circ \mathop{\mathscr{D}_{\mathscr{A}}}) \vert
\to \vert \AbBarCon(\Sing\circ \mathop{*^{\np{\mathscr{A}}}}) \vert$ is an $R$-homology isomorphism.

For any poset $P$ and any functor $F\colon P \to \sSet$,
there is a natural isomorphism
$\AbBarCon(\mbox{$|-|$} \circ F) \iso |\AbBarCon(F)|$,
again using the definitions and the fact that geometric realization preserves colimits,
being a left adjoint.
So we have the following commutative diagram:
\[
\begin{tikzcd}
 	\vert \AbBarCon(\Sing \circ \mathop{\mathscr{D}_{\mathscr{A}}}) \vert
	\arrow{d}{\cong}\arrow{r}{\sim_R}  &
	 \vert \AbBarCon(\Sing\circ \mathop{*^{\np{\mathscr{A}}}}) \vert\arrow{d}{\cong}\\
	 \AbBarCon(|-|\circ \Sing \circ \mathop{\mathscr{D}_{\mathscr{A}}})\arrow{r}\arrow{dr} & \AbBarCon(|-|\circ \Sing \circ \mathop{*^{\np{\mathscr{A}}}})\\
	 & \AbBarCon(\mathscr{D}_{\mathscr{A}})\arrow{u}
\end{tikzcd}
\]
Recall that the map $\AbBarCon(|-| \circ \Sing \circ \mathop{\mathscr{D}_{\mathscr{A}}})
\to \AbBarCon(\mathscr{D}_{\mathscr{A}})$ from line \ref{eq:unif_nerve_eq} is a weak equivalence.
Together with the fact that weak homotopy equivalences are also $R$-homology isomorphisms \cite[Proposition 4.21]{hatcher}, we get that the canonical map
\[
	\pi_{\Sd N} \colon \AbBarCon(\mathscr{D}_{\mathscr{A}})
	\to \vert  \Sd \Nrv(\mathscr{A}) \vert \iso  \AbBarCon(*^{\np{\mathscr{A}}})  \iso \AbBarCon(|-|\circ \Sing \circ \mathop{*^{\np{\mathscr{A}}}})
\]
is, once more by 2-of-3, an $R$-homology isomorphism as well.
\end{proof}

\printbibliography

\appendix
\section{Auxiliary Lemmas about Geometric Simplicial Complexes}
\label{appendix-auxiliary-lemmas}

\begin{lemma}
\label{lemma:int_bar_star}
Let $\sigma = \{ v_0, \dots, v_k\} \in K$ be a simplex and consider the subcomplex $L = \{ \tau_0 \subset \dots \subset \tau_m \mid  \sigma \subseteq \tau_0\}\subseteq \op{Sd}K.$
Then $\bigcap_{i=0}^k \operatorname{bst}v_i = |L|.$
\end{lemma}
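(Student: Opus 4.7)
The plan is to reduce this to a combinatorial statement about subcomplexes of $\op{Sd} K$ and then pass to geometric realizations, using the standard fact that realization commutes with intersections of subcomplexes.

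First, I would unpack the definition $\operatorname{bst} v_i = |\op{Cl}\op{St}_{\Sd K} v_i|$ combinatorially. A simplex of $\Sd K$ is a chain $\tau_0 \subsetneq \dots \subsetneq \tau_m$ of simplices of $K$, and it lies in $\op{Cl}\op{St}_{\Sd K} v_i$ exactly when $\{v_i\}\cup\{\tau_0,\dots,\tau_m\}$ is again a chain in $K$ (recall $v_i$ is a vertex of $\Sd K$, which is the $0$-simplex $\{v_i\}$ of $K$). Since each $\tau_j$ is nonempty, the comparability of the singleton $\{v_i\}$ with every $\tau_j$ forces $\{v_i\}\subseteq\tau_j$ for all $j$, which is equivalent to $v_i\in\tau_0$ (as $\tau_0$ is the smallest element of the chain).

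Intersecting over $i$, a chain $\tau_0\subsetneq\dots\subsetneq\tau_m$ lies in $\bigcap_{i=0}^k \op{Cl}\op{St}_{\Sd K} v_i$ if and only if $v_i\in\tau_0$ for every $i=0,\dots,k$, equivalently $\sigma=\{v_0,\dots,v_k\}\subseteq\tau_0$. Thus the combinatorial identity
\[
\bigcap_{i=0}^k \op{Cl}\op{St}_{\Sd K} v_i \;=\; L
\]
holds as subcomplexes of $\Sd K$.

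Finally, since $\op{Cl}\op{St}_{\Sd K} v_i$ and $L$ are all subcomplexes of the same ambient complex $\Sd K$, and geometric realization commutes with (arbitrary) intersections of subcomplexes, we obtain
\[
\bigcap_{i=0}^k \operatorname{bst} v_i \;=\; \bigcap_{i=0}^k |\op{Cl}\op{St}_{\Sd K} v_i| \;=\; \Bigl|\bigcap_{i=0}^k \op{Cl}\op{St}_{\Sd K} v_i\Bigr| \;=\; |L|,
\]
as required. There is no real obstacle here; the only point to be careful about is the comparability argument for singletons in the first step, and the routine verification that the intersection of the realizations of subcomplexes equals the realization of the intersection (which follows, for instance, by examining barycentric coordinates, where a point of $|\Sd K|$ lies in $|M|$ for a subcomplex $M$ iff its carrier simplex belongs to $M$).
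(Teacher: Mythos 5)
Your proposal is correct and follows essentially the same route as the paper: both arguments reduce to the combinatorial observation that a chain $\tau_0\subset\dots\subset\tau_m$ lies in $\operatorname{Cl}\operatorname{St}_{\Sd K}v_i$ precisely when $v_i\in\tau_0$, and then pass between subcomplexes and their realizations. Your version is if anything slightly cleaner, since it isolates the purely combinatorial identity and invokes the standard fact that realization commutes with intersections of subcomplexes, whereas the paper argues the two inclusions simplex by simplex.
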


\begin{proof}[Proof of \cref{lemma:int_bar_star}]
First, let $\phi =(\tau_0\subset  \cdots \subset \tau_m) \in L$ be a simplex. By definition, $\phi$ is contained in the simplex $\sigma\subseteq \tau_0\subset \cdots\subset \tau_m$ of $\Sd K$. Thus, the realization of $\phi$ is contained in $\op{bst} v_i$ for all $i$, and so we have $|L| \subseteq \bigcap_{i=0}^k \operatorname{bst}v_i$.

Now, let $|\phi =(\tau_0\subset  \cdots \subset \tau_m) | \subseteq \bigcap_{i=0}^k \op{bst} v_i$. Since for all $i$ we have $|\tau_0\subset  \cdots \subset \tau_m| \subseteq \operatorname{bst}v_i$,
we know that $v_i\in \tau_0$. Thus, the simplex $\sigma$ is also contained in $\tau_0$. Therefore, $\phi \in L$ and so we have $\bigcap_{i=0}^k \operatorname{bst}v_i \subseteq |L|$.
\end{proof}

\begin{proof}[Proof of \cref{lemma:int_contractible}]
	By \cref{lemma:int_bar_star}, every (geometric) simplex in $\bigcap_{v \in \sigma}\operatorname{bst}v\subseteq |\Sd K|$ has a coface in this intersection with $z(|\sigma|)$ as a vertex, where $z(|\sigma|)$ is the barycenter of $|\sigma|$.
	Thus, $\bigcap_{v \in \sigma}\operatorname{bst}v$ is star-shaped with respect to $z(|\sigma|)$ and hence contractible.
\end{proof}

The following two lemmas are straightforward calculations (compare \cite[p.62]{eil_steen}).
\begin{lemma}
\label{lemma:subdivision_simplex}
	Let $K$ be a simplicial complex and let $x\in |K|$. Write $x$ in barycentric coordinates of $K$ as \[x=\sum\limits_{j=0}^m\nu_j\cdot |w_j| \] with $w_i \in \vertx{K}$, $\nu_i> 0$ and $\sum\limits_{j=0}^m\nu_j=1$ as well as $\nu_0\geq \nu_1\geq \cdots\geq \nu_m$. Then, using the (geometric) simplices \begin{align}
	\label{equation:bary_simplices}
	|\tau_i|=\op{conv}\{ |w_{0}| ,\dots,|w_i| \} \quad \text{for all } i\in\{0,\dots,m\}
	\end{align}
	in the realization $|K|$ and by writing $z(|\tau_i|)$ for the barycenter of $|\tau_i|$, we have $x\in \op{conv}\{ z(|\tau_0|),\dots,z(|\tau_m|)\} .$
	Specifically, writing $x$ in barycentric coordinates of $\Sd K$ as
	$x=\sum_{j=0}^m\mu_jz(|\tau_j|),$ we have
	\begin{align*}
	\mu_i&=(i+1)\Big(\nu_i(x)-\nu_{i+1}(x)\Big) \text{ for }i=0,\dots,m-1\\
	\mu_m&=(m+1)\nu_m(x).
	\end{align*}
\end{lemma}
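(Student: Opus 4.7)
The plan is to verify the stated formula by direct computation: expand each barycenter $z(|\tau_i|)$, substitute into the claimed expression $\sum_i \mu_i z(|\tau_i|)$, rearrange the double sum by swapping the order of summation, and check that the coefficient of each $|w_j|$ matches $\nu_j$. Before doing this, I would observe that the chain $\tau_0 \subset \tau_1 \subset \cdots \subset \tau_m$ is a simplex of $\Sd K$: each $\tau_i$ is a face of the (unique) carrying simplex of $x$ in $K$, because its vertices appear among the $w_j$ with positive barycentric coordinate $\nu_j$. Hence the points $z(|\tau_0|), \dots, z(|\tau_m|)$ are the vertices of a geometric simplex in $|\Sd K|$, and once the $\mu_i$ are shown to be nonnegative and to sum to $1$, they really are the barycentric coordinates of $x$ with respect to that simplex.

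The computational core is as follows. By definition, $z(|\tau_i|) = \tfrac{1}{i+1}\sum_{j=0}^{i}|w_j|$. Substituting the candidate coefficients and swapping the order of summation gives
\begin{equation*}
\sum_{i=0}^{m}\mu_i\, z(|\tau_i|) \;=\; \sum_{i=0}^{m}\frac{\mu_i}{i+1}\sum_{j=0}^{i}|w_j| \;=\; \sum_{j=0}^{m}\Bigl(\sum_{i=j}^{m}\frac{\mu_i}{i+1}\Bigr)|w_j|,
\end{equation*}
so it suffices to verify that the inner sum equals $\nu_j$ for each $j$. Plugging in $\mu_i = (i+1)(\nu_i - \nu_{i+1})$ for $i<m$ and $\mu_m = (m+1)\nu_m$, the inner sum becomes the telescoping expression $\sum_{i=j}^{m-1}(\nu_i-\nu_{i+1}) + \nu_m = \nu_j$, as required.

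It then remains to check nonnegativity and the partition-of-unity condition. Nonnegativity is immediate from the hypothesis $\nu_0 \geq \nu_1 \geq \cdots \geq \nu_m \geq 0$, which makes each factor $\nu_i - \nu_{i+1}$ and $\nu_m$ nonnegative. For the sum, essentially the same telescoping argument yields
\begin{equation*}
\sum_{i=0}^{m}\mu_i \;=\; \sum_{i=0}^{m-1}(i+1)(\nu_i-\nu_{i+1}) + (m+1)\nu_m \;=\; \sum_{j=0}^{m}\nu_j \;=\; 1,
\end{equation*}
after regrouping the coefficients of each $\nu_j$.

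There is no real obstacle here beyond bookkeeping; the only subtle point is conceptual rather than computational, namely recognizing why the ordering hypothesis on the $\nu_j$ is exactly what is needed for the simplices $\tau_i$ to be nested (so that the corresponding barycenters span a simplex of $\Sd K$) and simultaneously for the coefficients $\mu_i$ to be nonnegative. Once that is noted, the proof reduces to the two telescoping identities above.
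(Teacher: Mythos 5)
Your computation is correct and is precisely the ``straightforward calculation'' that the paper invokes without writing out (it only points to Eilenberg--Steenrod for comparison): expanding the barycenters, swapping the order of summation, and telescoping verifies the coefficient formulas, while the ordering hypothesis gives nonnegativity and the chain condition on the $\tau_i$. Nothing further is needed.
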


\begin{lemma}
	\label{lem:subdivision_simplex_converse}
Let $x \in |\Sd K|$, written in barycentric coordinates as $x=\sum_{j=0}^m\mu_{j}z(|\tau_j|)$
	for some
	flag of simplices $\tau_0\subset \cdots\subset \tau_m$ in $K$, where
	\[
	|\tau_i|=\op{conv}\{ |w_{0}| ,\dots,|w_{i}| \} \quad \text{for all } i\in\{1,\dots,m\}
	\]
	and $w_i \in \vertx{K}$.
	Then we have $x \in |\tau_m| = \op{conv}\{ |w_{0}| ,\dots,|w_{m}| \}.$
	Specifically, the barycentric coordinates $\nu_i$ of $x$ in $K$ with respect to $|w_0|,\dots,|w_m|$ take the form
	\begin{equation}
	\label{eq:bar_coord_sum}
		\nu_i = \sum\limits_{j=i}^{m}\frac{1}{j+1}\mu_{j}.
	\end{equation}
\end{lemma}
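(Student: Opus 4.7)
The plan is to carry out a direct computation by substituting the explicit formula for the barycenter of each simplex $|\tau_j|$ into the expression $x = \sum_{j=0}^m \mu_j \, z(|\tau_j|)$ and then interchanging the order of summation to read off the coefficient of each vertex $|w_k|$.

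First I would write
\[
z(|\tau_j|) \;=\; \frac{1}{j+1}\sum_{k=0}^{j}|w_k|,
\]
since $|\tau_j| = \op{conv}\{|w_0|,\dots,|w_j|\}$ has $j+1$ vertices. Substituting this into the given expression for $x$ yields
\[
x \;=\; \sum_{j=0}^{m} \mu_j \cdot \frac{1}{j+1}\sum_{k=0}^{j} |w_k|
\;=\; \sum_{k=0}^{m} \Bigl(\sum_{j=k}^{m} \frac{\mu_j}{j+1}\Bigr) |w_k|,
\]
where in the last step I reversed the order of summation, noting that a pair $(j,k)$ contributes precisely when $0\le k\le j \le m$. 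This identifies the coefficient of $|w_k|$ as $\nu_k = \sum_{j=k}^m \frac{\mu_j}{j+1}$, which is the formula in equation \ref{eq:bar_coord_sum}.

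To conclude that these are genuine barycentric coordinates of $x$ with respect to $|w_0|,\dots,|w_m|$, and hence that $x \in |\tau_m|$, I would verify the two defining conditions. Non-negativity $\nu_k \ge 0$ is immediate since $\mu_j \ge 0$ for all $j$. For the normalization, interchanging the order of summation once more and using $\sum_{j=0}^m \mu_j = 1$ gives
\[
\sum_{k=0}^{m}\nu_k \;=\; \sum_{k=0}^{m}\sum_{j=k}^{m} \frac{\mu_j}{j+1}
\;=\; \sum_{j=0}^{m} \frac{\mu_j}{j+1} \sum_{k=0}^{j} 1
\;=\; \sum_{j=0}^{m} \mu_j \;=\; 1.
\]

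There is no real obstacle here; the lemma is purely a bookkeeping statement that inverts the construction of \cref{lemma:subdivision_simplex}, and the only point worth being careful about is the swap of summation indices and the observation that $\sum_{k=0}^{j}1 = j+1$ cancels the $(j+1)$ in the denominator, making the normalization work out exactly.
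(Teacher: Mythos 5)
Your computation is correct and is precisely the ``straightforward calculation'' the paper alludes to when it omits the proof: substitute $z(|\tau_j|)=\frac{1}{j+1}\sum_{k=0}^{j}|w_k|$, swap the order of summation, and check non-negativity and normalization of the resulting coefficients. Nothing is missing.
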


\begin{proof}[Proof of \cref{lemma:bst_maximal}]
	Let $x\in |K|$ be a point satisfying \cref{equation:bst_maximal}.
	It suffices to show that $x$ is contained in a simplex of $|\Sd K|$ having $|v|$ as a vertex.
	Let $v_0,\dots,v_m$ be the vertices in $K$ with $b_{v_i}(x) > 0$ in descending order of barycentric coordinates. By \cref{equation:bst_maximal} we may choose $v_0=v$.
	Now, by \cref{lemma:subdivision_simplex}, we know that
	the point $x$ is contained in $\op{conv}\{ |v|=z(|\tau_0|),\dots, z(|\tau_m|) \}$ for the simplices $|\tau_i| \subseteq |K|$ specified as in \cref{equation:bary_simplices}. Hence, by definition the point $x$ is contained in $\operatorname{bst} v$.

	Conversely, let $x \in \operatorname{bst} v$ for some vertex $v \in \vertx{K}$.
	Then there exists a simplex $\tau \in \Sd K$ with $v$ as a vertex such that $x\in |\tau|$ and that $\tau$ corresponds to a flag $v = \tau_0 \subset \dots \subset \tau_m$ of simplices in $K$.
	From \cref{lem:subdivision_simplex_converse}, or more specifically \cref{eq:bar_coord_sum}, we deduce that the barycentric coordinate $\nu_0 = b_v(x)$ of $x$ in $K$ with respect to $v$ is maximal.
\end{proof}
\section{Discrete Morse Theory for Infinite Complexes}
\label{discrete_morse}
Discrete Morse theory for finite cell complexes was introduced by Forman \cite{forman-discrete-morse,forman-user-guide} and since then found its way into algorithms, applications and underwent reformulations as well as generalizations. For an extension to infinite cell complexes that is similar to the results presented here, see \cite{batzies-thesis}.

Let $K$ be a simplicial complex and $\sigma\in K$ a simplex. We call a facet $\tau\subseteq \sigma$ a \emph{free facet} if~$\sigma$ is the only proper coface of $\tau$. An \emph{elementary collapse} $K\searrow K\setminus \{\tau,\sigma\}$ is the removal of a pair of simplices, where $\tau$ is a free facet of $\sigma$. A \emph{collapse} $K\searrow L$ onto a subcomplex~$L$ is a sequence of elementary collapses starting in $K$ and ending in $L$. Moreover, an elementary collapse can be realized continuously by a strong deformation retract and therefore collapses preserve the homotopy type.

By a \emph{discrete vector field} on $K$ we mean a partition $V$ of $K$ into singletons $\{\sigma\}$, $\sigma$ is then called a \emph{critical simplex}, and pairs $\{\sigma,\tau\}$ corresponding to arcs $(\sigma, \tau)$ in the Hasse diagram $\mathcal{H}(K)$ of the face poset, i.e., the directed graph whose nodes are the simplices and whose arcs are the pairs $(\sigma, \tau)$ in which $\sigma$ is a facet of $\tau$. We call the discrete vector field $V$ a \emph{discrete gradient vector field} if the graph $\mathcal{H}(K,V)$ that is obtained from $\mathcal{H}(K)$ by reversing all the arcs $(\sigma,\tau)$ for which $\{\sigma,\tau\}\in V$ is acyclic. Note that it suffices to check that there are no non-trivial closed $V$-paths, i.e., that there are no undirected paths in $\mathcal{H}(K)$ that are of the form $\tau_0\to\mu_0\leftarrow\cdots\to\mu_r\leftarrow\tau_{r+1}$ with $\{\tau_i,\mu_i \}\in V$, $\tau_i\neq\tau_{i+1}$ and $\tau_0=\tau_{r+1}$.

Given a discrete gradient vector field $V$ on a simplicial complex $K$, we can define a poset structure on $V$ as follows: For two elements $A,B\in V$ be define $A\leq_V B$ if and only if there exists a sequence $A=C_0,C_1,\dots,C_n=B$ in $V$ such that for every $i$ there exist elements $x_{i-1}\in C_{i-1},x_{i}\in C_{i}$ with $x_{i-1}$ a face of $x_i$.

Moreover, for any element $A\in V$ we define its \emph{height} to be
\[\op{ht}(A)= \sup\{n\in \mathbb{N}\mid \exists\ A=B_0> \cdots > B_n \text{ in }V   \}. \]

The following lemma is useful in practice.
\begin{lemma}
\label{lemma:equivalent_height_def}
The height $\op{ht}(A)$ is finite for every $A\in V$ if and only if for every simplex $\sigma\in K$ its $V$-path height
\[\op{ht}_V(\sigma)=\sup\{n\in\mathbb{N}\mid \exists \text{ V-path }\sigma=\tau_0\to\mu_0\leftarrow\cdots\to\mu_{n-1}\leftarrow\tau_{n}\}\]
is finite.
\end{lemma}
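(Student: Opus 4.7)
The plan is to prove both implications separately, using different techniques in each direction.

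For ($\Rightarrow$), given a $V$-path $\sigma = \tau_0 \to \mu_0 \leftarrow \tau_1 \to \cdots \leftarrow \tau_n$ of length $n$, I would form the associated sequence of pairs $A_i = \{\tau_i,\mu_i\} \in V$ for $i=0,\dots,n-1$. They are pairwise distinct: if $A_i = A_{i+1}$, then $\tau_{i+1} \in \{\tau_i,\mu_i\}$ combined with $\tau_{i+1}\neq \tau_i$ would force $\tau_{i+1}=\mu_i$, contradicting that $\tau_{i+1}$ is a proper facet of $\mu_i$. Since each $\tau_{i+1}\in A_{i+1}$ is a face of $\mu_i\in A_i$, we have $A_{i+1}\leq_V A_i$ by a single-step link, so $A_0 >_V A_1 >_V \cdots >_V A_{n-1}$ is a strict chain of length $n-1$ in the poset. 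This gives $\op{ht}_V(\sigma) \leq \op{ht}(A_\sigma) + 1$, where $A_\sigma$ is the pair in $V$ containing $\sigma$ (and $\op{ht}_V(\sigma)=0$ if $\sigma$ is not the smaller element of any pair). Finite poset heights therefore imply finite $V$-path heights.

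For ($\Leftarrow$), I would argue the contrapositive and assume $\op{ht}(A)=\infty$ for some $A\in V$. Decomposing any face relation in a single-step link into a chain of facets and inserting the $V$-element of each intermediate simplex refines any strict poset chain into one whose links are all single-step facet relations. The tree of such strict single-step facet chains starting at $A$ is locally finite, since each simplex has only finitely many facets; by König's lemma there is an infinite strict chain $A=C_0 >_V C_1 >_V C_2 >_V \cdots$ with witnesses $x_i\in C_{i+1}$ and $y_i\in C_i$ such that $x_i$ is a facet of $y_i$.

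The core of the argument is a dimension analysis. Set $d_i=\max\{\dim\gamma:\gamma\in C_i\}$. A short case analysis on whether $C_i,C_{i+1}$ are singletons or pairs, and on the positions of $x_i,y_i$ in their respective elements, shows that $d_{i+1}\leq d_i$ always, with equality occurring only when $C_{i+1}$ is a pair $\{\sigma_{i+1},\mu_{i+1}\}$, $x_i=\sigma_{i+1}$, and $y_i$ is the maximal-dimension element of $C_i$. The non-increasing integer sequence $(d_i)$ stabilizes at some value $d$ for $i\geq N$. For $i\geq N$ only the equality case can occur, which forces $C_j=\{\sigma_j,\mu_j\}$ to be a pair for every $j\geq N+1$, with $y_j=\mu_j$ and $\sigma_{j+1}$ a facet of $\mu_j$; disjointness of distinct partition elements gives $\sigma_j\neq\sigma_{j+1}$. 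These steps assemble into the infinite $V$-path $\sigma_{N+1}\to\mu_{N+1}\leftarrow\sigma_{N+2}\to\mu_{N+2}\leftarrow\cdots$, contradicting $\op{ht}_V(\sigma_{N+1})<\infty$.

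The main obstacle is the backward direction: a general single-step link in the poset uses an arbitrary face relation between simplices of different $V$-elements, and a priori need not correspond to a $V$-path step, which has the rigid alternating structure in which the new pair's smaller element must be a facet of the previous pair's larger element. The dimension stabilization argument is what rules out the non-$V$-path-like cases along the tail of any infinite chain, enabling the extraction of an honest infinite $V$-path.
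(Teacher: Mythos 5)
Your forward direction is the same as the paper's (a $V$-path induces a strictly descending chain of pairs in the poset $V$), just with the distinctness of consecutive pairs spelled out. Your backward direction, however, is a genuinely different route: the paper argues directly by induction on $\dim A=\dim\min A$, bounding $\op{ht}(A)$ by the lengths of $V$-paths out of $\min A$ plus the heights of lower-dimensional elements, and closing the induction with an explicit counting bound $(\dim(\sigma)+2)^{\op{ht}_V(\sigma)}$ on the number of such $V$-paths. You instead take the contrapositive, extract an infinite chain by K\"onig's lemma, and use a dimension-stabilization argument to show the tail of that chain is an honest infinite $V$-path. Your dimension case analysis is correct ($d_{i+1}\le d_i$ with equality exactly when $x_i$ is the smaller element of the pair $C_{i+1}$ and $y_i$ realizes $d_i$), and the extraction of the $V$-path from the stabilized tail is sound \emph{provided} consecutive elements of the chain are distinct.

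That proviso is where there is a genuine gap. Your refinement of a poset link $x_i\subseteq y_i$ into single facet steps $x_i=z_0\subset\cdots\subset z_k=y_i$ need not produce a chain with distinct consecutive entries: whenever $\{z_j,z_{j+1}\}$ happens to be a pair of $V$, the inserted elements $D_j$ and $D_{j+1}$ coincide. So either your tree consists of possibly non-strict single-facet chains, in which case K\"onig's lemma may hand you an eventually constant branch $C_0,\dots,C_M,C_M,C_M,\dots$ (a link from $C_M$ to itself is witnessed precisely when $C_M$ is a $V$-pair), and such a branch yields only a finite $V$-path and no contradiction; or your tree consists of strict chains, in which case the claim that it is infinite is not yet justified, because the refinement as described does not land in it. The fix is short: whenever $D_j=D_{j+1}=\{z_j,z_{j+1}\}$, delete the repeat; the link from $D_{j+2}$ down to $D_j$ is still a single facet step, witnessed by $z_{j+1}\in D_j$ being a facet of $z_{j+2}\in D_{j+2}$, and since a $V$-element contains at most two simplices no run of repeats has length greater than two. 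With that collapsing step the refined chains are consecutively distinct, still pass through all $n+1$ elements of the original strict chain (so the tree is infinite), and the rest of your argument goes through, giving $\sigma_j\neq\sigma_{j+1}$ in the tail as you claim.
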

\begin{proof}
Every $V$-path $\tau_0\to\mu_0\leftarrow\cdots\to\mu_{n-1}\leftarrow\tau_{n}$ induces a descending chain $\{\tau_0,\mu_0\}>\cdots>\{\tau_{n-1},\mu_{n-1}\}$. Hence, if the height is finite for every $A\in V$ this implies that the $V$-path height is finite for every simplex in $K$.

For the converse, we employ induction over the dimension $\dim A=\dim \min A$ of an element $A\in V$. If $\dim A=0$, then $\op{ht}(A)=\op{ht}_V(\min A)$ and this is finite by assumption. For the induction step,
consider the set $F_{\min A}$ of all $V$-paths starting in $\min A$. For a gradient path $\gamma= \tau_0\to\mu_0\leftarrow\cdots\to\mu_{n-1}\leftarrow\tau_{n}$ write $\op{end}\gamma=\tau_n$ and $\op{length}\gamma=n$. Then we can bound $\op{ht}(A)$ from above as follows:
\[\op{ht}(A)\leq \max\{\op{length}\gamma +1+ \op{ht}(B) \mid \gamma\in F_{\min A},\ \sigma\subsetneq\op{end}\gamma,\ \sigma\in B\in V\}. \]
To complete the induction step, note that for every $B$ as above we have $\dim B<\dim A$ and hence $\op{ht}(B)$ is finite by the induction assumption. Thus, it suffices to show that the set $F_{\min A}$ is finite. This can be seen as follows: Given a gradient path $\gamma$ as above, then the path ends in $\tau_i$ or there are $\dim\mu_i=\dim\tau_0+1$ choices for $\tau_{i+1}$ once $\tau_i$ is fixed. Hence, the cardinality of $F_{\min A}$ is bounded from above by $(\dim(\sigma)+2)^{\op{ht}_V(\sigma)}$.
\end{proof}

The essential ideas for the proof of the following proposition can be found already in the proof of \cite[Proposition 1]{brown-morse}, which predates Forman's papers.
\begin{prop}
\label{infinite_discrete_morse}
 Let $L\subseteq K$ be a pair of simplicial complexes and let $V$ be a discrete gradient vector field on $K$ such that for every element $A\in V$ its height $\op{ht}(A)$ is finite. Moreover, assume that $K\setminus L$ is the union of pairs in $V$. Then the inclusion $|L|\hookrightarrow |K|$ is a homotopy equivalence.
\end{prop}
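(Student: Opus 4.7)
The plan is to filter $K$ by the height of the gradient vector field $V$ and to collapse the resulting layers one at a time. For $n \geq -1$, I would set
\[
  K_n = L \cup \bigcup_{\substack{A \in V \\ \op{ht}(A) \leq n}} A,
\]
with the convention $K_{-1} = L$. The hypothesis that every height is finite gives $K = \bigcup_{n \geq -1} K_n$. First, I would verify that each $K_n$ is a subcomplex: if $\sigma' \subseteq \sigma$ with $\sigma \in A$ and $\op{ht}(A) \leq n$, then either $\sigma' \in L$ or $\sigma' \in A'$ for some $A' \in V$; in the latter case the face relation forces $A' \leq_V A$, so $\op{ht}(A') \leq \op{ht}(A) \leq n$, and hence $\sigma' \in K_n$.

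The main step is to produce, for each $n \geq 0$, a strong deformation retract $|K_n| \to |K_{n-1}|$ by simultaneously collapsing every pair of height exactly $n$. The key claim is that for any pair $\{\sigma,\tau\} \in V$ of height $n$ with $\sigma$ a facet of $\tau$, the simplex $\sigma$ is a free face of $\tau$ inside $K_n$. Indeed, a proper coface $\sigma'$ of $\sigma$ other than $\tau$ cannot lie in $L$ (because $\sigma \notin L$ and $L$ is a subcomplex), so it must belong to some element $C \in V$ distinct from $\{\sigma,\tau\}$; the face relation $\sigma \subsetneq \sigma'$ then yields $\{\sigma,\tau\} <_V C$, hence $\op{ht}(C) > n$ and $\sigma' \notin K_n$. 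The same type of argument rules out any face relation between the top simplices of two distinct height-$n$ pairs. Consequently, the standard strong deformation retracts realizing the individual elementary collapses, each supported on the corresponding closed simplex $|\tau|$, agree with the identity on common boundary faces and glue to a single strong deformation retract $r_n \colon |K_n| \to |K_{n-1}|$; continuity of the glued map uses the weak topology on $|K_n|$.

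To conclude, I would telescope the homotopies $H_n$ realizing the $r_n$ into a global strong deformation retract $H \colon |K| \times [0,1] \to |K|$ onto $|L|$, by carrying out $H_n$ during the time interval $[1 - 2^{-n},\, 1 - 2^{-(n+1)}]$ and extending by the identity elsewhere. Continuity at $t = 1$ is ensured because every point of $|K|$ lies in some $|K_N|$ (as $K = \bigcup_n K_n$) and is therefore acted upon during only finitely many of these intervals before coming to rest in $|L|$.

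The main obstacle I anticipate is making the continuity of the telescoped homotopy rigorous when infinitely many pairs occur at a single height or when $K$ is not locally finite. In essence this amounts to the standard fact that a sequential composition of closed Hurewicz cofibrations that are each homotopy equivalences is again a homotopy equivalence, but verifying it directly requires a careful argument compatible with the weak topology on $|K|$; an alternative is to invoke the general result for countable CW telescopes and avoid the explicit parametrization altogether.
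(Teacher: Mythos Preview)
Your filtration by height and the verification that the height-$n$ pairs can be collapsed simultaneously inside $K_n$ match the paper's argument exactly; your explicit check that each $K_n$ is a subcomplex is a detail the paper leaves implicit. The divergence is in how you pass from the stepwise equivalences $|K_{n-1}| \hookrightarrow |K_n|$ to the global one $|L| \hookrightarrow |K|$, and here the explicit telescoping has a flaw that is more than a continuity issue.

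Your scheme runs $H_0$ on $[0,\tfrac12]$, then $H_1$ on $[\tfrac12,\tfrac34]$, and so on, each extended by the identity outside $|K_n|$. Take $x \in |K_N| \setminus |K_{N-1}|$: it is fixed by the extended $H_n$ for all $n<N$ (since $x\notin |K_n|$), moved into $|K_{N-1}|$ by the extended $H_N$, and then fixed by every subsequent extended $H_n$ with $n>N$ (these are strong deformation retracts onto $|K_{n-1}|\supseteq |K_{N-1}|$). So at time $1$ the point lies in $|K_{N-1}|$, not in $|L|$. The retractions are stacked in the wrong order: to carry $x$ down to $|L|$ one needs $r_N,r_{N-1},\dots,r_0$ in that order, but there is no way to start a telescope from the top when the filtration is unbounded above.

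The paper avoids this entirely by not constructing an explicit homotopy. It proves a short auxiliary lemma: if $K_0\subseteq K_1\subseteq\cdots$ is a filtration of a simplicial complex $K$ by subcomplexes with union $K$ and each inclusion $|K_i|\hookrightarrow |K_{i+1}|$ a homotopy equivalence, then $|K_0|\hookrightarrow |K|$ is one as well. The proof uses Whitehead's theorem together with the observation that any map from a sphere (or a homotopy of such a map) into $|K|$ has compact image and hence factors through some $|K_i|$. Your proposed fallback to the ``general result for countable CW telescopes'' (equivalently, that a sequential composition of acyclic Hurewicz cofibrations is again one) is essentially the same statement and would close the gap cleanly; you should invoke it in place of the explicit parametrization.
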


Before we prove \cref{infinite_discrete_morse} we need one small lemma.
\begin{lemma}
\label{infinite_inclusion_homotop_eq}
 Let $K$ be a simplicial complex and $K_{0}\subseteq K_1\subseteq \cdots\subseteq K$ a filtration of subcomplexes whose union is $K$ such that for all $i\in\mathbb{N}$ the inclusion $|K_i|\hookrightarrow |K_{i+1}|$ is a homotopy equivalence. Then the inclusion $|K_0|\hookrightarrow |K|$ is also a homotopy equivalence.
\end{lemma}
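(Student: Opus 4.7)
The plan is to promote each inclusion $|K_i| \hookrightarrow |K_{i+1}|$ to a strong deformation retraction and then telescope these into a single strong deformation retraction of $|K|$ onto $|K_0|$, using time intervals that shrink geometrically toward $t = 0$. Since each inclusion is a subcomplex inclusion, it has the homotopy extension property (\cref{rmk:cw_hep}); combined with the hypothesis that each inclusion is a homotopy equivalence, a standard argument then yields a strong deformation retraction $H_i \colon |K_{i+1}| \times I \to |K_{i+1}|$ with $H_i(-, 0) = \mathrm{id}$, $H_i(-, 1)$ a retraction onto $|K_i|$, and $H_i(x, t) = x$ for all $x \in |K_i|$ and $t \in I$.

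Setting $\tau_k = 2^{-k}$, I would define $G \colon |K| \times I \to |K|$ as follows: declare $G(x, 0) = x$, and on the $k$-th time stage $[\tau_k, \tau_{k-1}]$ for $k \geq 1$, apply (a rescaled version of) $H_{k-1}$ to whatever lies in $|K_k|$, pushing it into $|K_{k-1}|$. For $x \in |K_n|$, the stages with index $k > n$ act trivially on $x$, because $H_{k-1}$ fixes $|K_{k-1}| \supseteq |K_n|$; so $G(x, t) = x$ for $t \in [0, \tau_n]$, and the $n$ stages on $[\tau_n, 1]$ successively push $x$ through $|K_{n-1}|, \ldots, |K_0|$, landing in $|K_0|$ at $t = 1$.

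The essential well-definedness check is that if $x \in |K_m|$ with $m \leq n$, then the formulas ``from level $n$'' and ``from level $m$'' agree: each stage with index $k > m$ fixes any point of $|K_m| \subseteq |K_{k-1}|$, so the restrictions $G|_{|K_n| \times I}$ are compatible on overlaps and each is continuous as a finite concatenation of rescaled $H_i$'s. Since each $|K_n|$ is closed in $|K|$ (as a subcomplex with respect to the weak topology) and $I$ is compact Hausdorff, $|K| \times I$ carries the colimit topology from the $|K_n| \times I$, so these assemble into a continuous $G \colon |K| \times I \to |K|$. By construction $G(-, 0) = \mathrm{id}_{|K|}$, $G(-, 1)$ has image in $|K_0|$, and $G$ fixes $|K_0|$ pointwise, showing $|K_0|$ is a strong deformation retract of $|K|$.

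The main obstacle will be coordinating the trajectories across different levels of the filtration and ensuring continuity as $t \to 0^+$. The geometric choice $\tau_k = 2^{-k}$ together with the ``rel $|K_i|$'' property of the $H_i$ solves both problems: later stages leave points of earlier subcomplexes untouched, making the piecewise definition unambiguous across levels, and each fixed $x \in |K_n|$ sits in the constant stretch $[0, \tau_n]$, so continuity at $t = 0$ is automatic on each level $|K_n| \times I$ and then transports to $|K| \times I$ via the colimit topology.
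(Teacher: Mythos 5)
Your argument is correct, but it takes a genuinely different route from the paper. The paper's proof is a two-line compactness argument: every map $S^n \to |K|$ and every homotopy $S^n \times [0,1] \to |K|$ has compact image, hence factors through some $|K_i|$, so the inclusion $|K_0| \hookrightarrow |K|$ is a $\pi_n$-isomorphism for all $n$; Whitehead's theorem (applicable since both spaces are CW) then upgrades this weak equivalence to a homotopy equivalence. You instead build an explicit infinite telescope of strong deformation retractions. Your approach needs three standard inputs that the paper's does not: the fact that a closed cofibration which is a homotopy equivalence admits a strong deformation retraction (this is where the homotopy extension property of \cref{rmk:cw_hep} gets combined with the hypothesis), the colimit description $|K| = \op{colim}_n |K_n|$, and the fact that $- \times [0,1]$ preserves this colimit because $[0,1]$ is compact Hausdorff (so that continuity of $G$ can be checked level by level). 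In exchange you get a strictly stronger conclusion -- $|K_0|$ is a \emph{strong deformation retract} of $|K|$, not merely homotopy equivalent to it -- and you avoid Whitehead's theorem entirely. Your well-definedness and continuity checks are the right ones: the ``rel $|K_i|$'' property of each $H_i$ is exactly what makes the piecewise formula consistent across levels and makes each restriction $G|_{|K_n| \times I}$ locally constant in $t$ near $t=0$, so there is no issue at the accumulation point of the time stages. For the purposes of the paper (where the lemma only feeds into \cref{infinite_discrete_morse}, which itself produces strong deformation retractions at each stage) either proof suffices, and in fact your version dovetails naturally with that application.
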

\begin{proof}
By Whitehead's theorem \cite[Theorem 4.5]{hatcher}, it suffices to show that for all $n\in\mathbb{N}$ the induced morphism on homotopy groups $g_n\colon \pi_n(|K_0|)\to\pi_n(|K|)$ is an isomorphism. For any map $f\colon S^n\to |K|$ its image is compact and hence contained in some $|K_i|$. As $|K_0|\to |K_i|$ is a homotopy equivalence, it follows that the homotopy class $[f]$ is in the image of the composite $\pi_n(|K_0|)\to \pi_n(|K_i|)\to \pi_n(|K|)$ and hence also in the image of $g_n$. This shows surjectivity. A similar argument applied to any homotopy $h\colon S^n\times[0,1]\to |K|$ shows that $g_n$ is injective. This proves the lemma.
\end{proof}

\begin{proof}[Proof of \cref{infinite_discrete_morse}.]
Without loss of generality, we can assume that $L$ is the union of critical simplices.

Consider the filtration $L=K_{0}\subseteq K_1\subseteq \cdots\subseteq K$ of $K$, where $K_i$ is the subcomplex \[K_i=L\cup \bigcup\limits_{A\in V,\ \op{ht}(A)\leq i}A.\]
We show that for every $i\in \mathbb{N}$ the inclusion $|K_i|\hookrightarrow |K_{i+1}|$ is a homotopy equivalence and the proposition then follows from \cref{infinite_inclusion_homotop_eq}: Let $\{\tau,\sigma\}=A\in V$ be any element, with $\tau$ a facet of $\sigma$, such that $\op{ht}(A)=i+1$. Then, $\tau$ is a free facet of $\sigma$ in $K_{i+1}$, as otherwise there would exist a pair $B\in V$ with $\op{ht}(B)\leq i+1$ and $B>A$. But this cannot be true, because then the last property implies that the height of $B$ satisfies $\op{ht}(B)\geq \op{ht}(A)+1=i+2$, contradicting the construction. A similar argument shows that $\sigma$ is not properly contained in any simplex of $K_{i+1}$.
Therefore, the complement $K_{i+1}\setminus K_i$ is partitioned by pairs in $V$ of height $i+1$ and the corresponding simplices to different pairs can only possibly intersect in the subcomplex $K_i$. Thus, executing the elementary collapses that are encoded by those pairs simultaneously induces a strong deformation retract \[|K_{i+1}|\to \left|K_{i+1}\setminus \bigcup\limits_{
A\in V,\ \op{ht}(A)=i+1}A\right|=|K_{i}| \]
and hence the inclusion $|K_i|\hookrightarrow |K_{i+1}|$ is a homotopy equivalence.
\end{proof}

\section{The Blowup Complex for Open Covers}
\label{appendix-open-covers}

The parts of 1(a) and 2(a) in \cref{abstract-nerve-thm} that establish weak homotopy equivalences follow from work of Dugger and Isaksen \cite{dugger-isaksen}. In this section, we adapt their proof strategy to give a more direct proof of the fact that the natural map $\rho_{S} \colon \SmallBarCon(\mathscr{A}) \to X$ is a weak homotopy equivalence whenever $\mathscr{A}$ is an open cover of~$X$.
Moreover, we use the same ideas to give a short proof of the fact that $\rho_{N} \colon \SmallBarCon(\mathscr{A}) \to | \Nrv(\mathscr{A}) |$ is a weak homotopy equivalence whenever $\mathscr{A}$ is a weakly good cover.

By the following lemma, a map is a weak homotopy equivalence if it is so locally.
\begin{lemma}[{\cite[Lemma 16.24]{MR0402714}; \cite[Theorem 6.7.11]{MR2456045}}]
\label{lemma:union-weak-equ}
	Let $f\colon Y\to X$ be a continuous map and let $\mathscr{A} = (A_i)_{i \in I}$ be an open cover of $X$. If for every $\sigma\in \Nrv(\mathscr{A})$ the restricted map $f^{-1}(A_\sigma)\to A_\sigma$ is a weak homotopy equivalence, then so is $f$.
\end{lemma}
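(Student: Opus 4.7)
The plan is to verify directly that $f$ induces a bijection on $\pi_0$ and isomorphisms $\pi_n(Y,y_0)\to\pi_n(X,f(y_0))$ for all $n\geq 1$ and all basepoints $y_0\in Y$, using the standard strategy of skeletal lifting enabled by fine subdivision.

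For surjectivity, given $\alpha\colon (S^n,\ast)\to (X,x_0)$, I would apply the Lebesgue number lemma to the pullback open cover $(\alpha^{-1}(A_i))_{i\in I}$ of $S^n$ to obtain a triangulation $T$ so fine that the closed star of every vertex lies in some $\alpha^{-1}(A_i)$. Choosing for each vertex $v$ an index $i(v)\in I$ with $\overline{\operatorname{St}(v)}\subseteq\alpha^{-1}(A_{i(v)})$, every simplex $\sigma=[v_0,\ldots,v_k]$ of $T$ then satisfies $\alpha(\sigma)\subseteq A_{J(\sigma)}$ with $J(\sigma)=\{i(v_0),\ldots,i(v_k)\}\in\Nrv(\mathscr{A})$, and moreover $J(\tau)\subseteq J(\sigma)$ whenever $\tau\subseteq\sigma$.

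I would then construct a lift $\tilde{\alpha}\colon S^n\to Y$ together with a homotopy $H$ from $f\circ\tilde{\alpha}$ to $\alpha$ by induction on the skeleta of $T$, maintaining the invariant that, on each simplex $\tau$, the restrictions $\tilde{\alpha}|_\tau$ and $H|_{\tau\times I}$ take values in $f^{-1}(A_{J^+(\tau)})$ and $A_{J^+(\tau)}$ respectively, where $J^+(\tau):=J(\tau)\cup\{i(w):w\in\op{lk}(\tau)\}$. The base case for vertices uses that $f^{-1}(A_{i(v)})\to A_{i(v)}$ is a bijection on $\pi_0$. For the induction step extending over a simplex $\sigma$ of dimension $k+1$, the inclusions $J^+(\sigma)\subseteq J^+(\tau)$ for $\tau\subseteq\sigma$ ensure that the data already defined on $\partial\sigma$ lands in $f^{-1}(A_{J^+(\sigma)})$ and $A_{J^+(\sigma)}$; one then invokes the standard relative lifting property of the weak equivalence $f^{-1}(A_{J^+(\sigma)})\to A_{J^+(\sigma)}$ (i.e.\ the vanishing of the relative homotopy groups of the pair) to extend the lift and the homotopy over $\sigma$. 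Injectivity of $\pi_n(f)$ follows by the same argument applied to $D^{n+1}$ with the two given lifts on $\partial D^{n+1}\cong S^n$ prescribed.

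The main obstacle will be setting up the correct inductive invariant. The naive version, demanding only $\tilde{\alpha}|_\tau\subseteq f^{-1}(A_{J(\tau)})$, is too weak: at a later stage, when one extends over a coface $\sigma\supsetneq\tau$, the data inherited from $\tau$ must already land in the strictly smaller $f^{-1}(A_{J(\sigma)})$. The strengthening to $J^+(\tau)$ resolves this, but requires checking both that $J^+(\tau)\in\Nrv(\mathscr{A})$ (which holds because for each $w$ in the link of $\tau$ the coface $\tau\cup\{w\}$ lies in $\overline{\operatorname{St}(w)}\subseteq\alpha^{-1}(A_{i(w)})$, so $\alpha(\tau)\subseteq A_{J^+(\tau)}$) and that $J^+(\sigma)\subseteq J^+(\tau)$ for $\tau\subseteq\sigma$, which itself follows from $\op{lk}(\sigma)\subseteq\op{lk}(\tau)$. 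Keeping this bookkeeping coherent across all faces simultaneously is the delicate part of the argument.
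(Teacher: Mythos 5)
Your argument is correct, but note that the paper does not prove this lemma at all—it is quoted from the cited references (tom Dieck, Theorem 6.7.11; cf.\ May), and your proof is essentially the one given there: fine subdivision via the Lebesgue number lemma, the closed-star assignment $v\mapsto i(v)$, and skeletal lifting using the relative-lifting (HELP/compression) characterization of weak equivalences, with your $J^+(\tau)=\bigcup_{\sigma\supseteq\tau}J(\sigma)$ being exactly the standard bookkeeping that makes the induction close up. The only points left implicit—arranging basepoints by making $\ast$ a vertex with prescribed lift $y_0$, and the precise relative formulation on $(D^{n+1},S^n)$ for injectivity (where the prescribed boundary lift satisfies the invariant automatically since $f\circ\tilde{\alpha}=\beta$ there)—are routine.
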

In order to apply this lemma to $\rho_S$, we need to determine the preimages of the finite intersections of cover elements in $\mathscr{A} = (A_i)_{i \in I}$. Recall from \cref{defi:blowup} that the blowup complex is defined as
\[
	\SmallBarCon(\mathscr{A}) = \left( \bigsqcup_{J \in \Nrv(\mathscr{A})}
	A_J\times |J|  \right) \; / \; \sim
\]
and $\rho_S$ is induced by the projections of the products $A_J\times |J| $ onto the first coordinate. For any $\sigma\in \Nrv(\mathscr{A})$ the preimage is the subspace
\[
	\rho_S^{-1}(A_\sigma)=\left( \bigsqcup_{J \in \operatorname{St}_{\Nrv(\mathscr{A})}(\sigma)}
	 A_J\times |J|  \right) \; / \; \sim\ ,
\]
where $\operatorname{St}_{\Nrv(\mathscr{A})}(\sigma)=\{J\in \Nrv(\mathscr{A})\mid \sigma\subseteq J \}$ is the star of $\sigma$ in $\Nrv(\mathscr{A})$.
This can be seen as follows: Whenever $A_J \cap A_{\sigma} =A_{J\cup \sigma}$ is non-empty for some $J$, the union $J\cup \sigma$ is a simplex in~$\operatorname{St}_{\Nrv(\mathscr{A})}(\sigma)$ and so
\[(A_J \cap A_{\sigma})\times |J| \subseteq A_{J\cup \sigma}\times |J\cup \sigma|\]
is contained in the right hand side of the equality above.
Conversely, $A_J\subseteq A_\sigma$ for every~$J\in \operatorname{St}_{\Nrv(\mathscr{A})}(\sigma)$, and so the above equality holds.

\begin{prop}
\label{prop:global-weak-heq-blowup-space}
 Let $\mathscr{A}=(A_i)_{i\in I}$ be an open cover of the topological space $X$. Then the natural map $\rho_S\colon \SmallBarCon(\mathscr{A}) \to X$ is a weak homotopy equivalence.
\end{prop}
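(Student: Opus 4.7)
The plan is to apply \cref{lemma:union-weak-equ} to the open cover $\mathscr{A}$ and the map $\rho_S$. This reduces the problem to showing that for each $\sigma \in \Nrv(\mathscr{A})$, the restricted map $\rho_S^{-1}(A_\sigma) \to A_\sigma$ is a weak homotopy equivalence. I will in fact exhibit a strong deformation retraction of $\rho_S^{-1}(A_\sigma)$ onto a subspace that $\rho_S$ carries homeomorphically onto $A_\sigma$, which is stronger than needed.

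Fix $\sigma \in \Nrv(\mathscr{A})$ and let $b_\sigma \in |\sigma|$ denote its barycenter. For every $J \in \operatorname{St}_{\Nrv(\mathscr{A})}(\sigma)$ we have $\sigma \subseteq J$, so $|\sigma|$ is a face of $|J|$ and hence $b_\sigma \in |J|$. Since $|J|$ is convex, the formula
\[
	H_t([x,y]) = [x, (1-t) y + t\, b_\sigma]
\]
defines, piecewise over the pieces $A_J \times |J|$ with $J \in \operatorname{St}_{\Nrv(\mathscr{A})}(\sigma)$, a candidate homotopy $H \colon [0,1] \times \rho_S^{-1}(A_\sigma) \to \rho_S^{-1}(A_\sigma)$. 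By construction $H_0 = \operatorname{id}$, the map $H_1$ takes values in $S := \{[x, b_\sigma] \mid x \in A_\sigma\}$, and $H_t$ fixes $S$ pointwise for every $t$. Moreover, $\rho_S$ restricts to the homeomorphism $S \to A_\sigma$, $[x, b_\sigma] \mapsto x$. Thus, modulo checking that $H$ is well-defined and continuous on the quotient, $H$ realizes $\rho_S|_{\rho_S^{-1}(A_\sigma)}$ as a homotopy equivalence, and the conclusion follows from \cref{lemma:union-weak-equ}.

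The main step that requires care, and the only place where one uses anything beyond convexity and the location of $b_\sigma$, is the well-definedness of $H$ on the quotient. For this, consider the two generating identifications from \cref{defi:blowup}: for $J \subseteq J'$ both in $\operatorname{St}_{\Nrv(\mathscr{A})}(\sigma)$, the subspace $A_{J'} \times |J|$ sits inside both $A_J \times |J|$ (via $A_{J'} \subseteq A_J$) and $A_{J'} \times |J'|$ (via $|J| \subseteq |J'|$). On either side the formula sends $(x,y)$ to $(x, (1-t) y + t\, b_\sigma)$ with the same $x$-coordinate, and with the same point of $|J| \subseteq |J'|$ as second coordinate, so both identifications are preserved. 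Continuity of $H$ then follows from the universal property of the quotient topology applied to the continuous maps on each piece. The remaining verifications are immediate.
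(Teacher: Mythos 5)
Your proof is correct and follows essentially the same route as the paper: both reduce to the stars $\rho_S^{-1}(A_\sigma)$ via \cref{lemma:union-weak-equ} and then deformation-retract $\rho_S^{-1}(A_\sigma)$ onto a slice $A_\sigma \times \{z\}$ over a point of $|\sigma|$, using that $|\operatorname{St}_{\Nrv(\mathscr{A})}(\sigma)|$ is star-shaped with respect to that point. The only difference is that you make the lifted retraction explicit (choosing $z = b_\sigma$ and writing the straight-line formula, then checking compatibility with the generating identifications), whereas the paper states the lift more briefly.
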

\begin{proof}
By \cref{lemma:union-weak-equ} it suffices to prove that for every $\sigma\in \Nrv(\mathscr{A})$ the restricted map $\rho_S^{-1}(A_\sigma)\to A_\sigma$ is a weak homotopy equivalence. We show that this map is in fact a homotopy equivalence.

Choose any point $z\in |\sigma|$ and consider the following subspace
 \[A_\sigma\times \{z\} \hookrightarrow  A_\sigma\times |\sigma| \hookrightarrow \rho_S^{-1}(A_\sigma).\]
Note that the space $|\operatorname{St}_{\Nrv(\mathscr{A})}(\sigma)|$ is star-shaped with respect to $z$ and hence it deformation retracts onto this point. As $A_\tau\subseteq A_\sigma$ for every $\tau\in \operatorname{St}_{\Nrv(\mathscr{A})}(\sigma)$, this lifts to a deformation retraction of $\rho_S^{-1}(A_\sigma)$ onto $ A_\sigma\times \{z\} $. Therefore, we get the following commutative diagram, where the horizontal maps are homotopy equivalences:
\[\begin{tikzcd}
   \arrow[hookrightarrow]{r}{\simeq} A_\sigma\times \{z\} \arrow{d}[swap]{\pi_1} & \rho_S^{-1}(A_\sigma) \arrow{d}{\rho_S} \\
   \arrow[hookrightarrow]{r}{=}A_\sigma & A_\sigma
  \end{tikzcd}
\]
Obviously, $\pi_1$ is a homotopy equivalence and hence so is the map on the right. This proves that $\rho_S\colon \SmallBarCon(\mathscr{A}) \to X$ is weak homotopy equivalence.
\end{proof}

Recall now that $\rho_N$ is induced by the projections of the products $A_J\times |J|$ onto the second coordinate.
To apply \cref{lemma:union-weak-equ} to $\rho_N$,
we cover~$| \Nrv(\mathscr{A}) |$ by the open simplex stars~$(S_\sigma)_{\sigma\in\Nrv(\mathscr{A})}$, where
\[
	S_\sigma=\bigcup\{\op{int}|J|\mid J\in \operatorname{St}_{\Nrv(\mathscr{A})}(\sigma) \}.
\]
Note that this cover is closed under finite intersections. Hence, it suffices to consider for any~$\sigma\in \Nrv(\mathscr{A})$ the preimage
\[
	\rho_N^{-1}(S_\sigma)=\left( \bigsqcup_{J \in \operatorname{St}_{\Nrv(\mathscr{A})}(\sigma)}
	 A_J\times \operatorname{int}|J|  \right) \; / \; \sim .
\]
\begin{prop}
\label{prop:global-weak-heq-blowup-nerve}
 Let $\mathscr{A}=(A_i)_{i\in I}$ be a weakly good cover of the topological space $X$. Then the natural map $\rho_{N} \colon \SmallBarCon(\mathscr{A}) \to | \Nrv(\mathscr{A}) |$ is a weak homotopy equivalence.
\end{prop}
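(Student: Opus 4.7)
The plan is to invoke \cref{lemma:union-weak-equ} for the cover of $|\Nrv(\mathscr{A})|$ by open simplex stars $(S_\sigma)_{\sigma \in \Nrv(\mathscr{A})}$, just as set up in the excerpt. That cover is closed under non-empty finite intersection, since $S_\sigma \cap S_\tau = S_{\sigma \cup \tau}$ whenever $\sigma \cup \tau \in \Nrv(\mathscr{A})$ and is empty otherwise. Hence it is enough to prove that for every $\sigma \in \Nrv(\mathscr{A})$ the restricted map $\rho_N^{-1}(S_\sigma) \to S_\sigma$ is a weak homotopy equivalence.

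I would prove this by showing that both spaces are weakly contractible. Fix a point $z \in \op{int}|\sigma|$. Since $z \in |J|$ for every $J \in \operatorname{St}_{\Nrv(\mathscr{A})}(\sigma)$, the open star $S_\sigma$ is star-shaped with respect to $z$ and is therefore contractible. For $\rho_N^{-1}(S_\sigma)$ I would construct a strong deformation retraction onto $A_\sigma \times \{z\}$ by the straight-line formula
\[
H((x,\alpha),t) = (x,\,(1-t)\alpha + tz), \qquad (x,\alpha)\in A_J \times \op{int}|J|.
\]
A short calculation in barycentric coordinates shows that for $0\le t<1$ the point $(1-t)\alpha + tz$ lies in $\op{int}|J|$, while at $t=1$ it lies in $\op{int}|\sigma|$; in particular the image stays in $\rho_N^{-1}(S_\sigma)$. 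At $t=1$, the endpoint $(x,z)\in A_J\times|\sigma|$ is identified in $\SmallBarCon(\mathscr{A})$ with its image in $A_\sigma\times\{z\}$ under the inclusion $A_J\hookrightarrow A_\sigma$, which is available because $J\supseteq \sigma$ forces $A_J\subseteq A_\sigma$. The same inclusions make $H$ compatible with all other identifications defining the blowup complex, so it descends to a continuous strong deformation retraction of $\rho_N^{-1}(S_\sigma)$ onto $A_\sigma\times\{z\}\cong A_\sigma$. Since $\mathscr{A}$ is weakly good, $A_\sigma$ is weakly contractible, and hence so is $\rho_N^{-1}(S_\sigma)$.

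Any map between weakly contractible spaces is a weak homotopy equivalence, so $\rho_N|_{\rho_N^{-1}(S_\sigma)}\colon \rho_N^{-1}(S_\sigma)\to S_\sigma$ is one, and \cref{lemma:union-weak-equ} finishes the proof. The main obstacle is verifying that the naive fiberwise formula $H$ actually descends continuously to the quotient $\rho_N^{-1}(S_\sigma)$: this rests on the monotonicity $A_J\subseteq A_\sigma$ for $J\supseteq \sigma$ together with the convexity of each $|J|$, which together ensure that $H$ respects the quotient equivalence relation and keeps its image inside $\rho_N^{-1}(S_\sigma)$ throughout the homotopy.
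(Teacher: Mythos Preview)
Your proposal is correct and follows essentially the same route as the paper: both apply \cref{lemma:union-weak-equ} to the cover by open simplex stars, both deformation retract $\rho_N^{-1}(S_\sigma)$ onto $A_\sigma\times\{z\}$ via the straight-line homotopy towards $z$, and both use weak contractibility of $A_\sigma$ to conclude. The only cosmetic difference is that the paper packages the last step as a commutative square comparing $\pi_2\colon A_\sigma\times\{z\}\to\{z\}$ with the restricted $\rho_N$, whereas you simply observe that any map between weakly contractible spaces is a weak homotopy equivalence; these are equivalent formulations of the same conclusion.
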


\begin{proof}
By \cref{lemma:union-weak-equ} it suffices to prove that for every $\sigma\in \Nrv(\mathscr{A})$ the restricted map $\rho_N^{-1}(S_\sigma)\to S_\sigma$ is a weak homotopy equivalence.

Similarly to the the proof of \cref{prop:global-weak-heq-blowup-space},
we get the following commutative diagram, where $z\in \op{int}|\sigma|$ is any point and the horizontal maps are homotopy equivalences:
\[\begin{tikzcd}
   \arrow[hookrightarrow]{r}{\simeq} A_\sigma \times \{z\} \arrow{d}[swap]{\pi_2} & \rho_N^{-1}(S_\sigma) \arrow{d}{\rho_N} \\
   \arrow[hookrightarrow]{r}{\simeq}\{z\} & S_\sigma
  \end{tikzcd}
\]
As $\mathscr{A}$ is a weakly good cover, the map $\pi_2$ is a weak homotopy equivalence and hence so is the map on the right. This proves that $\rho_{N} \colon \SmallBarCon(\mathscr{A}) \to | \Nrv(\mathscr{A}) |$ is a weak homotopy equivalence.
\end{proof}

\paragraph{Acknowledgements}
This research has been supported by
the Deutsche Forschungsgemeinschaft (DFG -- German Research Foundation) -- Project-ID 195170736 -- TRR109,
and Austrian Science Fund (FWF) grants P 29984-N35 and P 33765-N.

\end{document}